\newcommand{\ZZ}{\mathbb{Z}}
\newcommand{\CC}{\mathbb{C}}
\newcommand{\RR}{\mathbb{R}}
\newcommand{\TT}{\mathbb{T}}
\newcommand{\FF}{\mathcal{F}}
\newcommand{\BB}{\mathcal{B}}
\newcommand{\HH}{\mathcal{H}}
\newcommand{\OO}{\mathscr{O}}
\newcommand{\SLASH}{\char`\\}
\newcommand{\spacedash}{\hspace{.06cm} - \hspace{.06cm}}
\newcommand{\NN}{\mathbb{N}}
\newcommand{\SSS}{\mathbb{S}}
\newcommand{\defeq}{\vcentcolon=}
\newcommand{\Fin}{\mathcal{F}in}
\newcommand{\slant}{\mathcal{S}}
\DeclareMathOperator*{\hocoeq}{hocoeq}
\DeclareMathOperator*{\colim}{colim}
\DeclareMathOperator{\Res}{Res}
\DeclareMathOperator{\Ind}{Ind}
\newtheorem{thm}[equation]{Theorem}
\newtheorem{lem}[equation]{Lemma}
\newtheorem{prop}[equation]{Proposition}
\newtheorem{cor}[equation]{Corollary}
\theoremstyle{definition} \newtheorem{definition}[equation]{Definition}
\numberwithin{equation}{section}
\newlength\tindent
\renewcommand{\indent}{\hspace*{\tindent}}
\title{Symmetric Powers and Norms of Mackey Functors}
\author{John Ullman}
\begin{document}

\begin{abstract}
In this paper we give detailed algebraic descriptions of the derived symmetric power and norm constructions on categories of Mackey functors, as well as the derived $G$-symmetric monoidal structure. We build on the results of~\cite{UTamb}, in which it is shown that every Tambara functor over a finite group $G$ arises as the zeroth stable homotopy group of a commutative ring $G$-spectrum. The norm / restriction adjunctions on categories of Tambara functors promised in~\cite{UTamb} are demonstrated algebraically. Finally, we give a new characterization of Tambara functors in terms of multiplicative push forwards of Mackey functors, and use this to obtain an appealing new description of the free Tambara functor on a Mackey functor which closely matches the structure of equivariant extended powers.
\end{abstract}

\maketitle

\section{Introduction}\label{sec:intro}

Let $G$ be a finite group and $n > 1$. We can define the \emph{$n$'th symmetric power} construction on Mackey functors over $G$ by
\begin{align*}
	Sym_n (\underline{M}) \defeq \underline{\pi}_0 ((\HH\underline{M})^{\wedge n}/\Sigma_n).
\end{align*}

Note that this is \emph{not} equal to $\underline{M}^{\otimes n}/\Sigma_n$. However, the results of~\cite{UTamb} make it possible to give an algebraic description of $Sym_n$. We restate the relevant results here for convenience, using $\CC$ to denote the derived free commutative ring $G$-spectrum functor, and recalling that $\underline{\pi}_0$ of a commutative ring $G$-spectrum is a Tambara functor (see~\cite{Brun} or~\cite{Stri}).

\begin{thm}\label{thm:mapstoemcomm}
[Theorem 5.2 of \cite{UTamb}] If $X$ and $\HH\underline{R}$ are commutative ring $G$-spectra, with $X$ $(-1)$-connected and $\HH\underline{R}$ Eilenberg MacLane, then $\underline{\pi}_0$ induces a bijection between maps $X \to \HH\underline{R}$ in the homotopy category of commutative ring $G$-spectra and maps $\underline{\pi}_0 X \to \underline{R}$ of Tambara functors.
\end{thm}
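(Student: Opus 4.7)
My plan is to resolve $X$ simplicially by free commutative ring $G$-spectra and to reduce both sides of the claimed bijection to a common limit of Mackey-functor $\operatorname{Hom}$ sets. Let $U$ denote the forgetful functor from commutative ring $G$-spectra to $G$-spectra, with left adjoint $\CC$. Iterating the induced monad yields a simplicial resolution $Y_\bullet \to X$ in commutative ring $G$-spectra with $Y_n = \CC W_n$, where the $W_n$ can be arranged to remain $(-1)$-connected using the connectivity of $X$.

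On the topological side, adjunction together with the standard universal property of Eilenberg--MacLane spectra gives
\begin{align*}
[X,\HH\underline{R}]_{\mathrm{CRing}} &\cong \lim_{\Delta}\,[\CC W_\bullet,\HH\underline{R}]_{\mathrm{CRing}} \\
&\cong \lim_{\Delta}\,[W_\bullet,\HH\underline{R}]_{G\text{-}\mathrm{Sp}} \\
&\cong \lim_{\Delta}\,\operatorname{Hom}_{\mathrm{Mack}}(\underline{\pi}_0 W_\bullet,\underline{R}),
\end{align*}
the final line using the fact that for any $(-1)$-connected $G$-spectrum $Z$ one has $[Z,\HH\underline{R}] \cong \operatorname{Hom}_{\mathrm{Mack}}(\underline{\pi}_0 Z,\underline{R})$.

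On the algebraic side, I expect $\underline{\pi}_0 X$ to be presented, in Tambara functors, as the reflexive coequalizer of $\underline{\pi}_0 Y_1 \rightrightarrows \underline{\pi}_0 Y_0$. Granting the key identification that $\underline{\pi}_0 \CC W$ is the free Tambara functor on the Mackey functor $\underline{\pi}_0 W$ whenever $W$ is $(-1)$-connected, this yields
\begin{align*}
\operatorname{Hom}_{\mathrm{Tamb}}(\underline{\pi}_0 X,\underline{R}) &\cong \lim_{\Delta}\,\operatorname{Hom}_{\mathrm{Tamb}}(\underline{\pi}_0 \CC W_\bullet,\underline{R}) \\
&\cong \lim_{\Delta}\,\operatorname{Hom}_{\mathrm{Mack}}(\underline{\pi}_0 W_\bullet,\underline{R}),
\end{align*}
which matches the first chain; tracing through the identifications, the resulting bijection is visibly induced by $\underline{\pi}_0$.

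The principal obstacle is the free-Tambara identification, namely that $\underline{\pi}_0 \CC W$ is the free Tambara functor on $\underline{\pi}_0 W$. Since $\CC W$ assembles the derived equivariant extended powers of $W$ together with Hill--Hopkins--Ravenel norm maps from proper subgroups, its $\underline{\pi}_0$ is a nontrivial amalgamation of symmetric powers and norms of $\underline{\pi}_0 W$, and one must show that this amalgamation is exactly the free Tambara functor---precisely the algebraic description of derived symmetric powers and norms pursued in the body of this paper. A secondary concern is justifying that $\underline{\pi}_0$ commutes with realization of the simplicial resolution, which relies essentially on the $(-1)$-connectivity assumption being propagated at every stage.
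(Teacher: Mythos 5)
Your reduction has a circularity problem at exactly the point you flag as the ``principal obstacle.'' The identification that $\underline{\pi}_0 \CC W$ is the free Tambara functor on $\underline{\pi}_0 W$ is Corollary~\ref{cor:freetambara} (Corollary 5.8 of \cite{UTamb}), and in both \cite{UTamb} and the present paper that statement is \emph{deduced from} Theorem~\ref{thm:mapstoemcomm} together with Theorem~\ref{thm:homemcomm}, via the adjunction chain $Hom_{Tamb(G)}(\underline{\pi}_0\CC(W),\underline{R}) \cong Hom_{Ho(comm_G)}(\CC(W),\HH\underline{R}) \cong Hom_{Ho(Sp_G)}(W,\HH\underline{R}) \cong Hom_{Mack(G)}(\underline{\pi}_0 W,\underline{R})$ --- see the proof of Proposition~\ref{prop:-1connsamesympow}, which is exactly this argument for $(-1)$-connected $W$. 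Likewise, all of the algebraic descriptions of symmetric powers, norms, and the $G$-symmetric monoidal structure in Sections~2--4, which you propose to invoke to supply the missing identification, take Corollary~\ref{cor:freetambara} as their input rather than proving it independently. So as written your argument assumes a statement whose only available proof rests on the theorem you are trying to prove. To rescue the strategy you would need a direct, $\underline{\pi}_0$-level verification that the derived extended powers and norms of $\HH(\underline{\pi}_0 W)$ (equivalently of $W$) assemble into a Tambara functor satisfying the free universal property, with no appeal to Theorems~\ref{thm:mapstoemcomm} or~\ref{thm:homemcomm}; that computation is not sketched and is essentially as hard as the theorem itself. Note also that the present paper does not prove Theorem~\ref{thm:mapstoemcomm} at all: it is imported from \cite{UTamb}, where it is established by a different route (a multiplicative Postnikov truncation $X \to Post^0 X$ in $comm_G$ together with the equivalence of Eilenberg MacLane commutative ring $G$-spectra with Tambara functors, Theorem~\ref{thm:homemcomm}), precisely so that no computation of $\underline{\pi}_0 \CC$ is needed.

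The remaining steps of your outline are repairable but not free. Passing from $[X,\HH\underline{R}]_{\mathrm{CRing}}$ to $\lim_{\Delta}$ over the resolution requires identifying the mapping set out of the realization with $\pi_0$ of a totalization and killing the higher terms of the resulting tower; this does work here because $[\Sigma^s W_n, \HH\underline{R}] = 0$ for $s \geq 1$ when $W_n$ is $(-1)$-connected, but it needs to be said, along with the Reedy-type cofibrancy hypotheses making $|Y_\bullet| \to X$ an equivalence of commutative ring spectra. The claim that $\underline{\pi}_0 X$ is the reflexive coequalizer of $\underline{\pi}_0 Y_1 \rightrightarrows \underline{\pi}_0 Y_0$ computed in Tambara functors is fine, since such coequalizers are created in $Mack(G)$ (as used in Corollary~\ref{cor:sympowsomecolim}) and $\underline{\pi}_0$ of a realization of $(-1)$-connected spectra is the coequalizer of its $1$-skeleton. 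None of this, however, addresses the circular dependence above, which is the genuine gap.
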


\begin{thm}\label{thm:homemcomm}
[Theorem 5.3 of \cite{UTamb}] The functor $\underline{\pi}_0$ induces an equivalence from the homotopy category of Eilenberg MacLane commutative ring $G$-spectra to the category of Tambara functors.
\end{thm}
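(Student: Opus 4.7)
The plan is to establish the equivalence by proving fully faithfulness and essential surjectivity of the induced functor $\underline{\pi}_0$, both of which will rely crucially on Theorem~\ref{thm:mapstoemcomm}.

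For fully faithfulness, I would observe that any Eilenberg MacLane $G$-spectrum is $(-1)$-connected, since all of its homotopy Mackey functors vanish except $\underline{\pi}_0$. Applying Theorem~\ref{thm:mapstoemcomm} with $X = \HH\underline{R}$ as the source (viewed as a commutative ring $G$-spectrum) and $\HH\underline{S}$ as the target therefore yields a bijection between $[\HH\underline{R}, \HH\underline{S}]$ in the homotopy category of commutative ring $G$-spectra and maps $\underline{R} \to \underline{S}$ of Tambara functors, which is precisely the fully faithfulness statement.

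For essential surjectivity, I would invoke the existence result from~\cite{UTamb} recalled in the introduction: every Tambara functor $\underline{R}$ arises as $\underline{\pi}_0 Y$ for some commutative ring $G$-spectrum $Y$. To upgrade $Y$ to an Eilenberg MacLane object while preserving $\underline{\pi}_0$ and its Tambara structure, I would perform a Postnikov $0$-truncation inside the category of commutative ring $G$-spectra, iteratively attaching free commutative cells built from $\CC$-constructions on equivariant spheres to kill $\underline{\pi}_n$ for $n \geq 1$ without disturbing $\underline{\pi}_0$. The resulting Eilenberg MacLane commutative ring $G$-spectrum would then realize $\underline{R}$.

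The main obstacle will be verifying that this Postnikov truncation can be carried out within the commutative ring category—specifically, that the free commutative cell attachments affect only the targeted higher homotopy without altering $\underline{\pi}_0$ as a Tambara functor. This requires careful connectivity estimates for the derived free commutative ring functor $\CC$, which is precisely the type of analysis central to~\cite{UTamb} and to the investigation of the derived $Sym_n$ constructions in the present paper. If instead the realization in~\cite{UTamb} already produces an Eilenberg MacLane object by design, essential surjectivity is immediate and the entire theorem reduces to the application of Theorem~\ref{thm:mapstoemcomm} used above.
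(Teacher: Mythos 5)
Your outline matches the intended argument: the theorem is not proved in this paper but quoted from~\cite{UTamb}, where full faithfulness is exactly your application of Theorem~\ref{thm:mapstoemcomm} (an Eilenberg MacLane commutative ring $G$-spectrum is $(-1)$-connected, so maps $\HH\underline{R} \to \HH\underline{S}$ in $Ho(comm_G)$ biject with Tambara maps $\underline{R} \to \underline{S}$), and essential surjectivity is the realization theorem of~\cite{UTamb}, which already produces an Eilenberg MacLane object via Postnikov $0$-truncation inside commutative ring spectra (Proposition~3.7 of~\cite{UTamb}, the same result this paper later invokes in Corollary~\ref{cor:normrighthtpytype}). Since that realization input is established in~\cite{UTamb} independently of the statement here, your appeal to it is not circular, and your fallback truncation argument is precisely what that proposition carries out.
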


\begin{cor}\label{cor:freetambara}
[Corollary 5.8 of \cite{UTamb}] If $\underline{M}$ is any Mackey functor then $\underline{\pi}_0 \CC (\HH\underline{M})$ is the free Tambara functor on $\underline{M}$.
\end{cor}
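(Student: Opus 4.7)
The plan is to verify the universal property of the free Tambara functor: for any Tambara functor $\underline{R}$, natural in $\underline{R}$, we exhibit a bijection
\begin{align*}
\operatorname{Hom}_{\text{Tamb}}(\underline{\pi}_0 \CC(\HH\underline{M}), \underline{R}) \;\cong\; \operatorname{Hom}_{\text{Mackey}}(\underline{M}, \underline{R}),
\end{align*}
where on the right $\underline{R}$ is regarded as a Mackey functor via the forgetful functor from Tambara functors to Mackey functors.

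The proof would proceed by a chain of natural bijections. First I would invoke Theorem~\ref{thm:homemcomm} to replace $\underline{R}$ by the Eilenberg MacLane commutative ring $G$-spectrum $\HH\underline{R}$, and then apply Theorem~\ref{thm:mapstoemcomm} to identify maps of Tambara functors $\underline{\pi}_0 \CC(\HH\underline{M}) \to \underline{R}$ with morphisms $\CC(\HH\underline{M}) \to \HH\underline{R}$ in the homotopy category of commutative ring $G$-spectra. To apply Theorem~\ref{thm:mapstoemcomm} one needs that $\CC(\HH\underline{M})$ is $(-1)$-connected; this follows because $\HH\underline{M}$ is $(-1)$-connected and $\CC$, being built from smash powers of $\HH\underline{M}$ and orbits under symmetric group actions, preserves $(-1)$-connectivity. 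Next I would use the derived adjunction between $\CC$ and the forgetful functor from commutative ring $G$-spectra to $G$-spectra to identify these with morphisms $\HH\underline{M} \to \HH\underline{R}$ in the homotopy category of $G$-spectra. Finally, the standard adjunction for Eilenberg MacLane spectra (applying $\underline{\pi}_0$, and using that $\HH$ is fully faithful on Mackey functors) turns this into $\operatorname{Hom}_{\text{Mackey}}(\underline{M}, \underline{R})$.

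The main obstacle is essentially bookkeeping: verifying that the chain of bijections is natural in $\underline{R}$ and compatible with the forgetful map from Tambara functors to Mackey functors on the left-hand side, so that the resulting universal arrow $\underline{M} \to \underline{\pi}_0 \CC(\HH\underline{M})$ (coming from the unit $\HH\underline{M} \to \CC(\HH\underline{M})$ followed by the canonical isomorphism $\underline{\pi}_0 \HH\underline{M} \cong \underline{M}$) exhibits the desired universal property. Once naturality is checked, the corollary is simply the Yoneda identification of the free Tambara functor.
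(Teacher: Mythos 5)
Your proposal is correct and matches the intended argument: the paper imports this statement from~\cite{UTamb}, but the exact chain of bijections you describe (Theorem~\ref{thm:homemcomm} to realize $\underline{R}$ as $\underline{\pi}_0\HH\underline{R}$, Theorem~\ref{thm:mapstoemcomm} applied to the $(-1)$-connected spectrum $\CC(\HH\underline{M})$, the derived free/forgetful adjunction for $\CC$, and the Eilenberg MacLane property of $\HH$) is precisely the one the author uses, e.g.\ in the proof of Proposition~\ref{prop:-1connsamesympow}. Your identification of the universal arrow via the unit $\HH\underline{M}\to\CC(\HH\underline{M})$ also agrees with the map $\psi_{\underline{M}}$ constructed in Section~\ref{sec:sympow}.
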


Thus, for any Mackey functor $\underline{M}$ we have that $Sym_n (\underline{M})$ is the $n$'th degree part of the free Tambara functor on $\underline{M}$. We give a detailed algebraic description of this in Section~\ref{sec:sympow}. The precise statement is given by Corollary~\ref{cor:identsympow}, and the geometric fixed points are given by Corollary~\ref{cor:geomfixpsympow}.\\
\indent Next, let $H$ be a proper subgroup of $G$. Letting $N_H^G$ denote the derived norm functor of~\cite{HHR}, we can define a \emph{norm} construction sending $H$-Mackey functors to $G$-Mackey functors by
\begin{align*}
	N_H^G (\underline{M}) \defeq \underline{\pi}_0 N_H^G (\HH\underline{M}).
\end{align*}

Using our description of free Tambara functors, and the fact that the norm functor gives the left adjoint of restriction from commutative ring $G$-spectra to commutative ring $H$-spectra, we give an algebraic description of this norm construction in Section~\ref{sec:norm}. The precise statement is given by Theorem~\ref{thm:normimage}, and the geometric fixed points are given by Proposition~\ref{prop:geomfixpnorm}.\\
\indent Next we put a \emph{$G$-symmetric monoidal structure} on Mackey functors: for each finite $G$-set $T$ and Mackey functor $\underline{M}$, we let
\begin{align*}
	\underline{M}^{\otimes T} \defeq \underline{\pi}_0 (\HH\underline{M})^{\wedge T}.
\end{align*}

We give an algebraic description of this in Section~\ref{sec:gsymmon}. The precise statement is given by Theorem~\ref{thm:identgsymmon}.\\
\indent These first descriptions of the norm and $G$-symmetric monoidal structure are unsatisfying, since they do not give intrinsic generators and relations. We give intrinsic descriptions in Section~\ref{sec:intrinsic}, along with several structure maps. Then we use \emph{multiplicative push forwards} to cleanly match our descriptions to topology in Section~\ref{sec:multpush}.\\
\indent In~\cite{UTamb} it is shown that the norm construction $N_H^G$ on Mackey functors gives the left adjoint of restriction on Tambara functors. We restate the result here for convenience.
\begin{cor}\label{cor:utambnormresadj}
[Corollary 5.13 of~\cite{UTamb}] If $H$ is a subgroup of $G$, then the left adjoint of the restriction functor from $G$-Tambara functors to $H$-Tambara functors coincides with $N_H^G$ on underlying commutative Green functors.
\end{cor}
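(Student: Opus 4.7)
The plan is to derive the adjunction from the spectrum-level norm/restriction adjunction on commutative ring $G$-spectra of~\cite{HHR}, transporting it across the equivalences of Theorems~\ref{thm:mapstoemcomm} and~\ref{thm:homemcomm}. Given an $H$-Tambara functor $\underline{M}$ and a $G$-Tambara functor $\underline{R}$, Theorem~\ref{thm:homemcomm} lets me realize them as $\underline{M} \cong \underline{\pi}_0 \HH\underline{M}$ and $\underline{R} \cong \underline{\pi}_0 \HH\underline{R}$ for Eilenberg--MacLane commutative ring spectra $\HH\underline{M}$ and $\HH\underline{R}$ over $H$ and $G$ respectively, and the definition $N_H^G \underline{M} = \underline{\pi}_0 N_H^G \HH\underline{M}$ supplies the candidate left adjoint.

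The body of the argument is then a chain of natural bijections:
\begin{align*}
\mathrm{Hom}_{G\text{-Tamb}}(N_H^G \underline{M}, \underline{R}) &\cong [N_H^G \HH\underline{M}, \HH\underline{R}]_{G\text{-Comm}} \\
&\cong [\HH\underline{M}, \Res_H^G \HH\underline{R}]_{H\text{-Comm}} \\
&\cong \mathrm{Hom}_{H\text{-Tamb}}(\underline{M}, \Res_H^G \underline{R}).
\end{align*}
The first and third bijections are applications of Theorem~\ref{thm:mapstoemcomm}: the first requires that $N_H^G \HH\underline{M}$ be $(-1)$-connected, which follows since $\HH\underline{M}$ is connective and the derived norm preserves connectivity; the third requires that $\Res_H^G \HH\underline{R}$ be Eilenberg--MacLane, which is immediate because restriction is a geometric functor and commutes with $\underline{\pi}_0$. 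The middle bijection is the norm/restriction adjunction on homotopy categories of commutative ring spectra established in~\cite{HHR}.

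The qualification ``on underlying commutative Green functors'' is then handled by observing that the Mackey functor together with its commutative multiplication on $\underline{\pi}_0 N_H^G \HH\underline{M}$ is inherited directly from the commutative ring $G$-spectrum $N_H^G \HH\underline{M}$, and so matches by construction the Green functor produced by the Mackey-functor-level definition of the norm. The full Tambara (i.e.\ multiplicative transfer) structure on the left adjoint is likewise transported from the spectrum side, but it does not admit an obvious intrinsic description in this argument---which is precisely what motivates the more algebraic treatment undertaken later in the paper.

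The main obstacle I anticipate is not any single deep computation but rather the verification that the HHR adjunction descends cleanly to the required homotopy categories and that the three bijections compose naturally in both variables. Once one confirms the connectivity and Eilenberg--MacLane hypotheses for each application of Theorem~\ref{thm:mapstoemcomm} and writes out the unit/counit at the spectrum level, the conclusion follows by transport of structure.
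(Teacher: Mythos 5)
Your strategy (transport the spectrum-level norm/restriction adjunction on commutative ring spectra through Theorems~\ref{thm:mapstoemcomm} and~\ref{thm:homemcomm}) is the topological route of~\cite{UTamb}, and it is genuinely different from the demonstration given in this paper, which proceeds algebraically in Section~\ref{sec:normresadj}: the norm is constructed directly on (semi-)Tambara functors, the adjunction is verified by explicit unit and counit maps (Theorems~\ref{thm:stambnormresadj} and~\ref{thm:tambnormresadj}), the multiplication is compared in Proposition~\ref{prop:twonormsamecomm}, and only afterwards is the algebraic adjunction matched with the topological one (Theorem~\ref{thm:twoadj}). Your chain of bijections does correctly produce \emph{a} left adjoint of restriction on Tambara functors, namely $\underline{R} \mapsto \underline{\pi}_0 N_H^G \HH\underline{R}$ with the norm derived in $comm_H$.

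However, there is a genuine gap at exactly the point you dismiss as holding ``by construction.'' The Mackey-functor norm $N_H^G$ appearing in the statement is $\underline{\pi}_0$ of the norm derived in $Sp_H$, i.e.\ of the point-set norm applied to a \emph{spectrum-level} cofibrant replacement of $\HH\underline{R}$, whereas the left adjoint your bijections produce uses the norm derived in $comm_H$, i.e.\ the point-set norm of a cofibrant replacement as a commutative ring spectrum. Cofibrant commutative ring spectra are not cofibrant as spectra, so these two constructions do not agree ``by construction''; identifying their $\underline{\pi}_0$'s (equivalently, showing the comparison map $\gamma_{\underline{R}}$ of~\ref{eq:compareleftadj} is an isomorphism) is precisely the content of the corollary beyond the bare existence of a left adjoint, and it is a real theorem: it is (a relative of) Proposition~B.63 of~\cite{HHR}, appearing here as Corollary~\ref{cor:normrighthtpytype}, whose proof in this paper goes \emph{through} the algebraic adjunction and Theorem~\ref{thm:twoadj} rather than being available beforehand. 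The same issue infects your connectivity check: the statement that the derived norm preserves $(-1)$-connectedness is quoted for $Sp$-cofibrant inputs, and applying it to the $comm$-cofibrant replacement again presupposes the comparison. To repair the argument you must either invoke an HHR-type flatness/comparison result for cofibrant commutative rings in the relevant model structure, or follow this paper's algebraic route; as written, the crux of ``coincides with $N_H^G$ on underlying commutative Green functors'' is asserted rather than proved.
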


In Section~\ref{sec:normresadj} we give an algebraic demonstration of this adjunction, obtaining along the way a similar adjunction for semi-Tambara functors.\\
\indent Finally, in Section~\ref{sec:multpushmackey} we define \emph{multiplicative push forwards of Mackey functors} and give an algebraic description. We use these to give an alternative characterization of Tambara functors as \emph{multiplicative Mackey functors}, and to give an alternative formula for the free Tambara functor on a Mackey functor which closely matches the structure of equivariant extended powers.\\
\indent We work throughout this paper with orthogonal spectra. We denote the category of orthogonal $G$-spectra by $Sp_G$ and the category of commutative ring $G$-spectra by $comm_G$. We utilize the $\SSS$ model structures of~\cite{Stolz}, so that induction and restriction functors are Quillen pairs, and always index on a complete $G$-universe. Recall that we can pull back the positive model structure to $comm_G$. We refer to~\cite{Stolz} and Section~A.4 of~\cite{Ull} for background on these model structures, which build on the classical ones from~\cite{MM}.\\
\indent We will use the notation $Mack(G)$ for the category of $G$-Mackey functors, and $sMack(G)$ for the category of semi-Mackey functors (that is, Mackey functors without additive inverses).

\section{Symmetric Powers of Mackey Functors}\label{sec:sympow}

In this section we give an algebraic description of symmetric powers of Mackey functors. As explained in the introduction, this comes down to describing the free Tambara functor on a Mackey functor. Hence, we begin by giving a definition of (semi-)Tambara functors, after some preliminaries. Let $G$ be a finite group, and let $\Fin_G$ denote the category of finite $G$-sets. Also let $Set_{\neq \emptyset}$ denote the category of nonempty sets. Let $i : X \to Y$ and $j : Y \to Z$ be maps in $\Fin_G$. Let
\begin{align*}
	\textstyle \prod_{i,j} X \defeq \{ (z, s) : z \in Z, s : j^{-1} (z) \to X, i \circ s = Id \}
\end{align*}

be the set of sections of $i$ defined on fibers of $j$, with $G$ acting by conjugation. There is an obvious $G$-map
\begin{align*}
	p : \textstyle \prod_{i,j} X &\to Z \\
	(z, s) &\mapsto z
\end{align*}

as well as an evaluation $G$-map as below.
\begin{align*}
	e : Y \times_Z \textstyle \prod_{i,j} X &\to X \\
	(y, (z, s)) &\mapsto s(y)
\end{align*}

Observe that the diagram below commutes.
\begin{align*}
\xymatrix{
 Y \times_Z \textstyle \prod_{i,j} X \ar[d]_-{e} \ar[rr]^-{\pi_2} & & \textstyle \prod_{i,j} X \ar[d]^-{p} \\
 X \ar[r]_-{i} & Y \ar[r]_-{j} & Z }
\end{align*}

An \emph{exponential diagram} is any diagram in $\Fin_G$ which is isomorphic to one of the above form. If the diagram
\begin{align*}
\xymatrix{
 A \ar[d]_-{f} \ar[rr]^-{g} & & B \ar[d]^-{h} \\
 X \ar[r]_-{i} & Y \ar[r]_-{j} & Z }
\end{align*}

is an exponential diagram we will say that $(f,g,h)$ is a \emph{distributor} for $(i,j)$. We can now define semi-Tambara functors. Our definition is equivalent to that of Tambara's "semi-TNR functors" in~\cite{Tambara}.

\begin{definition}\label{def:tambara}
A \emph{semi-Tambara functor} $\underline{M}$ is a triplet of functors
\begin{align*}
	\underline{M}^* : \Fin_G^{op} &\to Set_{\neq \emptyset} \\
	\underline{M}_* : \Fin_G &\to Set_{\neq \emptyset} \\
	\underline{M}_{\star} : \Fin_G &\to Set_{\neq \emptyset}
\end{align*}
with common object assignment $X \mapsto \underline{M} (X)$ such that
\begin{enumerate}[(i)]
\item if $X \xrightarrow{i} Z \xleftarrow{j} Y$ is a coproduct in $\Fin_G$ then
\begin{align*}
	\underline{M} (X) \xleftarrow{\underline{M}^* (i)} \underline{M} (Z) \xrightarrow{\underline{M}^* (j)} \underline{M} (Y)
\end{align*}
is a product in $Set_{\neq \emptyset}$,
\item for any pullback diagram
\begin{align*}
\xymatrix{
 P \ar[d]_-{p} \ar[r]^-{q} & Y \ar[d]^-{i} \\
 X \ar[r]_-{j} & Z }
\end{align*}
we have the two relations $\underline{M}^* (j) \circ \underline{M}_* (i) = \underline{M}_* (p) \circ \underline{M}^* (q)$ and $\underline{M}^* (j) \circ \underline{M}_{\star} (i) = \underline{M}_{\star} (p) \circ \underline{M}^* (q)$, and
\item for any exponential diagram
\begin{align*}
\xymatrix{
 A \ar[d]_-{f} \ar[rr]^-{g} & & B \ar[d]^-{h} \\
 X \ar[r]_-{i} & Y \ar[r]_-{j} & Z }
\end{align*}
we have $\underline{M}_{\star} (j) \circ \underline{M}_* (i) = \underline{M}_* (h) \circ \underline{M}_{\star} (g) \circ \underline{M}^* f$.
\end{enumerate}
A map of semi-Tambara functors $\underline{M} \to \underline{N}$ is a collection of maps of sets $\underline{M} (X) \to \underline{N} (X)$ which forms a triplet of natural transformations $\underline{M}^* \to \underline{N}^*$, $\underline{M}_* \to \underline{N}_*$, $\underline{M}_{\star} \to \underline{N}_{\star}$. We denote the category of semi-Tambara functors by $sTamb(G)$.
\end{definition}

The third condition above is called the \emph{distributive law}. If $\underline{M}$ is a semi-Tambara functor the structure maps $\underline{M}^* (f)$ are called \emph{restrictions}, the $\underline{M}_* (f)$ are called \emph{transfers} and the $\underline{M}_{\star} (f)$ are called \emph{norms}. When the choice of $\underline{M}$ is clear we will denote these by $r_f$, $t_f$ and $n_f$, respectively. \\
\indent Now for any $X \in \Fin_G$, the composite
\begin{align*}
	\underline{M} (X) \times \underline{M} (X) \cong \underline{M} (X \textstyle \coprod X) \xrightarrow{\underline{M}_* (Id_X \coprod Id_X)} \underline{M} (X)
\end{align*}

defines an operation making $\underline{M}(X)$ into a commutative monoid. We call this addition; the unit (zero) comes from the unique transfer
\begin{align*}
	\underline{M}_* : \underline{M} (\emptyset) \to \underline{M} (X).
\end{align*}

A \emph{Tambara functor} is a semi-Tambara functor $\underline{M}$ such that these monoids are abelian groups. We denote by $Tamb(G)$ the category of Tambara functors. Note that we obtain analogous definitions of semi-Mackey and Mackey functors by deleting the norms from the above definition.\\
\indent Next, using norm maps instead, we obtain a second operation which distributes over the first. We call this multiplication. With these commutative semi-ring structures the restrictions become maps of rings, the transfers are maps of modules, and the norms are maps of multiplicative monoids. Thus a Tambara functor defines a commutative Green functor. We also obtain forgetful functors $sTamb(G) \to sMack(G)$ and $Tamb(G) \to Mack(G)$ by neglect of the norms.\\
\indent The Grothendieck group construction gives left adjoints
\begin{align*}
	sMack(G) &\to Mack(G)\\
	sTamb(G) &\to Tamb(G)
\end{align*}

to the appropriate forgetful functors. For Mackey functors this is trivial; for Tambara functors see~\cite{Tambara} (or, alternatively, Section~13 of~\cite{Stri}). Next we give a definition of free Tambara functors.
\begin{definition}\label{def:freetambara}
Let $\underline{M}$ be a Mackey functor. A \emph{free Tambara functor} on $\underline{M}$ is a Tambara functor $\TT(\underline{M})$ together with a map of Mackey functors $\underline{M} \to \TT(\underline{M})$ which is initial among maps from $\underline{M}$ to Tambara functors. A \emph{free semi-Tambara functor} on a semi-Mackey functor $\underline{M}$ is a semi-Tambara functor $s\TT(\underline{M})$ together with a map of semi-Mackey functors $\underline{M} \to s\TT(\underline{M})$ which is initial among maps from $\underline{M}$ to semi-Tambara functors.
\end{definition}

Of course, free (semi-)Tambara functors are unique up to unique isomorphism, so we may speak of \emph{the} free (semi-)Tambara functor on a (semi-)Mackey functor.\\
\indent We now give a construction of the free semi-Tambara functor on a semi-Mackey functor $\underline{M}$. Let $X \in \Fin_G$. We define $s\TT_0 (\underline{M}) (X)$ to be the set of equivalence classes of pairs $(U \xrightarrow{i} V \xrightarrow{j} X, u \in \underline{M} (U))$, where $i$ and $j$ are maps in $\Fin_G$ and $(U \xrightarrow{i} V \xrightarrow{j} X, u \in \underline{M} (U))$ is equivalent to $(U' \xrightarrow{i'} V' \xrightarrow{j'} X, u' \in \underline{M} (U'))$ if and only if there is a commutative diagram
\begin{align*}
\xymatrix{
 U \ar[r]^-{i} & V \ar[r]^-{j} & X \\
 U' \ar[u]^-{f}_-{\cong} \ar[r]_-{i'} & V' \ar[u]^-{g}_-{\cong} \ar[ru]_-{j'} & }
\end{align*}

such that $f$ and $g$ are isomorphisms and $r_f (u) = u'$. Such a pair represents a transfer of a norm of an element from $\underline{M}$. We define restrictions, transfers and norms for $s\TT_0 (\underline{M})$ as follows. If $f : X \to Y$ is a map in $\Fin_G$ we define $t_f ((U \xrightarrow{i} V \xrightarrow{j} X, u))$ to be $(U \xrightarrow{i} V \xrightarrow{f \circ j} Y, u)$. If instead $f : Y \to X$, we form the diagram below, where the squares are pullbacks,
\begin{align*}
\xymatrix{
 U \ar[r]^-{i} & V \ar[r]^-{j} & X \\
 Q \ar[u]^-{g} \ar[r]_-{h} & P \ar[u]^-{q} \ar[r]_-{p} & Y \ar[u]_-{f} }
\end{align*}

and define $r_f ((U \xrightarrow{i} V \xrightarrow{j} X, u))$ to be $(Q \xrightarrow{h} P \xrightarrow{p} Y, r_g (u))$. It is simple to verify that we now have a semi-Mackey functor. Addition is achieved by taking disjoint unions of the $U$'s and $V$'s; the zero element consists of the diagram with $U$ and $V$ empty and the element $0 \in \underline{M}(\emptyset)$. Now suppose again that $f : X \to Y$; to define the corresponding norm we take our cue from the distributive law. Form the diagram below, where the rectangle is exponential and the square is a pullback,
\begin{align*}
\xymatrix{
 U \ar[r]^-{i} & V \ar[r]^-{j} & X \ar[r]^-{f} & Y \\
 P \ar[u]^-{p} \ar[r]_-{q} & A \ar[u]_-{g} \ar[rr]_-{h} && B \ar[u]_-{k} }
\end{align*}

and define $n_f ((U \xrightarrow{i} V \xrightarrow{j} X, u))$ to be $(P \xrightarrow{h \circ q} B \xrightarrow{k} Y, r_p (u))$. The fact that $s\TT_0 (\underline{M})$ is a semi-Tambara functor follows from the following three lemmas, which we state without proof.

\begin{lem}\label{lem:distrpullback}
(Commutation of norms and restrictions) Suppose given maps of finite $G$-sets as below, where the squares are pullbacks.
\begin{align*}
\xymatrix{
 X \ar[r]^-{i} & Y \ar[r]^-{j} & Z \\
 Q \ar[u] \ar[r]_-{g} & P \ar[u] \ar[r]_-{h} & W \ar[u]_-{k} }
\end{align*}
Then the pullback over $k$ of the exponential diagram for $i,j$ is the exponential diagram for $g,h$.
\end{lem}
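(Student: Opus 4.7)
The plan is to unwind all four vertices of the exponential diagram as explicit $G$-sets, pull each one back over $k$, and exhibit natural bijections with the corresponding vertices of the exponential diagram for $(g,h)$. Let $\alpha\colon Q\to X$ and $\beta\colon P\to Y$ denote the vertical maps of the given diagram, so by hypothesis $P\cong Y\times_Z W$ and $Q\cong X\times_Y P$. The bottom row pulls back correctly by assumption: $X\times_Z W\cong Q$ and $Y\times_Z W\cong P$.

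The core step is to identify the pullback of the upper-right vertex. An element of $\prod_{i,j}X\times_Z W$ is a pair $(w,s)$ with $w\in W$ and $s\colon j^{-1}(k(w))\to X$ a section of $i$. By the right-hand pullback, $\beta$ restricts to a $G$-equivariant bijection $h^{-1}(w)\xrightarrow{\cong} j^{-1}(k(w))$. By the left-hand pullback, such an $s$ lifts uniquely to a section $\widetilde s\colon h^{-1}(w)\to Q$ of $g$, sending $(y',w)\in h^{-1}(w)$ to the unique element of $Q$ lying over $(y',w)\in P$ and $s(y')\in X$. This produces a natural $G$-equivariant bijection
\begin{align*}
	\textstyle\prod_{i,j}X\times_Z W \;\xrightarrow{\;\cong\;}\; \textstyle\prod_{g,h} Q,
\end{align*}
which clearly commutes with the projections down to $W$.

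The upper-left vertex is handled by the same recipe: an element of $(Y\times_Z\prod_{i,j}X)\times_Z W$ is a triple $(y,s,w)$ with $j(y)=k(w)$ and $s$ a section of $i$ over $j^{-1}(k(w))$, and under the above bijection it corresponds to $((y,w),(w,\widetilde s))\in P\times_W\prod_{g,h}Q$. Once these identifications are established, checking that the pulled-back evaluation map $e$ and projection $\pi_2$ coincide with the evaluation and projection maps of the exponential diagram for $(g,h)$ is immediate from the definitions: the evaluation sends $((y,w),(w,\widetilde s))$ to $\widetilde s(y,w)\in Q$, which by construction is the lift of $s(y)\in X$. The only delicate point is keeping consistent track of which of the two pullback squares is being invoked at each stage, but no genuine obstacle arises; the lemma is essentially a matter of bookkeeping.
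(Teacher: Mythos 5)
Your proof is correct: identifying $\textstyle\prod_{i,j}X\times_Z W$ with $\textstyle\prod_{g,h}Q$ by restricting sections along the fiber bijection $h^{-1}(w)\cong j^{-1}(k(w))$ coming from the right-hand pullback and lifting uniquely through the left-hand pullback, and then checking the projection and evaluation maps match, is exactly the verification required. The paper states this lemma without proof, so there is nothing to compare against; your elementwise bookkeeping is the standard argument the author evidently had in mind.
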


\begin{lem}\label{lem:distrlaw}
(Distributive law) Suppose given a commutative diagram of finite $G$-sets as below.
\begin{align*}
\xymatrix{
 & & C \ar[dl]_-{g} \ar[r]^-{f} & D \ar[dd]^-{p} \\
 & P \ar[dl]_-{h} \ar[dr] & & \\
 V \ar[dr]_-{i} & & A \ar[dl] \ar[r] & B \ar[dd]^-{q} \\
 & X \ar[dr]_-{j} & & \\
 & & Y \ar[r]_-{k} & Z }
\end{align*}
If the square is a pullback and the two interior pentagons are exponential, then the outer pentagon is exponential; that is, the maps $hg$, $f$ and $qp$ form a distributor for $ji, k$.
\end{lem}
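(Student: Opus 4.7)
Parse the diagram first. The lower interior pentagon has vertices $A, X, Y, Z, B$ and expresses that $(\beta, \gamma, q)$ is a distributor for $(j, k)$, where $\gamma : A \to B$ is the unlabeled arrow; so up to isomorphism $B = \prod_{j, k} X$ and $A = Y \times_Z B$ with the canonical maps. Writing $\alpha : P \to A$ for the other unlabeled arrow, the upper interior pentagon (vertices $C, P, A, B, D$) expresses that $(g, f, p)$ is a distributor for $(\alpha, \gamma)$, so $D = \prod_{\alpha, \gamma} P$ and $C = A \times_B D$. The square has vertices $P, V, X, A$ and is the pullback $P = V \times_X A$, under which $h$ and $\alpha$ are the two projections.

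The main step is a universal property computation showing $D \cong \prod_{ji, k} V$ over $Z$. For a test $G$-set $T$ over $Z$, a map $T \to D$ over $Z$ determines by composition with $p$ a map $u : T \to B$ over $Z$; by the lower exponential this $u$ corresponds to a section $s : T \times_Z Y \to X$ of $j$. Given $u$, the original map $T \to D$ is a lift over $B$, which by the upper exponential corresponds to a section of $\alpha$ over $T \times_B A$. Using $A = Y \times_Z B$ one has $T \times_B A \cong T \times_Z Y$, and using $P = V \times_X A$ a section of $\alpha$ over $T \times_Z Y$ is the same as a map $v : T \times_Z Y \to V$ whose image under $i$ equals the natural map $T \times_Z Y \to X$, which unwinds to $iv = s$. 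The combined data $(s, v)$ collapses to a single section of $ji$ over $T \times_Z Y$, which is precisely the universal property defining $\prod_{ji, k} V$.

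Once $D$ is identified, the pullback formula $C = A \times_B D$ together with $A = Y \times_Z B$ yields $C \cong Y \times_Z \prod_{ji, k} V$ automatically, and tracing the identifications shows $hg$ becomes the canonical evaluation $(y, s') \mapsto s'(y)$, $f$ the projection, and $qp$ the structure map to $Z$. Thus the outer pentagon is isomorphic to the canonical exponential diagram for $(ji, k)$. The main obstacle is bookkeeping: keeping the stack of universal properties straight, especially the identification $T \times_B A \cong T \times_Z Y$ coming from $A = Y \times_Z B$, and using the pullback $P = V \times_X A$ to recognize a section of $\alpha$ as a lift through $i$ of the section $s$ produced by the lower pentagon. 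Once these identifications are in place the conclusion is automatic.
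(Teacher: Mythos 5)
Your proof is correct. Note that the paper states Lemma~\ref{lem:distrlaw} without proof, so there is no argument of the author's to compare against; your universal-property argument is a perfectly good way to supply one, and it is in the same spirit as the adjunction viewpoint the paper itself adopts later (in Section~\ref{sec:normresadj}, where $\prod_{i,f}$ is identified as the right adjoint of pullback along $f$ and $e$ as the counit, and a chain of Hom-set isomorphisms like your test-object computation is used to recognize an exponential diagram). Your key steps all check out: maps $T \to D$ over $Z$ factor as a map $u : T \to B$ over $Z$ plus a lift over $B$; the lower exponential converts $u$ into a map $s : T \times_Z Y \to X$ over $Y$; the upper exponential converts the lift into a map $T \times_B A \to P$ over $A$; the identification $T \times_B A \cong T \times_Z Y$ (valid because $A \cong Y \times_Z B$ and $u$ lies over $Z$) and the pullback description $P \cong V \times_X A$ turn that into a map $v : T \times_Z Y \to V$ with $iv = s$, i.e.\ a map over $Y$ for $ji$; since $s$ is recovered as $iv$, the data collapses to exactly a map $T \to \prod_{ji,k} V$ over $Z$, naturally in $T$, so Yoneda in $\Fin_G / Z$ gives $D \cong \prod_{ji,k} V$ compatibly with $qp$. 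The remaining identifications ($C \cong A \times_B D \cong Y \times_Z \prod_{ji,k} V$, with $f$ the projection and $hg$ the evaluation $(y,s') \mapsto s'(y)$) trace through as you say, and commutativity of the outer pentagon is inherited from the given diagram, so the outer pentagon is isomorphic to the canonical exponential diagram for $(ji,k)$.
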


\begin{lem}\label{lem:functorialnorm}
(Functorality of norm) Suppose given a commutative diagram of finite $G$-sets as below.
\begin{align*}
\xymatrix{
 Q \ar[d]_-{g} \ar[r]^-{p} & C \ar[d] \ar[r]^-{q} & D \ar[ddd]^-{r} \\
 A \ar[d]_-{h} \ar[r] & B \ar[dd] & \\
 V \ar[d]_-{i} & & \\
 X \ar[r]_-{j} & Y \ar[r]_-{k} & Z }
\end{align*}
If the square is a pullback and the two interior rectangles are exponential, then the outer rectangle is exponential; that is, the maps $hg$, $qp$ and $r$ form a distributor for $i,kj$.
\end{lem}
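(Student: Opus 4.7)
The plan is to identify the outer rectangle of the displayed diagram with the standard exponential diagram for the composite pair $(i, kj)$, using the explicit set-theoretic description of $\prod_{-,-}(-)$ recorded at the start of the section together with the fact that sections over a disjoint union of fibers are the same thing as tuples of sections over the pieces.

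First I would exploit the hypothesis that the lower interior rectangle is exponential for $(i, j)$ to fix an identification $B \cong \prod_{i,j} V$ under which $A \cong X \times_Y B$, the map $h : A \to V$ is the evaluation $(x,(y,s)) \mapsto s(x)$, and the map $A \to B$ is the second projection; write $\pi : B \to Y$ for the resulting structure map (i.e., the unlabeled vertical map in the diagram). Similarly, the upper interior rectangle being exponential for $(\pi, k)$ gives $D \cong \prod_{\pi, k} B$ with $C \cong Y \times_Z D$, the map $C \to B$ the evaluation, and $q : C \to D$ the second projection. Since the square in the diagram is a pullback and the composite $C \to B \to Y$ equals the projection $C \cong Y \times_Z D \to Y$, a direct computation shows that $Q \cong X \times_Z D$, where $X \to Z$ is $kj$ and $D \to Z$ is $r$, with $g$ and $p$ coming from the evident projections to $A \cong X \times_Y B$ and $C \cong Y \times_Z D$.

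The technical heart is then the natural bijection $\prod_{\pi, k} B \cong \prod_{i, kj} V$ sending $(z, \sigma)$ to $(z, \tau)$, where $\tau : (kj)^{-1}(z) \to V$ is defined piecewise on the decomposition $(kj)^{-1}(z) = \coprod_{y \in k^{-1}(z)} j^{-1}(y)$ using the section stored in $\sigma(y) \in B \cong \prod_{i,j} V$. Under this bijection $r : D \to Z$ corresponds to the structure map of $\prod_{i, kj} V$, the composite $hg$ becomes the evaluation map, and $qp$ becomes the second projection; this exhibits the outer rectangle as an exponential diagram for $(i, kj)$, which is exactly the claim that $hg$, $qp$ and $r$ form a distributor for $(i, kj)$. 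The only real difficulty is bookkeeping: one must carefully verify that the several candidate maps $C \to Y$, $Q \to X$, $Q \to D$, etc., agree under the identifications, but once the standard exponentials for $(i,j)$ and $(\pi, k)$ are written out explicitly this reduces to elementary set-theoretic verifications.
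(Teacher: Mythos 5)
Your proof is correct. The paper states this lemma without proof, so there is no argument of the author's to compare against; your direct verification — identifying $B$ with $\prod_{i,j}V$ and $D$ with $\prod_{\pi,k}B$, observing that $\pi\circ e$ equals the projection to $Y$ so that the pullback $Q\cong X\times_Z D$, and matching sections of $i$ over $(kj)^{-1}(z)=\coprod_{y\in k^{-1}(z)}j^{-1}(y)$ with sections of $\pi$ over $k^{-1}(z)$ valued in $\prod_{i,j}V$ — is exactly the standard argument (the "composition of dependent products" bijection), and the identifications you flag as needing checking (that $hg$ becomes evaluation and $qp$ the second projection) do work out as you describe.
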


Now $s\TT_0 (\underline{M})$ is \emph{not} the free semi-Tambara functor on $\underline{M}$. (In fact, it is the free semi-Tambara functor on $\underline{M}$ regarded as an object with restrictions but not transfers.) To obtain the free semi-Tambara functor on $\underline{M}$ we must impose an equivalence relation representing the distributive law, as follows. Given $U \xrightarrow{i} V \xrightarrow{j} X$ in $\Fin_G$, let $k : W \to U$ be a map in $\Fin_G$ and form the commutative diagram below, where the rectangle is exponential.
\begin{align*}
\xymatrix{
 W \ar[r]^-{k} & U \ar[r]^-{i} & V \ar[r]^-{j} & X \\
 A \ar[u]^-{f} \ar[rr]_-{g} && B \ar[u]_-{h} \ar[ur]_-{j \circ h} & }
\end{align*}

We define $s\TT (\underline{M}) (X)$ to be the quotient of $s\TT_0 (\underline{M}) (X)$ by the smallest equivalence relation $\sim$ such that
\begin{align*}
	(U \xrightarrow{i} V \xrightarrow{j} X, t_k (w)) \sim (A \xrightarrow{g} B \xrightarrow{j \circ h} X, r_f (w))
\end{align*}

for all $W \xrightarrow{k} U \xrightarrow{i} V \xrightarrow{j} X$ in $\Fin_G$ and all $w \in \underline{M} (W)$. It is clear that the transfer maps for $s\TT_0 (\underline{M})$ descend to this quotient, and the restriction maps descend by Lemma~\ref{lem:distrpullback}. The fact that the norms descend is more difficult. We prove it below.

\begin{lem}\label{lem:normsdescend}
The norm maps on $s\TT_0 (\underline{M})$ descend to $s\TT (\underline{M})$.
\end{lem}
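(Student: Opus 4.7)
The plan is to verify that the generating relation for $\sim$ is preserved by every norm map $n_\ell$. Fix $\ell : X \to Y$, and apply $n_\ell$ to both sides of the generating relation
\begin{align*}
(U \xrightarrow{i} V \xrightarrow{j} X, t_k(w)) \sim (A \xrightarrow{g} B \xrightarrow{j \circ h} X, r_f(w)),
\end{align*}
then exhibit matching representatives in $s\TT(\underline{M})(Y)$.

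For the left side, I unpack $n_\ell$ from the definition on $s\TT_0$: form the exponential diagram $\tilde A \to V$, $\tilde A \to \tilde B$, $\tilde B \to Y$ for the pair $(j,\ell)$, pull back $\tilde A \to V$ along $i$ to get $\tilde p : \tilde P \to U$ and $\tilde q : \tilde P \to \tilde A$, and arrive at $(\tilde P \to \tilde B \to Y, r_{\tilde p}(t_k(w)))$. I then use the pullback axiom for semi-Mackey functors (condition (ii) of Definition~\ref{def:tambara}, restricted to $\underline M^*$ and $\underline M_*$) to pass $r_{\tilde p}$ past $t_k$: after pulling back $k$ along $\tilde p$ to get $p'' : W'' \to W$ and $k'' : W'' \to \tilde P$, the element becomes $t_{k''}(r_{p''}(w))$. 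This element is a transfer of a restriction, so I apply the generating relation for $\sim$ once more, with the exponential diagram for $(k'', \tilde g \tilde q)$, rewriting it as $(A^L \to B^L \to Y, r_{\phi^L}(w))$ for a specific map $\phi^L : A^L \to W$ obtained by composing restrictions in $\underline M$.

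For the right side, the definition of $n_\ell$ directly produces $(P^R \to B^R \to Y, r_{f \circ p^R}(w))$, where $P^R$ is obtained by pulling back $g$ against the exponential diagram for $(j \circ h,\ell)$, and I have used that restrictions compose contravariantly in $\underline M$.

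The remaining task is combinatorial: identify $(A^L, B^L, \phi^L)$ with $(P^R, B^R, f \circ p^R)$ canonically as objects over $Y$ equipped with a map to $W$, which will yield equality already in $s\TT_0(\underline{M})(Y)$. All three lemmas contribute: Lemma~\ref{lem:functorialnorm} decomposes the exponential diagram for the composite $(j \circ h, \ell)$ on the right into two pieces, one of which matches the exponential for $(j,\ell)$ used on the left; Lemma~\ref{lem:distrpullback} identifies the iterated pullbacks of exponential diagrams appearing in both constructions with exponentials of pullbacks, aligning the data on the two sides; and Lemma~\ref{lem:distrlaw} glues the remaining exponential-inside-an-exponential on the left into a single exponential matching the one on the right. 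The principal obstacle I anticipate is diagrammatic bookkeeping: the combined $\Fin_G$-diagram has roughly a dozen objects organized into several interlocking pullback squares and exponential rectangles, and one must verify that every arrow lines up on the nose before the three lemmas can be applied.
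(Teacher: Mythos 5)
Your overall strategy coincides with the paper's proof of Lemma~\ref{lem:normsdescend}: apply $n_\ell$ to both sides of the generating relation, commute the restriction past the transfer on the left-hand side via the pullback axiom, apply the generating relation once more to the resulting transfer, and then exhibit the same representative in $s\TT_0(\underline{M})(Y)$ as the one produced directly on the right-hand side. The paper builds the entire commutative diagram up front (twelve objects, two pullback squares, three exponential pentagons plus two auxiliary diagrams), then traces the two sides through it; your plan is to assemble the diagram piecemeal, but the skeleton is the same, and your identification "as objects over $Y$ equipped with a map to $W$" is exactly what the paper establishes via the equality $p_1 \circ j_3 = f_1 \circ j_1$ at the end.

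The one substantive slip is that you have the roles of Lemmas~\ref{lem:distrlaw} and~\ref{lem:functorialnorm} reversed. Lemma~\ref{lem:distrlaw} produces a distributor for a pair whose \emph{first} leg is a composite (its conclusion concerns $(ji, k)$), whereas Lemma~\ref{lem:functorialnorm} produces one for a pair whose \emph{second} leg is a composite (its conclusion concerns $(i, kj)$). On the right-hand side the pair in play is $(j \circ h, \ell)$ --- composite in the first leg --- so it is Lemma~\ref{lem:distrlaw} that decomposes that exponential into the piece for $(j,\ell)$ and the piece coming from the original exponential for $(k,i)$; this is exactly how the paper produces the distributor $(k_1 w_1, w_2, iy)$ for $(c \circ v, d)$. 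On the left-hand side, after you rewrite the element as $t_{k''} r_{p''}(w)$ over $(\tilde P \xrightarrow{\tilde h \tilde q} \tilde B \to Y)$, the pair you need is $(k'', \tilde h \circ \tilde q)$ --- composite in the second leg --- so the gluing step there is Lemma~\ref{lem:functorialnorm} (the paper's $(j_3 l_1, w_2 l_2, y)$ as a distributor for $(p_2, h_2 q_2)$). Lemma~\ref{lem:distrpullback} does what you say: it pulls the exponential for $(k,i)$ back along the map out of $\tilde A$ so that the two sides can be compared. If you carry out the bookkeeping with the lemmas applied on the correct sides, the argument closes.
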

\begin{proof}
We must prove that, if two elements in $s\TT_0 (\underline{M})$ are equivalent, then so are their norms. Unfortunately, this requires a large amount of notation. Consider a commutative diagram of the form below, where the pentagons are exponential and the squares are pullbacks.
\begin{align*}
\xymatrix{
 & L \ar[d]_-{l_1} \ar[r]^-{l_2} & W \ar[d]_-{w_1} \ar[r]^-{w_2} & Y \ar[ddr]^-{y} & \\
 & J \ar[dl]_-{j_1} \ar[r]^-{j_2} & K \ar[dl]_-{k_1} \ar[dr]^-{k_2} & & \\
 F \ar[d]_-{f_1} \ar[r]^-{f_2} & V \ar[dr]^-{v} && H \ar[dl]_-{h_1} \ar[r]^-{h_2} & I \ar[d]^-{i} \\
 A \ar[r]_-{a} & B \ar[r]_-{b} & C \ar[r]_-{c} & D \ar[r]_-{d} & E }
\end{align*}
Next form the diagram below, where the squares are pullbacks.
\begin{align*}
\xymatrix{
 P \ar[d]_-{p_1} \ar[r]^-{p_2} & Q \ar[d]^-{q_1} \ar[r]^-{q_2} & H \ar[d]^-{h_1} \\
 A \ar[r]_-{a} & B \ar[r]_-{b} & C }
\end{align*}
Now Lemma~\ref{lem:distrpullback} tells us that the exponential diagram for $a,b$ pulls back over $h_1$ to the exponential diagram for $p_2,q_2$. Thus we obtain an exponential diagram
\begin{align*}
\xymatrix{
 J \ar[d]_-{j_3} \ar[rr]^-{j_2} && K \ar[d]^-{k_2} \\
 P \ar[r]_-{p_2} & Q \ar[r]_-{q_2} & H }
\end{align*}
and a commutative diagram as below.
\begin{align*}
\xymatrix{
 F \ar[d]_-{f_1} & J \ar[d]^-{j_3} \ar[l]_-{j_1} \\
 A & P \ar[l]^-{p_1} }
\end{align*}
Now every generating relation on $s\TT_0 (\underline{M}) (F)$ is of the form
\begin{align*}
	(B \xrightarrow{b} C \xrightarrow{c} D, t_a (x)) \sim (F \xrightarrow{f_2} V \xrightarrow{c \circ v} D, r_{f_1} (x))
\end{align*}
for some $x$, $A$, $B$, etc. First we calculate
\begin{align*}
	n_d ((B \xrightarrow{b} C \xrightarrow{c} D, t_a (x))) = (Q \xrightarrow{h_2 \circ q_2} I \xrightarrow{i} E, r_{q_1} t_a (x)).
\end{align*}
Next, since $k_1 \circ w_1, w_2, i \circ y$ is a distributor for $c \circ v, d$ by Lemma~\ref{lem:distrlaw}, we obtain
\begin{align*}
	n_d ((F \xrightarrow{f_2} V \xrightarrow{c \circ v} D, r_{f_1} (x))) = (L \xrightarrow{w_2 \circ l_2} Y \xrightarrow{i \circ y} E, r_{l_1} r_{j_1} r_{f_1} (x)).
\end{align*}
Now $r_{q_1} t_a = t_{p_2} r_{p_1}$, and Lemma~\ref{lem:functorialnorm} implies that $j_3 \circ l_1, w_2 \circ l_2, y$ is a distributor for $p_2,h_2 \circ q_2$, so we obtain the following.
\begin{align*}
	(Q \xrightarrow{h_2 \circ q_2} I \xrightarrow{i} E, r_{q_1} t_a (x)) &= (Q \xrightarrow{h_2 \circ q_2} I \xrightarrow{i} E, t_{p_2} r_{p_1} (x)) \\
	                                                                                           &\sim (L \xrightarrow{w_2 \circ l_2} Y \xrightarrow{i \circ y} E, r_{l_1} r_{j_3} r_{p_1} (x))
\end{align*}
But since $p_1 \circ j_3 = f_1 \circ j_1$, we have $r_{l_1} r_{j_3} r_{p_1} (x) = r_{l_1} r_{j_1} r_{f_1} (x)$, so that
\begin{align*}
	n_d ((B \xrightarrow{b} C \xrightarrow{c} D, t_a (x))) \sim n_d((F \xrightarrow{f_2} V \xrightarrow{c \circ v} D, r_{f_1} (x))).
\end{align*}
\end{proof}

It is now clear that $s\TT (\underline{M})$ satisfies every part of Definition~\ref{def:tambara} except part (i). However, this is easily seen to hold when we note that disjoint unions of exponential diagrams are exponential. This implies that the equivalence relation on
\begin{align*}
	s\TT_0 (\underline{M}) (X_1 \textstyle \coprod \displaystyle X_2) \cong s\TT_0 (\underline{M}) (X_1) \times s\TT_0 (\underline{M}) (X_2)
\end{align*}

is the product of the two equivalence relations on $s\TT_0 (\underline{M}) (X_1)$ and $s\TT_0 (\underline{M}) (X_2)$.\\
\indent We must now show that $s\TT (\underline{M})$ is actually a free semi-Tambara functor on $\underline{M}$. First we require a map of semi-Mackey functors from $\underline{M}$ to $s\TT (\underline{M})$. Let $X$ be a finite $G$-set. We define a function as below.
\begin{align*}
	\theta_{\underline{M}} (X) : \underline{M} (X) &\to s\TT (\underline{M}) (X) \\
	                                                                        x &\mapsto (X \xrightarrow{=} X \xrightarrow{=} X, x)
\end{align*}

We now verify that this is a map of semi-Mackey functors.

\begin{lem}\label{lem:univmapsemitamb}
The functions $\theta_{\underline{M}} (X)$ determine a map
\begin{align*}
	\theta_{\underline{M}} : \underline{M} \to s\TT (\underline{M})
\end{align*}
of semi-Mackey functors.
\end{lem}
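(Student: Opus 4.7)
The plan is to verify that the maps $\theta_{\underline{M}}(X)$ are compatible with restrictions and transfers; compatibility with addition and with the zero element then follows automatically, since both are defined in any (semi-)Mackey functor via transfers (along the fold map $X \coprod X \to X$ and along $\emptyset \to X$, respectively).

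\textbf{Restrictions.} For any $f : Y \to X$ in $\Fin_G$ and $x \in \underline{M}(X)$, I would compute $r_f(\theta_{\underline{M}}(X)(x))$ directly from the prescription for restrictions on $s\TT_0(\underline{M})$. Starting from the triple $(X \xrightarrow{=} X \xrightarrow{=} X, x)$, both pullbacks in the definition are along identity maps, so $P = Q = Y$, both horizontal maps are identities, and both vertical maps equal $f$. Thus
\begin{align*}
r_f\bigl(\theta_{\underline{M}}(X)(x)\bigr) = (Y \xrightarrow{=} Y \xrightarrow{=} Y, r_f(x)) = \theta_{\underline{M}}(Y)(r_f(x)).
\end{align*}
This step is immediate.

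\textbf{Transfers.} For $f : X \to Y$ and $x \in \underline{M}(X)$, the definitions give
\begin{align*}
t_f\bigl(\theta_{\underline{M}}(X)(x)\bigr) = (X \xrightarrow{=} X \xrightarrow{f} Y, x), \quad \theta_{\underline{M}}(Y)(t_f(x)) = (Y \xrightarrow{=} Y \xrightarrow{=} Y, t_f(x)),
\end{align*}
which are distinct representatives in $s\TT_0(\underline{M})(Y)$. To identify them in $s\TT(\underline{M})(Y)$, I would exhibit a single application of the generating relation for $\sim$. The key observation is that the exponential diagram for the composable pair $(f : X \to Y,\, Id_Y : Y \to Y)$ is
\begin{align*}
\xymatrix{X \ar[d]_-{Id_X} \ar[rr]^-{Id_X} && X \ar[d]^-{f} \\ X \ar[r]_-{f} & Y \ar[r]_-{Id_Y} & Y}
\end{align*}
since for each $y \in Y$ a section of $f$ over the singleton fiber $Id_Y^{-1}(y) = \{y\}$ is just an element of $f^{-1}(y)$. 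Hence $(Id_X, Id_X, f)$ is a distributor for $(f, Id_Y)$. Applying the defining relation for $\sim$ with $W = X$, $U = V = Y$, $k = f$, $i = j = Id_Y$, target $Y$, the distributor above, and $w = x$, yields precisely
\begin{align*}
(Y \xrightarrow{=} Y \xrightarrow{=} Y, t_f(x)) \sim (X \xrightarrow{Id_X} X \xrightarrow{f} Y, r_{Id_X}(x)),
\end{align*}
and the right-hand side is $t_f(\theta_{\underline{M}}(X)(x))$. This gives the required equality in $s\TT(\underline{M})(Y)$.

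The main obstacle is the transfer case. The restriction identity is a direct unwinding, but the transfer identity cannot be detected at the level of $s\TT_0(\underline{M})$; it requires passing to the quotient $s\TT(\underline{M})$ and fitting the trivial distributor $(Id_X, Id_X, f)$ for $(f, Id_Y)$ into the defining relation for $\sim$. Once this matching is set up, the bookkeeping is routine.
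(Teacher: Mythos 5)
Your proposal is correct and follows essentially the same route as the paper: the restriction case is the same immediate pullback-of-identities observation, and the transfer case uses exactly the same exponential diagram (the distributor $(Id_X, Id_X, f)$ for $(f, Id_Y)$) plugged into the generating relation to identify $(Y \xrightarrow{=} Y \xrightarrow{=} Y, t_f(x))$ with $(X \xrightarrow{=} X \xrightarrow{f} Y, x)$. No gaps.
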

\begin{proof}
Since pullbacks of identity maps are identity maps, it is clear that $\theta_{\underline{M}}$ commutes with restrictions. Now let $f : X \to Y$ be a map in $\Fin_G$ and let $x \in \underline{M} (X)$. We have the following.
\begin{align*}
	\theta_{\underline{M}} (t_f (x)) &= (Y \xrightarrow{=} Y \xrightarrow{=} Y, t_f (x)) \\
	                                                    &\sim (X \xrightarrow{=} X \xrightarrow{f} Y, x) \\
	                                                    &= t_f ((X \xrightarrow{=} X \xrightarrow{=} X, x)) \\
	                                                    &= t_f (\theta_{\underline{M}} (x))
\end{align*}
The second line above is equivalent to the first because the diagram below is exponential.
\begin{align*}
\xymatrix{
 X \ar[d]_-{=} \ar[rr]^-{=} && X \ar[d]^-{f} \\
 X \ar[r]_-{f} & Y \ar[r]_-{=} & Y }
\end{align*}
Thus, $\theta_{\underline{M}}$ commutes with transfers as well.
\end{proof}

Finally we can show that $s\TT (\underline{M})$ is a free semi-Tambara functor on $\underline{M}$.

\begin{thm}\label{thm:freesemitamb}
Let $\underline{M}$ be a semi-Mackey functor. For any semi-Tambara functor $\underline{R}$, the function shown below is a bijection.
\begin{align*}
	Hom_{sTamb(G)} (s\TT (\underline{M}), \underline{R}) &\to Hom_{sMack(G)} (\underline{M}, \underline{R}) \\
	                                                                                     F &\mapsto F \circ \theta_{\underline{M}}
\end{align*}
That is, $s\TT (\underline{M})$ is the free semi-Tambara functor on $\underline{M}$.
\end{thm}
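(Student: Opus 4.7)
The plan is to construct an explicit two-sided inverse to the map $F \mapsto F \circ \theta_{\underline{M}}$ in the statement. Given a semi-Mackey functor morphism $\phi \colon \underline{M} \to \underline{R}$ with $\underline{R}$ a semi-Tambara functor, define
\[
  \widetilde{\phi}\bigl([U \xrightarrow{i} V \xrightarrow{j} X, u]\bigr) \defeq t_j\bigl(n_i(\phi(u))\bigr) \in \underline{R}(X).
\]
This definition is actually forced by uniqueness: unwinding the formulas for $t_j$ and $n_i$ on $s\TT_0(\underline{M})$ one sees that $[U \xrightarrow{i} V \xrightarrow{j} X, u] = t_j n_i \theta_{\underline{M}}(u)$ in $s\TT(\underline{M})(X)$, so any semi-Tambara morphism $F$ with $F \circ \theta_{\underline{M}} = \phi$ must coincide with $\widetilde{\phi}$. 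Granted that $\widetilde{\phi}$ is well-defined and semi-Tambara, injectivity of the map in the statement is then automatic, and surjectivity is immediate from $\widetilde{\phi}(\theta_{\underline{M}}(u)) = t_{\mathrm{id}} n_{\mathrm{id}} \phi(u) = \phi(u)$.

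The first genuine check is well-definedness. Invariance of $\widetilde{\phi}$ under the $s\TT_0$ equivalence (i.e.\ under isomorphisms of $U$ and $V$) is immediate from functoriality of transfers, norms, and restrictions in $\underline{R}$ together with the fact that $\phi$ commutes with restrictions. For the distributive equivalence, let $(f, g, h)$ be a distributor for $(k, i)$ as in the definition of $s\TT(\underline{M})$. Applying $\widetilde{\phi}$ to both sides of
\[
  (U \xrightarrow{i} V \xrightarrow{j} X, t_k(w)) \sim (A \xrightarrow{g} B \xrightarrow{j\circ h} X, r_f(w))
\]
and using that $\phi$ commutes with $t_k$ and $r_f$, the required equality reduces to $n_i t_k = t_h n_g r_f$ in $\underline{R}$, which is exactly the distributive law (axiom (iii) of Definition~\ref{def:tambara}).

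Next I would verify that $\widetilde{\phi}$ respects the three types of structure maps. Transfers are immediate from the transfer functoriality $t_{f} t_{j} = t_{f j}$ in $\underline{R}$. Compatibility with restrictions, using the definition of $r_f$ on $s\TT_0(\underline{M})$ with its two pullback squares, reduces to $r_f t_j n_i = t_p r_q n_i = t_p n_h r_g$, which is two successive applications of axiom (ii) of Definition~\ref{def:tambara}. The norm case is the main obstacle: for $f \colon X \to Y$, writing $(g, h, k, p, q)$ for the data from the exponential-plus-pullback diagram used to define $n_f$ on $s\TT_0(\underline{M})$, I must prove
\[
  n_f\bigl(t_j n_i \phi(u)\bigr) = t_k\, n_{h q}\, r_p\, \phi(u).
\]
I plan to do this in three steps: apply the distributive law to the distributor $(g, h, k)$ for $(j, f)$ to get $n_f t_j = t_k n_h r_g$; commute $r_g$ past $n_i$ via the norm-restriction half of axiom (ii) applied to the pullback square defining $(p, q)$ to get $r_g n_i = n_q r_p$; then collapse the adjacent norms via functoriality to $n_h n_q = n_{h q}$. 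The diagrammatic bookkeeping is essentially dual in shape to that in the proof of Lemma~\ref{lem:normsdescend}, and apart from organizing the diagram I do not expect any fundamentally new difficulty.
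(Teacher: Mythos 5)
Your proposal is correct and follows essentially the same route as the paper: both define the inverse by $(U \xrightarrow{i} V \xrightarrow{j} X, u) \mapsto t_j n_i \phi(u)$, get injectivity from the identity $(U \xrightarrow{i} V \xrightarrow{j} X, u) = t_j n_i \theta_{\underline{M}}(u)$, and obtain surjectivity by checking this assignment is a well-defined map of semi-Tambara functors descending to the quotient. The only difference is that you spell out the verifications (the distributive relation via axiom (iii), and compatibility with restrictions and norms via axiom (ii) plus functoriality) which the paper dismisses as easy checks, and your reductions there are all accurate.
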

\begin{proof}
Firstly, for any $X \in \Fin_G$ it is readily verified that
\begin{align*}
	(U \xrightarrow{i} V \xrightarrow{j} X, u) = t_j n_i (\theta_{\underline{M}} (u))
\end{align*}
in $s\TT (\underline{M})$, so the map is injective. Now suppose we are given a map $F_0 : \underline{M} \to \underline{R}$ of semi-Mackey functors. For any $X \in \Fin_G$ we define a function as below.
\begin{align*}
	F(X) : s\TT_0 (\underline{M}) (X) &\to \underline{R} (X) \\
	(U \xrightarrow{i} V \xrightarrow{j} X, u) &\mapsto t_j n_i (F_0 (u))
\end{align*}
It is easy to see that this is a well-defined function, and that these form a map
\begin{align*}
	F : s\TT_0 (\underline{M}) \to \underline{R}
\end{align*}
of semi-Tambara functors. Finally, one easily checks that $F$ descends to a map on the quotient $s\TT (\underline{M})$ and that $F \circ \theta_{\underline{M}} = F_0$.
\end{proof}

Next, we decompose $s\TT (\underline{M})$ as a semi-Mackey functor. For any finite $G$-set $X$ and $n \geq 0$, let $s\TT^n_0 (\underline{M}) (X)$ denote the set of equivalence classes in $s\TT_0 (\underline{M}) (X)$ represented by $(U \xrightarrow{i} V \xrightarrow{j} X, u)$ such that $i^{-1} (v)$ has exactly $n$ elements for each $v \in V$. Since pullbacks preserve fibers, these sets form a sub-semi-Mackey functor of $s\TT_0 (\underline{M})$. Now any map of $G$-sets $i : U \to V$ can be decomposed as
\begin{align*}
	\textstyle \coprod_{n \geq 0} i_n : \coprod_n U^n \to \coprod_n V^n,
\end{align*}

where $V^n \defeq \{ v \in V : \# i^{-1} (v) = n \}$ and $U^n \defeq i^{-1} (V^n)$, and this decomposition is isomorphism invariant, so we get a direct sum decomposition as below.
\begin{align*}
	s\TT_0 (\underline{M}) = \bigoplus_{n \in \NN} s\TT^n_0 (\underline{M})
\end{align*}

Next, consider the equivalence relation on $s\TT_0 (\underline{M})$. Suppose that the diagram below is exponential.
\begin{align*}
\xymatrix{
 A \ar[d] \ar[rr]^-{f} && B \ar[d] \\
 W \ar[r]_-{k} & U \ar[r]_-{i} & V }
\end{align*}

Then $f$ is a pullback of $i$, so if all the fibers of $i$ have $n$ elements then all the fibers of $f$ have $n$ elements. Combining this with the fact that disjoint unions of exponential diagrams are exponential, we see that the equivalence relation on $s\TT_0 (\underline{M})$ is the direct sum of its restrictions to the $s\TT^n_0 (\underline{M})$. Thus we get another direct sum decomposition, as below.
\begin{align*}
	s\TT (\underline{M}) = \bigoplus_{n \in \NN} s\TT^n (\underline{M})
\end{align*}

Here, $s\TT^n (\underline{M})$ consists of the elements of $s\TT (\underline{M})$ that are represented by pairs $(U \xrightarrow{i} V \xrightarrow{j} X, u)$ such that every fiber of $i$ has $n$ elements. The $n = 0$ and $n = 1$ components are easy to identify. For the statement below, we denote by $s\underline{A}$ the Burnside semi-Mackey functor, and note that it is the initial semi-Tambara functor.

\begin{prop}\label{prop:freestamb01}
The unique map $s\underline{A} \to s\TT (\underline{M})$ in $sTamb(G)$ induces an isomorphism
\begin{align*}
	s\underline{A} \xrightarrow{\cong} s\TT^0 (\underline{M}).
\end{align*}
The universal map $\theta_{\underline{M}} : \underline{M} \to s\TT (\underline{M})$ induces an isomorphism
\begin{align*}
	\underline{M} \xrightarrow{\cong} s\TT^1 (\underline{M}).
\end{align*}
\end{prop}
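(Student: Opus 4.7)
The plan is to use the direct sum decomposition $s\TT(\underline{M}) = \bigoplus_n s\TT^n(\underline{M})$ already established, and to treat each of the two summands by producing an explicit inverse at the level of representatives and then verifying compatibility with the generating relation. For the first isomorphism, the key observation is that part (i) of Definition~\ref{def:tambara} applied to $\emptyset = \emptyset \coprod \emptyset$ forces $\underline{M}(\emptyset)$ to be a singleton. Hence every element of $s\TT^0_0(\underline{M})(X)$ is uniquely represented by a pair $(\emptyset \to V \xrightarrow{j} X, 0)$, with two such pairs equivalent iff there is an isomorphism $V \cong V'$ over $X$; this gives a natural bijection with $s\underline{A}(X)$, and the map $s\underline{A} \to s\TT(\underline{M})$ factors through this identification by sending $[V \xrightarrow{j} X]$ to the class of $(\emptyset \to V \xrightarrow{j} X, 0)$. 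To see that the further quotienting relation collapses nothing in the $n=0$ piece, I specialize it to $U = \emptyset$, so that $W = \emptyset$; the exponential diagram for $(k: \emptyset \to \emptyset,\, i: \emptyset \to V)$ then has $B \cong V$ with $h = \mathrm{Id}_V$ and $A = \emptyset$, so the relation becomes tautological.

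For the second isomorphism, every element of $s\TT^1_0(\underline{M})(X)$ admits a representative with $i = \mathrm{Id}_V$, of the form $(V = V \xrightarrow{j} X, u)$, and I propose the candidate inverse
\begin{align*}
	\phi_X\bigl([V = V \xrightarrow{j} X, u]\bigr) \defeq t_j(u).
\end{align*}
Well-definedness on the isomorphism part of the equivalence is immediate from $t_g r_g = \mathrm{Id}$ for isomorphisms $g$, itself a direct consequence of part (ii) of Definition~\ref{def:tambara} applied to the pullback square with $g$ on the right and bottom and identities on top and left. The composite $\phi_X \circ \theta_{\underline{M}}(X)$ is then clearly the identity on $\underline{M}(X)$.

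The main technical obstacle is showing that $\phi_X$ descends to the quotient by the generating relation, and that $\theta_{\underline{M}}(X) \circ \phi_X$ is also the identity. Both reduce to a single exponential-diagram calculation: specialize the generating relation to $i = \mathrm{Id}_V$ and work out the exponential diagram for $(k: W \to V,\, \mathrm{Id}_V)$. A direct inspection yields $B \cong W$ with $h$ corresponding to $k$, $A \cong W$, and $f = g = \mathrm{Id}_W$, so the relation collapses to
\begin{align*}
	(V = V \xrightarrow{j} X,\, t_k(w)) \sim (W = W \xrightarrow{jk} X,\, w).
\end{align*}
Applying $\phi_X$ to both sides gives $t_j t_k(w) = t_{jk}(w)$, which holds by functoriality of transfers, proving well-definedness. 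Specializing the very same relation to outer datum $(X = X \xrightarrow{=} X)$ with $k = j$ and $w = u$ reads $(X = X = X, t_j(u)) \sim (V = V \xrightarrow{j} X, u)$, showing $\theta_{\underline{M}}(X) \circ \phi_X = \mathrm{Id}$ and completing the argument.
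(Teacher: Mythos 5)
Your proposal is correct and takes essentially the same route as the paper: both identify the degree-$0$ classes with pairs $(\emptyset \to V \xrightarrow{j} X, 0)$ and the degree-$1$ classes with normalized pairs $(V = V \xrightarrow{j} X, u)$ subject to the relation $(V \xrightarrow{j} X, t_k(w)) \sim (W \xrightarrow{j \circ k} X, w)$ coming from the exponential diagram for $(k, \mathrm{Id}_V)$, and your inverse $t_j$ is exactly the paper's identification of $\colim_{(V \to X) \in \Fin_G/X} \underline{M}(V)$ with $\underline{M}(X)$ via the terminal object $\mathrm{Id}_X$. The only step you assert rather than prove is that the unique map $s\underline{A} \to s\TT(\underline{M})$ sends $[V \xrightarrow{j} X]$ to the class of $(\emptyset \to V \xrightarrow{j} X, 0)$; the paper supplies the short justification, namely that this map preserves transfers and multiplicative units and that the unit of $s\TT(\underline{M})(X)$ is $(\emptyset \to X \xrightarrow{=} X, 0)$.
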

\begin{proof}
Firstly, the unique map $\iota : s\underline{A} \to s\TT (\underline{M})$ in $sTamb(G)$ is a map of commutative semi-Green functors, so it preserves multiplicative units. Now the multiplicative unit in $s\TT (\underline{M}) (X)$ is $(\emptyset \to X \xrightarrow{=} X, 0)$, so $\iota$ maps the span $Y \xleftarrow{f} X \to \ast$ to
\begin{align*}
	t_f ((\emptyset \to X \xrightarrow{=} X, 0)) = (\emptyset \to X \xrightarrow{f} Y, 0).
\end{align*}
It is also clear that $s\TT^0 (\underline{M}) (Y)$ consists of the elements represented by the pairs $(\emptyset \to X \xrightarrow{f} Y, 0)$, and that two of these are equivalent if and only if their maps $f : X \to Y$ are isomorphic objects of $\Fin_G / Y$. The first part now follows by inspection.\\
\indent Now consider $s\TT^1 (\underline{M})$. All of the fibers of a map $i : U \to V$ in $\Fin_G$ have $1$ element exactly when $i$ is an isomorphism, and in this case $i$ identifies $U$ with $V$. Thus we may describe $s\TT^1_0 (\underline{M}) (X)$ as the set of isomorphism classes of pairs $(V \xrightarrow{j} X, v \in \underline{M} (V))$. Now diagrams of the form below are exponential,
\begin{align*}
\xymatrix{
 W \ar[d]_-{=} \ar[rr]^-{=} && W \ar[d]^-{k} \\
 W \ar[r]_-{k} & V \ar[r]_-{=} & V }
\end{align*}
so the equivalence relation on $s\TT^1_0 (\underline{M}) (X)$ is generated by
\begin{align*}
	(V \xrightarrow{j} X, t_k (x)) \sim (W \xrightarrow{j \circ k} X, x).
\end{align*}
The equivalence relation \emph{defining} $s\TT^1_0 (\underline{M}) (X)$ is a special case of the above, where we restrict ourselves to \emph{isomorphisms} $k : W \to V$. We can now identify $s\TT^1 (\underline{M}) (X)$ as below.
\begin{align*}
	s\TT^1 (\underline{M}) (X) = \colim_{(V \to X) \in \Fin_G / X} \underline{M}_* (V)
\end{align*}
Since $Id_X$ is the terminal object of $\Fin_G / X$, we have an isomorphism as below.
\begin{align*}
	\colim_{f : V \to X} \underline{M}_* (f) : s\TT^1 (\underline{M}) (X) \xrightarrow{\cong} \underline{M} (X)
\end{align*}
Examining this map, we see that it sends $(X \xrightarrow{=} X \xrightarrow{=} X, x)$ to $x$, so its composite with $\theta_{\underline{M}} (X)$ is the identity.
\end{proof}

\indent Next, suppose that $\underline{M}$ is a Mackey functor and $\underline{R}$ is a Tambara functor. Then we have the following.
\begin{align*}
	Hom_{Mack(G)} (\underline{M}, \underline{R}) &= Hom_{sMack(G)} (\underline{M}, \underline{R}) \\
	                                                                            &\cong Hom_{sTamb(G)} (s\TT (\underline{M}), \underline{R})
\end{align*}

Thus, defining $\TT (\underline{M})$ to be the additive completion of $s\TT (\underline{M})$ and letting
\begin{align*}
	\theta_{\underline{M}} : \underline{M} \to \TT (\underline{M})
\end{align*}

be the composite of $\theta_{\underline{M}} : \underline{M} \to s\TT (\underline{M})$ with the completion map, we have the following.

\begin{cor}\label{cor:algfreetamb}
Let $\underline{M}$ be a Mackey functor. For any Tambara functor $\underline{R}$ the function shown below is a bijection.
\begin{align*}
	Hom_{Tamb(G)} (\TT (\underline{M}), \underline{R}) &\to Hom_{Mack(G)} (\underline{M}, \underline{R}) \\
	                                                                                  F &\mapsto F \circ \theta_{\underline{M}}
\end{align*}
That is, $\TT (\underline{M})$ is the free Tambara functor on $\underline{M}$.
\end{cor}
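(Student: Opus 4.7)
The plan is to chain together three bijections, namely the one displayed in the paragraph immediately preceding the corollary, together with the Grothendieck-group adjunction for (semi-)Tambara functors recalled earlier.

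First, since $\underline{R}$ is a Tambara functor its value groups are already abelian groups, so any homomorphism of abelian monoids $\underline{M}(X) \to \underline{R}(X)$ is automatically a homomorphism of abelian groups. This gives the trivial identification
\[
	Hom_{Mack(G)}(\underline{M}, \underline{R}) = Hom_{sMack(G)}(\underline{M}, \underline{R}),
\]
where on the right we regard $\underline{M}$ and $\underline{R}$ as their underlying semi-Mackey functors. Next, applying Theorem~\ref{thm:freesemitamb} to $\underline{M}$ (viewed as a semi-Mackey functor) and to the semi-Tambara functor underlying $\underline{R}$, we obtain a natural bijection
\[
	Hom_{sMack(G)}(\underline{M}, \underline{R}) \xrightarrow{\cong} Hom_{sTamb(G)}(s\TT(\underline{M}), \underline{R}),
\]
whose inverse is precomposition with $\theta_{\underline{M}}: \underline{M} \to s\TT(\underline{M})$. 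Finally, the Grothendieck group construction $sTamb(G) \to Tamb(G)$ is left adjoint to the forgetful functor, so since $\underline{R}$ is a Tambara functor the completion map $\eta : s\TT(\underline{M}) \to \TT(\underline{M})$ induces a bijection
\[
	Hom_{Tamb(G)}(\TT(\underline{M}), \underline{R}) \xrightarrow{\cong} Hom_{sTamb(G)}(s\TT(\underline{M}), \underline{R}),
\]
given by precomposition with $\eta$.

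Composing these three bijections in order gives the asserted bijection. To identify the composite explicitly with the map $F \mapsto F \circ \theta_{\underline{M}}$, I would trace an element: a map $F : \TT(\underline{M}) \to \underline{R}$ is sent by the last bijection to $F \circ \eta$, then by the middle bijection to $F \circ \eta \circ \theta_{\underline{M}}$ (viewed as a semi-Mackey map), and then identified with the same map in $Hom_{Mack(G)}(\underline{M}, \underline{R})$ by the first identification. Since the map $\theta_{\underline{M}} : \underline{M} \to \TT(\underline{M})$ in the corollary was \emph{defined} to be $\eta$ composed with the semi-Mackey map $\theta_{\underline{M}} : \underline{M} \to s\TT(\underline{M})$, the overall bijection is precisely $F \mapsto F \circ \theta_{\underline{M}}$.

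There is no real obstacle here; each bijection is either an immediate consequence of $\underline{R}$ being a Tambara functor, of Theorem~\ref{thm:freesemitamb}, or of the existence of the Grothendieck group left adjoint $sTamb(G) \to Tamb(G)$ recalled in the text (with a reference to~\cite{Tambara} and Section~13 of~\cite{Stri}). The only point that requires a moment's care is bookkeeping the two uses of the symbol $\theta_{\underline{M}}$ and checking that they compose to the one appearing in the corollary, which is handled by the definition immediately preceding the statement.
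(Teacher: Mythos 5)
Your proposal is correct and follows essentially the same route as the paper: the paragraph preceding the corollary chains exactly these identifications, namely $Hom_{Mack(G)}(\underline{M},\underline{R}) = Hom_{sMack(G)}(\underline{M},\underline{R})$, Theorem~\ref{thm:freesemitamb}, and the Grothendieck-group (additive completion) adjunction $sTamb(G) \to Tamb(G)$, with $\theta_{\underline{M}}$ defined as the composite with the completion map. Your extra bookkeeping of the two uses of $\theta_{\underline{M}}$ is just a more explicit version of what the paper leaves implicit.
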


Now let $\TT^n (\underline{M})$ be the additive completion of $s\TT^n (\underline{M})$. We obtain a direct sum decomposition as below.
\begin{align*}
	\TT (\underline{M}) = \bigoplus_{n \in \NN} \TT^n (\underline{M})
\end{align*}

The following is immediate from Proposition~\ref{prop:freestamb01}.

\begin{cor}\label{cor:freetamb01}
The unique map $\underline{A} \to \TT (\underline{M})$ in $Tamb(G)$ induces an isomorphism
\begin{align*}
	\underline{A} \xrightarrow{\cong} \TT^0 (\underline{M}).
\end{align*}
The universal map $\theta_{\underline{M}} : \underline{M} \to \TT (\underline{M})$ induces an isomorphism
\begin{align*}
	\underline{M} \xrightarrow{\cong} \TT^1 (\underline{M}).
\end{align*}
\end{cor}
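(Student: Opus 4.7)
The plan is to deduce this corollary by applying the additive completion (Grothendieck group) functor $(-)^+ : sMack(G) \to Mack(G)$ to the isomorphisms supplied by Proposition~\ref{prop:freestamb01}. Since $(-)^+$ is the left adjoint of the forgetful functor from Mackey functors to semi-Mackey functors, it preserves all colimits, and in particular direct sums. Moreover, by definition $\TT^n(\underline{M}) = (s\TT^n(\underline{M}))^+$ and the direct sum decomposition
\begin{align*}
    \TT(\underline{M}) = \bigoplus_{n \in \NN} \TT^n(\underline{M})
\end{align*}
is obtained by applying $(-)^+$ to the corresponding decomposition for $s\TT(\underline{M})$.

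For the first isomorphism, I would use that the Burnside Mackey functor $\underline{A}$ is by construction the additive completion of the Burnside semi-Mackey functor $s\underline{A}$. Applying $(-)^+$ to the isomorphism $s\underline{A} \xrightarrow{\cong} s\TT^0(\underline{M})$ from Proposition~\ref{prop:freestamb01} then yields an isomorphism $\underline{A} \xrightarrow{\cong} \TT^0(\underline{M})$. To check that this agrees with the unique map $\underline{A} \to \TT(\underline{M})$ in $Tamb(G)$, I would invoke the universal property: since $\underline{A}$ is initial in $Tamb(G)$, there is at most one such map, and both the completed isomorphism and the canonical map arise from the unique map $s\underline{A} \to s\TT(\underline{M})$ in $sTamb(G)$ under the adjunction between $sTamb(G)$ and $Tamb(G)$.

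For the second isomorphism, the key observation is that $\underline{M}$ is already a Mackey functor, so its additive completion $\underline{M}^+$ is canonically isomorphic to $\underline{M}$ itself (the unit of the adjunction at $\underline{M}$ is an isomorphism). Applying $(-)^+$ to the isomorphism $\underline{M} \xrightarrow{\cong} s\TT^1(\underline{M})$ from Proposition~\ref{prop:freestamb01} therefore produces an isomorphism $\underline{M} \xrightarrow{\cong} \TT^1(\underline{M})$. To see that this composite coincides with the map induced by $\theta_{\underline{M}} : \underline{M} \to \TT(\underline{M})$, I would note that $\theta_{\underline{M}}$ was defined as the composite of the semi-Mackey universal map with the completion map $s\TT(\underline{M}) \to \TT(\underline{M})$, and then chase this through the direct summand $\TT^1(\underline{M})$ using naturality of $(-)^+$.

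There is no real obstacle here beyond bookkeeping: the substance of the argument was carried out in Proposition~\ref{prop:freestamb01}, and all that remains is to observe that additive completion is compatible with the decomposition $\TT = \bigoplus_n \TT^n$ and with the identifications of the $n=0$ and $n=1$ summands. This is presumably why the author simply remarks that the corollary is immediate.
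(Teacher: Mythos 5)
Your proposal is correct and matches the paper's intent exactly: the paper declares the corollary "immediate from Proposition~\ref{prop:freestamb01}," meaning precisely that one applies additive completion to the isomorphisms there, using that $\underline{A} = (s\underline{A})^+$, that $\underline{M}$ is already complete, and that the completion functor preserves the direct sum decomposition and intertwines the universal maps. Your additional checks that the resulting isomorphisms agree with the stated maps (via initiality of $\underline{A}$ and the definition of $\theta_{\underline{M}}$ as a composite through the completion map) are the right bookkeeping and are what the author is leaving to the reader.
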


We our now in a position to identify our symmetric powers $Sym_n$. Letting $\CC$ denote the free commutative ring spectrum functor, and taking $\HH\underline{M}$ to be positive cofibrant, we have the following.
\begin{align*}
	\CC (\HH\underline{M}) = S \vee \HH\underline{M} \vee (\HH\underline{M})^{\wedge 2}/\Sigma_2 \vee ...
\end{align*}

The inclusion of the wedge summand $\HH\underline{M}$ induces a map
\begin{align*}
	\underline{M} \to \underline{\pi}_0 \CC (\HH\underline{M})
\end{align*}

of Mackey functors, which then induces a map of Tambara functors
\begin{align*}
	\psi_{\underline{M}} : \TT (\underline{M}) \to \underline{\pi}_0 \CC (\HH\underline{M}).
\end{align*}

Corollary~\ref{cor:freetambara} says that $\psi_{\underline{M}}$ is an isomorphism. We need one more simple fact. In the following, note that the wedge sum decomposition of $\CC (\HH\underline{M})$ induces a direct sum decomposition of $\underline{\pi}_0 \CC (\HH\underline{M})$.

\begin{lem}\label{lem:gradedmap}
The map $\psi_{\underline{M}}$ is a graded map of Mackey functors.
\end{lem}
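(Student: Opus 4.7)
The plan is to exploit the compatibility of the wedge decomposition $\CC(\HH\underline{M}) = \bigvee_n (\HH\underline{M})^{\wedge n}/\Sigma_n$ with the commutative ring structure, viewing it as an $\NN$-grading of $\CC(\HH\underline{M})$ as a commutative ring $G$-spectrum with $\HH\underline{M}$ in degree $1$. Applying $\underline{\pi}_0$ yields a grading on $\underline{\pi}_0 \CC(\HH\underline{M})$ in which transfers preserve grading, multiplication adds grading, and a norm $n_i$ for $i$ with constant fiber size $n$ multiplies grading by $n$. Since the composite $\underline{M} \to \underline{\pi}_0 \CC(\HH\underline{M})$ is the inclusion of the degree-$1$ summand, these compatibilities will imply at once that $\psi_{\underline{M}}$ carries each $\TT^n(\underline{M})$ into the degree-$n$ summand.

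More concretely, I would first verify that transfers preserve grading, which is immediate since the wedge splitting is one of $G$-spectra (not merely of commutative ring $G$-spectra) and so is preserved by the full Mackey-functor structure on $\underline{\pi}_0$. I would then show that a norm $n_i$ for $i$ having all fibers of size $n$ multiplies the grading by $n$. Reducing to the transitive case $i : G/H \to G/K$ (so $n = [K:H]$) and recalling that norms on the Tambara functor of a commutative ring $G$-spectrum are induced by the HHR norm $N_H^K$, one uses that $N_H^K$ applied to a wedge $\bigvee_d X_d$ decomposes as a wedge indexed by functions $f : K/H \to \NN$, with the $f$-indexed term a smash product $\bigwedge_i X_{f(i)}$; on a homogeneous degree-$d$ summand only the constant function $f \equiv d$ contributes, producing a term of total degree $d \cdot [K:H] = dn$.

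Once these compatibilities are in hand, the argument reduces to a short bookkeeping exercise. Any element of $\TT^n(\underline{M})$ is a $\ZZ$-linear combination of expressions $t_j n_i \theta_{\underline{M}}(u)$ in which $i : U \to V$ has all fibers of size $n$, and $\theta_{\underline{M}}(u)$ maps under $\psi_{\underline{M}}$ into the degree-$1$ summand of $\underline{\pi}_0 \CC(\HH\underline{M})$; since $\psi_{\underline{M}}$ is a map of Tambara functors and so commutes with $t_j$ and $n_i$, the compatibilities above force the image to lie in degree $n$. The main obstacle is establishing rigorously the behavior of norms on the target grading, as this depends on understanding how the HHR norm interacts with wedge decompositions of commutative ring $G$-spectra; with this in hand, the remainder of the argument is essentially formal.
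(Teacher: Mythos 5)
Your proposal follows essentially the same route as the paper: write every element of $\TT^n(\underline{M})$ as a difference of expressions $t_j n_i \theta_{\underline{M}}(u)$ with $i$ of degree $n$, note $\psi_{\underline{M}}$ is a map of Tambara functors, and use that $\CC(\HH\underline{M})$ is a graded ring spectrum with $\HH\underline{M}$ in degree $1$ so that such norms land in the $n$'th wedge summand. The only difference is that you spell out (correctly, via the behavior of the HHR norm on wedge decompositions) the grading statement that the paper simply cites from the definition of the norm maps in Strickland.
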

\begin{proof}
Any element of $\TT^n (\underline{M}) (X)$ is a difference of transfers of norms of elements in the image of $\underline{M}$, where the norms are taken along maps $U \to V$ such that all the fibers have $n$ elements. Now $\CC (\HH\underline{M})$ is a \emph{graded} ring spectrum with $\HH\underline{M}$ in degree $1$, so it is clear from the definition (see~\cite{Stri}) that such norms end up in the $n$'th wedge summand.
\end{proof}

The following is immediate.

\begin{cor}\label{cor:identsympow}
Let $n \geq 2$. There is a natural isomorphism of Mackey functors
\begin{align*}
	\psi_{\underline{M}}^n : \TT^n (\underline{M}) \xrightarrow{\cong} Sym_n (\underline{M}).
\end{align*}
Thus, for any finite $G$-set $X$, $Sym_n (\underline{M}) (X)$ is the quotient of the free abelian group on the pairs $(U \xrightarrow{i} V \xrightarrow{j} X, u)$, with $i$ and $j$ maps of finite $G$-sets and $u \in \underline{M} (U)$ such that $i$ has degree $n$, by the relations
\begin{enumerate}[(i)]
\item $(U \xrightarrow{i} V \xrightarrow{j} X, u) = (U' \xrightarrow{i'} V' \xrightarrow{j'} X, u')$ whenever there is a commutative diagram
\begin{align*}
\xymatrix{
 U \ar[r]^-{i} & V \ar[r]^-{j} & X \\
 U' \ar[u]^-{f}_-{\cong} \ar[r]_-{i'} & V' \ar[u]^-{g}_-{\cong} \ar[ru]_-{j'} & }
\end{align*}
such that $f$ and $g$ are isomorphisms and $r_f (u) = u'$,
\item $(U_1 \textstyle \coprod U_2 \xrightarrow{i_1 \coprod i_2} V_1 \coprod V_2 \xrightarrow{j_1 \coprod j_2} X, (u_1, u_2)) =$ \\
$(U_1 \xrightarrow{i_1} V_1 \xrightarrow{j_1} X, u_1) + (U_2 \xrightarrow{i_2} V_2 \xrightarrow{j_2} X, u_2)$, and
\item $(U \xrightarrow{i} V \xrightarrow{j} X, t_k (w)) = (A \xrightarrow{g} B \xrightarrow{j \circ h} X, r_f (w))$ whenever the diagram
\begin{align*}
\xymatrix{
 W \ar[r]^-{k} & U \ar[r]^-{i} & V \ar[r]^-{j} & X \\
 A \ar[u]^-{f} \ar[rr]_-{g} && B \ar[u]_-{h} \ar[ur]_-{j \circ h} & }
\end{align*}
is commutative and the rectangle is exponential.
\end{enumerate}
Transfers are determined on the generators by composition, while restrictions are determined by pullback.
\end{cor}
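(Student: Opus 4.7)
The plan is to combine the already-established isomorphism $\psi_{\underline{M}} : \TT(\underline{M}) \xrightarrow{\cong} \underline{\pi}_0 \CC(\HH\underline{M})$ of Corollary~\ref{cor:freetambara} with the grading statement of Lemma~\ref{lem:gradedmap} to extract the $n$'th graded piece. Since $\CC(\HH\underline{M}) = S \vee \HH\underline{M} \vee (\HH\underline{M})^{\wedge 2}/\Sigma_2 \vee \cdots$ and the wedge decomposition induces the direct sum decomposition on $\underline{\pi}_0$, the $n$'th wedge summand has $\underline{\pi}_0$ equal to $Sym_n(\underline{M})$ by definition for $n \geq 2$, while the $\NN$-indexed direct sum decomposition $\TT(\underline{M}) = \bigoplus_n \TT^n(\underline{M})$ was constructed just above. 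A graded isomorphism restricts to isomorphisms on each graded piece, yielding $\psi_{\underline{M}}^n$.

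For the explicit presentation, I would unwind the construction of $\TT^n(\underline{M})$ step by step. By definition, $\TT^n(\underline{M})$ is the additive completion of $s\TT^n(\underline{M})$, which is the quotient of $s\TT^n_0(\underline{M})$ by the restriction of the distributive law equivalence relation to pairs whose first map has all fibers of cardinality $n$. The elements of $s\TT^n_0(\underline{M})(X)$ are exactly the isomorphism classes of pairs $(U \xrightarrow{i} V \xrightarrow{j} X, u)$ with $i$ of degree $n$, where the isomorphism classes are described by relation (i). The distributive law equivalence relation, restricted to degree $n$ pairs (this is legitimate because exponential diagrams pull back along fiber-cardinality, as noted right before Corollary~\ref{cor:identsympow}), is precisely relation (iii). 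Finally, the addition on $s\TT^n_0(\underline{M})$ is given by disjoint union of the $U$'s and $V$'s, which is relation (ii); passing to the additive completion replaces the commutative-monoid structure by the corresponding abelian group, giving exactly the stated description as a quotient of the free abelian group on such generators.

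The descriptions of transfers and restrictions are inherited directly from the formulas on $s\TT_0(\underline{M})$: transfers are given by post-composing $j$ with the transfer map, and restrictions are given by pulling the entire diagram $U \xrightarrow{i} V \xrightarrow{j} X$ back along the restricting map. Both descend from $s\TT_0(\underline{M})$ to $s\TT(\underline{M})$ (by Lemma~\ref{lem:distrpullback} for the restrictions), and both are preserved under the grading and the additive completion, so they are transported along $\psi_{\underline{M}}^n$ to the corresponding operations on $Sym_n(\underline{M})$.

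There is essentially no obstacle here: the corollary is a bookkeeping exercise once Corollary~\ref{cor:freetambara}, Lemma~\ref{lem:gradedmap}, and the graded decomposition of $\TT(\underline{M})$ are in hand. The only point requiring a moment of care is the assertion that the distributive law relation respects the grading on $s\TT_0(\underline{M})$, so that relations (i) and (iii) can be imposed on the degree-$n$ generators alone; but this was already observed in the text immediately preceding the corollary, since exponential diagrams are stable under disjoint union and the left-hand vertical map of an exponential diagram is a pullback of the given map $i$ and hence has the same fiber cardinalities.
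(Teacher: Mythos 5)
Your proposal is correct and matches the paper's intended argument exactly: the paper declares the corollary ``immediate'' from Lemma~\ref{lem:gradedmap} and the surrounding definitions, and your unwinding of the graded isomorphism $\psi_{\underline{M}}$, the identification of the $n$'th wedge summand with $Sym_n(\underline{M})$, and the explicit description of $\TT^n(\underline{M})$ as the Grothendieck group of $s\TT^n(\underline{M})$ (yielding relations (i)--(iii)) is precisely the bookkeeping the author has in mind.
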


\begin{cor}\label{cor:sympowsomecolim}
For each $n$ the functor $Sym_n$ preserves direct limits and reflexive coequalizers.
\end{cor}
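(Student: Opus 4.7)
The plan is to combine Corollary~\ref{cor:identsympow} with the fact that colimits in $Mack(G)$ are computed pointwise. It then suffices to show that for each $X \in \Fin_G$ the functor $\underline{M} \mapsto \TT^n(\underline{M})(X)$ from $Mack(G)$ to $Ab$ preserves filtered colimits and reflexive coequalizers.

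The presentation in Corollary~\ref{cor:identsympow} expresses $\TT^n(\underline{M})(X)$ as the free abelian group on the set $\coprod_{[U \xrightarrow{i} V \xrightarrow{j} X]} \underline{M}(U)/\mathrm{Aut}(i,j)$, coproduct over isomorphism classes of composites with $i$ of degree $n$, modulo the subgroup generated by elements encoding relations (ii) and (iii). Each of these relations is \emph{unary} in $\underline{M}$: relation (ii) is parametrized by an element of $\underline{M}(U_1) \times \underline{M}(U_2) \cong \underline{M}(U_1 \coprod U_2)$, and relation (iii), for each diagram $W \xrightarrow{k} U \xrightarrow{i} V \xrightarrow{j} X$ with its exponential completion, by a single element of $\underline{M}(W)$. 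Consequently the generators of the relation subgroup form the image of a natural transformation whose source is a direct sum of evaluations of $\underline{M}$ at various finite $G$-sets.

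I would then verify that every ingredient in this description preserves filtered colimits and reflexive coequalizers as a functor of $\underline{M}$: evaluation $\underline{M} \mapsto \underline{M}(U)$ preserves all colimits by the pointwise formula; the coinvariant functor $(-)_{\mathrm{Aut}(i,j)}$ is a left adjoint; the underlying-set functor $Ab \to Set$ preserves sifted colimits by a standard monadicity argument; the free abelian group functor $\mathbb{Z}[-]$ is a left adjoint; and taking the cokernel of a natural transformation preserves all colimits along which its source does. Composing these observations gives the desired conclusion. The main obstacle is simply the observation --- routine once stated --- that all three relations from Corollary~\ref{cor:identsympow} are unary in $\underline{M}$ in the above sense, so that the entire presentation is a finitary algebraic construction out of $\underline{M}$ and hence commutes with the sifted colimits (filtered colimits and reflexive coequalizers) of $\underline{M}$.
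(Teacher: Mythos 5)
Your proof is correct, but it takes a genuinely different route from the paper's. The paper's argument is short and categorical: since $\TT = \bigoplus_n \TT^n$, the functor $Sym_n \cong \TT^n$ is a natural retract of $\TT$, viewed as an endofunctor of $Mack(G)$ by composing the free Tambara functor $Mack(G) \to Tamb(G)$ (a left adjoint, so cocontinuous) with the forgetful functor $Tamb(G) \to Mack(G)$; because Tambara functors are closed under direct limits and reflexive coequalizers of underlying Mackey functors, the forgetful functor preserves these, so $\TT$ does, and the retract inherits the property. Your approach instead works directly from the presentation in Corollary~\ref{cor:identsympow}, reducing to the pointwise statement and then showing that each stage of the construction of $\TT^n(\underline{M})(X)$ --- evaluation of $\underline{M}$ at finite $G$-sets, quotient of the generating set by the automorphism groups, the free abelian group functor, and the cokernel by the relation subgroup --- commutes with sifted colimits, the crucial observation being that relations (ii) and (iii) are each parametrized by a single element of $\underline{M}$ at one finite $G$-set, so the relation subgroup is the image of a natural transformation out of a functor that itself preserves sifted colimits. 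The one small phrasing I'd tighten: the relation-parametrizing maps $\underline{M}(W) \to \TT^n_0(\underline{M})(X)$ are not additive, so you should pass through the free abelian group on $\underline{M}(W)$ before speaking of a natural transformation of abelian groups whose cokernel you take --- but this costs nothing, since $\ZZ[-]$ and the underlying-set functor both preserve sifted colimits. Your argument is longer but more self-contained: it does not invoke the closure of $Tamb(G)$ under these colimits in $Mack(G)$ (which the paper asserts without proof) or the retract trick, and it makes the finitary algebraic nature of the construction explicit. The paper's argument is slicker and generalizes immediately to any situation where one has a monadic-flavored adjunction with a well-behaved forgetful functor.
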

\begin{proof}
We know that $Sym_n$ is a retract of $\TT$. Since Tambara functors are closed under direct limits and reflexive coequalizers (of Mackey functors), it is formal that $\TT$ preserves these.
\end{proof}

Next we point out that these same symmetric powers of Mackey functors are obtained by arbitrary $(-1)$-connected spectra.

\begin{prop}\label{prop:-1connsamesympow}
Let $n \geq 2$. For $(-1)$-connected spectra $X$, $\underline{\pi}_0$ of the derived $n$'th symmetric power $X^{\wedge n} / \Sigma_n$ is naturally isomorphic to $Sym_n (\underline{\pi}_0 X)$.
\end{prop}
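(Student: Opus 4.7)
The plan is to recycle the argument leading to Corollary~\ref{cor:identsympow}, with $\HH\underline{M}$ replaced by an arbitrary $(-1)$-connected $G$-spectrum $X$. I would first replace $X$ by a positive cofibrant representative, so that $\CC(X) = S \vee X \vee X^{\wedge 2}/\Sigma_2 \vee \cdots$ computes the derived free commutative ring $G$-spectrum on $X$ and the wedge summands compute the derived symmetric powers. Writing $\underline{M} \defeq \underline{\pi}_0 X$, the inclusion of the degree-one wedge summand is a map of spectra $X \to \CC(X)$, which induces a Mackey functor map $\underline{M} \to \underline{\pi}_0 \CC(X)$. By Corollary~\ref{cor:algfreetamb}, this extends uniquely to a Tambara functor map
\[
  \psi_X : \TT(\underline{M}) \to \underline{\pi}_0 \CC(X).
\]

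Next I would show $\psi_X$ is an isomorphism via a universal-property chase. For any Tambara functor $\underline{R}$, the universal property of $\CC$ identifies maps $\CC(X) \to \HH\underline{R}$ in the homotopy category of commutative ring $G$-spectra with spectrum maps $X \to \HH\underline{R}$ in the stable homotopy category. Since $X$ is $(-1)$-connected and $\HH\underline{R}$ is Eilenberg--MacLane, the latter correspond bijectively to Mackey functor maps $\underline{M} \to \underline{R}$; this is the underlying-spectrum analogue of Theorem~\ref{thm:mapstoemcomm} and is classical. On the other hand, $\CC(X)$ is itself $(-1)$-connected, so Theorem~\ref{thm:mapstoemcomm} itself identifies $[\CC(X), \HH\underline{R}]_{comm_G}$ with Tambara functor maps $\underline{\pi}_0 \CC(X) \to \underline{R}$. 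Tracing through, the resulting bijection $Hom_{Tamb(G)}(\underline{\pi}_0 \CC(X), \underline{R}) \cong Hom_{Mack(G)}(\underline{M}, \underline{R})$ is precomposition with $\psi_X$, so $\psi_X$ is an isomorphism.

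It remains to check that $\psi_X$ respects the wedge decomposition of $\CC(X)$, and this follows verbatim from the argument of Lemma~\ref{lem:gradedmap}: every element of $\TT^n(\underline{M})$ is a difference of transfers of norms of elements of $\underline{M}$ taken along maps with $n$-element fibers, and such norms automatically land in the $n$'th wedge summand because $\CC(X)$ is a graded commutative ring $G$-spectrum with $X$ in degree one. Combining the resulting isomorphism $\TT^n(\underline{\pi}_0 X) \cong \underline{\pi}_0(X^{\wedge n}/\Sigma_n)$ with Corollary~\ref{cor:identsympow} yields the desired natural isomorphism $Sym_n(\underline{\pi}_0 X) \cong \underline{\pi}_0(X^{\wedge n}/\Sigma_n)$ for $n \geq 2$. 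There is no serious obstacle here; the only point requiring care is tracking naturality in $X$, which is immediate from the functoriality of $\CC$, $\underline{\pi}_0$, and $\TT$ at each step of the construction of $\psi_X$.
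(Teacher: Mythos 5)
Your proposal is correct, but it is organized differently from the paper's own proof, even though both rest on the identical chain of isomorphisms
$Hom_{Tamb(G)}(\underline{\pi}_0\CC(X),\underline{R}) \cong Hom_{Ho(comm_G)}(\CC(X),\HH\underline{R}) \cong Hom_{Ho(Sp_G)}(X,\HH\underline{R}) \cong Hom_{Mack(G)}(\underline{\pi}_0 X,\underline{R})$
coming from Theorems~\ref{thm:mapstoemcomm} and~\ref{thm:homemcomm}, the free/forgetful adjunction, and the $(-1)$-connectivity of $\CC(X)$. The paper uses this chain only to deduce that $\CC(f)$ is a $\underline{\pi}_0$-isomorphism whenever $f$ is one between positive cofibrant $(-1)$-connected spectra, and then applies this to $X \to Post^0 X$, so that the wedge-summand decomposition reduces everything to the Eilenberg--MacLane case already handled by Corollary~\ref{cor:freetambara}, Lemma~\ref{lem:gradedmap} and Corollary~\ref{cor:identsympow}. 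You instead avoid Postnikov sections altogether: you run the Yoneda argument directly on $X$ to show that $\underline{\pi}_0\CC(X)$ is the free Tambara functor on $\underline{\pi}_0 X$ (a genuine extension of Corollary~\ref{cor:freetambara} from Eilenberg--MacLane spectra to all $(-1)$-connected spectra), and then rerun the grading argument of Lemma~\ref{lem:gradedmap} verbatim for $\CC(X)$ before passing to degree-$n$ pieces. Your route buys a slightly stronger and independently useful intermediate statement (the natural graded isomorphism $\TT(\underline{\pi}_0 X)\cong\underline{\pi}_0\CC(X)$ for all such $X$), at the cost of redoing the free-Tambara identification and the grading check; the paper's route is shorter because it reuses the EM case and its formulation (invariance of $\underline{\pi}_0\CC$ under $\underline{\pi}_0$-isomorphisms) parallels the connectivity lemmas it needs later for norms and $G$-symmetric monoidal powers. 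The only points to be careful about in your write-up are ones you implicitly use correctly: $\underline{\pi}_0$ of a wedge of $(-1)$-connected spectra is the direct sum of the $\underline{\pi}_0$'s, and a graded isomorphism of direct sums restricts to an isomorphism in each degree.
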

\begin{proof}
Let $f : X \to Y$ be a map of positive cofibrant, $(-1)$-connected spectra that induces an isomorphism on $\underline{\pi}_0$. Now $\CC (X)$ and $\CC (Y)$ are $(-1)$-connected, so for any Tambara functor $\underline{R}$ we have isomorphisms
\begin{align*}
	Hom_{Tamb(G)} (\underline{\pi}_0 \CC (X), \underline{R}) &\cong Hom_{Ho(comm_G)} (\CC(X), \HH\underline{R}) \\
	                                                                                              &\cong Hom_{Ho(Sp_G)} (X, \HH\underline{R}) \\
	                                                                                              &\cong Hom_{Mack(G)} (\underline{\pi}_0 X, \underline{R})
\end{align*}
by Theorems~\ref{thm:mapstoemcomm} and~\ref{thm:homemcomm}, and similarly for $Y$. It follows that $\CC (f)$ induces an isomorphism on $\underline{\pi}_0$. The result follows by applying this to positive cofibrant models for the natural maps $X \to Post^0 X$.
\end{proof}

Finally, we can identify the geometric fixed points of the free Tambara functor on a Mackey functor $\underline{M}$. We use the notation $\Phi^G \underline{N}$ for the geometric fixed points of $\underline{N}$, and for any subgroup $H$ we use $\Phi^H \underline{N}$ to denote $\Phi^H \Res_H^G \underline{N}$. This is given by
\begin{align*}
	\Phi^H \underline{N} = \underline{N} (G/H) / \big( \sum_{K \subsetneq H} t_K^H (\underline{N} (G/K)) \big).
\end{align*}

Recall that these functors are symmetric monoidal. For any subgroup $H$ we may define a homomorphism as below. Here $W(H)$ denotes the Weyl group $Aut_G(G/H)$ of $H$ in $G$.
\begin{align*}
	\Xi_{\underline{M}} (H) : (\Phi^H \underline{M}) / W(H) &\to \Phi^G \TT^{|G/H|} (\underline{M}) \\
	                                                                                     [x] &\mapsto [n_H^G \theta_{\underline{M}} (x)]
\end{align*}

The fact that this formula induces a homomorphism on $\underline{M} (G/H)$ follows from the distributive law: $n_H^G (a + b)$ is equal to $n_H^G a + n_H^G b$ modulo transfers. The distributive law also implies that $n_H^G z$ is a sum of transfers when $z$ is. The element $n_H^G \theta_{\underline{M}} (x)$ is represented by $(G/H \to \ast \xrightarrow{=} \ast, x)$, so it is in $\TT^{|G/H|} (\underline{M})$. Finally we note that, for any $f \in W(H)$, $(G/H \to \ast \xrightarrow{=} \ast, x)$ and $(G/H \to \ast \xrightarrow{=} \ast, r_f (x))$ are equivalent under the relation which defines $s\TT_0 (\underline{M}) (G/G)$. \\
\indent Now let $H_1, ..., H_r$ be a list of subgroups containing exactly one from each conjugacy class. The maps $\Xi_{\underline{M}} (H)$ determine a homomorphism of graded rings
\begin{align*}
	\Xi_{\underline{M}} : Sym\big(\bigoplus_{i = 1}^{r} (\Phi^{H_i} \underline{M}) / W(H_i)\big) \to \Phi^G \TT (\underline{M}),
\end{align*}

where $Sym(\spacedash)$ denotes the symmetric algebra functor and the piece $(\Phi^{H_i} \underline{M}) / W(H_i)$ is in degree $|G/H_i|$. We have the following.

\begin{thm}\label{thm:geomfixpfreetamb}
The map $\Xi_{\underline{M}}$ is an isomorphism of graded rings. That is, the geometric fixed points of the free Tambara functor on $\underline{M}$ is the free symmetric algebra on the geometric fixed points of $\underline{M}$ with respect to all subgroups, quotiented by the actions of their Weyl groups.
\end{thm}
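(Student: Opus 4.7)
The plan is to produce an inverse to $\Xi_{\underline{M}}$ on underlying graded abelian groups; since $\Xi_{\underline{M}}$ is already a graded ring map, this suffices. First I would simplify $\Phi^G \TT^n(\underline{M})$: using the generators and relations of Corollary~\ref{cor:identsympow}, additivity (relation (ii)) splits any pair $(U \xrightarrow{i} V \to G/G, u)$ orbit-wise in $V$, and any summand whose $V$-orbit is of type $G/K$ with $K \subsetneq G$ is a transfer along $G/K \to G/G$ and so vanishes in $\Phi^G$. Hence $\Phi^G \TT^n(\underline{M})$ is generated by classes $[(U \to \ast \xrightarrow{=} \ast, u)]$ with $U$ a $G$-set of size $n$.

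Next I would establish the multiplication identity
\begin{align*}
[(U_1 \to \ast \xrightarrow{=} \ast, u_1)] \cdot [(U_2 \to \ast \xrightarrow{=} \ast, u_2)] = [(U_1 \sqcup U_2 \to \ast \xrightarrow{=} \ast, (u_1, u_2))]
\end{align*}
in $\TT(\underline{M})(G/G)$ by direct computation: multiplication on $\underline{R}(\ast)$ in any Tambara functor is the norm along the fold $\ast \sqcup \ast \to \ast$, and the required exponential diagram is degenerate because the identity on $\ast \sqcup \ast$ has a unique section. Together with the reduction above, this yields surjectivity of $\Xi_{\underline{M}}$: decomposing $U = \bigsqcup_j G/K_j$ orbit-wise expresses any generator as $\prod_j \Xi_{\underline{M}}(H_{i(j)})([u_j])$, where $H_{i(j)}$ is the conjugacy-class representative of $K_j$ and $u_j$ is the restriction of $u$ to $G/K_j$. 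For the inverse, I define
\begin{align*}
\Psi([(U \to \ast \xrightarrow{=} \ast, u)]) \defeq \prod_j [u_j] \in Sym\Big(\bigoplus_i (\Phi^{H_i}\underline{M})/W(H_i)\Big).
\end{align*}

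Well-definedness with respect to (i) is forced by the quotient by $W(H_i)$, with respect to (ii) is built into $Sym$, and with respect to the transfer quotient of $\Phi^G$ follows from the orbit reduction. The main obstacle is compatibility with the distributive law (iii): for $k: W \to U$ and $w \in \underline{M}(W)$, one must show that $\Psi$ sends both $(U \to \ast \xrightarrow{=} \ast, t_k(w))$ and the right-hand side $(U \times \prod_{k,i} W \xrightarrow{\pi_2} \prod_{k,i} W \to \ast, r_e(w))$ to the same element, where $\prod_{k,i} W$ is the set of sections of $k$ and $e$ is the evaluation. I would decompose $\prod_{k,i} W$ into $G$-orbits: non-fixed orbits contribute transfers from proper subgroups of $G$ and vanish in the target, while each $G$-equivariant section of $k$ restricts orbit-by-orbit on $U = \bigsqcup_j G/K_j$ to a $K_j$-equivariant section of $k_j$, and the sum over these reproduces $\prod_j [t_{k_j}(w_j)] = \Psi(\mathrm{LHS})$ by the same argument applied one subgroup down (equivalently, by directly expanding each $[t_{k_j}(w_j)]$ using the definition of $\Phi^{K_j}\underline{M}$ as a quotient). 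Once $\Psi$ is well-defined, $\Psi \circ \Xi_{\underline{M}} = \mathrm{id}$ and $\Xi_{\underline{M}} \circ \Psi = \mathrm{id}$ follow on generators by the multiplication formula.
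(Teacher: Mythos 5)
Your overall strategy is the same as the paper's: prove surjectivity by splitting $V$ into orbits, discarding non-trivial orbits as transfers, and using the multiplication identity for pairs over $\ast$ (the paper uses exactly the identity $[(U\to\ast\to\ast,u)]=\prod_j[n_{H_{i_j}}^G\theta_{\underline{M}}(u_j)]$, and your degenerate-exponential-diagram justification of it is correct); then prove injectivity by exhibiting an explicit left inverse that reads off the product of geometric-fixed-point classes, with the distributive law handled via equivariant sections (these exist only for orbits of $W$ mapped isomorphically by $k$, non-isomorphic orbits contribute transfers that die, and expanding the product of sums gives the match). That computational core coincides with the paper's verification that its map $\xi_{\underline{M}}$ respects the relation.

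There is, however, a genuine gap in how the inverse is set up. You define $\Psi$ only on classes of pairs of the special form $(U\to\ast\xrightarrow{=}\ast,u)$, but $\Phi^G\TT(\underline{M})$ is a quotient of the group generated by \emph{all} pairs $(U\xrightarrow{i}V\to\ast,u)$, and relation (iii) applied to a special generator produces the pair $(U\times\prod_{k,i}W\xrightarrow{\pi_2}\prod_{k,i}W\to\ast,r_e(w))$, whose middle object is not a point. As written you evaluate ``$\Psi$'' of this element by first rewriting it inside $\Phi^G\TT(\underline{M})$ using relation (ii) and the vanishing of transfers --- but rewriting modulo the relations and then applying $\Psi$ only makes sense once you already know $\Psi$ descends to the quotient, which is precisely what is being proved; knowing that the special pairs generate does not by itself let you define a map out of the quotient. (Relatedly, relation (ii) never relates two pairs with $V=\ast$ to each other, so ``built into $Sym$'' is not where the work lies.) The repair is exactly the paper's move: define the inverse on \emph{every} pair $(U\xrightarrow{i}V\to\ast,u)$ from the outset, sending it to $\sum_{v\in V^G}\otimes_j[u_{v,j}]$ where $i^{-1}(v)\cong\coprod_j G/H_{i_{v,j}}$, then verify relations (i)--(iii) and the vanishing on transfers from proper subgroups for this globally defined map. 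Your section/transfer/distributivity computation then goes through essentially verbatim, so the gap is repairable, but the proposal as stated does not define the inverse on enough elements for its own well-definedness check to be meaningful.
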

\begin{proof}
First we show that $\Xi_{\underline{M}}$ is surjective. A typical element of $\TT (\underline{M}) (G/G)$ is a difference of pairs $(U \to V \to \ast, u)$. Decomposing the $V$'s into orbits to obtain sums, we see that it suffices to show that $\Xi_{\underline{M}}$ hits the equivalence classes of the pairs with $V$ an orbit. Now if $V$ is not the trivial orbit, this pair is a transfer, and so represents zero. Thus we consider pairs $(U \to \ast \xrightarrow{=} \ast, u)$. We may express $U$ as a union
\begin{align*}
	U \cong \coprod_{j = 1}^m G/H_{i_j}
\end{align*}
and $u$ as a product of elements $u = (u_1, ... , u_m)$ with $u_j \in \underline{M} (G/H_{i_j})$. We then have
\begin{align*}
	[(U \to \ast \xrightarrow{=} \ast, u)] &= \prod_{j = 1}^m [n_{H_{i_j}}^G \theta_{\underline{M}} (u_j)] \\
	                                       &= \Xi_{\underline{M}} (\otimes_{j=1}^m [u_j]),
\end{align*}
so we see that $\Xi_{\underline{M}}$ is surjective.\\
\indent We now show that $\Xi_{\underline{M}}$ is injective by constructing a left inverse $\xi_{\underline{M}}$. Consider a pair $(U \xrightarrow{i} V \to \ast, u)$. Decompose $i^{-1} (V^G)$ into orbits to obtain
\begin{align*}
	i^{-1} (V^G) \cong \coprod_{v \in V^G} \coprod_{j = 1}^{m_v} G/H_{i_{v,j}}
\end{align*}
and $u = \prod_v (u_{v,1}, ... , u_{v,m_v})$ with $u_{v,j} \in \underline{M} (G/H_{i_{v,j}})$. Note that for each $v \in V^G$ the $u_{v,j}$'s are unique up to permutations within the various $\underline{M} (G/H_i)$ and the actions by the corresponding Weyl groups $W(H_i)$. Thus we define a homomorphism as below.
\begin{align*}
	\xi_{\underline{M}} : s\TT_0 (\underline{M}) (G/G) &\to Sym\big(\bigoplus_{i = 1}^{r} (\Phi^{H_i} \underline{M}) / W(H_i)\big)\\
	                              (U \xrightarrow{i} V \to \ast, u) &\mapsto \sum_{v \in V^G} \otimes_{j = 1}^{m_v} [u_{v,j}]
\end{align*}
Now consider the equivalence relation on $s\TT_0 (\underline{M}) (G/G)$. Suppose we are given a diagram as below, where the rectangle is exponential, and take $u = t_k (w)$ with $w \in \underline{M} (W)$.
\begin{align*}
\xymatrix{
 A \ar[d]_-{e} \ar[rr]^-{\pi} && B \ar[d]_-{p} \ar[dr] & \\
 W \ar[r]_-{k} & U \ar[r]_-{i} & V \ar[r] & \ast }
\end{align*}
Points of $B^G$ correspond to elements $v \in V^G$ and equivariant sections $s : i^{-1} (v) \to W$ of $k$. Of course, a map of orbits has an equivariant section if and only if it is an isomorphism, in which case it has exactly one equivariant section. Let $W_{iso}$ denote the union of the orbits of $W$ that are mapped injectively by $k$ into $U$. Now we can choose isomorphisms successively as below,
\begin{align*}
	i^{-1} (V^G) &\cong \coprod_{v \in V^G} \coprod_{j = 1}^{m_v} G/H_{i_{v,j}} \\
	k^{-1} i^{-1} (V^G) \cap W_{iso} &\cong \coprod_{v \in V^G} \coprod_{j = 1}^{m_v} \coprod_{l=1}^{n_{v,j}} G/H_{i_{v,j}}
\end{align*}
so that $k : k^{-1} i^{-1} (V^G) \cap W_{iso} \to i^{-1} (V^G)$ becomes the appropriate fold map. Meanwhile, each orbit of $k^{-1} i^{-1} (V^G)$ that is not in $W_{iso}$ contributes a transfer to one of the $u_{v,j}$'s, so it does not affect the equivalence classes of the $u_{v,j}$'s in the appropriate geometric fixed points. The fact that $\xi_{\underline{M}}$ descends to a map on $s\TT (\underline{M}) (G/G)$ now follows from the fact that the expression
\begin{align*}
	\otimes_{j = 1}^{m_v} [u_{v,j}]
\end{align*}
is distributive in each variable $u_{v,j}$. Taking the additive completion, we get a map on $\TT (\underline{M}) (G/G)$. Finally, it is clear that this map descends to $\Phi^G \TT (\underline{M})$, since transfers of elements of $\TT (\underline{M})$ from proper subgroups are differences of pairs $(U \to V \to \ast, u)$ such that $V^G = \emptyset$. To show that $\xi_{\underline{M}} \circ \Xi_{\underline{M}}$ is the identity, it suffices to check this on elements of the form $\otimes_{j=1}^m [u_j]$, with $u_j \in \underline{M} (G/H_{i_j})$. We have
\begin{align*}
	\Xi_{\underline{M}} (\otimes_{j=1}^m [u_j]) = [(\textstyle \coprod_j G/H_{i_j} \to \ast \xrightarrow{=} \ast, (u_1, ..., u_m))]
\end{align*}
and
\begin{align*}
	\xi_{\underline{M}} ([(\textstyle \coprod_j G/H_{i_j} \to \ast \xrightarrow{=} \ast, (u_1, ..., u_m))]) = \otimes_{j=1}^m [u_j],
\end{align*}
by definition.
\end{proof}

Combining this with Corollary~\ref{cor:identsympow}, we obtain the following.

\begin{cor}\label{cor:geomfixpsympow}
Let $n \geq 2$, and let $H_1, ..., H_r$ be a list of subgroups of $G$ containing exactly one from each conjugacy class. Then there is a natural isomorphism
\begin{align*}
	\Phi^G (\psi_{\underline{M}}^n) \circ \Xi_{\underline{M}}^n : Sym^n \big(\bigoplus_{i = 1}^{r} (\Phi^{H_i} \underline{M}) / W(H_i)\big) \xrightarrow{\cong} \Phi^G Sym_n (\underline{M}).
\end{align*}
Here, $Sym^n$ denotes the $n$'th degree part of the symmetric algebra functor, and we take the summand $(\Phi^{H_i} \underline{M}) / W(H_i)$ to have degree $|G/H_i|$.
\end{cor}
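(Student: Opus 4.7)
The plan is to extract the statement as the degree-$n$ component of Theorem~\ref{thm:geomfixpfreetamb}, post-composed with the image of Corollary~\ref{cor:identsympow} under $\Phi^G$. Since the corollary is described as following by ``combining'' these two results, the proof will be essentially formal; the only content is to verify compatibility with the gradings on both sides.

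First I would record that the decomposition $\TT(\underline{M}) = \bigoplus_{n \in \NN} \TT^n(\underline{M})$ is a splitting of Mackey functors, and since $\Phi^G$ preserves direct sums, it induces a splitting $\Phi^G \TT(\underline{M}) = \bigoplus_{n \in \NN} \Phi^G \TT^n(\underline{M})$. By Theorem~\ref{thm:geomfixpfreetamb}, $\Xi_{\underline{M}}$ is an isomorphism of graded rings, where the source is graded by assigning $(\Phi^{H_i}\underline{M})/W(H_i)$ degree $|G/H_i|$ and extending multiplicatively to the symmetric algebra. Restricting the isomorphism to the degree-$n$ summands then yields an isomorphism
\begin{align*}
\Xi_{\underline{M}}^n : Sym^n \Bigl( \bigoplus_{i=1}^{r} (\Phi^{H_i}\underline{M})/W(H_i) \Bigr) \xrightarrow{\cong} \Phi^G \TT^n(\underline{M}).
\end{align*}

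Next, I would apply the additive functor $\Phi^G$ to the natural isomorphism $\psi_{\underline{M}}^n : \TT^n(\underline{M}) \xrightarrow{\cong} Sym_n(\underline{M})$ provided by Corollary~\ref{cor:identsympow} to obtain $\Phi^G(\psi_{\underline{M}}^n) : \Phi^G \TT^n(\underline{M}) \xrightarrow{\cong} \Phi^G Sym_n(\underline{M})$. Composing this with $\Xi_{\underline{M}}^n$ produces the claimed natural isomorphism, with naturality in $\underline{M}$ inherited from the naturality of the constituent maps.

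There is no genuine obstacle: Theorem~\ref{thm:geomfixpfreetamb} has already done the hard work of producing and checking $\Xi_{\underline{M}}$, and the only verification needed is that the grading on the source of $\Xi_{\underline{M}}$ matches the grading on $\TT(\underline{M})$ used to define $\TT^n$. This matching was built into the statement of Theorem~\ref{thm:geomfixpfreetamb}, so the combination is automatic.
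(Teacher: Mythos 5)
Your proposal is correct and matches the paper's approach exactly: the paper offers no separate argument, simply asserting that the corollary follows by ``combining'' Theorem~\ref{thm:geomfixpfreetamb} with Corollary~\ref{cor:identsympow}, and your writeup spells out precisely the routine gradedness and direct-sum bookkeeping that this combination requires.
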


\indent \emph{Remark:} The conjecture that there is an isomorphism such as $\Xi_{\underline{M}}$ is due to Mike Hill. It was, in fact, inferred topologically by inspecting the structure of equivariant extended powers.

\section{Norms of Mackey Functors}\label{sec:norm}

In this section we give an algebraic description of the norm construction on Mackey functors. Let $G$ be a finite group, and let $H$ be a proper subgroup. We describe the norm functor $N_H^G : Sp_H \to Sp_G$ of~\cite{HHR} as follows. Let $\{ g_i \}$ be a set of coset representatives for $G/H$, with $1$ being the representative of the identity coset. As an ordinary orthogonal spectrum we set
\begin{align*}
	N_H^G X \defeq \bigwedge_{G/H} X.
\end{align*}

Fix $g \in G$, and define $g_{j_i}$ by $g^{-1} \cdot g_i H = g_{j_i} H$. Then we indicate the $G$-action on $N_H^G X$ schematically by
\begin{align*}
	g \cdot (\wedge_{g_i H} x_i) = \wedge_{g_i H}  (g_i^{-1} g g_{j_i}) \cdot x_{j_i}.
\end{align*}

This functor is clearly symmetric monoidal. In Section I.5 of \cite{Ull} it is shown that this functor preserves cofibrancy and weak equivalences between cofibrant objects. It follows that $N_H^G$ is left derivable, so we can define a norm construction on Mackey functors as below.
\begin{align*}
	N_H^G : Mack(H) &\to Mack(G) \\
	      \underline{M} &\mapsto \underline{\pi}_0 (N_H^G \HH\underline{M})
\end{align*}

Next, Corollary~I.5.8 of~\cite{Ull} implies that if $X$ is $(-1)$-connected, then the derived norm $N_H^G X$ is too. Furthermore, application of Theorem~I.5.9 of~\cite{Ull} to the cofiber sequences $Post_1 Z \to Z \to Post^0 Z$ yields the following.

\begin{lem}\label{lem:norm-1conn}
If $f : X \to Y$ is a map of cofibrant, $(-1)$-connected spectra which induces an isomorphism on $\underline{\pi}_0$, then $N_H^G (f)$ induces an isomorphism on $\underline{\pi}_0$.
\end{lem}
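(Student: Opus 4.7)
The plan is to reduce to the Eilenberg--MacLane case by passing to Postnikov sections. For any $(-1)$-connected cofibrant spectrum $Z$, consider the cofiber sequence $Post_1 Z \to Z \to Post^0 Z$, where $Post^0 Z$ is (equivalent to) the Eilenberg--MacLane spectrum $\HH\underline{\pi}_0 Z$ and $Post_1 Z$ is $0$-connected. The strategy is to prove, for such $Z$, that the natural map $N_H^G Z \to N_H^G Post^0 Z$ induces an isomorphism on $\underline{\pi}_0$; granted this, the lemma reduces to showing the same conclusion for $Post^0 X \to Post^0 Y$.

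Since $f$ induces an iso on $\underline{\pi}_0$, the induced map $Post^0 X \to Post^0 Y$ of Eilenberg--MacLane spectra is a weak equivalence. Because $N_H^G$ preserves weak equivalences between cofibrant objects (Section~I.5 of~\cite{Ull}), $N_H^G(Post^0 X) \to N_H^G(Post^0 Y)$ is also a weak equivalence, hence an iso on $\underline{\pi}_0$. Chasing the evident naturality square comparing the $X$-column and the $Y$-column of the Postnikov diagram then completes the reduction.

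The heart of the argument is therefore the step $N_H^G Z \to N_H^G Post^0 Z$. One applies Theorem~I.5.9 of~\cite{Ull} to the cofiber sequence $Post_1 Z \to Z \to Post^0 Z$: although $N_H^G$ is not exact, one analyzes the relative term $N_H^G Post^0 Z / N_H^G Z$ via a filtration whose subquotients are smash products indexed by $G/H$ involving at least one copy of the $0$-connected spectrum $Post_1 Z$. Each such subquotient is then at least $1$-connected, forcing the cofiber to be $1$-connected, so that the map is an iso on $\underline{\pi}_0$. This is precisely the sort of connectivity estimate Theorem~I.5.9 is designed to deliver.

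The main obstacle is this last step: extracting from Theorem~I.5.9 the explicit connectivity bound for the cofiber of $N_H^G Z \to N_H^G Post^0 Z$. This is a Blakers--Massey-type estimate for the nonlinear functor $N_H^G$, and essentially all of the nontrivial content of the lemma lives there; the remaining pieces -- the formation of the Postnikov cofiber sequence, the weak equivalence of $\pi_0$-level Eilenberg--MacLane spectra, and the diagram chase -- are formal.
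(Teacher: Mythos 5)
Your proposal is correct and is essentially the paper's own argument: the paper proves this lemma precisely by applying Theorem~I.5.9 of~\cite{Ull} to the cofiber sequences $Post_1 Z \to Z \to Post^0 Z$ and then using that $N_H^G$ preserves weak equivalences between cofibrant objects to compare $Post^0 X$ and $Post^0 Y$. (The only bookkeeping point is that the relevant smash factors in the filtration are copies of the cofiber of $Z \to Post^0 Z$, i.e.\ $\Sigma Post_1 Z$, which is $1$-connected; this is what makes the cofiber of $N_H^G Z \to N_H^G Post^0 Z$ $1$-connected and hence gives injectivity, not just surjectivity, on $\underline{\pi}_0$.)
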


\begin{cor}\label{cor:norm-1der}
For cofibrant, $(-1)$-connected spectra $X$ there is a natural isomorphism
\begin{align*}
	\underline{\pi}_0 (N_H^G X) \xrightarrow{\cong} N_H^G (\underline{\pi}_0 X).
\end{align*}
\end{cor}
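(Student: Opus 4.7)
The plan is to reduce the statement to Lemma~\ref{lem:norm-1conn} by comparing $X$ with its zeroth Postnikov section. For a cofibrant $(-1)$-connected spectrum $X$, take a positive cofibrant model for the natural Postnikov map $X \to Post^0 X \simeq \HH\underline{\pi}_0 X$. Both source and target are cofibrant and $(-1)$-connected, and the map induces an isomorphism on $\underline{\pi}_0$ by construction of the Postnikov section.

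Applying Lemma~\ref{lem:norm-1conn}, the induced map $N_H^G X \to N_H^G \HH\underline{\pi}_0 X$ also induces an isomorphism on $\underline{\pi}_0$. Taking $\underline{\pi}_0$ and unwinding the definition of $N_H^G$ on Mackey functors yields the desired isomorphism
\[ \underline{\pi}_0(N_H^G X) \xrightarrow{\cong} \underline{\pi}_0(N_H^G \HH\underline{\pi}_0 X) = N_H^G(\underline{\pi}_0 X). \]

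For naturality, any map $f : X \to Y$ of cofibrant $(-1)$-connected spectra fits into a commutative square with the respective Postnikov maps to $\HH\underline{\pi}_0 X$ and $\HH\underline{\pi}_0 Y$; applying the derived $N_H^G$ and then $\underline{\pi}_0$ delivers the naturality of the resulting isomorphism. The main technical point is organizing the cofibrant replacements and Postnikov sections carefully enough that the comparisons make sense in the homotopy category, but this is precisely the same style of argument used for symmetric powers in the proof of Proposition~\ref{prop:-1connsamesympow}. All of the real content of the corollary is packaged into Lemma~\ref{lem:norm-1conn}; what remains is a short formal consequence, with no serious obstacle.
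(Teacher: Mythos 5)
Your argument is exactly the intended one: the paper derives this corollary from Lemma~\ref{lem:norm-1conn} by applying it to a cofibrant model of the Postnikov map $X \to Post^0 X \simeq \HH\underline{\pi}_0 X$, precisely as in the proof of Proposition~\ref{prop:-1connsamesympow}. Your proposal is correct and matches the paper's (implicit) proof, including the naturality argument.
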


It follows from this that we have isomorphisms as below.
\begin{align*}
	N_H^G (\underline{M}_1 \otimes \underline{M}_2) &= \underline{\pi}_0 N_H^G (\HH(\underline{M}_1 \otimes \underline{M}_2)) \\
	                                                                                  &\cong \underline{\pi}_0 N_H^G (\HH\underline{M}_1 \wedge \HH\underline{M}_2) \\
	                                                                                  &\cong \underline{\pi}_0 (N_H^G \HH\underline{M}_1 \wedge N_H^G \HH\underline{M}_2) \\
	                                                                                  &\cong (\underline{\pi}_0 N_H^G \HH\underline{M}_1) \otimes (\underline{\pi}_0 N_H^G \HH\underline{M}_2)
\end{align*}

Thus, the norm construction on Mackey functors is symmetric monoidal. Next, one easily calculates that
\begin{align*}
	N_H^G (\Sigma^{\infty} T_+) \cong \Sigma^{\infty} (N_H^G T)_+
\end{align*}

for any finite $H$-set $T$, where $N_H^G T$ denotes $\bigtimes_{G/H} T$ with $G$-action as in the definition of the spectrum level norm. Thus we have
\begin{align*}
	N_H^G ([\spacedash, T]) \cong [\spacedash, N_H^G T].
\end{align*}

We shall need the following.

\begin{lem}\label{lem:normpresurj}
If $f : \underline{M}_1 \to \underline{M}_2$ is a surjective map in $Mack(H)$ then
\begin{align*}
	N_H^G (f) : N_H^G \underline{M}_1 \to N_H^G \underline{M}_2
\end{align*}
is a surjective map in $Mack(G)$.
\end{lem}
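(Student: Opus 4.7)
The plan is to lift $f$ to a cofiber sequence of spectra and use connectivity of a resulting ``fat wedge.'' Let $\underline{K} = \ker(f)$, so we have a short exact sequence $0 \to \underline{K} \to \underline{M}_1 \to \underline{M}_2 \to 0$ in $Mack(H)$. Realize this topologically as a cofiber sequence $A \to X_1 \to X_2$ of cofibrant $(-1)$-connected orthogonal $H$-spectra with $A \to X_1$ a cofibration, $X_2 \simeq X_1/A$, $\underline{\pi}_0 A \cong \underline{K}$, and $\underline{\pi}_0 X_i \cong \underline{M}_i$, the map $X_1 \to X_2$ realizing $f$ on $\underline{\pi}_0$; such a model exists by cofibrant replacement applied to an Eilenberg--MacLane realization of $H\underline{K} \to H\underline{M}_1$.

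Now apply $N_H^G$. By the results of Section~I.5 of~\cite{Ull}, $N_H^G A \to N_H^G X_1$ is a cofibration of cofibrant $G$-spectra, and $N_H^G X_2 = N_H^G(X_1/A)$ coincides with the quotient of $N_H^G X_1 = X_1^{\wedge G/H}$ by the \emph{fat wedge} $W \subset X_1^{\wedge G/H}$, the $G$-invariant subspectrum where at least one smash coordinate lies in $A$. Thus $W \to N_H^G X_1 \to N_H^G X_2$ is a cofiber sequence of $G$-spectra. The fat wedge $W$ is the union of pieces of the form $X_1 \wedge \cdots \wedge A \wedge \cdots \wedge X_1$ with $A$ occupying at least one slot of the $G/H$-indexed smash; each such piece is $(-1)$-connected, since a smash product of $(-1)$-connected spectra is again $(-1)$-connected. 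Assembling these via the filtration of $W$ by the number of $A$-coordinates (which is a $G$-invariant stratification) shows that $W$ itself is $(-1)$-connected.

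The long exact sequence of homotopy Mackey functors for the cofiber sequence $W \to N_H^G X_1 \to N_H^G X_2$ then reads
\begin{align*}
\underline{\pi}_0 W \to \underline{\pi}_0 N_H^G X_1 \to \underline{\pi}_0 N_H^G X_2 \to \underline{\pi}_{-1} W = 0,
\end{align*}
which yields surjectivity of $\underline{\pi}_0 N_H^G X_1 \to \underline{\pi}_0 N_H^G X_2$. By Corollary~\ref{cor:norm-1der} this map is precisely $N_H^G f$, and the lemma follows.

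The main technical obstacle is making the equivariant fat-wedge analysis rigorous: one needs $W$ to be a $G$-subspectrum with the correct quotient, the filtration by number of $A$-coordinates must descend to a $G$-equivariant cofibration sequence of filtration pieces, and each associated graded piece must have the expected connectivity as a $G$-spectrum. These are precisely the kinds of facts developed in Section~I.5 of~\cite{Ull} (compare the route used to derive Lemma~\ref{lem:norm-1conn} from Theorem~I.5.9 of that reference), so the argument is short modulo that technology.
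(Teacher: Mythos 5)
Your proposal is correct and follows essentially the same route as the paper: form a cofiber sequence realizing the short exact sequence $\underline{K}\to\underline{M}_1\to\underline{M}_2$, apply $N_H^G$, and use the connectivity technology of Section~I.5 of~\cite{Ull} (specifically Theorem~I.5.9) to control the cofiber. The paper's proof is just the two-line version — it cites Theorem~I.5.9 directly to conclude the cofiber of $N_H^G(\HH f)$ is $0$-connected — whereas you unpack the cofiber as the $G$-equivariant fat wedge and argue its $(-1)$-connectivity via the filtration by number of $A$-coordinates, which is precisely the content packaged in that cited theorem.
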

\begin{proof}
Letting $\underline{K}$ be the kernel of $f$, we have a cofiber sequence
\begin{align*}
	\HH\underline{K} \to \HH\underline{M}_1 \xrightarrow{\HH(f)} \HH\underline{M}_2
\end{align*}
in $Ho(Sp_H)$. Theorem I.5.9 of~\cite{Ull} now implies that the cofiber of $N_H^G (\HH(f))$ is $0$-connected.
\end{proof}

Now let $\underline{M} \in Mack(H)$, and let $\HH\underline{M}$ be a positive cofibrant model for the corresponding Eilenberg MacLane spectrum. Recalling that the free commutative ring spectrum on $\HH\underline{M}$ is as below,
\begin{align*}
	\CC (\HH\underline{M}) = S \vee \HH\underline{M} \vee (\HH\underline{M})^{\wedge 2}/\Sigma_2 \vee ...
\end{align*}

we have a coretraction
\begin{align*}
	\iota_{\underline{M}} : \HH\underline{M} \to \CC(\HH\underline{M}).
\end{align*}

Applying $N_H^G$, we obtain a coretraction
\begin{align*}
	N_H^G (\iota_{\underline{M}}) : N_H^G \HH\underline{M} \to N_H^G \CC (\HH\underline{M}).
\end{align*}

Now $N_H^G$ on $comm_H$ is left adjoint to the restriction functor
\begin{align*}
	\Res_H^G : comm_G \to comm_H,
\end{align*}

so we have
\begin{align*}
	N_H^G \CC(\HH\underline{M}) \cong \CC(G_+ \wedge_H \HH\underline{M}).
\end{align*}

Applying the functor $\underline{\pi}_0$ to $N_H^G (\iota_{\underline{M}})$ and applying Corollary~\ref{cor:freetambara}, we obtain another coretraction
\begin{align*}
	\iota_{\underline{M}}^{G,H} : N_H^G \underline{M} \to \TT (\Ind_H^G \underline{M}).
\end{align*}

Our task is thus to identify the image of $\iota_{\underline{M}}^{G,H}$. Note that the existence of this retraction implies the following, since $\TT$ and $\Ind_H^G$ preserve direct limits and reflexive coequalizers.

\begin{lem}\label{lem:normsomecolim}
The norm construction on Mackey functors commutes with direct limits and reflexive coequalizers.
\end{lem}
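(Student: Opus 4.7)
The plan is to exhibit $N_H^G$ as a natural retract of the composite $\TT \circ \Ind_H^G$ and then invoke the standard fact that a natural retract of a functor preserving a class of colimits itself preserves those colimits. The coretraction $\iota_{\underline{M}}^{G,H} : N_H^G \underline{M} \to \TT(\Ind_H^G \underline{M})$ has already been built and is natural in $\underline{M}$; what remains is to produce a natural splitting.

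For this, first observe that the projection $\CC(\HH\underline{M}) \to \HH\underline{M}$ onto the degree-one wedge summand is natural in $\underline{M}$ on the spectrum level and is a left inverse to $\iota_{\underline{M}}$. Applying the derived norm $N_H^G$, then $\underline{\pi}_0$, and identifying $\underline{\pi}_0 N_H^G \HH\underline{M}$ with $N_H^G \underline{M}$ via Corollary~\ref{cor:norm-1der} while identifying $\underline{\pi}_0 N_H^G \CC(\HH\underline{M}) \cong \underline{\pi}_0 \CC(G_+ \wedge_H \HH\underline{M})$ with $\TT(\Ind_H^G \underline{M})$ via Corollary~\ref{cor:freetambara}, one obtains a natural transformation $\rho^{G,H}_{\underline{M}} : \TT(\Ind_H^G \underline{M}) \to N_H^G \underline{M}$ satisfying $\rho^{G,H} \circ \iota^{G,H} = \mathrm{id}$.

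Next I would verify that the target $\TT \circ \Ind_H^G$ preserves both classes of colimits. The induction functor $\Ind_H^G$ is left adjoint to $\Res_H^G$ on Mackey functors, hence preserves all colimits. For $\TT$, the argument is the one already invoked in Corollary~\ref{cor:sympowsomecolim}: $\TT$ is left adjoint to the forgetful functor $Tamb(G) \to Mack(G)$, and since this forgetful functor creates direct limits and reflexive coequalizers, $\TT$ sends them to the corresponding colimits in $Mack(G)$.

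Finally, for any diagram $D$ of one of the two shapes with colimit $\underline{L}$, the natural transformations $\iota^{G,H}$ and $\rho^{G,H}$ realize the canonical comparison map $\colim N_H^G D \to N_H^G \underline{L}$ as a retract of the isomorphism $\colim \TT(\Ind_H^G D) \to \TT(\Ind_H^G \underline{L})$, so it is itself an isomorphism. I expect the only real obstacle to be keeping naturality straight when constructing $\rho^{G,H}$ through the chain of derived and homotopy-categorical identifications; once the splitting is pinned down, the rest of the argument is purely categorical.
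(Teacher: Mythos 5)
Your proposal is correct and follows essentially the same route as the paper: the paper deduces the lemma directly from the existence of the coretraction $\iota_{\underline{M}}^{G,H} : N_H^G \underline{M} \to \TT(\Ind_H^G \underline{M})$, together with the facts that $\Ind_H^G$ and $\TT$ preserve direct limits and reflexive coequalizers (the latter exactly as in Corollary~\ref{cor:sympowsomecolim}), so $N_H^G$ is a natural retract of a colimit-preserving functor. Your extra care in constructing the natural splitting $\rho^{G,H}$ from the projection onto the degree-one wedge summand just makes explicit what the paper leaves implicit in calling $\iota^{G,H}$ a coretraction.
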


For $X \in \Fin_G$ let $\TT^{G,H} \underline{M} (X)$ denote the subset of $\TT (\Ind_H^G \underline{M}) (X)$ whose elements are differences of pairs $(G/H \times V \xrightarrow{\pi_2} V \xrightarrow{j} X, u)$ such that $u$ is of the form $u = (u_1, 0)$ under the correspondence below.
\begin{align*}
	\Ind_H^G \underline{M} (G/H {\times} V) \cong \underline{M} (\Res_H^G V) \bigtimes \underline{M} \big((\Res_H^G (G/H) {-} H) {\times} \Res_H^G V\big)
\end{align*}

This is equivalent to the corresponding map
\begin{align*}
	G_+ \wedge_H (\Sigma^{\infty} (\Res_H^G V)_+) \cong \Sigma^{\infty} (G/H \times V)_+ \xrightarrow{u} G_+ \wedge_H \HH\underline{M}
\end{align*}

in $Ho(Sp_G)$ being of the form $G_+ \wedge_H (u_1)$. We shall write such elements as $u = G \times_H u_1$. It is clear that $\TT^{G,H} \underline{M}$ is closed under transfers. That $\TT^{G,H} \underline{M}$ is closed under restrictions follows from the fact that, for any map of finite $G$-sets $f : W \to V$, the diagram
\begin{align*}
\xymatrix{
 G/H \times V \ar[r]^-{\pi_2} & V \\
 G/H \times W \ar[u]^-{1 \times f} \ar[r]_-{\pi_2} & W \ar[u]_-{f} }
\end{align*}

is a pullback. We can now identify the image of $\iota_{\underline{M}}^{G,H}$.

\begin{thm}\label{thm:normimage}
The image of $\iota_{\underline{M}}^{G,H}$ is $\TT^{G,H} \underline{M}$. That is, we have a natural isomorphism
\begin{align*}
	\iota_{\underline{M}}^{G,H} : N_H^G \underline{M} \xrightarrow{\cong} \TT^{G,H} \underline{M}.
\end{align*}
\end{thm}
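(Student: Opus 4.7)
The map $\iota_{\underline{M}}^{G,H}$ is constructed as a coretraction and is therefore automatically a split injection of Mackey functors, so the content of the theorem is the identification of its image with the sub-Mackey functor $\TT^{G,H}\underline{M}$. My plan is to split the proof into two halves: first show that the image is contained in $\TT^{G,H}\underline{M}$, then exhibit an explicit preimage for each generator of $\TT^{G,H}\underline{M}$.

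For the containment, I would argue topologically. An element of $N_H^G \underline{M}(X)$ is represented by a homotopy class $\xi : \Sigma^{\infty} X_+ \to N_H^G \HH\underline{M}$ in $Ho(Sp_G)$, and $\iota_{\underline{M}}^{G,H}(\xi)$ is recorded by post-composing with $N_H^G (\iota_{\underline{M}}) : N_H^G \HH\underline{M} \to N_H^G \CC (\HH\underline{M}) \cong \CC (G_+ \wedge_H \HH\underline{M})$. Since $\iota_{\underline{M}}$ includes $\HH\underline{M}$ as the degree-one wedge summand of $\CC(\HH\underline{M})$ and $N_H^G$ is symmetric monoidal, $N_H^G(\iota_{\underline{M}})$ factors through the degree-$|G/H|$ wedge summand $(G_+ \wedge_H \HH\underline{M})^{\wedge |G/H|}/\Sigma_{|G/H|}$ of $\CC(G_+ \wedge_H \HH\underline{M})$, the factoring map being the $|G/H|$-fold smash of units $\HH\underline{M} \to G_+ \wedge_H \HH\underline{M}$ twisted by the $G$-action on $G/H$. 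Translating this back via Corollary~\ref{cor:identsympow}, the resulting class in $\TT(\Ind_H^G \underline{M})(X)$ must be representable by a pair $(G/H \times V \xrightarrow{\pi_2} V \xrightarrow{j} X, u)$ with $u = G \times_H u_1$, which is precisely the defining form of an element of $\TT^{G,H}\underline{M}(X)$.

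For surjectivity onto $\TT^{G,H}\underline{M}$, I would construct explicit preimages for the generators. Given a pair $(G/H \times V \xrightarrow{\pi_2} V \xrightarrow{j} X, G \times_H u_1)$ with $u_1 : \Sigma^{\infty} (\Res_H^G V)_+ \to \HH\underline{M}$ a homotopy class in $Ho(Sp_H)$, apply $N_H^G$ and precompose with the canonical $G$-equivariant diagonal map $\Sigma^{\infty} V_+ \to \Sigma^{\infty} (N_H^G \Res_H^G V)_+ \cong N_H^G \Sigma^{\infty} (\Res_H^G V)_+$ to obtain a homotopy class $\Sigma^{\infty} V_+ \to N_H^G \HH\underline{M}$, i.e., an element $m' \in N_H^G \underline{M}(V)$; then set $m \defeq t_j(m')$. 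A diagram chase using the naturality of $\iota_{\underline{M}}$ together with the explicit construction of $\iota_{\underline{M}}^{G,H}$ should yield $\iota_{\underline{M}}^{G,H}(m) = (G/H \times V \xrightarrow{\pi_2} V \xrightarrow{j} X, G \times_H u_1)$. Since both sides are abelian-group-valued, hitting these generators is enough.

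The principal obstacle will be keeping precise track of the three interlocking identifications involved: the adjunction isomorphism $N_H^G \CC(\HH\underline{M}) \cong \CC(G_+ \wedge_H \HH\underline{M})$, the isomorphism $\underline{\pi}_0 \CC(G_+ \wedge_H \HH\underline{M}) \cong \TT(\Ind_H^G \underline{M})$ from Corollary~\ref{cor:freetambara}, and the splitting of $\Ind_H^G \underline{M}(G/H \times V)$ that defines $\TT^{G,H}\underline{M}$. A potentially cleaner route is to reduce the entire statement to the case of representable Mackey functors $\underline{M} = [\spacedash, T]$: all of $N_H^G$, $\Ind_H^G$, $\TT$, and the subfunctor $\TT^{G,H}\underline{M} \subset \TT(\Ind_H^G \underline{M})$ commute with reflexive coequalizers and arbitrary direct sums in $\underline{M}$ (by Lemma~\ref{lem:normsomecolim}, Corollary~\ref{cor:sympowsomecolim}, and the pointwise nature of the defining splitting), and for $\underline{M} = [\spacedash, T]$ we have $N_H^G [\spacedash, T] = [\spacedash, N_H^G T]$, so both containment and surjectivity can be verified by direct inspection on spans $X \leftarrow W \to N_H^G T$.
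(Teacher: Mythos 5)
Your overall decomposition (containment then surjectivity) matches the paper's, and your surjectivity idea---precomposing $N_H^G(u_1)$ with the diagonal $\Sigma^\infty V_+ \to N_H^G \Sigma^\infty(\Res_H^G V)_+$---is essentially the paper's construction with $\Delta_V : V \to N_H^G(\Res_H^G V)$. But the containment argument has a genuine gap. Factoring $N_H^G(\iota_{\underline{M}})$ through the degree-$|G/H|$ wedge summand of $\CC(G_+ \wedge_H \HH\underline{M})$ only places the image inside $\TT^{|G/H|}(\Ind_H^G \underline{M})$, which is strictly larger than $\TT^{G,H}\underline{M}$: a degree-$|G/H|$ element is a difference of pairs $(U \xrightarrow{i} V \xrightarrow{j} X, u)$ where $i$ may be any degree-$|G/H|$ map, not necessarily $\pi_2 : G/H \times V \to V$, and $u$ is arbitrary, not necessarily of the form $G \times_H u_1$. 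Your sentence ``the resulting class $\ldots$ must be representable by a pair $\ldots$ with $u = G \times_H u_1$'' is asserting exactly what needs proof; the explicit form of the factoring map does carry this information, but extracting it through the chain of identifications (the adjunction isomorphism, Corollary~\ref{cor:freetambara}, the splitting of $\Ind_H^G\underline{M}$) is where the work lies, and you have not done it.

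The paper handles containment by precisely the route you offer as an alternative: reduce $\underline{M}$ to represented Mackey functors via Lemma~\ref{lem:normpresurj} (norms preserve surjections) and Lemma~\ref{lem:normsomecolim} (norms commute with direct limits), then compute the universal element of $N_H^G([\spacedash, T])$ explicitly at the spectrum level and read off that it lands as the norm along $\pi_2$ of the map built from $\pi_H : \Res_H^G(N_H^G T) \to T$. If you pursue this route, you should also verify that the sub-Mackey-functor $\TT^{G,H}$ (not just $\TT$ and $\Ind_H^G$) commutes with direct limits and that $\TT^{G,H}\underline{M}_1 \to \TT^{G,H}\underline{M}_2$ is surjective whenever $\underline{M}_1 \to \underline{M}_2$ is; both are true and easy since the defining condition on pairs is natural in $\underline{M}$ and is checked one pair at a time, but neither is supplied by the lemmas you cite. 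Finally note that the paper does not reduce surjectivity to representables---after containment it proceeds directly via $\Delta_V$ and the pullback square~\ref{eq:deltapullback}---so your ``potentially cleaner route'' is the paper's route only for half the theorem.
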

\begin{proof}
First we show that the image of $\iota_{\underline{M}}^{G,H}$ is contained in $\TT^{G,H} \underline{M}$. We begin by choosing a collection of finite $H$-sets $T_{\alpha}$ and a surjection
\begin{align*}
	f : \oplus_{\alpha} [\spacedash, T_{\alpha}] \to \underline{M}.
\end{align*}
The map $N_H^G (f)$ is a surjection by Lemma~\ref{lem:normpresurj}, so it suffices to show the inclusion for $\oplus_{\alpha} [\spacedash, T_{\alpha}]$. Letting $A$ denote an arbitrary, finite subset of $\alpha$'s we define
\begin{align*}
	T_A \defeq \coprod_{\alpha \in A} T_{\alpha},
\end{align*}
so that $\oplus_{\alpha} [\spacedash, T_{\alpha}] \cong \varinjlim_A [\spacedash, T_A]$. Now the norm functor on $Mack(H)$ commutes with direct limits by Lemma~\ref{lem:normsomecolim}. Then, examining the commutative diagram below,
\begin{align*}
\xymatrix{
 \varinjlim_A N_H^G ([\spacedash, T_A]) \ar[d]_-{\cong} \ar[rr]^-{\varinjlim_A \iota^{G,H}_{[\spacedash, T_A]}} && \varinjlim_A \TT (\Ind_H^G [\spacedash, T_A]) \ar[d]^-{\cong} \\
 N_H^G (\varinjlim_A [\spacedash, T_A]) \ar[rr]_-{\iota^{G,H}_{\varinjlim_A [\spacedash, T_A]}} && \TT (\Ind_H^G (\varinjlim_A [\spacedash, T_A])) }
\end{align*}
we are reduced to the case of a represented Mackey functor $[\spacedash, T]$. We need only check that the universal element of $N_H^G ([\spacedash, T]) \cong [\spacedash, N_H^G T]$ maps into $\TT^{G,H} ([\spacedash, T])$. By Proposition~\ref{prop:-1connsamesympow} and Corollary~\ref{cor:norm-1der}, we may work with the spectra $N_H^G (F_1 S^1 \wedge T_+)$ and $\CC(F_1 S^1 \wedge (G \times_H T)_+)$, rather than $N_H^G \HH([\spacedash, T])$ and $\CC(G_+ \wedge_H \HH([\spacedash, T]))$. Here, $F_1 S^1$ denotes the free orthogonal spectrum on $S^1$ in level $\RR^1$. Denoting by $\pi_H$ the ($H$-equivariant) projection below,
\begin{align*}
	\pi_H : \Res_H^G (N_H^G T) &\to T \\
	                       \{ t_{g_i H} \} &\mapsto t_H
\end{align*}
one can now see directly that our universal element maps to the norm of the composite
\begin{gather*}
	F_1 S^1 \wedge (G/H \times N_H^G T)_+ \cong G_+ \wedge_H (F_1 S^1 \wedge \Res_H^G (N_H^G T)_+) \\
	\hspace{1cm} \xrightarrow{G_+ \wedge_H (1 \wedge {\pi_H}_+)} G_+ \wedge_H (F_1 S^1 \wedge T_+) \to \CC(G_+ \wedge_H (F_1 S^1 \wedge T_+))
\end{gather*}
along the projection
\begin{align*}
	\pi_2 : G/H \times N_H^G T \to N_H^G T.
\end{align*}
\indent It remains to show that $\iota_{\underline{M}}^{G,H}$ maps \emph{onto} $\TT^{G,H} \underline{M}$. It suffices to show that the pairs $(G/H \times V \xrightarrow{\pi_2} V \xrightarrow{j} X, u)$ with $u = G \times_H u_1$ are in the image. Such an element is $t_j$ of $(G/H \times V \xrightarrow{\pi_2} V \xrightarrow{=} V, u)$, so we may assume that $V = X$ and $j = Id_V$. Observe that we have an isomorphism of $G$-sets as below.
\begin{align*}
	N_H^G (\Res_H^G V) &\xrightarrow{\cong} V^{G/H} \\
	            \{ v_{g_i H} \} &\mapsto \{ g_i \cdot v_{g_i H} \}
\end{align*}
We then have a map
\begin{align*}
	\Delta_V : V \to N_H^G (\Res_H^G V)
\end{align*}
which is adjoint to $G/H \to \ast$ under this identification. Note that the composite
\begin{align*}
	\Res_H^G V \xrightarrow{\Res_H^G \Delta_V} \Res_H^G N_H^G (\Res_H^G V) \xrightarrow{\pi_H} \Res_H^G V
\end{align*}
is the identity on $\Res_H^G V$, and that the diagram below is a pullback.
\begin{align}\label{eq:deltapullback}
\xymatrix{
 G/H \times V \ar[d]_-{1 \times \Delta_V} \ar[r]^-{\pi_2} & V \ar[d]^-{\Delta_V} \\
 G/H \times N_H^G (\Res_H^G V) \ar[r]_-{\pi_2} & N_H^G (\Res_H^G V) }
\end{align}
Now let $u = G \times_H u_1$ with $u_1 \in \underline{M} (\Res_H^G V)$. The first part of the proof implies that the element
\begin{align*}
	\big(G/H {\times} N_H^G (\Res_H^G V) {\xrightarrow{\pi_2}} N_H^G (\Res_H^G V) {\xrightarrow{=}} N_H^G (\Res_H^G V), G {\times_H} (r_{\pi_H} (u_1))\big)
\end{align*}
is in the image of $\iota_{\underline{M}}^{G,H}$. In fact, it is the image of the universal element under the composite
\begin{align*}
	[\spacedash, N_H^G (\Res_H^G V)] \cong N_H^G ([\spacedash, \Res_H^G V]) {\xrightarrow{N_H^G ({u_1}_*)}} N_H^G \underline{M} \xrightarrow{\iota_{\underline{M}}^{G,H}} \TT (\Ind_H^G \underline{M}).
\end{align*}
Thus, $r_{\Delta_V}$ of this element is also in the image, but since the diagram~\ref{eq:deltapullback} is a pullback this is equal to
\begin{align*}
	\big(G/H \times V \xrightarrow{\pi_2} V \xrightarrow{=} V, G \times_H (r_{\Res_H^G \Delta_V} r_{\pi_H} (u_1))\big).
\end{align*}
Finally, since $\pi_H \circ \Res_H^G \Delta_V$ is the identity, we have
\begin{align*}
	G \times_H (r_{\Res_H^G \Delta_V} r_{\pi_H} (u_1)) = G \times_H u_1 = u.
\end{align*}
\end{proof}

\indent \emph{Remark:} Note that we now have a description of the norm that makes no mention of a set of coset representatives. In fact, given any two sets of coset representatives, there is a natural isomorphism between the corresponding norm functors, and this isomorphism fits into a commutative triangle with the corresponding maps $\iota_{\underline{M}}^{G,H}$.\\
\indent Next we examine the geometric fixed points of the norm, using the description given in Theorem~\ref{thm:geomfixpfreetamb}. Consider
\begin{align*}
	\Ind_H^G \underline{M} \cong \underline{\pi}_0 (G_+ \wedge_H \HH\underline{M}),
\end{align*}

and let $K$ be a subgroup of $G$. One can see topologically that the geometric fixed points $\Phi^K$ of this Mackey functor are zero unless $K$ is subconjugate to $H$. Now suppose $K$ is subconjugate to $H$. If $|K|$ is smaller than $|H|$, then $|G/K| > |G/H|$. Otherwise $K$ is conjugate to $H$. Again, one can see topologically that
\begin{align*}
	\Phi^H (\Ind_H^G \underline{M}) \cong \bigoplus_{W(H)} \Phi^H \underline{M},
\end{align*}

where the Weyl group $W(H)$ acts by permuting the summands. Hence we have
\begin{align*}
	\Phi^H (\Ind_H^G \underline{M}) / W(H) \cong \Phi^H \underline{M}.
\end{align*}

Choosing $H$ to represent its conjugacy class, Theorem~\ref{thm:geomfixpfreetamb} gives
\begin{align*}
	\Phi^G \TT (\Ind_H^G \underline{M}) \cong \ZZ \oplus \Phi^H \underline{M} \oplus ...,
\end{align*}

where $\ZZ$ is in degree zero, $\Phi^H \underline{M}$ is in degree $|G/H|$ and the other terms have higher degree. It follows from Corollary~\ref{cor:geomfixpsympow} that the summand $\Phi^H \underline{M}$ is $\Phi^G \TT^{|G/H|} (\Ind_H^G \underline{M})$. Now the composite
\begin{align*}
	\iota_{\underline{M}}^{G,H} : N_H^G \underline{M} \xrightarrow{\cong} \TT^{G,H} \underline{M} \xrightarrow{\subseteq} \TT^{|G/H|} (\Ind_H^G \underline{M})
\end{align*}

is a coretraction, so by applying the functor $\Phi^G$ we obtain another coretraction
\begin{align*}
	(\Xi^{|G/H|}_{\Ind_H^G \underline{M}})^{-1} \circ \Phi^G (\iota_{\underline{M}}^{G,H}) : \Phi^G N_H^G \underline{M} \to \Phi^H \underline{M}.
\end{align*}

We can now identify the geometric fixed points of the norm.

\begin{prop}\label{prop:geomfixpnorm}
There is a natural isomorphism
\begin{align*}
	(\Xi^{|G/H|}_{\Ind_H^G \underline{M}})^{-1} \circ \Phi^G (\iota_{\underline{M}}^{G,H}) : \Phi^G N_H^G \underline{M} \xrightarrow{\cong} \Phi^H \underline{M}.
\end{align*}
\end{prop}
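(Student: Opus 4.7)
The strategy is to exploit the fact, just noted in the text, that the map $(\Xi^{|G/H|}_{\Ind_H^G \underline{M}})^{-1} \circ \Phi^G(\iota_{\underline{M}}^{G,H})$ is a coretraction, so it is automatically injective and only surjectivity remains. Since $\Xi^{|G/H|}_{\Ind_H^G \underline{M}}$ is itself a bijection onto $\Phi^G \TT^{|G/H|}(\Ind_H^G \underline{M})$ by Corollary~\ref{cor:geomfixpsympow}, I would reduce to showing that every element of $\Phi^G \TT^{|G/H|}(\Ind_H^G \underline{M})$ already lies in the image of $\Phi^G(\iota_{\underline{M}}^{G,H})$.

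The first step would be to pin down the shape of $\Phi^G \TT^{|G/H|}(\Ind_H^G \underline{M})$ via Theorem~\ref{thm:geomfixpfreetamb}. The preamble to the proposition has already established that $\Phi^K \Ind_H^G \underline{M}$ vanishes unless $K$ is subconjugate to $H$, and that a subconjugate $K$ with $|K| \geq |H|$ is forced to be conjugate to $H$. Hence in the graded symmetric algebra description, the degree $n = |G/H|$ part reduces to the linear piece $\Phi^H(\Ind_H^G \underline{M})/W(H) \cong \Phi^H \underline{M}$, and every class is of the form $\Xi_{\Ind_H^G \underline{M}}(H)([x])$ for a single $[x] \in \Phi^H \underline{M}$.

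The next step is to compute $\Xi_{\Ind_H^G \underline{M}}(H)([x])$ explicitly. Under the splitting
\begin{align*}
\Ind_H^G \underline{M}(G/H) \cong \underline{M}(H/H) \times \underline{M}((\Res_H^G G/H) - H)
\end{align*}
a class $[x] \in \Phi^H \underline{M}$ lifts to $(x, 0)$, which in the notation of Theorem~\ref{thm:normimage} is precisely $G \times_H x$. Unpacking the definition of $\Xi$ from Section~\ref{sec:sympow} gives
\begin{align*}
\Xi_{\Ind_H^G \underline{M}}(H)([x]) = \bigl[\bigl(G/H \to \ast \xrightarrow{=} \ast,\ G \times_H x\bigr)\bigr].
\end{align*}
This representative is manifestly of the defining form for $\TT^{G,H} \underline{M}$ (take $V = \ast$, $X = G/G$, $u_1 = x$), and hence by Theorem~\ref{thm:normimage} it belongs to the image of $\iota_{\underline{M}}^{G,H}$. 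Applying $\Phi^G$ delivers the needed surjectivity.

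The main obstacle I anticipate is purely bookkeeping: verifying that the abstract identification $\Phi^H(\Ind_H^G \underline{M})/W(H) \cong \Phi^H \underline{M}$ really is compatible with the concrete splitting of $\Ind_H^G \underline{M}(G/H)$ used to define $\TT^{G,H} \underline{M}$, so that $[x]$ genuinely lifts to $(x,0) = G \times_H x$ rather than to some Weyl-twisted companion. Both identifications are ultimately read off the same $H$-orbit decomposition of $\Res_H^G(G/H)$ (singling out the orbit through the identity coset, which is $H/H$), so the compatibility is immediate from the definitions, but it is the one place where care is required to avoid a spurious permutation ambiguity.
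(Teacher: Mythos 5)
Your argument is correct, but it is not the route the paper takes. The paper's proof also starts from the coretraction, but then observes that $\Phi^G N_H^G$ inherits right exactness (it preserves direct sums and cokernels because $\Phi^H$ does and the map is a natural retract), reduces to the representable case $\underline{M} = [\spacedash, H/K]$, and finishes with the elementary identification $(N_H^G T)^G \cong T^H$, so that both sides are free abelian of the same finite rank and a coretraction $\ZZ \to \ZZ$ must be an isomorphism. You instead prove surjectivity directly for arbitrary $\underline{M}$: you use the preamble's degree analysis of Theorem~\ref{thm:geomfixpfreetamb} to see that $\Phi^G \TT^{|G/H|}(\Ind_H^G \underline{M})$ is exhausted by classes $\Xi_{\Ind_H^G \underline{M}}(H)([\,\tilde{x}\,])$, lift each such class to a representative $(x,0) = G \times_H x$ supported on the identity-coset component of $\Ind_H^G \underline{M}(G/H)$, and observe that the resulting element $(G/H \to \ast \xrightarrow{=} \ast, G \times_H x)$ lies in $\TT^{G,H}\underline{M}(G/G)$, hence in the image of $\iota_{\underline{M}}^{G,H}$ by Theorem~\ref{thm:normimage}; injectivity is the coretraction. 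This is sound: the only point needing care is the one you flag, namely that modulo transfers and the $W(H)$-action every element of $\Phi^H(\Ind_H^G \underline{M})$ is represented by an element of the form $(x,0)$ — which follows from the $H$-orbit decomposition of $\Res_H^G(G/H)$ (non-fixed orbits contribute transfers, other fixed orbits are translated to the identity coset by Weyl elements), and note that even a Weyl-twisted identification would not hurt, since the statement only requires that $\Phi^G(\iota_{\underline{M}}^{G,H})$ hit all of $\Phi^G \TT^{|G/H|}(\Ind_H^G \underline{M})$. Comparing the two: the paper's reduction to representables is formally lighter on bookkeeping and establishes the right exactness of $\Phi^G N_H^G$, which it reuses later (e.g., for $\mu^{G,H}$ in Subsection~\ref{subsec:norm}); your argument avoids the reduction entirely, needs no extra input like $(N_H^G T)^G \cong T^H$, and has the virtue of exhibiting explicit preimages — in effect constructing the inverse $[x] \mapsto [(G/H \to \ast \xrightarrow{=} \ast, G \times_H x)]$ that the paper only records in the remark following the proposition — at the cost of leaning more heavily on the full strength of Theorem~\ref{thm:geomfixpfreetamb}.
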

\begin{proof}
We know that this map is a natural coretraction. Hence, since $\Phi^H$ preserves direct sums and cokernels, so does $\Phi^G N_H^G$. Expressing an arbitrary $\underline{M}$ as a cokernel of a map between direct sums of represented Mackey functors then reduces us to the case $\underline{M} = [\spacedash, H/K]$, where $K$ is a subgroup of $H$. We have the following.
\begin{gather*}
	\Phi^G N_H^G ([\spacedash, H/K]) \cong \Phi^G ([\spacedash, N_H^G (H/K)]) \cong \ZZ \{ (N_H^G (H/K))^G \} \\
	\Phi^H ([\spacedash, H/K]) \cong \ZZ \{ (H/K)^H \}
\end{gather*}
Now, for arbitrary $H$-sets $T$ there is a natural isomorphism
\begin{align*}
	(N_H^G T)^G \cong T^H.
\end{align*}
Hence, when $K \neq H$ both groups are zero, and when $K = H$ the map is a coretraction from $\ZZ$ to $\ZZ$, and therefore an isomorphism.
\end{proof}

\indent \emph{Remark:} One can trace through the arguments above to find the inverse isomorphism
\begin{align*}
	\Phi^H \underline{M} \xrightarrow{\cong} \Phi^G N_H^G \underline{M}.
\end{align*}
It is induced by the function indicated below (which is \emph{not} a homomorphism).
\begin{align*}
	\underline{M} (H/H) &\to N_H^G \underline{M} (G/G) \\
	(S^0 \xrightarrow{u} \HH\underline{M}) &\mapsto (S^0 \cong N_H^G S^0 \xrightarrow{N_H^G (u)} N_H^G \HH\underline{M})
\end{align*}
\indent \emph{Remark:} For arbitrary $X \in Ho(Sp_H)$ there is a natural isomorphism
\begin{align*}
	\Phi^G N_H^G (X) \cong \Phi^H X
\end{align*}
in Ho(Sp). See~\cite{HHR}.

\section{A G-Symmetric Monoidal Structure on Mackey Functors}\label{sec:gsymmon}

In this section we put a \emph{$G$-symmetric monoidal structure} on $Mack(G)$, and give an algebraic description. A $G$-symmetric monoidal structure is, roughly speaking, a symmetric monoidal structure with additional data that allows us to form iterated monoidal products where the group $G$ acts on the factors. If $\mathcal{C}$ is any symmetric monoidal category, then the category of $G$-objects in $\emph{C}$, which we denote by $G\mathcal{C}$, has a $G$-symmetric monoidal structure. Letting $T$ be an arbitrary finite $G$-set and $X \in G\mathcal{C}$, we define
\begin{align*}
	X^{\otimes T} \defeq \bigotimes_{t \in T} X,
\end{align*}

with $G$ acting by permuting the factors and simultaneously acting on each factor. Now $Sp_G$ is equivalent to the category of $G$-objects in $Sp$, so we can define a $G$-symmetric monoidal structure on $Sp_G$ by
\begin{align*}
	(X, T) \mapsto X^{\wedge T}.
\end{align*}

To analyze this homotopically, we begin by breaking $T$ down into orbits. If $T \cong \coprod_i G/H_i$, then we have a natural isomorphism
\begin{align*}
	X^T \cong \bigwedge_i X^{\wedge G/H_i}.
\end{align*}

Furthermore, by choosing sets of coset representatives $\{ g_{ij} \}$ for each of the $G/H_i$'s, we obtain natural isomorphisms which we indicate below.
\begin{align*}
	N_{H_i}^G \Res_{H_i}^G X &\xrightarrow{\cong} X^{\wedge G/H_i} \\
	\wedge_{g_{ij} H_i} x_{g_{ij} H_i} &\mapsto \wedge_{g_{ij} H_i} (g_{ij} \cdot x_{g_{ij} H_i})
\end{align*}

Combining the above two isomorphisms, we find
\begin{align*}
	X^T \cong \bigwedge_i (N_{H_i}^G \Res_{H_i}^G X).
\end{align*}

Since norm functors preserve cofibrancy and weak equivalences between cofibrant spectra, we have the following.

\begin{lem}\label{lem:gsymmonder}
For any finite $G$-set $T$ the functor
\begin{align*}
	X \mapsto X^{\wedge T}
\end{align*}
preserves cofibrancy and weak equivalences between cofibrant $G$-spectra.
\end{lem}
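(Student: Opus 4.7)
The plan is to reduce the statement to properties already established for norms and restrictions, via the orbit decomposition recalled just above the lemma. Choosing a decomposition $T \cong \coprod_i G/H_i$ together with coset representatives for each factor yields a natural isomorphism
\begin{align*}
X^{\wedge T} \cong \bigwedge_i N_{H_i}^G \Res_{H_i}^G X,
\end{align*}
so it suffices to show that each of $\Res_{H_i}^G$, the norm $N_{H_i}^G$, and the external smash product preserves cofibrancy and weak equivalences between cofibrant objects.

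For restriction, the $\mathbb{S}$-model structures of~\cite{Stolz} are chosen precisely so that $\Res_{H_i}^G$ is a left Quillen functor (its right adjoint being coinduction); in particular it sends cofibrant $G$-spectra to cofibrant $H_i$-spectra, and it preserves all weak equivalences, since these are detected after further restriction to subgroups. For the norm, the required preservation properties are exactly the content of Section~I.5 of~\cite{Ull}, as recalled at the start of Section~\ref{sec:norm} preceding Corollary~\ref{cor:norm-1der}. For the iterated smash, the stable model structure on $Sp_G$ is symmetric monoidal, so the smash of any finite family of cofibrant objects is cofibrant, and the pushout-product axiom guarantees that smashing preserves weak equivalences between cofibrant objects in each variable.

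The main subtlety is the first assertion: that $\Res_{H_i}^G$ really preserves cofibrancy in the model structures at hand. This is not automatic from $(\Ind_{H_i}^G, \Res_{H_i}^G)$ being a Quillen pair, and it is the reason for adopting the $\mathbb{S}$-model structures of~\cite{Stolz} rather than those of~\cite{MM}. Once this is in place, combining the three preservation properties above yields the lemma.
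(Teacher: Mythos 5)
Your proof is correct and follows essentially the same route as the paper: decompose $T$ into orbits, use the isomorphism $X^{\wedge T} \cong \bigwedge_i N_{H_i}^G \Res_{H_i}^G X$, and reduce to the preservation properties of norm, restriction, and smash. The paper's own proof is a single sentence invoking just the norm step; you spell out the restriction and monoidal-model-category ingredients that the paper treats as implicit, which is a reasonable and faithful expansion.
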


Lemma~\ref{lem:norm-1conn} implies the following.

\begin{cor}\label{cor:gsymmon-1conn}
If $f : X \to Y$ is a map of cofibrant, $(-1)$-connected spectra which induces an isomorphism on $\underline{\pi}_0$, then $f^{\wedge T}$ induces an isomorphism on $\underline{\pi}_0$.
\end{cor}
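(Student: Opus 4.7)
The plan is to exploit the orbit decomposition already displayed just above the statement: if $T \cong \coprod_{i=1}^r G/H_i$, then
$$X^{\wedge T} \cong \bigwedge_{i=1}^r N_{H_i}^G \Res_{H_i}^G X,$$
and naturally in $X$. This reduces the corollary to three preservation properties of $\underline{\pi}_0$-isomorphisms between cofibrant $(-1)$-connected $G$-spectra: under restriction, under the derived norm, and under smash product. Restriction is immediate, since $\Res_{H_i}^G$ preserves cofibrancy, weak equivalences between cofibrant objects, $(-1)$-connectedness, and $\underline{\pi}_0$. The norm step is exactly Lemma~\ref{lem:norm-1conn}, together with Corollary~I.5.8 of~\cite{Ull} to retain $(-1)$-connectedness of the targets.

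Setting $g_i \defeq N_{H_i}^G \Res_{H_i}^G f$, each $g_i$ is then a $\underline{\pi}_0$-isomorphism between cofibrant $(-1)$-connected $G$-spectra, and it remains to show that the smash product $g_1 \wedge \cdots \wedge g_r$ is again a $\underline{\pi}_0$-isomorphism. By induction on $r$ and the factorization
$$g_1 \wedge \cdots \wedge g_r = (g_1 \wedge \cdots \wedge g_{r-1} \wedge 1) \circ (1 \wedge \cdots \wedge 1 \wedge g_r),$$
this reduces to a single two-factor claim: smashing with a fixed cofibrant $(-1)$-connected $G$-spectrum $B$ preserves $\underline{\pi}_0$-isomorphisms between cofibrant $(-1)$-connected $G$-spectra. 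To prove this last claim, given such $g : A \to A'$, the long exact sequence of homotopy Mackey functors shows the homotopy cofiber $C$ of $g$ is $0$-connected. Smashing the cofiber sequence $A \to A' \to C$ with $B$ and invoking the equivariant connectivity estimate for smash products---namely, that the smash of a $0$-connected $G$-spectrum with a $(-1)$-connected $G$-spectrum is $0$-connected---shows that the induced map $\underline{\pi}_0 (A \wedge B) \to \underline{\pi}_0 (A' \wedge B)$ is an isomorphism.

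The main obstacle I expect is this last equivariant connectivity estimate, which is the smash-product analog of the norm statement supplied by Theorem~I.5.9 of~\cite{Ull} and was used in the proof of Lemma~\ref{lem:normpresurj}. It follows from a cellular induction on a cofibrant model of the $(-1)$-connected factor, whose cells in the Stolz model structure have the form $G/H_+ \wedge S^n$ with $n \geq 0$, so that smashing with such a cell preserves $0$-connectedness. Once this is in hand, the corollary is simply the composition of the three preservation properties along the factors of $\bigwedge_i N_{H_i}^G \Res_{H_i}^G$ in the orbit decomposition.
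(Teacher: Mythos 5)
Your overall route is the paper's: the proof in the text is literally the orbit decomposition $X^{\wedge T} \cong \bigwedge_i N_{H_i}^G \Res_{H_i}^G X$ displayed above the statement together with Lemma~\ref{lem:norm-1conn} applied factorwise, with the smash-product compatibility left implicit (it is spelled out later, in the proof of Corollary~\ref{cor:gsymmonrighthtpytype}, via flatness and the identification of $\underline{\pi}_0$ of a smash of connective spectra). The gap is in your two-factor lemma. From the cofiber sequence $A \wedge B \to A' \wedge B \to C \wedge B$ the long exact sequence reads
\begin{align*}
	\underline{\pi}_1 (C \wedge B) \to \underline{\pi}_0 (A \wedge B) \to \underline{\pi}_0 (A' \wedge B) \to \underline{\pi}_0 (C \wedge B),
\end{align*}
so the $0$-connectedness of $C \wedge B$ only kills the right-hand term and yields \emph{surjectivity} of $\underline{\pi}_0 (g \wedge 1_B)$. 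Injectivity would require the image of $\underline{\pi}_1 (C \wedge B) \to \underline{\pi}_0 (A \wedge B)$ to vanish, and nothing in your argument controls $\underline{\pi}_1 (C \wedge B)$; it is nonzero in general (already nonequivariantly: $C$ being $0$-connected says nothing about $\pi_1$). So as written the key step proves only half of what you need.

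The statement you want is true, and the repair is exactly the maneuver the paper uses to deduce Lemma~\ref{lem:norm-1conn} from Theorem~I.5.9 of~\cite{Ull}: run your connectivity estimate on the Postnikov cofiber sequences $Post_1 A \to A \to Post^0 A$ and $Post_1 A' \to A' \to Post^0 A'$ rather than on the cofiber of $g$. Smashing the first with $B$, both $\underline{\pi}_0$ and $\underline{\pi}_{-1}$ of $Post_1 A \wedge B$ vanish (it is $0$-connected), so $\underline{\pi}_0 (A \wedge B) \cong \underline{\pi}_0 ((Post^0 A) \wedge B)$, and likewise for $A'$; since $g$ is a $\underline{\pi}_0$-isomorphism of $(-1)$-connected spectra, $Post^0 A \to Post^0 A'$ is a weak equivalence, and after replacing these truncations by cofibrant models, smashing with the cofibrant $B$ preserves that equivalence. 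Naturality of the comparison then gives that $\underline{\pi}_0 (g \wedge 1_B)$ is an isomorphism. (Equivalently, one can quote that for $(-1)$-connected $A, B$ one has $\underline{\pi}_0 (A \wedge B) \cong \underline{\pi}_0 A \otimes \underline{\pi}_0 B$ naturally, which is how the paper disposes of the same point in Corollary~\ref{cor:gsymmonrighthtpytype}.) With that lemma corrected, your reduction via restriction, Lemma~\ref{lem:norm-1conn}, and induction on the number of orbit factors goes through and coincides with the paper's intended argument.
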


Hence, for any $T \in \Fin_G$ we can define a functor as below,
\begin{align*}
	(\spacedash)^{\otimes T} : Mack(G) &\to Mack(G) \\
	                                    \underline{M} &\mapsto \underline{\pi}_0 (\HH\underline{M})^{\wedge T}
\end{align*}

and if $T \cong \coprod_i G/H_i$ then we have a natural isomorphism
\begin{align*}
	\underline{M}^{\otimes T} \cong \bigotimes_i N_{H_i}^G \Res_{H_i}^G \underline{M}.
\end{align*}

We obtain the following by considering Postnikov sections $Post^0$.

\begin{cor}\label{cor:gsymmon-1der}
For cofibrant, $(-1)$-connected spectra $X$ there is a natural isomorphism
\begin{align*}
	\underline{\pi}_0 (X^{\wedge T}) \xrightarrow{\cong} (\underline{\pi}_0 X)^{\otimes T}.
\end{align*}
\end{cor}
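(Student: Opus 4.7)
The plan is to mirror the strategies used in Proposition~\ref{prop:-1connsamesympow} and Corollary~\ref{cor:norm-1der}: reduce to the Eilenberg MacLane case by comparing $X$ with its zeroth Postnikov section. Given a cofibrant, $(-1)$-connected spectrum $X$, choose a positive cofibrant model $\HH \underline{\pi}_0 X$ of the Eilenberg MacLane spectrum and a map
\[ \eta_X \colon X \to \HH \underline{\pi}_0 X \]
in $Ho(Sp_G)$ which induces the identity on $\underline{\pi}_0$; such a map is the standard Postnikov truncation $X \to Post^0 X$, followed by an identification of $Post^0 X$ with $\HH \underline{\pi}_0 X$. Using functorial cofibrant replacement in the $\SSS$ model structure, we may regard $\eta_X$ as a natural transformation between functors on cofibrant $(-1)$-connected spectra.

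Since both $X$ and $\HH \underline{\pi}_0 X$ are cofibrant and $(-1)$-connected and $\underline{\pi}_0(\eta_X)$ is an isomorphism by construction, Corollary~\ref{cor:gsymmon-1conn} applies directly to tell us that
\[ \eta_X^{\wedge T} \colon X^{\wedge T} \to (\HH \underline{\pi}_0 X)^{\wedge T} \]
induces an isomorphism on $\underline{\pi}_0$. But by definition of the $G$-symmetric monoidal structure on Mackey functors,
\[ \underline{\pi}_0 (\HH \underline{\pi}_0 X)^{\wedge T} = (\underline{\pi}_0 X)^{\otimes T}, \]
so applying $\underline{\pi}_0$ to $\eta_X^{\wedge T}$ yields the desired map $\underline{\pi}_0 (X^{\wedge T}) \xrightarrow{\cong} (\underline{\pi}_0 X)^{\otimes T}$.

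For naturality in $X$, a map $f \colon X \to Y$ of cofibrant $(-1)$-connected spectra produces $\HH(\underline{\pi}_0 f) \colon \HH \underline{\pi}_0 X \to \HH \underline{\pi}_0 Y$ (on suitable cofibrant models), and the square formed with $\eta_X$ and $\eta_Y$ commutes in $Ho(Sp_G)$. Applying $(\spacedash)^{\wedge T}$ (which preserves homotopy-commutative squares between cofibrant objects by Lemma~\ref{lem:gsymmonder}) and then $\underline{\pi}_0$ shows that the isomorphism commutes with the maps induced by $f$.

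The only real subtlety is ensuring that the Postnikov section is functorial enough that the isomorphism is natural, rather than merely existing pointwise. This is handled by the functorial cofibrant-fibrant replacement machinery available in the $\SSS$ model structure on $Sp_G$, combined with the homotopical uniqueness of Postnikov sections, so I expect this to be a routine check rather than a genuine obstacle.
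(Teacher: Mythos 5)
Your proposal matches the paper's own (one-line) argument: the paper deduces this corollary from Corollary~\ref{cor:gsymmon-1conn} ``by considering Postnikov sections $Post^0$,'' i.e., by applying that corollary to a positive cofibrant model of the natural map $X \to Post^0 X \simeq \HH\underline{\pi}_0 X$, exactly as you do. Your extra remarks on naturality are a harmless elaboration of the same approach.
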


It follows that $(\spacedash)^{\otimes T}$ preserves ordinary tensor products. Next, for any $X \in \Fin_G$ one easily calculates that
 \begin{align*}
	(\Sigma^{\infty} X_+)^{\wedge T} \cong \Sigma^{\infty} (X^T)_+.
\end{align*}

Hence, we obtain
\begin{align*}
	[\spacedash, X]^{\otimes T} \cong [\spacedash, X^T].
\end{align*}

Also, since tensor products of surjections in $Mack(G)$ are surjective, Lemma~\ref{lem:normpresurj} implies the following.

\begin{lem}\label{lem:gsymmonsurj}
If $f : \underline{M}_1 \to \underline{M}_2$ is a surjective map in $Mack(G)$ and $T$ is any finite $G$-set then
\begin{align*}
	f^{\otimes T} : \underline{M}_1^{\otimes T} \to \underline{M}_2^{\otimes T}
\end{align*}
is surjective.
\end{lem}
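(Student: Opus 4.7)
The plan is to reduce the claim to Lemma~\ref{lem:normpresurj} by exploiting the orbit decomposition of $T$. Writing $T \cong \coprod_i G/H_i$ and using the natural isomorphism established just above,
\begin{align*}
	\underline{M}^{\otimes T} \cong \bigotimes_i N_{H_i}^G \Res_{H_i}^G \underline{M},
\end{align*}
the map $f^{\otimes T}$ is identified with $\bigotimes_i N_{H_i}^G \Res_{H_i}^G f$. So it suffices to verify that each constituent operation preserves surjectivity of maps of Mackey functors.

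First, restriction $\Res_{H_i}^G$ is evaluation on the transitive $G$-set $G/H_i$ (and its subsets under pullback), so it is exact and in particular preserves surjections; hence each $\Res_{H_i}^G f$ is surjective in $Mack(H_i)$. Second, Lemma~\ref{lem:normpresurj} then shows $N_{H_i}^G \Res_{H_i}^G f$ is surjective in $Mack(G)$. Third, the box product of surjections of Mackey functors is surjective (as remarked in the sentence preceding the lemma): given surjections $f_1 : \underline{M}_1 \to \underline{N}_1$ and $f_2 : \underline{M}_2 \to \underline{N}_2$, the tensor product $f_1 \otimes f_2$ factors as $(f_1 \otimes \mathrm{id}) \circ (\mathrm{id} \otimes f_2)$, and each factor is surjective since $\spacedash \otimes \underline{M}$ preserves cokernels. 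Iterating across the finitely many orbits of $T$ gives that $\bigotimes_i N_{H_i}^G \Res_{H_i}^G f$ is surjective, which completes the proof via the isomorphism displayed above.

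No step is really an obstacle here; the main content has already been bundled into Lemma~\ref{lem:normpresurj}, and the rest is the formal structure of the $G$-symmetric monoidal product built from norms and restrictions. The only mild subtlety is ensuring that the orbit-wise decomposition used for $\underline{M}^{\otimes T}$ is natural in $\underline{M}$, so that $f^{\otimes T}$ really is identified with the asserted tensor product of maps; this naturality is part of the construction via $N_{H_i}^G \Res_{H_i}^G$, so no additional argument is needed.
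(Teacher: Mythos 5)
Your argument is correct and is essentially the paper's own: the paper deduces this lemma in one line from the orbit decomposition $\underline{M}^{\otimes T} \cong \bigotimes_i N_{H_i}^G \Res_{H_i}^G \underline{M}$, the fact that tensor products of surjections in $Mack(G)$ are surjective, and Lemma~\ref{lem:normpresurj}, exactly as you do. Your additional remarks on exactness of restriction and right-exactness of the box product just make explicit what the paper leaves implicit.
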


Now let $T \cong \coprod_i G/H_i$. Since the norm functors $N_{H_i}^G$ commute with direct limits and reflexive coequalizers by Lemma~\ref{lem:normsomecolim}, and tensor products preserve these, we obtain the following.

\begin{lem}\label{lem:gsymmondirlim}
For any finite $G$-set $T$ the functor $(\spacedash)^{\otimes T}$ commutes with direct limits and reflexive coequalizers in $Mack(G)$.
\end{lem}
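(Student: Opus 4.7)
The plan is to reduce everything to the orbit decomposition assembled just above the lemma statement. Writing $T \cong \coprod_i G/H_i$, we have a natural isomorphism
\[
\underline{M}^{\otimes T} \cong \bigotimes_i N_{H_i}^G \Res_{H_i}^G \underline{M},
\]
so it suffices to show (a) each single-variable factor $\underline{M} \mapsto N_{H_i}^G \Res_{H_i}^G \underline{M}$ preserves direct limits and reflexive coequalizers, and (b) the iterated tensor product preserves these colimits in the joint variable $\underline{M}$.

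For (a), the restriction functor $\Res_{H_i}^G$ has both a left and a right adjoint (induction and coinduction), so it preserves all colimits; and $N_{H_i}^G$ preserves direct limits and reflexive coequalizers by Lemma~\ref{lem:normsomecolim}. Hence each composite $N_{H_i}^G \Res_{H_i}^G$ preserves the required colimits.

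For (b), the tensor product on $Mack(G)$ is closed symmetric monoidal, so $\otimes$ is cocontinuous in each variable separately, and in particular preserves direct limits and reflexive coequalizers in each slot. To promote separate preservation of direct limits to joint preservation, one uses that the diagonal $I \to I^k$ is cofinal for filtered $I$; to pass from separate to joint preservation of reflexive coequalizers, one uses that a diagonal reflexive pair gives a reflexive pair in each slot, allowing one to coequalize slot by slot. Combining (a) and (b) yields the lemma.

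No step presents a real obstacle, since the proof is essentially a synthesis of results already in hand; the mildest subtlety is the formal joint-variable argument for the tensor product, which is entirely routine.
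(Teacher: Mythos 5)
Your proof is correct and follows essentially the same route as the paper: decompose $T$ into orbits, use the isomorphism $\underline{M}^{\otimes T} \cong \bigotimes_i N_{H_i}^G \Res_{H_i}^G \underline{M}$, invoke Lemma~\ref{lem:normsomecolim} for the norms (with restriction preserving all colimits), and note that tensor products preserve direct limits and reflexive coequalizers. Your extra care about joint-versus-separate preservation in the multivariable tensor product is the standard routine argument the paper leaves implicit.
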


Now let $\underline{M} \in Mack(G)$, and let $\HH\underline{M}$ be a positive cofibrant model for the corresponding Eilenberg MacLane spectrum. Just as in Section~\ref{sec:norm}, we have a coretraction
\begin{align*}
	\iota_{\underline{M}} : \HH\underline{M} \to \CC (\HH\underline{M}).
\end{align*}

Letting $T \in \Fin_G$ and applying the functor $(\spacedash)^{\wedge T}$, we obtain a coretraction
\begin{align*}
	\iota^{\wedge T}_{\underline{M}} : (\HH\underline{M})^{\wedge T} \to (\CC (\HH\underline{M}))^{\wedge T} \cong \CC (\HH\underline{M} \wedge T_+).
\end{align*}

Applying the functor $\underline{\pi}_0$ and applying Corollary~\ref{cor:freetambara}, we obtain another coretraction
\begin{align*}
	\iota^T_{\underline{M}} : \underline{M}^{\otimes T} \to \TT (\underline{M} \otimes [\spacedash, T]).
\end{align*}

Our task is thus to identify the image of $\iota^T_{\underline{M}}$.\\
\indent For $X \in \Fin_G$ let $\TT^T \underline{M} (X)$ denote the subset of $\TT (\underline{M} \otimes [\spacedash, T]) (X)$ whose elements are differences of pairs of the form
\begin{align*}
	(V \times T \xrightarrow{\pi_1} V \xrightarrow{j} X, u),
\end{align*}

where $u$ is in the image of the natural map below, which we shall call the \emph{slant map}. Here, $\Delta$ denotes the diagonal map of $T$.
\begin{align*}
	\slant^{V,T}_{\underline{M}} : \underline{M} (V \times T) &\to (\underline{M} \otimes [\spacedash, T]) (V \times T) \\
	                                                                                          x &\mapsto r_{1 \times \Delta} (x \otimes Id_T)
\end{align*}

These correspond topologically to maps $u : \Sigma^{\infty} (V \times T)_+ \to \HH\underline{M} \wedge T_+$ that are of the form
\begin{align*}
	\Sigma^{\infty} (V \times T)_+ \xrightarrow{\Sigma^{\infty} (1 \times \Delta)_+} \Sigma^{\infty} (V \times T)_+ \wedge T_+ \xrightarrow{x \wedge 1} \HH\underline{M} \wedge T_+
\end{align*}

in $Ho(Sp_G)$. It is clear that $\TT^T \underline{M}$ is closed under transfers. That it is closed under restrictions follows from the fact that, for any map $f : W \to V$ of finite $G$-sets, the diagram
\begin{align*}
\xymatrix{
 V \times T \ar[r]^-{\pi_1} & V \\
 W \times T \ar[u]^-{f \times 1} \ar[r]_-{\pi_1} & W \ar[u]_-{f} }
\end{align*}

is a pullback. We can now identify the image of $\iota^T_{\underline{M}}$.

\begin{thm}\label{thm:identgsymmon}
The image of $\iota^T_{\underline{M}}$ is $\TT^T \underline{M}$. That is, we have a natural isomorphism
\begin{align*}
	\iota^T_{\underline{M}} : \underline{M}^{\otimes T} \xrightarrow{\cong} \TT^T \underline{M}.
\end{align*}
\end{thm}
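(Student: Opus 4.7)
The plan is to mirror the proof of Theorem~\ref{thm:normimage}, proceeding in two steps: first establishing that the image of $\iota^T_{\underline{M}}$ lies in $\TT^T \underline{M}$, then showing the map is surjective onto $\TT^T \underline{M}$.

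For the containment, I will choose a surjection $f : \oplus_\alpha [\spacedash, S_\alpha] \to \underline{M}$ from a direct sum of represented Mackey functors. By Lemma~\ref{lem:gsymmonsurj} the induced map $f^{\otimes T}$ is still surjective, and by Lemma~\ref{lem:gsymmondirlim} together with the analogous preservation properties of $\TT$ and $\otimes [\spacedash, T]$, this reduces the problem to the representable case $\underline{M} = [\spacedash, S]$. There $\underline{M}^{\otimes T} \cong [\spacedash, S^T]$ with universal element $Id_{S^T}$, and it suffices to check that this element maps into $\TT^T [\spacedash, S]$. Using Proposition~\ref{prop:-1connsamesympow} and Corollary~\ref{cor:gsymmon-1der} to replace $\HH[\spacedash, S]$ with $F_1 S^1 \wedge S_+$, a direct computation using the evaluation $e : S^T \times T \to S$ in place of the map $\pi_H$ from the proof of Theorem~\ref{thm:normimage} will show that the universal element maps to
\begin{align*}
	(S^T \times T \xrightarrow{\pi_1} S^T \xrightarrow{=} S^T, \slant^{S^T,T}_{[\spacedash, S]}(e)),
\end{align*}
which lies in $\TT^T [\spacedash, S]$ by definition.

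For surjectivity, it suffices to hit every generator $(V \times T \xrightarrow{\pi_1} V \xrightarrow{j} X, \slant^{V,T}_{\underline{M}}(x))$, and applying $t_j$ reduces to the case $V = X$ with $j = Id_V$. I define $\delta_V : V \to (V \times T)^T$ by sending $v$ to the section $t \mapsto (v, t)$. Two properties are then immediate: the square
\begin{align*}
\xymatrix{
 V \times T \ar[d]_-{\pi_1} \ar[r]^-{\delta_V \times 1} & (V \times T)^T \times T \ar[d]^-{\pi_1} \\
 V \ar[r]_-{\delta_V} & (V \times T)^T
}
\end{align*}
is a pullback, and the evaluation $e : (V \times T)^T \times T \to V \times T$ satisfies $e \circ (\delta_V \times 1) = Id_{V \times T}$. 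Applying the first part of the proof to $\underline{M}' = [\spacedash, V \times T]$ places the element $((V \times T)^T \times T \xrightarrow{\pi_1} (V \times T)^T \xrightarrow{=} (V \times T)^T, \slant(e))$ in the image of $\iota^T_{\underline{M}'}$. Pushing forward along $x_* : \underline{M}' \to \underline{M}$ and then restricting along $\delta_V$, the naturality of $\iota^T$ together with the pullback and retraction properties above will produce the desired element $(V \times T \xrightarrow{\pi_1} V \xrightarrow{=} V, \slant^{V,T}_{\underline{M}}(x))$ in the image of $\iota^T_{\underline{M}}$.

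The main obstacle is the explicit topological computation in the first step: tracing the universal element of $[\spacedash, S^T]$ through the weak equivalence between $(\CC(F_1 S^1 \wedge S_+))^{\wedge T}$ and $\CC(F_1 S^1 \wedge (S \times T)_+)$, and confirming that the result is literally the slant of the evaluation map. This closely parallels the corresponding step in the proof of Theorem~\ref{thm:normimage}, with the evaluation $e : S^T \times T \to S$ taking on the role played there by $\pi_H$, so the same spectrum-level bookkeeping applies.
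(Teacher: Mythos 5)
Your proposal is correct and follows essentially the same route as the paper's proof: reduction to representable Mackey functors via Lemmas~\ref{lem:gsymmonsurj} and~\ref{lem:gsymmondirlim}, identification of the universal element of $[\spacedash, S^T]$ with the slant of the evaluation map (working with $F_1 S^1 \wedge S_+$ via Proposition~\ref{prop:-1connsamesympow} and Corollary~\ref{cor:gsymmon-1der}), and surjectivity via the coevaluation map, the pullback square, and the retraction identity $ev \circ (coev \times 1) = Id$. Your $\delta_V$ is exactly the paper's $coev_{V,T}$, and the remaining bookkeeping (naturality of $\iota^T$ and compatibility of the slant map with restriction along $coev_{V,T} \times 1$) is the same as in the paper.
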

\begin{proof}
First we show that the image of $\iota^T_{\underline{M}}$ is contained in $\TT^T \underline{M}$. Using the same argument as in the first part of the proof of Theorem~\ref{thm:normimage}, and using Lemmas~\ref{lem:gsymmonsurj} and~\ref{lem:gsymmondirlim}, we are reduced to the case of a represented Mackey functor $[\spacedash, X]$. We need only check that the universal element of $[\spacedash, X]^{\otimes T} \cong [\spacedash, X^T]$ maps into $\TT^T ([\spacedash, X])$. By Proposition~\ref{prop:-1connsamesympow} and Corollary~\ref{cor:gsymmon-1der}, we may work with the spectra $(F_1 S^1 \wedge X)^{\wedge T}$ and $\CC (F_1 S^1 \wedge (X \times T)_+)$, rather than the spectra $(\HH([\spacedash, X]))^{\wedge T}$ and $\CC (\HH([\spacedash, X]) \wedge T_+)$. Denoting by $ev_{X,T}$ the evaluation map
\begin{align*}
	ev_{X,T} : X^T \times T \to X,
\end{align*}
one can now see directly that our universal element maps to the norm of the composite
\begin{align*}
	F_1 S^1 \wedge (X^T \times T)_+ &\xrightarrow{1 \wedge ((ev_{X,T} \times 1) \circ (1 \times \Delta))_+} F_1 S^1 \wedge (X \times T)_+ \\ 
	&\to \CC (F_1 S^1 \wedge (X \times T)_+)
\end{align*}
along the projection
\begin{align*}
	\pi_1 : X^T \times T \to X^T.
\end{align*}
That is, our universal element maps to the pair
\begin{align*}
	\big(X^T \times T \xrightarrow{\pi_1} X^T \xrightarrow{=} X^T, \slant^{X^T,T}_{[\spacedash, X]} (ev_{X,T})\big).
\end{align*}
\indent It remains to show that $\iota^T_{\underline{M}}$ maps \emph{onto} $\TT^T \underline{M}$. It suffices to show that the image contains the pairs
$(V \times T \xrightarrow{\pi_1} V \xrightarrow{j} W, u)$ where $u$ is in the image of the slant map. Of course, the image is closed under transfers, so we may assume that $W = V$ and $j = Id_V$. Suppose that $u = \slant^{V,T}_{\underline{M}} (u_1)$. The element $u_1 \in \underline{M} (V \times T)$ corresponds to a unique map of Mackey functors $[\spacedash, V \times T] \to \underline{M}$. Naturality and the first part of the proof (putting $X = V \times T$) then imply that the pair
\begin{align*}
	\big((V \times T)^T \times T \xrightarrow{\pi_1} (V \times T)^T \xrightarrow{=} (V \times T)^T, \slant^{(V \times T)^T, T}_{\underline{M}} (r_{ev_{V \times T,T}} (u_1))\big)
\end{align*}
is in the image. Now let $coev_{V,T}$ denote the "coevaluation" map below.
\begin{align*}
	coev_{V,T} : V &\to (V \times T)^T \\
	                     v &\mapsto (t \mapsto (v,t))
\end{align*}
Applying $r_{coev_{V,T}}$ to the pair above, and using the fact that the diagram
\begin{align*}
\xymatrix{
 V \times T \ar[d]_-{coev_{V,T} \times 1} \ar[r]^-{\pi_1} & V \ar[d]^-{coev_{V,T}} \\
 (V \times T)^T \times T \ar[r]_-{\pi_1} & (V \times T)^T }
\end{align*}
is a pullback, we get that the pair below is in the image.
\begin{align*}
	\big(V \times T \xrightarrow{\pi_1} V \xrightarrow{=} V, r_{coev_{V,T} \times 1} \slant^{(V \times T)^T, T}_{\underline{M}} (r_{ev_{V \times T,T}} (u_1))\big)
\end{align*}
We have the identity $r_{coev_{V,T} \times 1} \circ \slant^{(V \times T)^T, T}_{\underline{M}} = \slant^{V, T}_{\underline{M}} \circ r_{coev_{V,T} \times 1}$, and one easily checks that $ev_{V \times T, T} \circ (coev_{V,T} \times 1) = Id_{V \times T}$ so that we have $r_{coev_{V,T} \times 1} r_{ev_{V \times T,T}} = 1$. Thus we have
\begin{align*}
	r_{coev_{V,T} \times 1} \slant^{(V \times T)^T, T}_{\underline{M}} (r_{ev_{V \times T,T}} (u_1)) = \slant^{V, T}_{\underline{M}} (u_1) = u.
\end{align*}
\end{proof}

\indent \emph{Remark:} The slant map has a left inverse. Any $T \in \Fin_G$ has a unique map $\pi : T \to \ast$ to the terminal object. The left inverse of the slant map is induced by the composite below.
\begin{align*}
	\underline{M} \otimes [\spacedash, T] \xrightarrow{1 \otimes [\spacedash, \pi]} \underline{M} \otimes [\spacedash, \ast] \cong \underline{M}
\end{align*}
This is due to the fact that the composite $T \xrightarrow{\Delta} T \times T \xrightarrow{\pi_1} T$ is $Id_T$.
\indent \emph{Remark:} Another $G$-symmetric monoidal structure on $Mack(G)$ is constructed in~\cite{Mazur}, where it is also shown that a Mackey functor $\underline{M}$ defines a functor
\begin{align*}
	\Fin_G &\to Mack(G) \\
	T &\mapsto \underline{M}^{\otimes T}
\end{align*}

with respect to this other $G$-symmetric monoidal structure if and only if $\underline{M}$ is a Tambara functor. It is an open question whether these two $G$-symmetric monoidal structures are the same.

\section{Intrinsic Descriptions}\label{sec:intrinsic}

\subsection{G-Symmetric Monoidal Structure}\label{subsec:gsymmon}

While the above description is complete, it is unsatisfying since it does not give intrinsic generators and relations. We now give such a description. First, for any map $f : W \to V \times T$ of finite $G$-sets, let the following diagram be exponential.
\begin{align}\label{eq:distrdef}
\xymatrix{
 D(W, f, V) \times T \ar[d]_-{e} \ar[rr]^-{\pi_1} && D(W, f, V) \ar[d]^-{p} \\
 W \ar[r]_-{f} & V \times T \ar[r]_-{\pi_1} & V }
\end{align}

We can now give our definition.

\begin{definition}\label{def:intrinsicpower}
Let $T \in \Fin_G$ and $\underline{M} \in Mack(G)$. For any finite $G$-set $X$ we define $F(T, \underline{M}) (X)$ to be the quotient of the free abelian group on the pairs $(V \xrightarrow{j} X, u \in \underline{M} (V \times T))$, where $j$ is a map in $\Fin_G$, by the relations
\begin{enumerate}[(i)]
\item $(V \xrightarrow{j} X, u) = (V' \xrightarrow{j'} X, u')$ when there is a commutative diagram
\begin{align*}
\xymatrix{
 V \ar[r]^-{j} & X \\
 V' \ar[u]^-{f}_-{\cong} \ar[ur]_-{j'} & }
\end{align*}
such that $f$ is an isomorphism and $r_{f \times 1} (u) = u'$,
\item $(V_1 \coprod V_2 \xrightarrow{j_1 \coprod j_2} X, (u_1, u_2)) = (V_1 \xrightarrow{j_1} X, u_1) + (V_2 \xrightarrow{j_2} X, u_2)$, and
\item $(V \xrightarrow{j} X, t_f (w)) = (D(W, f, V) \xrightarrow{j \circ p} X, r_e (w))$ for any $G$-map $f : W \to V \times T$, where $D(W, f, V)$, $p$ and $e$ are as in~\ref{eq:distrdef}.
\end{enumerate}
\end{definition}

We define restrictions and transfers for $F(T, \underline{M})$ as follows. Transfers are defined on the generators by composition; it is clear that these are well-defined and additive. If $f : Y \to X$ is a map in $\Fin_G$, we define $r_f ((V \xrightarrow{j} X, u))$ to be $(P \xrightarrow{k} Y, r_{q \times 1} (u))$, where the diagram
\begin{align*}
\xymatrix{
 V \ar[r]^-{j} & X \\
 P \ar[u]^-{q} \ar[r]_-{k} & Y \ar[u]_-{f} }
\end{align*}

is a pullback. Lemma~\ref{lem:distrpullback} implies that this is well-defined. To show that coproducts of $G$-sets are converted into products, one may argue as in Section~\ref{sec:sympow} with $s\TT_0$ and $s\TT$; imposing the first two relations in Definition~\ref{def:intrinsicpower} yields a Mackey functor, and then the third relation for $X = X_1 \coprod X_2$ becomes the product of the relations for $X_1$ and $X_2$. This argument uses the fact that $D(V \times T, Id, V) = V$. Hence, $F(T, \underline{M})$ is a Mackey functor.\\
\indent Next, we define a natural map $\Theta^T : F(T, \spacedash) \to \TT^T$ as below.
\begin{align*}
	\Theta^T_{\underline{M}} (X) : F(T, \underline{M}) (X) &\to \TT^T \underline{M} (X) \\
	(V \xrightarrow{j} X, u) &\mapsto (V \times T \xrightarrow{\pi_1} V \xrightarrow{j} X, \slant^{V, T}_{\underline{M}} (u))
\end{align*}

To show that this map respects relation (iii) of Definition~\ref{def:intrinsicpower}, we proceed as follows. Let $f : W \to V \times T$ be a $G$-map. Then since the diagram below is a pullback,
\begin{align*}
\xymatrix{
 W \ar[d]_-{f} \ar[r]^-{1 \times \pi_2 f} & W \times T \ar[d]^-{f \times 1} \\
 V \times T \ar[r]_-{1 \times \Delta} & V \times T \times T }
\end{align*}

we have the following for $w \in \underline{M} (W)$.
\begin{align*}
	\slant^{V, T}_{\underline{M}} (t_f (w)) &= r_{1 \times \Delta} (t_f (w) \otimes Id_T) \\
	                                                                &= r_{1 \times \Delta} t_{f \times 1} (w \otimes Id_T) \\
	                                                                &= t_f (r_{1 \times \pi_2 f} (w \otimes Id_T))
\end{align*}

It follows that in $\TT^T \underline{M}$, the element $(V {\times T} {\xrightarrow{\pi_1}} V {\xrightarrow{j}} X, \slant^{V, T}_{\underline{M}} (t_f (w)))$ is equal to $(D(W, f, V) \times T \xrightarrow{\pi_1} D(W, f, V) \xrightarrow{j \circ p} X, r_e r_{1 \times \pi_2 f} (w \otimes Id_T))$. Now one easily checks that the diagram below commutes,
\begin{align*}
\xymatrix{
 D(W, f, V) \times T \ar[d]_-{1 \times \Delta} \ar[r]^-{e} & W \ar[d]^-{1 \times \pi_2 f} \\
 D(W, f, V) \times T \times T \ar[r]_-{e \times 1} & W \times T }
\end{align*}

so we have the following.
\begin{align*}
	r_e r_{1 \times \pi_2 f} (w \otimes Id_T) &= r_{1 \times \Delta} r_{e \times 1} (w \otimes Id_T) = r_{1 \times \Delta} (r_e (w) \otimes Id_T) \\
	&= \slant^{D(W, f, V), T}_{\underline{M}} (r_e(w))
\end{align*}

It follows that $\Theta^T$ is a well-defined, natural map of Mackey functors. It is also clear that it is surjective. We shall prove the following theorem.

\begin{thm}\label{thm:intrpow}
For any $T \in \Fin_G$, the map $\Theta^T$ is an isomorphism.
\end{thm}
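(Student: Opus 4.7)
Since surjectivity of $\Theta^T$ is already established, my plan is to show injectivity by constructing an explicit natural left inverse $\Phi^T \colon \TT^T \underline{M} \to F(T, \underline{M})$. On the standard presentation of $\TT^T \underline{M}(X)$, I set
\[
\Phi^T\bigl(V \times T \xrightarrow{\pi_1} V \xrightarrow{j} X,\ \slant^{V,T}_{\underline{M}}(u)\bigr) := (V \xrightarrow{j} X, u).
\]
The preimage $u$ of $\slant^{V,T}_{\underline{M}}(u)$ is canonical because the slant map is injective: the remark after Theorem~\ref{thm:identgsymmon} produces a natural left inverse $\rho \colon \underline{M} \otimes [\spacedash, T] \to \underline{M}$, induced by $T \to \ast$. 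Because $\rho$ is a map of Mackey functors, it commutes with all transfers and restrictions, so $\rho \circ r_f \circ \slant = r_f$ and $\rho \circ t_f \circ \slant = t_f$ on $\underline{M}$. This ``lifting through $\rho$'' is the main technical lever for what follows.

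The remaining task is to check that $\Phi^T$ descends through the three defining relations of $\TT(\underline{M} \otimes [\spacedash, T])$ from Corollary~\ref{cor:identsympow}, whenever both sides of a relation happen to be represented by generators in slant form. Additivity (relation (ii)) is automatic from naturality of $\slant$ with respect to coproducts in $V$. For the isomorphism relation (i), an iso of pairs is witnessed by $\beta \colon V' \times T \to V \times T$ covering $\alpha \colon V' \to V$ with $r_\beta(\slant^{V,T}(u)) = \slant^{V',T}(u')$; applying $\rho$ immediately gives $u' = r_\beta(u)$. Factoring $\beta = (\alpha \times 1_T) \circ \tau$ with $\tau$ covering $Id_{V'}$, one argues that the requirement $r_\tau$ preserve the slant image forces $\tau$ to act as the identity on slant-image elements; this is verified first in the representable case $\underline{M} = [\spacedash, Y]$ (by direct inspection of graph spans) and then reduced to the general case by naturality. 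Hence $r_\beta(u) = r_{\alpha \times 1_T}(u)$, matching relation (i) of $F(T, \underline{M})$ with iso $\alpha$.

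The main obstacle is relation (iii), the distributive law. A slant-form instance of $\TT$'s distributive law arises from a decomposition $\slant^{V,T}(u) = t_{\tilde k}(w')$ with $\tilde k \colon \tilde W \to V \times T$ and $w' \in (\underline{M} \otimes [\spacedash, T])(\tilde W)$. By Lemma~\ref{lem:distrpullback}, the exponential diagram for $\tilde k$ and $\pi_1 \colon V \times T \to V$ has top edge $D \times T \xrightarrow{\pi_1} D$ with $D = D(\tilde W, \tilde k, V)$, precisely matching the data of relation (iii) in Definition~\ref{def:intrinsicpower}. Applying $\rho$ produces the lift $u = t_{\tilde k}(w)$ with $w := \rho(w') \in \underline{M}(\tilde W)$, so the data $(\tilde k, w)$ appears in relation (iii) of $F$. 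The equality of the $\TT$-RHS pair $(D \times T, \pi_1, D, jp, r_e(w'))$ with $\Theta^T$ applied to the $F$-RHS $(D, jp, r_e(w))$ reduces to the identity
\[
r_e(w') = \slant^{D, T}_{\underline{M}}(r_e(w)),
\]
which is the dual of the computation $\slant(t_k(w)) = t_k r_{1 \times \pi_2 k}(w \otimes Id_T)$ established just before the theorem, and its verification is a diagram chase using Lemmas~\ref{lem:distrpullback}, \ref{lem:distrlaw}, and \ref{lem:functorialnorm}, formally analogous to the proof of Lemma~\ref{lem:normsdescend}. Once all three relations are verified, $\Phi^T \circ \Theta^T = Id$ is immediate by construction on generators, completing the proof.
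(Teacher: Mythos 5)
Your strategy — constructing an explicit left inverse $\Phi^T$ on slant-form representatives — is genuinely different from the paper's (which proceeds by induction on $|G|$, a cosketal filtration, and geometric fixed points for the representable base case). Unfortunately it has a fundamental well-definedness gap that your outline does not address, and I don't see how to patch it without re-deriving most of the paper's machinery.

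The issue is that $\TT^T \underline{M}(X)$ is defined as a \emph{subset} of $\TT(\underline{M} \otimes [\spacedash, T])(X)$: it is the image of the free abelian group on slant-form pairs inside the larger group. It is not presented by slant-form generators together with the subset of relations in Corollary~\ref{cor:identsympow} whose terms all happen to be slant-form. Two formal sums of slant-form pairs can map to the same element of $\TT^T \underline{M}(X)$ via a chain of relations in the ambient $\TT(\underline{M} \otimes [\spacedash, T])(X)$ whose intermediate terms are \emph{not} slant-form. Your proposal only verifies that $\Phi^T$ respects relations ``whenever both sides happen to be represented by generators in slant form,'' which is not the same thing and does not establish that $\Phi^T$ is well defined. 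To make this approach rigorous, you would need to show that the slant-form pairs together with the relations you describe actually present $\TT^T \underline{M}$ — a statement essentially equivalent to the theorem you are trying to prove.

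There is a second, more local error in your treatment of the distributive law. You claim that a decomposition $\slant^{V,T}(u) = t_{\tilde k}(w')$ forces the identity $r_e(w') = \slant^{D,T}_{\underline{M}}(r_e(w))$ with $w := \rho(w')$, calling it the ``dual'' of the computation preceding Theorem~\ref{thm:intrpow}. But that computation runs in the opposite direction: starting from $u = t_f(w)$ with $w \in \underline{M}(W)$, it produces a \emph{specific} element $w' = r_{1 \times \pi_2 f}(w \otimes Id_T)$ for which the slant identity holds. In your direction, $w'$ is arbitrary subject only to $t_{\tilde k}(w') = \slant(u)$; there is nothing forcing $w'$, or $r_e(w')$, to lie in the slant image. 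For instance with the fold map $\tilde k = Id \coprod Id : \tilde W = (V\times T) \coprod (V \times T) \to V \times T$ one may take $w' = (\alpha,\, \slant(u) - \alpha)$ for arbitrary $\alpha \in (\underline{M}\otimes[\spacedash,T])(V\times T)$, and then $r_e(w')$ need not be $\slant^{D,T}$ of anything. So even the ``slant-form only'' cases you do check are not handled correctly.
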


We shall proceed by induction on the order of $G$. First we check that $F(T, \spacedash)$ restricts appropriately. Let $H$ be a subgroup of $G$. To define $\Res_H^G F(T, \underline{M}) (X) = F(T, \underline{M}) (G \times_H X)$ we begin with the generators $(V \xrightarrow{j} G \times_H X, u \in \underline{M} (V \times T))$. Now $j$ identifies $V$ as $G \times_H V'$, where $V' = j^{-1} (X)$, so that $V \times T \cong (G \times_H V') \times T \cong G \times_H (V' \times \Res_H^G T)$. Thus, $u$ is an element of $\underline{M} (G \times_H (V' \times \Res_H^G T))$, which is equal to $\Res_H^G \underline{M} (V' \times \Res_H^G T)$. Thus we have a natural isomorphism
\begin{align*}
	\Res_H^G F(T, \underline{M}) \cong F(\Res_H^G T, \Res_H^G \underline{M}).
\end{align*}

We know topologically that $\TT^T$ must also restrict appropriately. In fact, for any $H$-set $V$ we have a commutative diagram as below.
\begin{align*}
\xymatrix{
 \underline{M} ((G \times_H V) \times T) \ar[d]_-{\cong} \ar[r]^-{\slant^{G \times_H V, T}_{\underline{M}}} & (\underline{M} \otimes [\spacedash, T]) ((G \times_H V) \times T) \ar[d]^-{\cong} \\
 \Res_H^G \underline{M} (V \times \Res_H^G T) \ar[r]_-{\slant^{V, \Res_H^G T}_{\Res_H^G \underline{M}}} & (\Res_H^G \underline{M} \otimes [\spacedash, \Res_H^G T]) (V \times \Res_H^G T) }
\end{align*}

Hence, we obtain a commutative diagram of natural maps as below.
\begin{align}\label{eq:thetarestr}
\xymatrix{
 \Res_H^G F(T, \spacedash) \ar[d]_-{\Res_H^G \Theta^T} \ar[r]^-{\cong} & F(\Res_H^G T, \Res_H^G (\spacedash)) \ar[d]^-{\Theta^{\Res_H^G T}} \\
 \Res_H^G \TT^T \ar[r]_-{\cong} & \TT^{\Res_H^G T} }
\end{align}

We now show that $\Theta^T$ is an isomorphism on representable Mackey functors.

\begin{lem}\label{lem:intrrepiso}
For any $Z \in \Fin_G$, the map $\Theta^T_{[\spacedash, Z]}$ is an isomorphism.
\end{lem}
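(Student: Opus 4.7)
The plan is to construct an explicit inverse to $\Theta^T_{[\spacedash, Z]}$. Surjectivity has already been noted, so only injectivity remains. Combining the natural isomorphism $[\spacedash, Z]^{\otimes T} \cong [\spacedash, Z^T]$ with Theorem~\ref{thm:identgsymmon} identifies $\TT^T [\spacedash, Z]$ with $[\spacedash, Z^T]$. The first step is to pin down this identification explicitly: a generating span $[X \xleftarrow{j} V \xrightarrow{\hat\phi} Z^T]$ in $[\spacedash, Z^T](X)$ corresponds to the pair $(V \times T \xrightarrow{\pi_1} V \xrightarrow{j} X, \slant^{V,T}_{[\spacedash, Z]}(\phi))$ in $\TT^T [\spacedash, Z](X)$, where $\phi : V \times T \to Z$ is adjoint to $\hat\phi$ and is viewed as an element of $[\spacedash, Z](V \times T)$ via the identity span $V \times T \xleftarrow{=} V \times T \xrightarrow{\phi} Z$. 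This formula is obtained by applying $t_j r_{\hat\phi}$ to the universal pair $(Z^T \times T \xrightarrow{\pi_1} Z^T \xrightarrow{=} Z^T, \slant^{Z^T, T}_{[\spacedash, Z]}(\mathrm{ev}_{Z,T}))$ extracted from the proof of Theorem~\ref{thm:identgsymmon}, using that the slant map commutes with restriction along maps of the form $f \times 1_T$.

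Next, define a candidate inverse $\Psi(X) : [\spacedash, Z^T](X) \to F(T, [\spacedash, Z])(X)$ on generating spans by sending $[X \xleftarrow{j} V \xrightarrow{\hat\phi} Z^T]$ to the generator $(V \xrightarrow{j} X, \phi)$, with $\phi$ the adjoint of $\hat\phi$ viewed in $[\spacedash, Z](V \times T)$ as above. Well-definedness under isomorphism of spans is precisely relation (i) of Definition~\ref{def:intrinsicpower}: an isomorphism $f : V \to V'$ with $j' f = j$ and $\hat\phi' f = \hat\phi$ gives $\phi = \phi' \circ (f \times 1_T) = r_{f \times 1_T}(\phi')$. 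Compatibility with disjoint unions is relation (ii). Extending $\ZZ$-linearly produces a homomorphism $\Psi$.

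Finally, I verify that $\Psi$ is two-sided inverse to the composite $F(T, [\spacedash, Z])(X) \xrightarrow{\Theta^T} \TT^T [\spacedash, Z](X) \xrightarrow{\cong} [\spacedash, Z^T](X)$. The right inverse check is immediate from the formula in the first paragraph. For the left inverse, take an arbitrary generator $(V \xrightarrow{j} X, u)$; by bilinearity it suffices to treat the case $u = t_a(b)$ for a span $V \times T \xleftarrow{a} W \xrightarrow{b} Z$, where $b$ is represented by the identity span on $W$ mapped to $Z$ by the $G$-map $b$. Relation (iii) applied to $a : W \to V \times T$ rewrites the generator as $(D(W, a, V) \xrightarrow{j \circ p} X, r_e(b)) = (D(W, a, V) \xrightarrow{j \circ p} X, b \circ e)$, which is now in reduced form with a genuine $G$-map in the second slot. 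Applying $\bar\Theta^T$ to this reduced generator yields a span to $Z^T$, and $\Psi$ returns that generator, which equals the original by relation (iii).

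The main obstacle is the identification in the first paragraph: one must carefully confirm that the image of a general span under the isomorphism $[\spacedash, Z^T] \cong \TT^T [\spacedash, Z]$ takes the claimed form, which involves unwinding the pullback $r_{\hat\phi}$ applied to the universal pair and using naturality of $\slant$. Once that is nailed down, everything else is a direct bookkeeping check against the three relations defining $F(T, [\spacedash, Z])$.
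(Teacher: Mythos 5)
Your construction of $\Psi$, the identification in the first paragraph, and the right-inverse check $\bar\Theta^T\circ\Psi=\mathrm{id}$ are all sound and match the section that the paper itself constructs (sending the universal element to $(Z^T \xrightarrow{=} Z^T, ev_{Z,T})$). The gap is in the left-inverse check, at the phrase ``by bilinearity it suffices to treat the case $u = t_a(b)$.'' The generators of $F(T,[\spacedash,Z])(X)$ are pairs $(V\xrightarrow{j}X,u)$ with $u$ ranging over the \emph{entire group} $[\spacedash,Z](V\times T)$, including formal differences of spans. It is not the case that $(V\xrightarrow{j}X,u_1+u_2) = (V\xrightarrow{j}X,u_1) + (V\xrightarrow{j}X,u_2)$ in $F(T,[\spacedash,Z])$: under $\Theta^T$ this pair maps to $t_j n_{\pi_1}\theta(\slant^{V,T}(u))$, and the norm $n_{\pi_1}$ along the degree-$|T|$ map $\pi_1 : V\times T \to V$ is multiplicative, not additive, in $u$. (For a concrete test: with $G=e$, $T=\{1,2\}$, $Z=\ast$, one computes from relation (iii) that $(\ast,(m,n)) = mn\cdot(\ast,(1,1))$, so $(\ast,-b)=(\ast,b)$ rather than $-(\ast,b)$.) So neither side of the identity you want is additive in $u$, and linearity cannot reduce you to single spans.

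What you have actually shown is that every pair $(V\to X, u)$ with $u$ a \emph{single} span lies in $\mathrm{im}(\Psi)$, and that $\Psi\circ\bar\Theta^T$ fixes such pairs. Since $\bar\Theta^T\circ\Psi=\mathrm{id}$, the remaining content of the lemma is exactly surjectivity of the section $\Psi$, i.e.\ that these single-span pairs generate all of $F(T,[\spacedash,Z])(X)$; that is the hard step and your argument does not establish it. This is precisely what the paper's proof is built to handle: it inducts on $|G|$ so that $\Psi$ is known to be an isomorphism at all proper subgroups, uses the Five Lemma to reduce to surjectivity of $\Phi^G(\Psi)$, and then proves that by comparing the rank of $\Phi^G F(T,[\spacedash,Z])$ (bounded above via the surjection from $\bigotimes_i \Phi^{H_i}[\spacedash,Z]$) with the rank $\#(Z^T)^G$ of $\Phi^G[\spacedash,Z^T]$. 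To repair your proof you would need to supply an argument of comparable force for surjectivity of $\Psi$; as written, the claimed direct two-sided-inverse verification does not close.
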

\begin{proof}
In view of~\ref{eq:thetarestr}, we may assume inductively that $\Theta^T_{[\spacedash, Z]}$ is an isomorphism at $G/H$ for all proper subgroups $H$ of $G$. Next, recall from the proof of Theorem~\ref{thm:identgsymmon} that $\TT^T ([\spacedash, Z]) \cong [\spacedash, Z^T]$ with universal element
\begin{align*}
	(Z^T \times T \xrightarrow{\pi_1} Z^T \xrightarrow{=} Z^T, \slant^{Z^T, T}_{[\spacedash, Z]} (ev_{Z,T})).
\end{align*}
Thus, $\Theta^T_{[\spacedash, Z]}$ has a section, defined by sending the above universal element to
\begin{align*}
	(Z^T \xrightarrow{=} Z^T, ev_{Z,T}).
\end{align*}
It suffices to show that this section is surjective. Our inductive hypothesis implies that it is surjective at $G/H$ when $H \neq G$, and so it is surjective on sub-Mackey functors generated by these levels. The Five Lemma now implies that it suffices to show that the section is surjective on geometric fixed points. This is equivalent to the statement that $\Phi^G (\Theta^T_{[\spacedash, Z]})$ is an isomorphism. Hence we consider $\Phi^G F(T, \underline{M})$ for an arbitrary Mackey functor $\underline{M}$ (this will help to separate out the salient points). Any generator $(V \to \ast, u)$ with $V$ an orbit is a transfer, unless $V = \ast$. In this case $u \in \underline{M} (T)$. Breaking $T$ down into orbits to obtain $T \cong \coprod_i G/H_i$, we obtain a surjection
\begin{align*}
	\ZZ \{ \bigtimes_i \underline{M} (G/H_i) \} \to \Phi^G F(T, \underline{M}).
\end{align*}
To see that this map factors through $\otimes_i \underline{M} (G/H_i)$, we apply for each $i$ the relation coming from the "fold" map $T \coprod G/H_i \to T$. To see that it then factors through $\otimes_i \Phi^{H_i} (\underline{M})$, we apply for each $i$ and proper subgroup $K \subsetneq H_i$ the relation coming from the map below,
\begin{align*}
	\textstyle (\coprod_{j \neq i} G/H_j) \coprod G/K \xrightarrow{1 \coprod \pi} (\coprod_{j \neq i} G/H_j) \coprod G/H_i = T
\end{align*}
where $\pi : G/K \to G/H_i$ denotes the canonical projection. Hence we have a surjection
\begin{align}\label{eq:geomfixpftm}
	\bigotimes_i \Phi^{H_i} (\underline{M}) \to \Phi^G F(T, \underline{M}).
\end{align}
Now we have $\Phi^{H_i} ([\spacedash, Z]) \cong \ZZ \{ Z^{H_i} \}$, so that
\begin{align*}
	\bigotimes_i \Phi^{H_i} ([\spacedash, Z]) \cong \ZZ \{ \times_i Z^{H_i} \} \cong \ZZ \{ (Z^T)^G \}.
\end{align*}
It follows that the rank of $\Phi^G F(T, [\spacedash, Z])$ is at most $\# (Z^T)^G$, and that $\Phi^G F(T, [\spacedash, Z])$ is free if it attains this rank. Since $\Phi^G (\TT^T ([\spacedash, Z]))$ is a free abelian group of precisely this rank, the surjection $\Phi^G (\Theta^T_{[\spacedash, Z]})$ must be an isomorphism.
\end{proof}

Next, we dispose of two special cases.

\begin{lem}\label{lem:intrspeccase}
When $T = \emptyset$ we have $F(\emptyset, \spacedash) \equiv \underline{A}$. When $T \neq \emptyset$ we have $F(T, \underline{0}) = \underline{0}$.
\end{lem}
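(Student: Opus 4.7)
The plan is to handle the two cases separately and observe that in each of them the relation (iii) of Definition~\ref{def:intrinsicpower} degenerates; once that is done, only the obvious relations (i) and (ii) are left and the conclusion is essentially formal. The one substantive calculation is identifying the distributor $D(W, f, V)$ in the relevant edge case.

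For the first claim, I would take $T = \emptyset$. Since $\underline{M}$ is valued in abelian groups, $\underline{M}(\emptyset) = 0$, so every generator has the form $(V \xrightarrow{j} X, 0)$. I would then check that relation (iii) has no content here: any $f : W \to V \times \emptyset$ forces $W = \emptyset$, and unwinding the exponential diagram shows $D(\emptyset, f, V) = V$ with $p$ the identity, because for each $v \in V$ the fiber $\pi_1^{-1}(v)$ is empty and so admits exactly one empty section, automatically $G$-equivariant. Relation (iii) thus reduces to $(V \xrightarrow{j} X, 0) = (V \xrightarrow{j} X, 0)$. What remains are precisely the defining relations of the Burnside Mackey functor $\underline{A}(X)$ as a Grothendieck group of $G$-sets over $X$; the transfer-by-composition and restriction-by-pullback structure maps coincide with those of $\underline{A}$, so $F(\emptyset, \spacedash) \equiv \underline{A}$.

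For the second claim, I would take $T \neq \emptyset$ and $\underline{M} = \underline{0}$, so again every generator is $(V \xrightarrow{j} X, 0)$, and the goal is to show each one is zero. The key move is to apply relation (iii) with $W = \emptyset$ and $f$ the unique map $\emptyset \to V \times T$. The essential computation is now in the opposite direction: $D(\emptyset, f, V) = \emptyset$, because a point would be a pair $(v, s)$ with $s$ a section $\{v\} \times T \to \emptyset$, and since $T \neq \emptyset$ no such section exists. Relation (iii) therefore collapses each generator to $(\emptyset \to X, 0)$, and relations (i)--(ii) applied to the trivial decomposition $V \cong V \coprod \emptyset$ force the latter to be $0$.

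Neither case presents a real obstacle; the only point demanding any care is the opposite behavior of the distributor in the two vacuous cases (empty $T$ makes empty sections automatic, nonempty $T$ with $W = \emptyset$ makes sections nonexistent), and this asymmetry is exactly what drives the two conclusions in opposite directions.
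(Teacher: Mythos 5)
Your proof is correct and follows essentially the same route as the paper: for $T=\emptyset$ you observe $u\in\underline{M}(\emptyset)=0$ and that relation (iii) is vacuous since $D(\emptyset,f,V)=V$, leaving the Burnside Mackey functor, while for $T\neq\emptyset$ you rewrite $0=t_f(0)$ along $\emptyset\to V\times T$ and use $D(\emptyset,f,V)=\emptyset$ to collapse every generator to the zero pair. The identification of the distributor in the two edge cases, which you single out as the only delicate point, is exactly the content of the paper's argument.
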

\begin{proof}
First take $T = \emptyset$. Then for any pair $(V \xrightarrow{j} X, u)$ we have $u \in \underline{M} (\emptyset) = 0$, so that $u = 0$. Examining Definition~\ref{def:intrinsicpower}, we see that relation (iii) is trivial, because for any map $f : W \to V \times T = \emptyset$ we have $W = \emptyset$, and hence $D(W, f, V) = V$. Hence the element $u$ is redundant, and $F(\emptyset, \underline{M}) (X)$ can be described as the Grothendieck group of isomorphism classes of $G$-maps $V \to X$. For the second part, take $T \neq \emptyset$ and $\underline{M} = \underline{0}$. Consider a pair $(V \xrightarrow{j} X, 0)$. We can regard this $0$ as the transfer of $0$ along $i : \emptyset \to V \times T$. Assuming $V$ is nonempty, the fibers of $\pi_1 : V \times T \to V$ are nonempty, so we have $D(\emptyset, i, V) = \emptyset$ (this also holds for $V = \emptyset$). Thus, the pair $(V \xrightarrow{j} X, 0)$ is equivalent to $(\emptyset \to X, 0)$, which is zero.
\end{proof}

The second part of the lemma implies the following.

\begin{cor}\label{cor:intrpreservezero}
When $T \neq \emptyset$ the functor $F(T, \spacedash)$ preserves zero maps.
\end{cor}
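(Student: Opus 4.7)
The plan is to deduce this immediately from the second part of Lemma~\ref{lem:intrspeccase} by a standard factorization argument. A zero map $0 : \underline{M} \to \underline{N}$ in $Mack(G)$ factors canonically as $\underline{M} \to \underline{0} \to \underline{N}$, so applying the functor $F(T, \spacedash)$ yields a factorization
\begin{align*}
    F(T, 0) : F(T, \underline{M}) \to F(T, \underline{0}) \to F(T, \underline{N}).
\end{align*}
Since $T \neq \emptyset$, the lemma gives $F(T, \underline{0}) = \underline{0}$, so this composite is zero.

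There is no real obstacle here; the content is entirely in Lemma~\ref{lem:intrspeccase}, and functoriality of $F(T, \spacedash)$ in the Mackey functor variable (which is immediate from Definition~\ref{def:intrinsicpower}, since a map $\underline{M} \to \underline{N}$ of Mackey functors induces a map on generators $(V \xrightarrow{j} X, u) \mapsto (V \xrightarrow{j} X, u_{\underline{N}})$ that respects all three relations) does the rest.
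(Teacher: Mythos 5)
Your argument is correct and is exactly what the paper intends: the corollary is stated as an immediate consequence of the second part of Lemma~\ref{lem:intrspeccase}, via precisely this factorization of a zero map through $\underline{0}$ together with functoriality of $F(T,\spacedash)$ in the Mackey functor variable. Nothing further is needed.
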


We now turn our attention to monoidal pairings. Let $T_1$ and $T_2$ be finite $G$-sets. We have a natural isomorphism $\TT^{T_1} \underline{M} \otimes \TT^{T_2} \underline{M} \to \TT^{T_1 \coprod T_2} \underline{M}$. One computes the pairing below.
\begin{gather*}
	\TT^{T_1} \underline{M} (X_1) \otimes \TT^{T_2} \underline{M} (X_2) \to \TT^{T_1 \coprod T_2} \underline{M} (X_1 \times X_2) \\
	(V_1 \times T_1 \xrightarrow{\pi_1} V_1 \xrightarrow{j_1} X_1, \slant^{V_1,T_1}_{\underline{M}} (u_1)) \otimes (V_2 \times T_2 \xrightarrow{\pi_1} V_2 \xrightarrow{j_2} X_2, \slant^{V_2,T_2}_{\underline{M}} (u_2)) \\
	\textstyle \mapsto \big(V_1 \times V_2 \times (T_1 \coprod T_2) \xrightarrow{\pi_1} V_1 \times V_2 \xrightarrow{j_1 \times j_2} X_1 \times X_2, \\ 		\slant^{V_1 \times V_2,T_1 \coprod T_2}_{\underline{M}} \big((r_{\pi_{V_1 \times T_1}} (u_1), r_{\pi_{V_2 \times T_2}} (u_2))\big)\big)
\end{gather*}

Hence, we define a pairing $F(T_1, \underline{M}) \otimes F(T_2, \underline{M}) \to F(T_1 \coprod T_2, \underline{M})$ analogously, as below.
\begin{gather*}
	F(T_1, \underline{M}) (X_1) \otimes F(T_2, \underline{M}) (X_2) \to \textstyle F(T_1 \coprod T_2, \underline{M}) (X_1 \times X_2) \\
	(V_1 \xrightarrow{j_1} X_1, u_1) \otimes (V_2 \xrightarrow{j_2} X_2, u_2) \\
	\mapsto \big(V_1 \times V_2 \xrightarrow{j_1 \times j_2} X_1 \times X_2, (r_{\pi_{V_1 \times T_1}} (u_1), r_{\pi_{V_2 \times T_2}} (u_2))\big)
\end{gather*}

We must show that this is well-defined.

\begin{prop}\label{prop:intrpairing}
There is a natural, associative and commutative system of pairings $F(T_1, \underline{M}) \otimes F(T_2, \underline{M}) \to F(T_1 \coprod T_2, \underline{M})$ making the following diagrams commute.
\begin{align*}
\xymatrix{
 F(T_1, \underline{M}) \otimes F(T_2, \underline{M}) \ar[d]_-{\Theta^{T_1}_{\underline{M}} \otimes \Theta^{T_2}_{\underline{M}}} \ar[r] & F(T_1 \coprod T_2, \underline{M}) \ar[d]^-{\Theta^{T_1 \coprod T_2}_{\underline{M}}} \\
 \TT^{T_1} \underline{M} \otimes \TT^{T_2} \underline{M} \ar[r]_-{\cong} & \TT^{T_1 \coprod T_2} \underline{M} }
\end{align*}
\end{prop}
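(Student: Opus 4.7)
The plan is to define the pairing first on the free abelian groups on pairs $(V_i \xrightarrow{j_i} X_i, u_i \in \underline{M}(V_i \times T_i))$, which is automatic from the given bilinear formula, and then verify that it descends through the three relations of Definition~\ref{def:intrinsicpower} in each variable. Once well-definedness is established, naturality in $\underline{M}$, associativity, commutativity, and compatibility with $\Theta^T$ will fall out of the formula without further work.

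Relation~(i) (isomorphism invariance) and relation~(ii) (coproduct decomposition of $V$) are essentially formal. For~(i), an isomorphism $f : V_1' \to V_1$ with $r_{f \times 1_{T_1}}(u_1) = u_1'$ induces the isomorphism $f \times 1_{V_2} : V_1' \times V_2 \to V_1 \times V_2$, and functoriality of pullback gives the identity $r_{(f \times 1_{V_2}) \times 1_{T_1}} \circ r_{\pi_{V_1 \times T_1}} = r_{\pi_{V_1' \times T_1}} \circ r_{f \times 1_{T_1}}$. For~(ii), the distributivity $(V_1 \coprod V_1') \times V_2 \cong (V_1 \times V_2) \coprod (V_1' \times V_2)$ matches the target relation in the codomain. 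The main obstacle is relation~(iii). Given $f : W \to V_1 \times T_1$ with exponential data $(D(W, f, V_1), p, e)$ and the pair $(V_1 \xrightarrow{j_1} X_1, t_f(w)) \otimes (V_2 \xrightarrow{j_2} X_2, u_2)$, I would first apply the Mackey base-change formula across the pullback square $W \times V_2 \to V_1 \times V_2 \times T_1$ over $W \to V_1 \times T_1$ to rewrite $r_{\pi_{V_1 \times T_1}}(t_f(w))$ as $t_{f \times 1_{V_2}}(r_{\pi_W}(w))$. Packaging this with the second coordinate $r_{\pi_{V_2 \times T_2}}(u_2)$ via the decomposition $V_1 \times V_2 \times (T_1 \coprod T_2) \cong (V_1 \times V_2 \times T_1) \coprod (V_1 \times V_2 \times T_2)$, the full element of $\underline{M}$ becomes a single transfer $t_{\tilde{f}}(\tilde{w})$ along the combined map $\tilde{f} : (W \times V_2) \coprod (V_1 \times V_2 \times T_2) \to V_1 \times V_2 \times (T_1 \coprod T_2)$ obtained by applying the inclusions to $f \times 1_{V_2}$ on the first summand and to the identity on the second, with $\tilde{w} = (r_{\pi_W}(w), r_{\pi_{V_2 \times T_2}}(u_2))$. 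Now the exponential diagram for $\tilde{f}$ over $V_1 \times V_2$ splits along the coproduct: sections over a fiber $\{(v_1,v_2)\} \times (T_1 \coprod T_2)$ are determined independently on the $T_1$ and $T_2$ parts, and Lemma~\ref{lem:distrpullback} identifies the $T_1$-piece as the pullback of the exponential diagram for $f$ along $\pi_{V_1} : V_1 \times V_2 \to V_1$, while the $T_2$-piece is trivial because $\tilde{f}|_{T_2}$ is an identity map. Hence $D(\mathrm{src}, \tilde{f}, V_1 \times V_2) \cong D(W, f, V_1) \times V_2$ with $p$ becoming $p \times 1_{V_2}$. Applying relation~(iii) of Definition~\ref{def:intrinsicpower} to $\tilde{f}$ and reading off $r_{\tilde{e}}(\tilde{w})$ componentwise then yields exactly the image of $(D(W, f, V_1) \xrightarrow{j_1 \circ p} X_1, r_e(w)) \otimes (V_2 \xrightarrow{j_2} X_2, u_2)$, so the pairing descends.

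With well-definedness in hand, naturality in $\underline{M}$ is visible in the formula, and associativity and commutativity (as $T_1, T_2$ vary) follow from the corresponding properties of direct products of $G$-sets together with the compatibility of restrictions with those products. Finally, the displayed square with $\Theta^T$ commutes by direct inspection: both composites send $(V_1, j_1, u_1) \otimes (V_2, j_2, u_2)$ to
\begin{align*}
	\big(V_1 \times V_2 \times (T_1 \coprod T_2) \xrightarrow{\pi_1} V_1 \times V_2 \xrightarrow{j_1 \times j_2} X_1 \times X_2, \slant^{V_1 \times V_2, T_1 \coprod T_2}_{\underline{M}}(r_{\pi_{V_1 \times T_1}}(u_1), r_{\pi_{V_2 \times T_2}}(u_2))\big),
\end{align*}
since the pairing formula on $\TT^{T_1} \underline{M} \otimes \TT^{T_2} \underline{M}$ reproduced in the excerpt matches the pairing formula on $F$ after applying $\slant$. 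The only real work is the bookkeeping for relation~(iii); the rest is formal manipulation of pullback diagrams.
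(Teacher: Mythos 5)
Your proposal is correct and follows essentially the same route as the paper: after dismissing relations (i) and (ii) as formal, it rewrites the paired element $(r_{\pi_{V_1\times T_1}}(t_f(w)), r_{\pi_{V_2\times T_2}}(u_2))$ as a single transfer along $(f\times 1)\coprod 1$ and identifies the distributor of that map over $V_1\times V_2$ as $D(W,f,V_1)\times V_2$, which is precisely the exponential diagram the paper exhibits. The only difference is cosmetic: the paper asserts that diagram is exponential by inspection, while you justify it via the splitting of sections over the coproduct together with Lemma~\ref{lem:distrpullback}.
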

\begin{proof}
We need only show that our pairing is compatible with relation (iii) of Definition~\ref{def:intrinsicpower}; the other parts are trivial to check. Given pairs $(V_i \xrightarrow{j_i} X_i, u_i)$ for $i = 1,2$, suppose that $u_1 = t_{f_1} (w_1)$ for some map $f_1 : W_1 \to V_1 \times T_1$. Now note that the diagram below is exponential, where we let $D_1 \defeq D(W_1, f_1, V_1)$.
\begin{align*}
\xymatrix{
 D_1 {\times} V_2 {\times} (T_1 \coprod T_2) \ar[d]^-{(e {\times} 1 \circ 1 {\times} \tau) \coprod (p {\times} 1 {\times} 1)} \ar[rrr]^-{\pi_1} &&& D_1 {\times} V_2 \ar[d]^-{p {\times} 1} \\
 W_1 {\times} V_2 \coprod V_1 {\times} V_2 {\times} T_2 \ar[rr]_-{(1 {\times} \tau \circ f_1 {\times} 1) \coprod 1} && V_1 {\times} V_2 {\times} (T_1 \coprod T_2) \ar[r]_-{\pi_1} & V_1 {\times} V_2 }
\end{align*}
Now $(r_{\pi_{V_1 \times T_1}} (u_1), r_{\pi_{V_2 \times T_2}} (u_2))$ is the transfer of $(r_{\pi_{W_1}} (w_1), r_{\pi_{V_2 \times T_2}} (u_2))$, so the above diagram tells us that
\begin{gather*}
	\big(V_1 \times V_2 \xrightarrow{j_1 \times j_2} X_1 \times X_2, (r_{\pi_{V_1 \times T_1}} (t_{f_1} (w_1)), r_{\pi_{V_2 \times T_2}} (u_2))\big) \\
	\sim \big( D_1 \times V_2 \xrightarrow{(j_1 \circ p) \times j_2} X_1 \times X_2,  (r_{\pi_{D_1 \times T_1}} (r_e (w_1)), r_{\pi_{V_2 \times T_2}} (u_2)) \big).
\end{gather*}
A symmetrical argument applies to $u_2$.
\end{proof}

Next we require an easy statement about direct limits.

\begin{lem}\label{lem:intrdirlim}
For any $T \in \Fin_G$, the functor $F(T, \spacedash)$ commutes with direct limits.
\end{lem}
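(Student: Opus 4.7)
The plan is to observe that $F(T,\underline{M})(X)$ is built from $\underline{M}$ by a \emph{finitary} generators-and-relations procedure, so that preservation of filtered direct limits is automatic. Since direct limits in $Mack(G)$ are computed pointwise, it suffices, for any directed system $\{\underline{M}_\alpha\}$ with colimit $\underline{M}$ and any finite $G$-set $X$, to show that the natural map
\begin{align*}
	\varinjlim_\alpha F(T,\underline{M}_\alpha)(X) \to F(T,\underline{M})(X)
\end{align*}
is a bijection.

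For surjectivity, I would take any generator $(V \xrightarrow{j} X, u)$ of $F(T,\underline{M})(X)$. The element $u \in \underline{M}(V \times T) = \varinjlim_\alpha \underline{M}_\alpha(V \times T)$ lifts to some $u_\alpha \in \underline{M}_\alpha(V \times T)$, and the class of $(V \xrightarrow{j} X, u_\alpha)$ in $F(T,\underline{M}_\alpha)(X)$ provides a preimage.

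For injectivity, I would examine the three defining relations of Definition~\ref{def:intrinsicpower}. Each generator $(V \xrightarrow{j} X, u)$ involves a single element $u$ of $\underline{M}$; relations (i) and (ii) involve one or two such elements; and, most importantly, relation (iii) depends only on the single element $w \in \underline{M}(W)$, since the construction of $D(W,f,V)$, $p$, and $e$ in Diagram~\ref{eq:distrdef} is purely $G$-set-theoretic and independent of $\underline{M}$. Consequently, if $\xi_\alpha \in F(T,\underline{M}_\alpha)(X)$ maps to zero in $F(T,\underline{M})(X)$, this vanishing is witnessed by a finite expression of the image of $\xi_\alpha$ as a sum of instances of relations (i)--(iii); the finitely many elements of $\underline{M}$ appearing in the generators and these witnessing relations all lift compatibly to some $\underline{M}_\beta$ with $\beta \geq \alpha$ by filteredness of the indexing system. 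The same identity then already holds in $F(T,\underline{M}_\beta)(X)$, so $\xi_\alpha$ vanishes there and hence in the direct limit.

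The only step requiring attention is verifying that relation (iii) is finitary, which is immediate from the observation just made about Diagram~\ref{eq:distrdef}; the rest is a routine filtered-colimit argument, with no substantive obstacle.
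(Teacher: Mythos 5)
Your proof is correct, and it supplies exactly the routine verification the paper omits (the lemma is stated there without proof as an ``easy statement''): since each generator $(V \xrightarrow{j} X, u)$ and each instance of relations (i)--(iii) of Definition~\ref{def:intrinsicpower} involves only finitely many elements of $\underline{M}$, with the $G$-set data in~\ref{eq:distrdef} independent of $\underline{M}$, the standard filtered-colimit argument applies levelwise and gives the isomorphism $\varinjlim_\alpha F(T,\underline{M}_\alpha) \cong F(T,\varinjlim_\alpha \underline{M}_\alpha)$. No gaps; this is essentially the intended argument.
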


Combined with Lemma~\ref{lem:gsymmondirlim} and the fact that every Mackey functor is the direct limit of its finitely generated sub-Mackey functors, this reduces Theorem~\ref{thm:intrpow} to the following lemma.

\begin{lem}\label{lem:intrpowfingen}
For any $T \in \Fin_G$ and finitely generated $\underline{M} \in Mack(G)$, the map $\Theta^T_{\underline{M}}$ is an isomorphism.
\end{lem}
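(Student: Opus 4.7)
The plan is to reduce the general finitely generated case to the representable case, which is handled by Lemma~\ref{lem:intrrepiso}. The pivot for this reduction is the claim that $F(T, \spacedash)$ preserves reflexive coequalizers of Mackey functors, paralleling the corresponding property for $\TT^T$ from Lemma~\ref{lem:gsymmondirlim}; combined with the preservation of filtered colimits from Lemmas~\ref{lem:intrdirlim} and~\ref{lem:gsymmondirlim}, this will be enough.

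First I would verify that $F(T, \spacedash)$ preserves reflexive coequalizers. Since reflexive coequalizers in $Mack(G)$ are computed levelwise, it suffices to check this on each abelian group $F(T, \underline{M})(X)$. By construction this group is the quotient of the free abelian group on the set of triples $(V, j : V \to X, u \in \underline{M}(V \times T))$ by relations (i)--(iii) of Definition~\ref{def:intrinsicpower}. The generating set depends on $\underline{M}$ only through the values $\underline{M}(V \times T)$, and each relation is indexed by data independent of $\underline{M}$ together with a single element of some $\underline{M}(W)$. Since both free abelian group formation on sets and quotienting by subgroups generated by naturally indexed relations preserve colimits, $F(T, \spacedash)$ preserves reflexive coequalizers.

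Next, given a finitely generated $\underline{M}$, I would choose a surjection $[\spacedash, Z_0] \twoheadrightarrow \underline{M}$ from a representable and write the kernel $\underline{K}$ as the directed union of its finitely generated sub-Mackey functors $\underline{K}_\alpha$. Then $\underline{M} = \colim_\alpha [\spacedash, Z_0]/\underline{K}_\alpha$ is a filtered colimit of finitely presented Mackey functors, and since both $F(T, \spacedash)$ and $\TT^T$ preserve this colimit we may reduce to the case where $\underline{M}$ is finitely presented with a presentation $[\spacedash, Z_1] \to [\spacedash, Z_0] \to \underline{M} \to 0$. Converting this cokernel to a reflexive coequalizer $[\spacedash, Z_0 \coprod Z_1] \rightrightarrows [\spacedash, Z_0] \to \underline{M}$ by the standard trick (one arrow being the obvious map and the other being the projection, with common section given by inclusion into the second summand) and applying both functors, preservation of reflexive coequalizers together with Lemma~\ref{lem:intrrepiso} on the representable pieces identifies $\Theta^T_{\underline{M}}$ as an isomorphism via the universal property of coequalizers.

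The main obstacle will be the verification of the first step. Although conceptually clean, showing that $F(T, \spacedash)$ preserves reflexive coequalizers requires carefully unwinding the definition and tracking how relation (iii), which involves restrictions and transfers along the edges of an exponential diagram, depends functorially on $\underline{M}$.
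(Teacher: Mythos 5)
Your route is correct, but it is genuinely different from the paper's. The paper never establishes that $F(T, \spacedash)$ preserves reflexive coequalizers; instead it realizes a finitely generated $\underline{M}$ as $\underline{\pi}_0 (Z''/Y')$ for a presentation $[\spacedash, Y] \to [\spacedash, Z] \to \underline{M} \to \underline{0}$, filters ${Z''}^{\wedge T}$ by its coskeleta relative to $Y'$, and extracts from this a right-exact sequence $\bigoplus_j \Ind^G_{H_j}\big([\spacedash, \Res^G_{H_j} Y]^{\otimes S_j} \otimes [\spacedash, \Res^G_{H_j} Z]^{\otimes T-S_j}\big) \to [\spacedash, Z]^{\otimes T} \to \underline{M}^{\otimes T} \to \underline{0}$; injectivity of $\Theta^T_{\underline{M}}$ then follows from Lemma~\ref{lem:intrrepiso} together with surjectivity of $F(T,\beta)$, the pairing of Proposition~\ref{prop:intrpairing}, and $F(S_j, \underline{0}) = \underline{0}$ (Corollary~\ref{cor:intrpreservezero}). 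You replace all of this with the purely algebraic claim that $F(T,\spacedash)$ preserves reflexive coequalizers, quote Lemma~\ref{lem:gsymmondirlim} (via Theorem~\ref{thm:identgsymmon}) for the target, and present $\underline{M}$ as a reflexive coequalizer of representables so that Lemma~\ref{lem:intrrepiso} finishes the job. That claim is true and your plan goes through; it avoids the extended-power/coskeleton topology entirely, and since both functors also commute with direct limits (Lemmas~\ref{lem:intrdirlim} and~\ref{lem:gsymmondirlim}) it in fact yields Theorem~\ref{thm:intrpow} for arbitrary $\underline{M}$ in one stroke, with no finite-generation hypothesis. What the paper's argument buys instead is an explicit exact sequence for $\underline{M}^{\otimes T}$ and a workout of the multiplicative pairing; your argument concentrates all the content in the preservation statement.

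Two points need tightening. Your justification of the key step is stated too strongly: $F(T, \spacedash)$ certainly does not preserve arbitrary colimits (it is not additive, or the lemma would be trivial), and the argument must use reflexivity in an essential way --- the forgetful functor from abelian groups to sets preserves only reflexive coequalizers, and relation (ii) of Definition~\ref{def:intrinsicpower} involves pairs $(u_1, u_2)$, so one either indexes it by a single element of $\underline{M}((V_1 \coprod V_2) \times T)$ or invokes the commutation of sifted colimits with finite products. The clean formulation is to write $F(T, \underline{M})(X)$ as the cokernel of a map $\ZZ\{\mathcal{R}(\underline{M})\} \to \ZZ\{\mathcal{G}(\underline{M})\}$ in which $\mathcal{G}(\underline{M})$ and $\mathcal{R}(\underline{M})$ are coproducts, over $\underline{M}$-independent index data, of underlying sets of values $\underline{M}(A)$; each such set-valued functor preserves reflexive coequalizers, $\ZZ\{\spacedash\}$ preserves all colimits, and cokernels commute with coequalizers. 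Finally, a small slip: for the reflexive pair $(1,\alpha), (1,0) : [\spacedash, Z_0 \coprod Z_1] \rightrightarrows [\spacedash, Z_0]$ the common section is the inclusion of the $Z_0$ summand, not the $Z_1$ summand.
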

\begin{proof}
By Lemma~\ref{lem:intrspeccase} we may assume that $T \neq \emptyset$. Since $\underline{M}$ is finitely generated, there is an exact sequence as below.
\begin{align*}
	[\spacedash, Y] \xrightarrow{\alpha} [\spacedash, Z] \xrightarrow{\beta} \underline{M} \to \underline{0}
\end{align*}
Now let $Y'$ and $Z'$ denote fibrant replacements for $\Sigma^{\infty} Y_+$ and $\Sigma^{\infty} Z_+$, respectively. Let $\alpha$ be represented by $\alpha' : Y' \to Z'$, and let $Z''$ denote the mapping cone of $\alpha'$. Then the inclusion $Y' \to Z''$ is a cofibration and we have $\underline{M} \cong \underline{\pi}_0 (Z'' / Y')$, so that $\underline{M}^{\otimes T} \cong \underline{\pi}_0 (Z'' / Y')^{\wedge T}$. Next, let $\partial^{i}_{Y'} {Z''}^{\wedge T}$ denote the $i$-coskeleton of ${Z''}^{\wedge T}$ with respect to $Y'$. We have a sequence of cofibrations
\begin{align*}
	{Y'}^{\wedge T} = \partial^{|T|}_{Y'} {Z''}^{\wedge T} \to \partial^{|T|-1}_{Y'} {Z''}^{\wedge T} \to ... \to \partial^{1}_{Y'} {Z''}^{\wedge T} \to {Z''}^{\wedge T}
\end{align*}
with quotients as below for $i = 1, ... , |T| - 1$, where the wedge sum is indexed.
\begin{align}\label{eq:coskelquot}
	\partial^{i}_{Y'} {Z''}^{\wedge T} / \partial^{i+1}_{Y'} {Z''}^{\wedge T} \cong \bigvee_{S \subseteq T, |S| = i} {Y'}^{\wedge S} \wedge (Z'' / Y')^{\wedge T - S}
\end{align}
Now, for $i = 1, ... , |T|$ we have maps
\begin{align*}
	\bigvee_{S \subseteq T, |S| \geq i} {Y'}^{\wedge S} \wedge {Z''}^{\wedge T - S} \to \partial^{i}_{Y'} {Z''}^{\wedge T}.
\end{align*}
Combining~\ref{eq:coskelquot} with Lemma~\ref{lem:gsymmonsurj}, one easily sees by downward induction on $i$ that these maps are surjective on $\underline{\pi}_0$. Letting $\{ S_j \}$ be a complete set of orbit representatives for the nonempty subsets of $T$, and $H_j$ the stabilizer of $S_j$, we obtain an exact sequence as below,
\begin{align*}
	\bigoplus_j {\Ind^G_{H_j}} \big( [\spacedash, {\Res^G_{H_j}} Y]^{\otimes S_j} {\otimes} [\spacedash, {\Res^G_{H_j}} Z]^{\otimes T - S_j} \big) \\
	\to [\spacedash, Z]^{\otimes T} \xrightarrow{\beta^{\otimes T}} \underline{M}^{\otimes T} \to \underline{0}
\end{align*}
where the first map on the $j$'th summand is adjoint to $(\Res^G_{H_j} \alpha)^{\otimes S_j} \otimes 1$. We now obtain the commutative square below, where we have simplified by omitting the notation for the restriction functors.
\begin{align*}
\xymatrix{
 \bigoplus_j {\Ind^G_{H_j}} \big( F(S_j, [\spacedash, Y]) {\otimes} F(T{-}S_j, [\spacedash, Z]) \big) \ar[r] \ar[d]^-{\cong}_-{\oplus_j \Ind^G_{H_j} (\Theta^{S_j}_{[\spacedash, Y]} \otimes \Theta^{T-S_j}_{[\spacedash, Z]})} & F(T, [\spacedash, Z]) \ar[d]^-{\Theta^T_{[\spacedash, Z]}}_-{\cong} \\
 \bigoplus_j {\Ind^G_{H_j}} \big( \TT^{S_j} ([\spacedash, Y]) {\otimes} \TT^{T{-}S_j} ([\spacedash, Z]) \big) \ar[r] & \TT^T ([\spacedash, Z]) }
\end{align*}
Now it is clear from the definition that $F(T, \spacedash)$ preserves surjections, so it suffices to show that the composite of the top map in the above diagram with $F(T, \beta)$ is zero. For each $j$, we check this by examining the following commutative diagram; we have again omitted the notation for the restriction functors, and have abbreviated $[\spacedash, Y]$ and $[\spacedash, Z]$ by $\overline{Y}$ and $\overline{Z}$, respectively, to further reduce clutter.
\begin{align*}
\xymatrix{
 F(S_j, \overline{Y}) {\otimes} F(T{-}S_j, \overline{Z}) \ar[dr]_-{F(S_j, 0) \otimes F(T{-}S_j, \beta) \hspace{1cm}} \ar[r]^-{F(S_j, \alpha) \otimes 1} & F(S_j, \overline{Z}) {\otimes} F(T{-}S_j, \overline{Z}) \ar[d]^-{F(S_j, \beta) \otimes F(T{-}S_j, \beta)} \ar[r] & F(T, \overline{Z}) \ar[d]^-{F(T, \beta)} \\
 & F(S_j, \underline{M}) {\otimes} F(T{-}S_j, \underline{M}) \ar[r] & F(T, \underline{M}) }
\end{align*}
The map in question is the composite along the top and right sides. Since $F(S_j, 0) = 0$ by Corollary~\ref{cor:intrpreservezero}, this composite is zero.
\end{proof}

We have completed the proof of Theorem~\ref{thm:intrpow}. Combining this with Proposition~\ref{prop:intrpairing}, we obtain the following.

\begin{cor}\label{cor:intrpairiso}
The maps $F(T_1, \underline{M}) \otimes F(T_2, \underline{M}) \to F(T_1 \coprod T_2, \underline{M})$ form an associative, commutative and unital system of natural isomorphisms. There is a natural isomorphism as below.
\begin{align*}
	F(\ast, \underline{M}) &\xrightarrow{\cong} \underline{M} \\
	(V \xrightarrow{j} X, u) &\mapsto t_j (u)
\end{align*}
\end{cor}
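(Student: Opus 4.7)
The plan is to deduce everything from Theorem~\ref{thm:intrpow} and Proposition~\ref{prop:intrpairing}. In the commutative square of Proposition~\ref{prop:intrpairing}, the bottom arrow is asserted to be a natural isomorphism, and the vertical arrows $\Theta^{T_1}_{\underline{M}} \otimes \Theta^{T_2}_{\underline{M}}$ and $\Theta^{T_1 \coprod T_2}_{\underline{M}}$ are natural isomorphisms by Theorem~\ref{thm:intrpow}; hence the top $F$-pairing is also a natural isomorphism. Associativity and commutativity of these pairings are already part of the statement of Proposition~\ref{prop:intrpairing}, so the only coherence left to verify for the first claim is unitality.

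For unitality, the pairing $F(\emptyset, \underline{M}) \otimes F(T, \underline{M}) \to F(T, \underline{M})$ should match the unit isomorphism of $(Mack(G), \otimes)$ under the identification $F(\emptyset, \underline{M}) \cong \underline{A}$ from Lemma~\ref{lem:intrspeccase}. I would transport this from the topological side: by Theorem~\ref{thm:identgsymmon}, $\TT^T \underline{M} \cong \underline{M}^{\otimes T}$, and the $\TT^T$-pairings arise from the symmetric monoidal smash product of orthogonal $G$-spectra via Corollary~\ref{cor:gsymmon-1der}; the unit object $\underline{M}^{\otimes \emptyset} = \underline{\pi}_0 S \cong \underline{A}$ matches $F(\emptyset, \underline{M})$, and the unit coherence on the spectrum side descends accordingly. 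Transporting along $\Theta$ completes the first claim.

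For the unit isomorphism $F(\ast, \underline{M}) \xrightarrow{\cong} \underline{M}$, I would just verify the explicit formula directly rather than trace through the topological identifications. Define $\phi: F(\ast, \underline{M}) \to \underline{M}$ by $(V \xrightarrow{j} X, u) \mapsto t_j(u)$, using the identification $V \times \ast = V$. Well-definedness against the three relations of Definition~\ref{def:intrinsicpower} is routine: relation (i) uses $t_{j \circ f} \circ r_f = t_j$ for $f$ an isomorphism, relation (ii) is additivity of transfers, and for (iii) in the case $T = \ast$ the exponential diagram reduces to $D(W, f, V) = W$ with $p = f$ and $e = Id_W$, so both sides map to $t_{j \circ f}(w)$. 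An inverse is given by $u \mapsto (X \xrightarrow{=} X, u)$; one composite is trivially the identity, and the other reduces to the instance of relation (iii) with $f = j: V \to X = X \times \ast$, which gives $(X \xrightarrow{=} X, t_j(u)) = (V \xrightarrow{j} X, u)$ in $F(\ast, \underline{M})(X)$.

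There is no real obstacle here once Theorem~\ref{thm:intrpow} and Proposition~\ref{prop:intrpairing} are in hand; the corollary is essentially a packaging of them together with the observation that $T = \emptyset$ and $T = \ast$ behave as expected.
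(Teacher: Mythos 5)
Your proposal is correct and follows essentially the same route as the paper, which states the corollary as an immediate consequence of Theorem~\ref{thm:intrpow} and Proposition~\ref{prop:intrpairing} without supplying any further argument; you have simply unpacked what "combining" those two results entails. Your direct verification of the $T = \ast$ case (identifying $D(W,f,V) = W$, $p = f$, $e = Id_W$ for the exponential diagram over $Id_V$, so that relation~(iii) reduces to $(V \xrightarrow{j} X, t_f(w)) = (W \xrightarrow{j\circ f} X, w)$, and then exhibiting the two-sided inverse via the instance of relation~(iii) with $f = j$) is correct, and is a welcome elementary check that the paper leaves implicit. The only mild soft spot is the unitality claim: the transport-from-topology argument is sound in outline, but one could equally verify directly that under $F(\emptyset, \underline{M}) \cong \underline{A}$ of Lemma~\ref{lem:intrspeccase} the pairing with $T_1 = \emptyset$ becomes the unit isomorphism of the box product; either way the content is routine. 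You might also make explicit that $\phi\colon(V \xrightarrow{j} X, u) \mapsto t_j(u)$ is a map of Mackey functors before invoking the set-level two-sided inverse, though this is immediate from the definitions of transfers (composition) and restrictions (pullback and the Mackey axiom) on $F(\ast,\underline{M})$.
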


\begin{cor}\label{cor:geomfixpftm}
If $T \cong \coprod_i G/H_i$ then for any $\underline{M} \in Mack(G)$ the map shown below is an isomorphism.
\begin{align*}
	\mu^T_{\underline{M}} : \bigotimes_i \Phi^{H_i} (\underline{M}) &\to \Phi^G F(T, \underline{M}) \\
	\otimes_i [u_i] &\mapsto [(\ast \xrightarrow{=} \ast, \times_i u_i)]
\end{align*}
\end{cor}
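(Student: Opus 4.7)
The plan is to leverage the multiplicative structure of $F(T,\spacedash)$ from Corollary~\ref{cor:intrpairiso} together with the geometric-fixed-points computation for norms from Proposition~\ref{prop:geomfixpnorm}. Surjectivity of $\mu^T_{\underline{M}}$ comes essentially for free: the construction in the proof of Lemma~\ref{lem:intrrepiso} (the paragraph culminating in the surjection~\ref{eq:geomfixpftm}) makes sense for any Mackey functor $\underline{M}$ and produces a surjection $\bigotimes_i \Phi^{H_i}(\underline{M}) \to \Phi^G F(T,\underline{M})$; comparing formulas shows it coincides with $\mu^T_{\underline{M}}$. So the real content of the corollary is injectivity, and I would attack this by building an inverse isomorphism from a different direction and matching the two.

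To build the inverse, iterate the pairing isomorphism of Corollary~\ref{cor:intrpairiso} on the decomposition $T \cong \coprod_i G/H_i$ to obtain a natural isomorphism
\begin{align*}
\bigotimes_i F(G/H_i, \underline{M}) \xrightarrow{\cong} F(T, \underline{M}).
\end{align*}
Apply $\Phi^G$, which is symmetric monoidal, to get $\bigotimes_i \Phi^G F(G/H_i, \underline{M}) \cong \Phi^G F(T, \underline{M})$. For each $i$, Theorems~\ref{thm:intrpow} and~\ref{thm:identgsymmon} identify $F(G/H_i, \underline{M}) \cong \underline{M}^{\otimes G/H_i} \cong N_{H_i}^G \Res_{H_i}^G \underline{M}$, and Proposition~\ref{prop:geomfixpnorm} applied to $\Res_{H_i}^G \underline{M}$ gives $\Phi^G N_{H_i}^G \Res_{H_i}^G \underline{M} \cong \Phi^{H_i}\underline{M}$. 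Composing produces a natural isomorphism $\bigotimes_i \Phi^{H_i}(\underline{M}) \xrightarrow{\cong} \Phi^G F(T,\underline{M})$.

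The main obstacle is then to verify that this composite isomorphism is in fact $\mu^T_{\underline{M}}$ on pure tensors. Tracing a class $[u_i] \in \Phi^{H_i}(\underline{M})$ forward: the remark following Proposition~\ref{prop:geomfixpnorm} sends it to $[n_{H_i}^G(u_i)] \in \Phi^G N_{H_i}^G \Res_{H_i}^G \underline{M}$, which under the slant-map identification of Theorem~\ref{thm:intrpow} corresponds to the generator $[(\ast \xrightarrow{=} \ast, u_i)] \in \Phi^G F(G/H_i,\underline{M})$. Plugging these factors into the explicit pairing formula from Proposition~\ref{prop:intrpairing} (with $V_1 = V_2 = \ast$, $j_1 = j_2 = \mathrm{Id}$, so each restriction $r_{\pi_{V \times T}}$ is the identity) produces $[(\ast \xrightarrow{=} \ast, \times_i u_i)]$, which is exactly $\mu^T_{\underline{M}}(\otimes_i [u_i])$. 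Once this compatibility is in hand, the surjection $\mu^T_{\underline{M}}$ coincides with an isomorphism and is therefore itself an isomorphism.
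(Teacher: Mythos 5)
Your proof is correct, but it finishes differently than the paper does. The paper likewise observes that $\mu^T_{\underline{M}}$ is the surjection~\ref{eq:geomfixpftm} and uses the pairing of Proposition~\ref{prop:intrpairing} to reduce to a single orbit $T \cong G/H$; but from there it argues formally: since $\Phi^G F(G/H, \spacedash) \cong \Phi^G N_H^G \Res_H^G (\spacedash) \cong \Phi^H$ and the source $\Phi^H$ are right exact, additive functors, presenting $\underline{M}$ by represented Mackey functors reduces the claim to the representable case, which was already settled inside the proof of Lemma~\ref{lem:intrrepiso}. You never reduce to representables: you assemble an explicit isomorphism $\bigotimes_i \Phi^{H_i}(\underline{M}) \cong \Phi^G F(T,\underline{M})$ out of Corollary~\ref{cor:intrpairiso}, the (strong) monoidality of $\Phi^G$, and Proposition~\ref{prop:geomfixpnorm}, and then match it with $\mu^T_{\underline{M}}$ on pure tensors. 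That matching is the crux, and it does hold, but be aware that the middle step --- that $[n_{H_i}^G u_i]$ corresponds to $[(\ast \xrightarrow{=} \ast, u_i)]$ under the chain $F(G/H_i,\underline{M}) \cong \TT^{G/H_i}\underline{M} \cong \underline{M}^{\otimes G/H_i} \cong N_{H_i}^G \Res_{H_i}^G \underline{M}$ --- needs more than a citation of Theorem~\ref{thm:intrpow}: one must compare the slant map with the norm element, e.g.\ via the schematic computation $v \mapsto \prod_{t \in T} u(v,t)$ given just before Corollary~\ref{cor:intrtensmack} (or, equivalently, the compatibility recorded in Lemma~\ref{lem:gtimeshslant}), applied with $V = \ast$, together with the explicit inverse in the remark after Proposition~\ref{prop:geomfixpnorm}. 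With that supplied, your computation with the pairing formula (all restrictions along projections being identities when $V_1 = V_2 = \ast$) correctly yields $[(\ast \xrightarrow{=} \ast, \times_i u_i)]$, so $\mu^T_{\underline{M}}$ agrees with your composite isomorphism on generators of the tensor product and is therefore an isomorphism. The trade-off between the two arguments: the paper's right-exactness device confines all element-level verification to representable Mackey functors, while your version front-loads an explicit generator chase but avoids both the reduction to orbits and the reduction to representables.
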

\begin{proof}
This map is the same as~\ref{eq:geomfixpftm} from the proof of Lemma~\ref{lem:intrrepiso}. We know it is a surjection, and an isomorphism when $\underline{M}$ is representable. It is trivially an isomorphism when $T = \emptyset$. Suppose that $T = T_1 \coprod T_2$. Letting $T_1 \cong \coprod_i G/H_{1,i}$ and $T_2 \cong \coprod_j G/H_{2,j}$ so that $T \cong (\coprod_i G/H_{1,i}) \coprod (\coprod_j G/H_{2,j})$, one easily sees that the diagram below commutes.
\begin{align*}
\xymatrix{
 \big( \bigotimes_i \Phi^{H_i} (\underline{M}) \big) \otimes \big( \bigotimes_j \Phi^{H_j} (\underline{M}) \big) \ar[dr]_-{\mu^T_{\underline{M}}} \ar[r]^-{\mu^{T_1}_{\underline{M}} \otimes \mu^{T_2}_{\underline{M}}} & \Phi^G F(T_1, \underline{M}) \otimes \Phi^G F(T_2, \underline{M}) \ar[d]^-{\cong} \\
 & \Phi^G F(T, \underline{M}) }
\end{align*}
Thus, it suffices to prove the statement for $T \cong G/H$ an orbit. In this case we have $\Phi^G F(G/H, \underline{M}) \cong \Phi^G N_H^G (\Res_H^G \underline{M}) \cong \Phi^H \underline{M}$, so both the source and target of $\mu^T$ are right exact, additive functors. Hence we are reduced to the case where $\underline{M}$ is representable, which is handled in the proof of Lemma~\ref{lem:intrrepiso}.
\end{proof}

We can describe $\Phi^G F(T, \underline{M})$ in a canonical way, as follows. We first form the free abelian group on $\underline{M} (T)$. Then, for any $G$-map $f : W \to T$, we impose the relation below.
\begin{align*}
	[t_f (w)] = \sum_{j : T \to W, f \circ j = 1} [r_j (w)]
\end{align*}

Next, we know that there is a natural isomorphism of the form
\begin{align*}
	F(T, \underline{M}_1) \otimes F(T, \underline{M}_2) \xrightarrow{\cong} F(T, \underline{M}_1 \otimes \underline{M}_2),
\end{align*}

using the isomorphisms $\iota^T$ and $\Theta^T$. Letting $(V \xrightarrow{j} X, u) \in F(T, \underline{M})$, we compute $(\iota^T)^{-1} (\Theta^T)^{-1} (V \xrightarrow{j} X, u)$ as follows. First of all, this is $t_j$ of $(\iota^T)^{-1} (\Theta^T)^{-1} (V \xrightarrow{=} V, u)$. Let $u$ be represented by a map
\begin{align*}
	u : \Sigma^{\infty} (V \times T)_+ \to \HH\underline{M},
\end{align*}

where $\HH\underline{M}$ is cofibrant. Then $(\iota^T)^{-1} (\Theta^T)^{-1} (V \xrightarrow{=} V, u)$ is represented by the map
\begin{align*}
	\Sigma^{\infty} V_+ \to (\HH\underline{M})^{\wedge T}
\end{align*}

such that the underlying nonequivariant map on the summand corresponding to $v \in V$ is
\begin{align*}
	S^0 \cong (S^0)^{\wedge T} \xrightarrow{\wedge_{t \in T} u(v, t)} (\HH\underline{M})^{\wedge T}.
\end{align*}

We can indicate this schematically with the following.
\begin{align*}
	v \mapsto \prod_{t \in T} u(v, t)
\end{align*}

The following is now clear.

\begin{cor}\label{cor:intrtensmack}
There is an associative and commutative system of natural isomorphisms, as below.
\begin{gather*}
	F(T, \underline{M}_1) \otimes F(T, \underline{M}_2) \xrightarrow{\cong} F(T, \underline{M}_1 \otimes \underline{M}_2) \\
	(V_1 \xrightarrow{j_1} X_1, u_1) \otimes (V_2 \xrightarrow{j_2} X_2, u_2) \\
	\mapsto \big(V_1 \times V_2 \xrightarrow{j_1 \times j_2} X_1 \times X_2, r_{1 \times 1 \times \Delta} r_{1 \times \tau \times 1} (u_1 \otimes u_2)\big)
\end{gather*}
\end{cor}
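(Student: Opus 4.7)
The plan is to obtain the isomorphism by transport along the natural isomorphism $F(T, \underline{M}) \cong \underline{M}^{\otimes T}$ given by $\Theta^T_{\underline{M}} \circ (\text{id})$ and $\iota^T_{\underline{M}}$ (Theorems~\ref{thm:identgsymmon} and~\ref{thm:intrpow}), combined with the fact that $(\spacedash)^{\otimes T}$ is strong symmetric monoidal. Explicitly, since the smash product in $Sp_G$ is symmetric monoidal, there is for cofibrant $X, Y$ a natural $G$-equivariant isomorphism $X^{\wedge T} \wedge Y^{\wedge T} \cong (X \wedge Y)^{\wedge T}$ obtained by shuffling the $T$ factors. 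Applied to $\HH\underline{M}_1$ and $\HH\underline{M}_2$ and run through $\underline{\pi}_0$ (using that smash products of $(-1)$-connected cofibrant spectra have $\underline{\pi}_0$ equal to the tensor product on Mackey functors), this produces the desired natural isomorphism $\underline{M}_1^{\otimes T} \otimes \underline{M}_2^{\otimes T} \xrightarrow{\cong} (\underline{M}_1 \otimes \underline{M}_2)^{\otimes T}$. Transporting across $F(T, \spacedash) \cong (\spacedash)^{\otimes T}$ yields the top horizontal map of the corollary.

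The only real work is to check the explicit formula. I would use the schematic description obtained in the paragraphs immediately preceding the statement: $(\iota^T)^{-1} (\Theta^T)^{-1} (V \xrightarrow{j} X, u)$ is $t_j$ of the class represented on an orbit of $V$ by $v \mapsto \prod_{t \in T} u(v, t)$. Pairing two such classes $(V_1 \xrightarrow{j_1} X_1, u_1)$ and $(V_2 \xrightarrow{j_2} X_2, u_2)$ on the spectrum side produces, schematically, the map $(v_1, v_2) \mapsto \prod_{t \in T} u_1(v_1, t) \otimes u_2(v_2, t)$ out of $V_1 \times V_2$. I then verify that the proposed pair $\bigl(V_1 \times V_2 \xrightarrow{j_1 \times j_2} X_1 \times X_2, r_{1 \times 1 \times \Delta} r_{1 \times \tau \times 1}(u_1 \otimes u_2)\bigr)$ has the \emph{same} schematic description, by chasing $u_1 \otimes u_2 \in (\underline{M}_1 \otimes \underline{M}_2)(V_1 \times T \times V_2 \times T)$ through the restriction along $1 \times \tau \times 1$ to $V_1 \times V_2 \times T \times T$ and then along $1 \times 1 \times \Delta$ to $V_1 \times V_2 \times T$: at a point $(v_1, v_2, t)$ the result is precisely $u_1(v_1, t) \otimes u_2(v_2, t)$. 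Because the isomorphism $\Theta^T \circ \iota^T$ is determined by these schematic values on each orbit of $V_1 \times V_2$ (plus the transfer $t_{j_1 \times j_2}$), the two elements agree in $F(T, \underline{M}_1 \otimes \underline{M}_2)$.

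Associativity, commutativity, and unitality reduce, via the same transport, to the corresponding properties of the strong symmetric monoidal structure on $(\spacedash)^{\wedge T}$ — in particular the unit isomorphism corresponds under Corollary~\ref{cor:intrpairiso} (the $T = \ast$ case) to the usual unit isomorphism of the tensor product of Mackey functors. The only technical obstacle is the formula verification above, and even that reduces to the observation $r_{1 \times 1 \times \Delta} \circ r_{1 \times \tau \times 1}$ implements, on sections, the diagonal substitution that identifies the two copies of $T$, which matches the "pointwise product over $T$" description of the monoidal pairing after transport.
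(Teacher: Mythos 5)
Your proposal is correct and follows essentially the same route as the paper: transport the topologically obtained monoidality of $(\spacedash)^{\otimes T}$ across the isomorphisms $\Theta^T$ and $\iota^T$, then verify the explicit formula against the schematic description $v \mapsto \prod_{t \in T} u(v,t)$. The paper leaves that last verification implicit ("the following is now clear"), and your chase of $r_{1 \times 1 \times \Delta} r_{1 \times \tau \times 1}(u_1 \otimes u_2)$ is exactly the intended check.
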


There must also be a natural isomorphism as below.
\begin{align*}
	F(T_1, F(T_2, \underline{M})) \xrightarrow{\cong} F(T_1 \times T_2, \underline{M})
\end{align*}

We give a partial computation of this, as follows. An element of $F(T_1, F(T_2, \underline{M})) (X)$ may be represented by a pair $(V \xrightarrow{j} X, u)$ with $u \in F(T_2, \underline{M}) (V \times T_1)$. Suppose that $u$ is represented by a pair $(W \xrightarrow{f} V \times T_1, x)$ with $x \in \underline{M} (W \times T_2)$. Then we can represent $u$ schematically, as follows.
\begin{align*}
	(v, t_1) \mapsto \sum_{w \in W, f(w) = (v,t_1)} \Big( \prod_{t_2 \in T_2} x(w, t_2) \Big)
\end{align*}

Then $(V \xrightarrow{=} V, u)$ can be represented as below.
\begin{align*}
	v &\mapsto \prod_{t_1 \in T_1} \Big( \sum_{w \in W, f(w) = (v,t_1)} \Big( \prod_{t_2 \in T_2} x(w, t_2) \Big) \Big) \\
	   &= \sum_{(v,s) \in D(W,f,V)} \Big( \prod_{t_1 \in T_1} \prod_{t_2 \in T_2} x(s(v,t_1), t_2) \Big)
\end{align*}

It is not difficult to translate these formulas into topology. We obtain the following. In principle, a formula can be given for the case where $u$ is a difference of pairs, but we shall not attempt to write it down here.

\begin{cor}\label{cor:intrpowpow}
There is an associative and unital system of natural isomorphisms as below. The elements of the form below map as shown.
\begin{align*}
	F(T_1, F(T_2, \underline{M})) &\xrightarrow{\cong} F(T_1 \times T_2, \underline{M}) \\
	\big(V \xrightarrow{j} X, (W \xrightarrow{f} V \times T_1, x) \big) &\mapsto \big(D(W, f, V) \xrightarrow{j \circ p} X, r_{e \times 1} (x) \big)
\end{align*}
\end{cor}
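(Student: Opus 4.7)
The plan is to build the isomorphism by combining Theorem~\ref{thm:intrpow} with the associativity of the $G$-symmetric monoidal structure on Mackey functors, and then separately verify the explicit formula on generators of the stated form. By Theorem~\ref{thm:intrpow} applied twice, we have
\begin{align*}
	F(T_1, F(T_2, \underline{M})) \cong (F(T_2, \underline{M}))^{\otimes T_1} \cong (\underline{M}^{\otimes T_2})^{\otimes T_1}
\end{align*}
and $F(T_1 \times T_2, \underline{M}) \cong \underline{M}^{\otimes T_1 \times T_2}$, so it suffices to produce an associative and unital natural isomorphism $(\underline{M}^{\otimes T_2})^{\otimes T_1} \cong \underline{M}^{\otimes T_1 \times T_2}$. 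This comes by applying $\underline{\pi}_0$ to the canonical associativity isomorphism $((\HH\underline{M})^{\wedge T_2})^{\wedge T_1} \cong (\HH\underline{M})^{\wedge T_1 \times T_2}$ of spectra, using Lemma~\ref{lem:gsymmonder} to ensure cofibrancy is preserved and Corollary~\ref{cor:gsymmon-1der} to identify $\underline{\pi}_0$ of the iterated smash power with the iterated tensor power. Associativity and unitality for the resulting Mackey functor isomorphism follow from the corresponding coherence of smash product on spectra.

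Second, I would verify the formula. Fix a pair $(V \xrightarrow{j} X, u)$ in $F(T_1, F(T_2, \underline{M}))(X)$ with $u$ represented by $(W \xrightarrow{f} V \times T_1, x)$ and $x \in \underline{M}(W \times T_2)$. Since transfers on generators act by postcomposition with $j$, and the isomorphism is natural (hence commutes with transfers), we may reduce to the case $X = V$, $j = Id_V$. In $F(T_2, \underline{M})(V \times T_1)$, we have the identity $(W \xrightarrow{f} V \times T_1, x) = t_f(W \xrightarrow{=} W, x)$. Applying relation (iii) of Definition~\ref{def:intrinsicpower} to the pair $(V \xrightarrow{=} V, t_f(\spacedash))$ with the exponential diagram~\ref{eq:distrdef} for $f: W \to V \times T_1$ gives
\begin{align*}
	(V \xrightarrow{=} V, t_f \text{ of the class of } (W \xrightarrow{=} W, x)) = (D(W,f,V) \xrightarrow{p} V, r_e \text{ of that class})
\end{align*}
inside $F(T_1, F(T_2, \underline{M}))(V)$, where $r_e$ is computed in $F(T_2, \underline{M})$ along $e : D(W,f,V) \times T_1 \to W$ and so equals $(D(W,f,V) \times T_1 \xrightarrow{=} D(W,f,V) \times T_1, r_{e \times 1}(x))$ by the definition of restriction in $F(T_2, \underline{M})$.

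Third, I would match this to the target: the schematic computation just before the statement already shows that under the chain of isomorphisms through $\underline{M}^{\otimes T_1 \times T_2}$, the resulting pair on $D(W,f,V)$ carrying the element $r_{e \times 1}(x) \in \underline{M}(D(W,f,V) \times T_1 \times T_2)$ maps precisely to $(D(W,f,V) \xrightarrow{p} V, r_{e \times 1}(x))$ in $F(T_1 \times T_2, \underline{M})(V)$; applying $t_j$ to both sides yields the claimed formula. The main obstacle is the bookkeeping in this final matching step: one must check that the chain of isomorphisms $F(T_1, F(T_2, \underline{M})) \to (F(T_2, \underline{M}))^{\otimes T_1} \to (\underline{M}^{\otimes T_2})^{\otimes T_1} \to \underline{M}^{\otimes T_1 \times T_2} \to F(T_1 \times T_2, \underline{M})$ really does send the right-hand side above to the stated pair, which is a direct calculation using Corollary~\ref{cor:intrtensmack} to assemble the slant maps, the fact that the evident pullback/exponential manipulations (Lemmas~\ref{lem:distrpullback} and~\ref{lem:distrlaw}) commute with restriction and transfer, and the identification of $(W \xrightarrow{=} W, x)$ in $F(T_2, \underline{M})$ under $\Theta^{T_2}$.
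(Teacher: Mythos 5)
Your proposal is correct, and its skeleton matches the paper's: both obtain the isomorphism by identifying $F(T,\spacedash)$ with $(\spacedash)^{\otimes T}$ via Theorem~\ref{thm:intrpow} and invoking the spectrum-level coherence $((\HH\underline{M})^{\wedge T_2})^{\wedge T_1} \cong (\HH\underline{M})^{\wedge T_1 \times T_2}$ (with Lemma~\ref{lem:gsymmonder} and Corollary~\ref{cor:gsymmon-1der} supplying the cofibrancy and connectivity needed to pass to $\underline{\pi}_0$), and both then verify the stated formula on generators. Where you genuinely diverge is in how that formula is checked. The paper works schematically/topologically: it represents the inner element as $(v,t_1)\mapsto \sum_{w,\, f(w)=(v,t_1)}\prod_{t_2}x(w,t_2)$, expands the outer product over $T_1$, and carries out the distribution of products over sums by hand, recognizing the resulting sum over sections as a sum indexed by $D(W,f,V)$. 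You instead perform the distributivity step algebraically inside $F(T_1,F(T_2,\underline{M}))$: writing the inner generator as $t_f$ of $(W\xrightarrow{=}W,x)$ and applying relation (iii) of Definition~\ref{def:intrinsicpower} with the exponential diagram~\ref{eq:distrdef} converts the source element into $\big(D(W,f,V)\xrightarrow{j\circ p}X,\ (D(W,f,V)\times T_1\xrightarrow{=}D(W,f,V)\times T_1,\ r_{e\times 1}(x))\big)$, so the only topological translation left is the easy case of generators whose inner pair has identity structure map, where the schematic description $v\mapsto\prod_t u(v,t)$ (applied twice, plus associativity of smash powers) immediately yields $(V'\xrightarrow{j'}X,y)$. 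This buys a cleaner handling of the combinatorial heart of the computation — it becomes literally the defining relation, already legitimized by Theorem~\ref{thm:intrpow} — whereas the paper's schematic expansion is shorter and displays the extended-power combinatorics directly; your level of detail in the final matching step is comparable to the paper's own. One small quibble: Corollary~\ref{cor:intrtensmack} is not really the tool needed there (it concerns $F(T,\underline{M}_1)\otimes F(T,\underline{M}_2)$, not iterated powers); the relevant input is the schematic identification of $(\iota^{T})^{-1}(\Theta^{T})^{-1}(V\xrightarrow{=}V,u)$ developed just before that corollary, which you also cite — a misattribution, not a gap.
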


Next we give an intrinsic description of the norm functor.

\subsection{Norm Functor}\label{subsec:norm}

Fix a subgroup $H$ of $G$, and a set of coset representatives for $G/H$, with $1$ representing the identity coset. We shall freely identify $G \times_H \Res_H^G V$ with $V \times G/H$ and $G/H \times V$ with $V \times G/H$ in this subsection, for convenience. Before we define our candidate for the norm functor, we introduce one small notation. If $V \in \Fin_G$ and $W \in \Fin_H$, and $f : W \to V$ is an $H$-map, we shall define $D_H (W, f, V)$ by the following exponential diagram.
\begin{align}\label{eq:intrnormdistr}
\xymatrix{
 D_H (W, f, V) \times G/H \ar[d]_-{e} \ar[rr]^-{\pi_1} && D_H (W, f, V) \ar[d]^-{p} \\
 G \times_H W \ar[r]_-{G \times_H f} & V \times G/H \ar[r]_-{\pi_1} & V }
\end{align}

We also define an $H$-map
\begin{align*}
	e_H : D_H (W, f, V) \to W
\end{align*}

to be the restriction of $e$ to $D_H (W, f, V) \times \{ H \}$. Thus $e = G \times_H e_H$.

\begin{definition}\label{def:intrnorm}
Let $\underline{M} \in Mack(H)$. For $X \in \Fin_G$ we define $N^{G,H} \underline{M} (X)$ to be the quotient of the free abelian group on the pairs $(V \xrightarrow{j} X, u \in \underline{M} (\Res_H^G V))$, where $j$ is a map in $\Fin_G$, by the relations
\begin{enumerate}[(i)]
\item $(V \xrightarrow{j} X, u) = (V' \xrightarrow{j'} X, u')$ when there is a commutative diagram
\begin{align*}
\xymatrix{
 V \ar[r]^-{j} & X \\
 V' \ar[u]^-{f}_-{\cong} \ar[ur]_-{j'} & }
\end{align*}
such that $f$ is an isomorphism and $r_{\Res_H^G f} (u) = u'$,
\item $(V_1 \coprod V_2 \xrightarrow{j_1 \coprod j_2} X, (u_1, u_2)) = (V_1 \xrightarrow{j_1} X, u_1) + (V_2 \xrightarrow{j_2} X, u_2)$, and
\item $(V \xrightarrow{j} X, t_f (w)) = (D_H (W, f, V) \xrightarrow{j \circ p} X, r_{e_H} (w))$ for any $H$-set $W$ and $H$-map $f : W \to \Res_H^G V$.
\end{enumerate}
\end{definition}

We define transfers on the generators by composition and restrictions by pullback, as before. Lemma~\ref{lem:distrpullback} implies that the restrictions are well-defined, and it is again easy to see that coproducts of finite $G$-sets are converted into products. Hence, we have a Mackey functor $N^{G,H} \underline{M}$.\\
\indent Next, we define a natural map $\Theta^{G,H} : N^{G,H} \to \TT^{G,H}$ as below.
\begin{align*}
	\Theta^{G,H}_{\underline{M}} (X) : N^{G,H} \underline{M} (X) &\to \TT^{G,H} \underline{M} (X) \\
	(V \xrightarrow{j} X, u) &\mapsto (G/H \times V \xrightarrow{\pi_2} V \xrightarrow{j} X, G \times_H u)
\end{align*}

To see that this is a natural map of Mackey functors, we need only show that it respects relation (iii) of Definition~\ref{def:intrnorm}. For this we apply~\ref{eq:intrnormdistr}, supposing $u = t_f (w)$. We have $G \times_H u = t_{G \times_H f} (G \times_H w)$, and also $r_e (G \times_H w) = r_{G \times_H e_H} (G \times_H w) = G \times_H (r_{e_H} (w))$. Thus we have
\begin{gather*}
	\big(G/H \times V \xrightarrow{\pi_2} V \xrightarrow{j} X, G \times_H (t_f (w))\big) \\
	\sim \big(G/H \times D_H (W, f, V) \xrightarrow{\pi_2} D_H (W, f, V) \xrightarrow{j \circ p} X, G \times_H (r_{e_H} (w)) \big).
\end{gather*}

It follows that $\Theta^{G,H}$ is well-defined, and it is clear that it is surjective. We shall prove the following.

\begin{thm}\label{thm:intrnorm}
The natural map $\Theta^{G,H}$ is an isomorphism.
\end{thm}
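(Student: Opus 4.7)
My plan is to parallel the strategy used for Theorem~\ref{thm:intrpow}. Surjectivity of $\Theta^{G,H}$ has already been established, so everything reduces to injectivity. I will organize the argument into three stages: (1) compatibility with restriction along subgroups, so that an induction on $|G|$ becomes available; (2) the representable case; and (3) extension to arbitrary $\underline{M}$ via direct limits and finite presentations.

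For stage (1), I would check that for any subgroup $K \leq G$, restriction $\Res_K^G N^{G,H} \underline{M}$ decomposes via the double coset formula, namely
\begin{align*}
    \Res_K^G N^{G,H} \underline{M} \cong \bigotimes_{KgH \in K\SLASH G / H} N^{K, K \cap gHg^{-1}} \big( c_g \Res_{g^{-1}Kg \cap H}^H \underline{M} \big),
\end{align*}
with $\Theta^{G,H}$ compatible with the $\Theta^{K, K \cap gHg^{-1}}$ on each factor (mirroring the commutative square~\ref{eq:thetarestr}). This allows induction on $|G|$: combined with the Five Lemma and the fact that both $N^{G,H} \underline{M}$ and $\TT^{G,H} \underline{M}$ are generated by their values at transitive $G$-sets, injectivity is reduced to showing $\Phi^G (\Theta^{G,H}_{\underline{M}})$ is injective.

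For stage (2), the key identification is that for a finite $H$-set $T$ there is a natural isomorphism $N^{G,H}[\spacedash, T] \cong [\spacedash, N_H^G T]$, sending $(V \xrightarrow{j} X, \phi : \Res_H^G V \to T)$ to $(V \xrightarrow{j} X, \hat\phi : V \to N_H^G T)$ under the natural bijection $Hom_H(\Res_H^G V, T) \cong Hom_G(V, N_H^G T)$. Relation~(iii) of Definition~\ref{def:intrnorm} is exactly the content of well-definedness of this bijection: the exponential diagram defining $D_H(W, f, V)$ encodes precisely how a transferred $H$-map $\Res_H^G V \to T$ decomposes under the product structure of $N_H^G T$. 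Since we also know $\TT^{G,H}[\spacedash, T] \cong N_H^G[\spacedash, T] \cong [\spacedash, N_H^G T]$, and since $\Phi^G[\spacedash, N_H^G T] \cong \ZZ\{(N_H^G T)^G\} \cong \ZZ\{T^H\}$ (agreeing with $\Phi^H[\spacedash, T]$ as in Proposition~\ref{prop:geomfixpnorm}), verifying that $\Theta^{G,H}_{[\spacedash, T]}$ matches these two identifications completes the representable case.

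For stage (3), I would check that $N^{G,H}$ commutes with direct limits (immediate from Definition~\ref{def:intrnorm}) and preserves surjections in the obvious way, reducing to the case where $\underline{M}$ is finitely generated. Using a presentation $[\spacedash, Y] \xrightarrow{\alpha} [\spacedash, Z] \to \underline{M} \to \underline{0}$, and invoking Lemmas~\ref{lem:normsomecolim} and~\ref{lem:normpresurj} on the topological side together with the analogous intrinsic surjectivity on the $N^{G,H}$ side, a standard Five-Lemma diagram chase against the representable case finishes the proof. The main obstacle will be stage (2): carefully verifying that Definition~\ref{def:intrnorm}'s relation~(iii) is the precise combinatorial content of the bijection $Hom_H(\Res_H^G V, T) \cong Hom_G(V, N_H^G T)$, since this is where the distributive law encoded by the exponential diagram genuinely carries the whole argument; the rest is formal bookkeeping that parallels the $F(T, \spacedash)$ proof, and is in fact cleaner since only a single norm appears rather than an iterated smash product.
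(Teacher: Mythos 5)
Your plan is a from-scratch replay of the induction used for Theorem~\ref{thm:intrpow}, whereas the paper takes a much shorter route: it proves $N^{G,H} \Res_H^G \underline{M} \cong F(G/H, \underline{M})$ (Lemma~\ref{lem:intrnormres}) and $\TT^{G,H} \Res_H^G \underline{M} \cong \TT^{G/H} \underline{M}$ (Lemma~\ref{lem:extrnormres}), checks via Lemma~\ref{lem:gtimeshslant} that these identifications intertwine $\Theta^{G,H}_{\Res_H^G \underline{M}}$ with $\Theta^{G/H}_{\underline{M}}$, and then uses that every $\underline{M} \in Mack(H)$ is canonically a retract of $\Res_H^G \Ind_H^G \underline{M}$, so that $\Theta^{G,H}_{\underline{M}}$ is a retract of an isomorphism already supplied by Theorem~\ref{thm:intrpow}. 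Since you have Theorem~\ref{thm:intrpow} in hand, this retract observation makes all of your stages (1)--(3) unnecessary; more importantly, as written your stages contain genuine gaps.

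The central one is the injectivity step in stage (1): it is \emph{not} true that a surjective map of $G$-Mackey functors which is an isomorphism at all levels $G/K$ with $K$ proper and injective on $\Phi^G$ must be injective at level $G/G$. The kernel at the top level can consist entirely of transferred elements. For instance, with $G = C_2$, let $\underline{A}$ have $\underline{A}(G/e) = \underline{A}(G/G) = \ZZ/2$ with trivial Weyl action, transfer the identity and restriction zero, and let $\underline{B}$ have $\underline{B}(G/e) = \ZZ/2$, $\underline{B}(G/G) = 0$; the evident map is surjective, an isomorphism at $G/e$, and an isomorphism on the (vanishing) geometric fixed points, yet fails injectivity at $G/G$. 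The paper's Lemma~\ref{lem:intrrepiso} avoids exactly this trap by producing a \emph{section} of $\Theta$ in the representable case and checking that the section is surjective (for surjectivity, the ``proper levels plus $\Phi^G$'' argument is legitimate); for a general $\underline{M}$ no section of $\Theta^{G,H}_{\underline{M}}$ is available a priori, which is why the paper does not rerun the induction for the norm. Two further points are asserted rather than proved and are not formal: the double coset decomposition of $\Res_K^G N^{G,H} \underline{M}$ for the \emph{intrinsically defined} $N^{G,H}$ of Definition~\ref{def:intrnorm} (the paper never proves or uses it, and verifying compatibility of relation (iii) with such a decomposition is real work), and the stage (3) ``standard Five-Lemma chase'', whose analogue for $F(T, \spacedash)$ in Lemma~\ref{lem:intrpowfingen} required the nonformal coskeleton filtration of $(Z''/Y')^{\wedge T}$ to produce the needed exact sequence; a norm version of that exactness input would have to be established. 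Your stage (2), by contrast, is essentially sound and matches the identification $[\spacedash, N_H^G T] \cong N^{G,H}([\spacedash, T])$ recorded in Corollary~\ref{cor:intrnormsymmon}, but the representable case alone cannot carry the theorem without a valid general reduction.
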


We shall require three lemmas.

\begin{lem}\label{lem:intrnormres}
For $\underline{M} \in Mack(G)$, the natural isomorphisms
\begin{align*}
	\Res_H^G \underline{M} (\Res_H^G V) \cong \underline{M} (V \times G/H)
\end{align*}
for $V \in \Fin_G$ induce a natural isomorphism
\begin{align*}
	N^{G,H} \Res_H^G \underline{M} \xrightarrow{\cong} F(G/H, \underline{M}).
\end{align*}
\end{lem}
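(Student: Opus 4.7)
The approach is to observe that both Mackey functors are defined by generators and relations, and then exhibit a bijection of generators and relations. On generators, an element of $N^{G,H} \Res_H^G \underline{M}(X)$ is a pair $(V \xrightarrow{j} X, u)$ with $u \in \Res_H^G \underline{M}(\Res_H^G V) = \underline{M}(G \times_H \Res_H^G V) \cong \underline{M}(V \times G/H)$, which is precisely the form of a generator of $F(G/H, \underline{M})(X)$. Relations (i) and (ii) translate immediately under this identification: an $H$-isomorphism $f : V' \to V$ of the form $\Res_H^G f$ for a $G$-isomorphism gives $r_{\Res_H^G f} = r_{f \times 1_{G/H}}$ under the identification, and disjoint unions behave identically.

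The content is in matching relation (iii). The plan is to use the adjunction between $G$-sets and $H$-sets: every $G$-set $W'$ equipped with a $G$-map to $V \times G/H$ factors canonically as $W' \cong G \times_H W$, where $W$ is the preimage of $V \times \{H\}$, and the given map is $G \times_H f$ for a unique $H$-map $f : W \to \Res_H^G V$. This sets up a bijection between the $G$-maps $f' : W' \to V \times G/H$ that parametrize relation (iii) of $F(G/H, \underline{M})$ and the $H$-maps $f : W \to \Res_H^G V$ that parametrize relation (iii) of $N^{G,H} \Res_H^G \underline{M}$.

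Under this bijection, the exponential diagram defining $D(W', f', V)$ in $F(G/H, \underline{M})$ coincides with the exponential diagram defining $D_H(W, f, V)$ in $N^{G,H} \Res_H^G \underline{M}$, since both are the exponential diagram for $(G \times_H f, \pi_1)$. The evaluation map in the former, $e : D(W', f', V) \times G/H \to W'$, agrees with $G \times_H e_H$ in the latter, so the associated restriction $r_e : \underline{M}(W') \to \underline{M}(D(W', f', V) \times G/H)$ corresponds precisely to $r_{e_H} : \Res_H^G \underline{M}(W) \to \Res_H^G \underline{M}(D_H(W, f, V))$ under the identification of the two groups with $\underline{M}(G \times_H W)$ and $\underline{M}(G \times_H D_H(W, f, V))$. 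Thus both sides of relation (iii) match termwise.

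Consequently the identity on generators descends to a well-defined bijection $N^{G,H} \Res_H^G \underline{M}(X) \to F(G/H, \underline{M})(X)$, which is additive and natural. Compatibility with transfers is obvious (both are defined by post-composition on $j$), and compatibility with restrictions follows from the fact that pullback in $\Fin_G$ along $X' \to X$ restricts to pullback in $\Fin_H$ after applying $\Res_H^G$, together with Lemma~\ref{lem:distrpullback}. The main (and only) obstacle is verifying the identification of the two distributor diagrams, which is essentially bookkeeping with the adjunction $G \times_H (\spacedash) \dashv \Res_H^G$; no deep structure is needed.
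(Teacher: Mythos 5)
Your proposal is correct and follows essentially the same route as the paper: identify the generators via $\Res_H^G \underline{M}(\Res_H^G V) \cong \underline{M}(V \times G/H)$, use the canonical decomposition $W' \cong G \times_H W$, $f' = G \times_H f$ to match relation (iii) and the exponential diagrams (with $e = G \times_H e_H$), and then check compatibility with transfers and restrictions. The paper records the same bookkeeping as two commutative diagrams comparing $t_{f}$ with $t_{G\times_H f}$ and $r_{\Res_H^G k}$ with $r_{k \times 1}$, which is exactly the adjunction argument you sketch.
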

\begin{proof}
Let $X \in \Fin_G$. The generators for $N^{G,H} \Res_H^G \underline{M} (X)$ are given by pairs $(V \xrightarrow{j} X, u)$ with $u \in \Res_H^G \underline{M} (\Res_H^G V)$, while the generators for $F(G/H, \underline{M}) (X)$ are given by such pairs with $u \in \underline{M} (V \times G/H)$. However, we have $\Res_H^G \underline{M} (\Res_H^G V) \cong \underline{M} (V \times G/H)$; hence, the generators of the two groups are in bijection. To see that we have an isomorphism of groups, one need only check that the relations (iii) of Definitions~\ref{def:intrinsicpower} and~\ref{def:intrnorm} correspond under this identification. For this, let $f : W \to V \times G/H$ be a $G$-map. Then $f$ identifies $W$ (canonically) as $G \times_H W'$, where $W' = f^{-1} (V \times \{ H \})$, and we have $f = G \times_H f'$, where $f'$ is the restriction of $f$ to $W'$. We now note the commutativity of the following diagram, and leave the rest up to the reader.
\begin{align*}
\xymatrix{
 \underline{M} (G \times_H W') \ar[r]^-{t_{G \times_H f'}} \ar[d]_-{=} & \underline{M} (V \times G/H) \ar[d]^-{\cong} \\
 \Res_H^G \underline{M} (W') \ar[r]_-{t_{f'}} & \Res_H^G \underline{M} (\Res_H^G V) }
\end{align*}
That these isomorphisms respect transfers is obvious. That they respect restrictions follows from the commutativity of the following diagram, supposing we have a map $k : P \to V$ in $\Fin_G$.
\begin{align*}
\xymatrix{
 \underline{M} (V \times G/H) \ar[d]_-{\cong} \ar[r]^-{r_{k \times 1}} & \underline{M} (P \times G/H) \ar[d]^-{\cong} \\
 \Res_H^G \underline{M} (\Res_H^G V) \ar[r]_-{r_{\Res_H^G k}} & \Res_H^G \underline{M} (\Res_H^G P) }
\end{align*}
\end{proof}

Next, one can use topology to define a map as below
\begin{align*}
	G \times_H (\spacedash) : \underline{M} (Z) \to \Ind_H^G \underline{M} (G \times_H Z)
\end{align*}

for $\underline{M} \in Mack(H)$ and $Z \in \Fin_H$, as well as an isomorphism
\begin{align*}
	\Ind_H^G \Res_H^G \underline{M} \cong \underline{M} \otimes [\spacedash, G/H]
\end{align*}

for $\underline{M} \in Mack(G)$. Of course, algebraic descriptions can be given for these maps, but these are not needed for our purposes. We can now relate the above maps $G \times_H (\spacedash)$ to the slant map.

\begin{lem}\label{lem:gtimeshslant}
For any $\underline{M} \in Mack(G)$ and $V \in \Fin_G$ the following diagram commutes.
\begin{align*}
\xymatrix{
 \Res_H^G \underline{M} (\Res_H^G V) \ar[d]_-{\cong} \ar[r]^-{G \times_H (\spacedash)} & \Ind_H^G \Res_H^G \underline{M} (V \times G/H) \ar[d]^-{\cong} \\
 \underline{M} (V \times G/H) \ar[r]_-{\slant^{V,G/H}_{\underline{M}}} & \underline{M} \otimes [\spacedash, G/H] (V \times G/H) }
\end{align*}
\end{lem}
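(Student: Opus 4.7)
The plan is to verify the square commutes at the level of orthogonal spectra, showing that both composites produce the same map after the vertical isomorphisms are unwound. Both vertical isos are induced, after taking $\underline{\pi}_0$, by the shearing isomorphism
\[
\mathrm{sh}: G_+ \wedge_H \Res_H^G Y \xrightarrow{\cong} Y \wedge (G/H)_+, \quad [g, y] \mapsto (gy, gH),
\]
which is natural in $G$-spectra $Y$. Taking $Y = \Sigma^\infty_+ V$ gives (via the Wirthmüller / $(\Ind_H^G, \Res_H^G)$ adjunction) the left vertical iso $\Res_H^G \underline{M}(\Res_H^G V) \cong \underline{M}(V \times G/H)$, while taking $Y = \HH\underline{M}$ and applying $\underline{\pi}_0$ gives the right vertical iso $\Ind_H^G \Res_H^G \underline{M} \cong \underline{M} \otimes [\spacedash, G/H]$.

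Concretely: represent $u \in \Res_H^G \underline{M}(\Res_H^G V)$ by an $H$-equivariant map $u: \Sigma^\infty_+ \Res_H^G V \to \Res_H^G \HH\underline{M}$. Its $(\Ind_H^G, \Res_H^G)$-adjoint precomposed with $\mathrm{sh}^{-1}$ on the source is a $G$-equivariant map $\tilde u: \Sigma^\infty_+(V \times G/H) \to \HH\underline{M}$ representing the image of $u$ under the left vertical iso. The slant map applied to $\tilde u$ is by definition $(\tilde u \wedge 1) \circ (1 \times \Delta)_+$, sending a point $(v, gH) \in V \times G/H$ to $(\tilde u(v, gH), gH)$.

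Going up and right, $G \times_H u$ is represented by $G_+ \wedge_H u: G_+ \wedge_H \Sigma^\infty_+ \Res_H^G V \to G_+ \wedge_H \Res_H^G \HH\underline{M}$. Applying $\mathrm{sh}$ to source and target yields a map $\Sigma^\infty_+(V \times G/H) \to \HH\underline{M} \wedge (G/H)_+$. Tracing a point $(v, gH)$ through $\mathrm{sh}^{-1}$, then $G_+ \wedge_H u$, then $\mathrm{sh}$, gives $[g, g^{-1}v] \mapsto [g, u(g^{-1}v)] \mapsto (g \cdot u(g^{-1}v), gH)$; and $g \cdot u(g^{-1}v) = \tilde u(v, gH)$ by $G$-equivariance of $\tilde u$ (since $u$ is the restriction of $\tilde u$ to the fiber over $H$). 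Both composites therefore carry $(v, gH)$ to $(\tilde u(v, gH), gH)$, as desired.

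The main obstacle is purely bookkeeping: one must confirm that the iso $\Ind_H^G \Res_H^G \underline{M} \cong \underline{M} \otimes [\spacedash, G/H]$ used in the right vertical arrow, and the iso $\Res_H^G \underline{M}(\Res_H^G V) \cong \underline{M}(V \times G/H)$ used in the left vertical arrow, are genuinely the ones induced by the same shearing formula rather than by a twisted variant, so that the pointwise formulas above match on the nose. Once the conventions are fixed the verification is a direct computation. Alternatively, one could appeal to naturality in $\underline{M}$ and reduce to the universal case $\underline{M} = [\spacedash, V \times G/H]$ with $u$ the identity element, where the diagram becomes an evident equality of $G$-maps in $\Fin_G$.
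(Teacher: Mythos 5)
Your proof is correct and is exactly the ``simple topological calculation'' the paper has in mind (it leaves the verification to the reader): you unwind both vertical isomorphisms through the untwisting isomorphism $G_+ \wedge_H \Res_H^G Y \cong Y \wedge (G/H)_+$ and check the two composites agree summand by summand, which matches the paper's own topological definitions of $G \times_H(\spacedash)$, the slant map, and the identification $\Ind_H^G \Res_H^G \underline{M} \cong \underline{M} \otimes [\spacedash, G/H]$. No gaps; the convention check you flag is settled by the paper's stated identification of $G \times_H \Res_H^G V$ with $V \times G/H$.
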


The proof, which is a simple topological calculation, is left to the reader. Next we note that $\TT^{G,H} \Res_H^G \underline{M}$ is contained in $\TT (\Ind_H^G \Res_H^G \underline{M})$, while $\TT^{G/H} \underline{M}$ is contained in $\TT (\underline{M} \otimes [\spacedash, G/H])$. We have the following.

\begin{lem}\label{lem:extrnormres}
For $\underline{M} \in Mack(G)$, the natural isomorphism
\begin{align*}
	\Ind_H^G \Res_H^G \underline{M} \cong \underline{M} \otimes [\spacedash, G/H]
\end{align*}
induces a natural isomorphism
\begin{align*}
	\TT^{G,H} \Res_H^G \underline{M} \xrightarrow{\cong} \TT^{G/H} \underline{M}.
\end{align*}
\end{lem}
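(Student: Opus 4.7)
The plan is to apply the free Tambara functor $\TT$ to the isomorphism $\Phi : \Ind_H^G \Res_H^G \underline{M} \xrightarrow{\cong} \underline{M} \otimes [\spacedash, G/H]$, obtaining an isomorphism $\TT(\Phi) : \TT(\Ind_H^G \Res_H^G \underline{M}) \xrightarrow{\cong} \TT(\underline{M} \otimes [\spacedash, G/H])$ of ambient Tambara functors, and then to verify that this isomorphism carries $\TT^{G,H} \Res_H^G \underline{M}$ bijectively onto $\TT^{G/H} \underline{M}$. Since both subfunctors are defined as the abelian subgroups generated by differences of certain explicit pairs, it suffices to identify the generators.

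First I would unpack the action of $\TT(\Phi)$ on generators. Using the generators-and-relations description of $\TT$ (or equivalently, $s\TT_0$) from Section~\ref{sec:sympow}, $\TT(\Phi)$ sends a pair $(U \xrightarrow{i} V \xrightarrow{j} X, u)$ to $(U \xrightarrow{i} V \xrightarrow{j} X, \Phi(u))$; that is, it acts on the third coordinate alone. A generator of $\TT^{G,H} \Res_H^G \underline{M}(X)$ has the form $(G/H \times V \xrightarrow{\pi_2} V \xrightarrow{j} X, G \times_H u_1)$ with $u_1 \in \Res_H^G \underline{M}(\Res_H^G V)$. Applying $\TT(\Phi)$ and using relation~(i) of Definition~\ref{def:tambara} to swap factors in $G/H \times V \cong V \times G/H$, this becomes $(V \times G/H \xrightarrow{\pi_1} V \xrightarrow{j} X, \Phi(G \times_H u_1))$.

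Next, by Lemma~\ref{lem:gtimeshslant}, $\Phi(G \times_H u_1) = \slant^{V, G/H}_{\underline{M}}(u)$, where $u \in \underline{M}(V \times G/H)$ corresponds to $u_1$ under the canonical isomorphism $\Res_H^G \underline{M}(\Res_H^G V) \cong \underline{M}(V \times G/H)$. This is precisely a generator of $\TT^{G/H} \underline{M}(X)$. The argument is reversible: given a generator $(V \times G/H \xrightarrow{\pi_1} V \xrightarrow{j} X, \slant^{V, G/H}_{\underline{M}}(u))$ of $\TT^{G/H} \underline{M}(X)$, let $u_1$ be the preimage of $u$ and run Lemma~\ref{lem:gtimeshslant} backwards to recover a corresponding generator of $\TT^{G,H} \Res_H^G \underline{M}(X)$.

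Since $\TT(\Phi)$ is an isomorphism and it maps the generating set of $\TT^{G,H} \Res_H^G \underline{M}$ bijectively onto the generating set of $\TT^{G/H} \underline{M}$, it restricts to the desired natural isomorphism $\TT^{G,H} \Res_H^G \underline{M} \xrightarrow{\cong} \TT^{G/H} \underline{M}$. There is no real obstacle here since the crucial matching of the map $G \times_H (\spacedash)$ with the slant map has been isolated as Lemma~\ref{lem:gtimeshslant}; the only subtlety is the benign index-swap between $G/H \times V$ and $V \times G/H$, which is absorbed into isomorphism invariance of the generators.
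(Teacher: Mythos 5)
Your proof is correct, but it takes a genuinely different route from the paper's. The paper argues topologically: taking $\HH\underline{M}$ positive cofibrant, it writes a commutative diagram of spectra comparing the composite $N_H^G \Res_H^G \HH\underline{M} \to N_H^G \CC(\Res_H^G \HH\underline{M}) \cong \CC(G_+ \wedge_H \Res_H^G \HH\underline{M})$ with $(\HH\underline{M})^{\wedge G/H} \to (\CC\HH\underline{M})^{\wedge G/H} \cong \CC(\HH\underline{M} \wedge G/H_+)$, where the vertical maps are isomorphisms and the right-hand one is $\CC$ of the canonical identification $G_+ \wedge_H \Res_H^G \HH\underline{M} \cong \HH\underline{M} \wedge G/H_+$ inducing $\Ind_H^G \Res_H^G \underline{M} \cong \underline{M} \otimes [\spacedash, G/H]$; since the images on $\underline{\pi}_0$ of the two horizontal composites are exactly $\TT^{G,H}\Res_H^G\underline{M}$ and $\TT^{G/H}\underline{M}$ (Theorems~\ref{thm:normimage} and~\ref{thm:identgsymmon}), the induced map restricts to the desired isomorphism, with no element-level computation. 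Notably the paper does not invoke Lemma~\ref{lem:gtimeshslant} here; that compatibility is deferred to Lemma~\ref{lem:intrnormfinish}. You instead stay entirely algebraic: you apply $\TT$ to the Mackey functor isomorphism, note that on representatives it acts only in the third coordinate, and use Lemma~\ref{lem:gtimeshslant} (together with the harmless identification of $G/H \times V$ with $V \times G/H$ via relation (i)) to match the generating pairs of the two subfunctors, concluding from additivity and global injectivity of $\TT(\Phi)$ that the restriction is an isomorphism onto $\TT^{G/H}\underline{M}$. (Strictly you only need that the generating set maps onto the generating set, not bijectively, but this is a phrasing point, not a gap.) Your approach buys a proof with no spectrum-level diagram or cofibrancy bookkeeping, at the cost of front-loading the content of Lemma~\ref{lem:gtimeshslant}; the paper's approach buys independence from that lemma at this stage and keeps the isomorphism visibly the one induced by the spectrum-level identification that is exploited afterwards.
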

\begin{proof}
Let $\HH\underline{M}$ be positive cofibrant. Utilizing our set of coset representatives, we obtain a commutative diagram as below.
\begin{align*}
\xymatrix{
 N_H^G \Res_H^G \HH \underline{M} \ar[d]_-{\cong} \ar[r] & N_H^G \CC (\Res_H^G \HH\underline{M}) \ar[d]^-{\cong} \ar[r]^-{\cong} & \CC(G_+ \wedge_H (\Res_H^G \HH\underline{M})) \ar[d]^-{\cong} \\
 (\HH\underline{M})^{\wedge G/H} \ar[r] & (\CC(\HH\underline{M}))^{\wedge G/H} \ar[r]_-{\cong} & \CC(\HH\underline{M} \wedge G/H_+) }
\end{align*}
The image in $\underline{\pi}_0$ of the top composite is $\TT^{G,H} \Res_H^G \underline{M}$, while the image of the bottom composite is $\TT^{G/H} \underline{M}$. The right vertical map is $\CC$ of the canonical isomorphism $G_+ \wedge_H (\Res_H^G \HH\underline{M}) \cong \HH\underline{M} \wedge G/H_+$, which induces the isomorphism $\Ind_H^G \Res_H^G \underline{M} \cong \underline{M} \otimes [\spacedash, G/H]$.
\end{proof}

Now every $H$-Mackey functor is a retract of a restriction of a $G$-Mackey functor. In fact, $\underline{M}$ is canonically a retract of $\Res_H^G \Ind_H^G \underline{M}$. Since retracts of isomorphisms are isomorphisms, Theorem~\ref{thm:intrnorm} is reduced to the following.

\begin{lem}\label{lem:intrnormfinish}
For any $\underline{M} \in Mack(G)$ the diagram below commutes, where the left vertical map is given by Lemma~\ref{lem:intrnormres} and the right vertical map is given by Lemma~\ref{lem:extrnormres}.
\begin{align*}
\xymatrix{
 N^{G,H} \Res_H^G \underline{M} \ar[d]_-{\cong} \ar[rr]^-{\Theta^{G,H}_{\Res_H^G \underline{M}}} && \TT^{G,H} \Res_H^G \underline{M} \ar[d]^-{\cong} \\
 F(G/H, \underline{M}) \ar[rr]_-{\Theta^{G/H}_{\underline{M}}}^-{\cong} && \TT^{G/H} \underline{M} }
\end{align*}
\end{lem}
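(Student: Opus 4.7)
The plan is to verify commutativity on generators of $N^{G,H} \Res_H^G \underline{M}(X)$, of which a typical one is a pair $(V \xrightarrow{j} X, u)$ with $u \in \Res_H^G \underline{M}(\Res_H^G V)$. Both composites are additive maps of Mackey functors, so tracking such a generator around the square is sufficient. The content of the lemma then reduces to an identification of the two images inside $\TT(\underline{M} \otimes [\spacedash, G/H])(X)$, and essentially everything comes down to applying Lemma~\ref{lem:gtimeshslant}.

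First I would go clockwise: applying $\Theta^{G,H}_{\Res_H^G \underline{M}}$ produces the pair
\begin{align*}
	(G/H \times V \xrightarrow{\pi_2} V \xrightarrow{j} X,\ G \times_H u)
\end{align*}
in $\TT^{G,H} \Res_H^G \underline{M}(X)$, and then the right vertical isomorphism from Lemma~\ref{lem:extrnormres} just transports $G \times_H u$ across the canonical identification $\Ind_H^G \Res_H^G \underline{M} \cong \underline{M} \otimes [\spacedash, G/H]$. Going counterclockwise, Lemma~\ref{lem:intrnormres} sends $(V \xrightarrow{j} X, u)$ to $(V \xrightarrow{j} X, \tilde{u})$ with $\tilde{u} \in \underline{M}(V \times G/H)$ the image of $u$ under $\Res_H^G \underline{M}(\Res_H^G V) \cong \underline{M}(V \times G/H)$, and then $\Theta^{G/H}_{\underline{M}}$ produces
\begin{align*}
	(V \times G/H \xrightarrow{\pi_1} V \xrightarrow{j} X,\ \slant^{V, G/H}_{\underline{M}}(\tilde{u})).
\end{align*}

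To compare these, I would invoke the convention from the start of Subsection~\ref{subsec:norm} that $G/H \times V$ and $V \times G/H$ are freely identified by swapping factors, which also identifies $\pi_2$ on the former with $\pi_1$ on the latter, so the middle data $(W \xrightarrow{i} V \xrightarrow{j} X)$ agrees in both pairs. The remaining equality is that, under the isomorphism $\Ind_H^G \Res_H^G \underline{M}(V \times G/H) \cong (\underline{M} \otimes [\spacedash, G/H])(V \times G/H)$, the element $G \times_H u$ corresponds to $\slant^{V, G/H}_{\underline{M}}(\tilde{u})$; this is precisely the content of Lemma~\ref{lem:gtimeshslant} applied to $\underline{M}$ and $V$.

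The only possible obstacle is bookkeeping around the factor swap $G/H \times V \cong V \times G/H$: one must check that the coretraction $\iota^{G,H}_{\Res_H^G \underline{M}}$ used to define $\TT^{G,H}$ (which depends on a choice of coset representatives) is intertwined with $\iota^{G/H}_{\underline{M}}$ by the canonical isomorphism of Lemma~\ref{lem:extrnormres}. This is essentially unpacked in the proof of that lemma, where the commutative square exhibits $N_H^G \Res_H^G$ and $(\spacedash)^{\wedge G/H}$ as matching via the chosen coset representatives, and applying $\CC$ to the bottom row converts $G \times_H$ on spectra into the slant map on $\underline{\pi}_0$. Once this is spelled out, the diagram chase above gives the lemma with no further work.
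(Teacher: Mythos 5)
Your proposal is correct and is essentially the paper's own argument: chase a generator $(V \xrightarrow{j} X, u)$ around the square, observe both composites yield a pair with first coordinate $V \times G/H \xrightarrow{\pi_1} V \xrightarrow{j} X$, and identify the second coordinates via Lemma~\ref{lem:gtimeshslant}. The extra bookkeeping you note about the coset-representative identifications in Lemma~\ref{lem:extrnormres} is exactly where the paper also locates that compatibility, so no gap remains.
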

\begin{proof}
Let an element of $N^{G,H} \Res_H^G \underline{M}$ be given by a pair $(V \xrightarrow{j} X, u)$. Going around the diagram in either way, we obtain a pair with first coordinate as below.
\begin{align*}
	V \times G/H \xrightarrow{\pi_1} V \xrightarrow{j} X
\end{align*}
The fact that we obtain the same second coordinate is precisely the content of Lemma~\ref{lem:gtimeshslant}.
\end{proof}

We have completed the proof of Theorem~\ref{thm:intrnorm}. We now describe the geometric fixed points of this norm. A pair $(V \xrightarrow{j} \ast, u)$ with $V$ an orbit is a transfer unless $V = \ast$. Then $u \in \Res_H^G \underline{M} (\ast) \cong \underline{M} (G/H)$. Hence we have a surjection as below.
\begin{align*}
	\ZZ \{ \underline{M} (G/H) \} &\to \Phi^G N^{G,H} \underline{M} \\
	[u] &\mapsto [(\ast \xrightarrow{=} \ast, u)]
\end{align*}

To show that this map factors through $\Phi^H \underline{M}$, we examine relation (iii) of Definition~\ref{def:intrnorm}. Let $W$ be an $H$-set. Then the diagram below is exponential, where $\pi$ is the unique map $W \to \ast$ and $\pi_H$ is as in Section~\ref{sec:norm}.
\begin{align*}
\xymatrix{
 N_H^G W \times G/H \ar[rr]^-{\pi_1} \ar[d]_-{G \times_H \pi_H} && N_H^G W \ar[d] \\
 G \times_H W \ar[r]_-{G \times_H \pi} & G/H \ar[r] & \ast }
\end{align*}

Now the $G$-fixed points of $N_H^G W$ are the $G/H$-tuples where every element is some fixed $w \in W^H$. Hence, letting $W \cong (\coprod_i \ast) \coprod (\coprod_j H/K_j)$ with each $K_j$ a proper subgroup of $H$, we obtain the relation below in $\Phi^G N^{G,H} \underline{M}$.
\begin{align*}
	\big[\big(\ast \xrightarrow{=} \ast, \textstyle \sum_i u_i + \sum_j \displaystyle t_{K_j}^H u'_j \big)\big] = \sum_i [(\ast \xrightarrow{=} \ast, u_i)]
\end{align*}

We have obtained a natural surjection as below.
\begin{align*}
	\mu^{G,H}_{\underline{M}} : \Phi^H \underline{M} &\to \Phi^G N^{G,H} \underline{M} \\
	[u] &\mapsto [(\ast \xrightarrow{=} \ast, u)]
\end{align*}

Now we know that $\Phi^G N^{G,H}$ is naturally isomorphic to $\Phi^H$, so the source and target of $\mu^{G,H}$ are right exact, additive functors. Thus, to check it is an isomorphism we need only consider representable Mackey functors. In case $\underline{M} = [\spacedash, X]$, we have a surjection of free abelian groups of rank $\# X^H$; hence, an isomorphism.\\
\indent Next we turn to monoidal pairings. We know that there is a natural isomorphism $N^{G,H} \underline{M}_1 \otimes N^{G,H} \underline{M}_2 \cong N^{G,H} (\underline{M}_1 \otimes \underline{M}_2)$. To describe this algebraically, we first describe the elements of $N^{G,H} \underline{M}$ topologically. Let $\HH\underline{M}$ be cofibrant, and suppose that $u \in \underline{M} (\Res_H^G V)$ is represented by an $H$-map
\begin{align*}
	u : \Sigma^{\infty} (\Res_H^G V)_+ \to \Res_H^G \HH\underline{M}
\end{align*}

Letting $\{ g_i \}$ be our set of coset representatives for $G/H$, one calculates that the pair $(G/H \times V \xrightarrow{\pi_2} V \xrightarrow{=} V, G \times_H u)$ is represented by the $G$-map
\begin{align*}
	\Sigma^{\infty} V_+ \to N_H^G \HH\underline{M}
\end{align*}

whose underlying nonequivariant map on the wedge summand corresponding to $v \in V$ is as below.
\begin{align*}
	S^0 \cong \bigwedge_i S^0 \xrightarrow{\wedge_i u(g_i^{-1} \cdot v)} \bigwedge_i \HH\underline{M} = N_H^G \HH\underline{M}
\end{align*}

We can represent this schematically as below.
\begin{align*}
	v \mapsto \prod_i u(g_i^{-1} \cdot v)
\end{align*}

One now obtains the following by an easy calculation.

\begin{cor}\label{cor:intrnormsymmon}
There is a canonical isomorphism $\underline{A} \xrightarrow{\cong} N^{G,H} \underline{A}$ sending the universal element to $(\ast \xrightarrow{=} \ast, 1)$. (More generally, for any finite $H$-set $X$ there is an isomorphism $[\spacedash, N_H^G X] \xrightarrow{\cong} N^{G,H} ([\spacedash, X])$ sending the universal element to $(N_H^G X \xrightarrow{=} N_H^G X, \pi_H)$.) There is also an associative, commutative and unital natural isomorphism as below.
\begin{align*}
	N^{G,H} \underline{M}_1 \otimes N^{G,H} \underline{M}_2 &\xrightarrow{\cong} N^{G,H} (\underline{M}_1 \otimes \underline{M}_2) \\
	(V_1 \xrightarrow{j_1} X_1, u_1) \otimes (V_2 \xrightarrow{j_2} X_2, u_2) &\mapsto (V_1 \times V_2 \xrightarrow{j_1 \times j_2} X_1 \times X_2, u_1 \otimes u_2)
\end{align*}
That is, the functor $N^{G,H}$ is strong symmetric monoidal.
\end{cor}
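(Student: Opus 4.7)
The plan is to leverage the isomorphism $N^{G,H} \underline{M} \cong N_H^G \underline{M}$ obtained by composing $\Theta^{G,H}$ (Theorem~\ref{thm:intrnorm}) with $(\iota^{G,H}_{\underline{M}})^{-1}$ (Theorem~\ref{thm:normimage}). Under this isomorphism, the spectrum-level strong symmetric monoidal structure on $N_H^G$ (established at the start of Section~\ref{sec:norm}) transports to a strong symmetric monoidal structure on $N^{G,H}$; our task is to identify, under this transport, the resulting unit and pairing with the concrete formulas in the statement. The key dictionary is the schematic $v \mapsto \prod_i u(g_i^{-1} \cdot v)$ representing pairs $(V \xrightarrow{=} V, u)$ topologically, as displayed just before the statement.

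For the unit on $[\spacedash, X]$, the universal element of $[\spacedash, N_H^G X]$ is $\mathrm{Id}_{N_H^G X}$. Taking $u = \pi_H : \Res_H^G N_H^G X \to X$, the schematic $v \mapsto \prod_i \pi_H(g_i^{-1} \cdot v)$ reduces to the identity on $N_H^G X$, since the $H$-component of $g_i^{-1} \cdot v$ is the $g_i H$-component of $v$; thus the universal element corresponds to $(N_H^G X \xrightarrow{=} N_H^G X, \pi_H)$, and specializing to $X = \ast$ yields the unit $\underline{A} \cong N^{G,H} \underline{A}$. For the pairing, the spectrum-level iso $N_H^G \HH\underline{M}_1 \wedge N_H^G \HH\underline{M}_2 \cong N_H^G(\HH\underline{M}_1 \wedge \HH\underline{M}_2)$ sends the smash product of representatives $u_1, u_2$ for $(V_1 \xrightarrow{=} V_1, u_1)$ and $(V_2 \xrightarrow{=} V_2, u_2)$ to the norm of $u_1 \wedge u_2$, and unwinding the schematic on the product $V_1 \times V_2$ yields $(V_1 \times V_2 \xrightarrow{=} V_1 \times V_2, u_1 \otimes u_2)$; applying $t_{j_1 \times j_2}$ then gives the stated formula.

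What remains is to verify that this formula descends to a well-defined map on $N^{G,H} \underline{M}_1 \otimes N^{G,H} \underline{M}_2$, i.e.\ that it respects relations (i)--(iii) of Definition~\ref{def:intrnorm}. Relations (i) and (ii) are immediate. For (iii), if $u_1 = t_f(w_1)$ for an $H$-map $f : W \to \Res_H^G V_1$, one identifies $D_H(W \times \Res_H^G V_2, f \times \mathrm{Id}, V_1 \times V_2) \cong D_H(W, f, V_1) \times V_2$ with compatible $p$ and $e_H$ maps, by using Lemma~\ref{lem:distrpullback} to see that the exponential diagram for $(f \times \mathrm{Id}, \pi_1 : V_1 \times V_2 \times G/H \to V_1 \times V_2)$ is the base change of the one for $(f, \pi_1 : V_1 \times G/H \to V_1)$ along $V_1 \times V_2 \to V_1$; this shows that applying relation (iii) before the pairing yields the same result as applying it after. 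Associativity, commutativity, and unitality then descend from the spectrum-level coherences of $N_H^G$ through the natural isomorphisms already recorded. The main obstacle is the distributor bookkeeping for relation (iii); the remainder is a routine translation of the topological description given just above the statement into the algebraic formulas.
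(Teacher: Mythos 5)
Your proposal is correct and follows essentially the paper's approach, which is given only as ``an easy calculation'' after the schematic description $v \mapsto \prod_i u(g_i^{-1}\cdot v)$ preceding the corollary: transport the spectrum-level symmetric monoidal structure on $N_H^G$ across $\Theta^{G,H}$ and $\iota^{G,H}_{\underline{M}}$ and unwind the schematic on representatives. The final paragraph's relation-(iii) bookkeeping is redundant---once you have identified the transported map (already well defined) with the stated formula on the generating external products $(V_1 \xrightarrow{j_1} X_1, u_1) \otimes (V_2 \xrightarrow{j_2} X_2, u_2)$, well-definedness of the formula follows automatically rather than needing a separate check---but it is not incorrect.
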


Next let $K$ be a subgroup of $H$, and choose a set $\{ h_l \}$ of coset representatives for $H/K$, with $1$ representing the identity coset. Then $\{ g_i h_l \}$ is a set of coset representatives for $G/K$, with $1$ representing the identity coset, and we have a natural isomorphism of functors $N_K^G \cong N_H^G N_K^H$ on $K$-spectra. Hence we have a natural isomorphism $N^{G,K} \cong N^{G,H} N^{H,K}$ on $K$-Mackey functors. We give a partial computation of this, as follows. Suppose an element of $N^{G,H} N^{H,K} \underline{M}$ is represented by a pair $(V \xrightarrow{j} X, u)$, and that $u$ is represented by a pair $(W \xrightarrow{f} V, x)$. We can represent $u$ schematically as below.
\begin{align*}
	v \mapsto \sum_{w \in W, f(w) = v} \prod_l x(h_l^{-1} \cdot w)
\end{align*}

Then $(V \xrightarrow{=} V, u)$ can be represented as below,
\begin{align*}
	v &\mapsto \prod_i \Big( \sum_{w \in W, f(w) = g_i^{-1} \cdot v} \prod_l x(h_l^{-1} \cdot w) \Big) \\
	   &= \sum_{(v,s) \in D_H (W, f, V)} \prod_i \prod_l x(h_l^{-1} \cdot w_{i,v})
\end{align*}

where $w_{i,v}$ is such that $[g_i, w_{i,v}] = s(v, g_i H)$. This implies that we have $w_{i,v} = e_H (g_i^{-1} \cdot (v,s))$, so that
\begin{align*}
	x(h_l^{-1} \cdot w_{i,v}) = x(h_l^{-1} \cdot e_H (g_i^{-1} \cdot (v,s))) = r_{e_H} (x) ((g_i h_l)^{-1} \cdot (v,s)).
\end{align*}

It is not difficult to translate this argument into topology. The result is the following. In principle, one could write down a formula for the case where $u$ is a difference of pairs, but we shall not do so here.

\begin{cor}\label{cor:intrnormcomposite}
There is an associative system of natural isomorphisms of symmetric monoidal functors $N^{G,H} N^{H,K} \cong N^{G,K}$. The elements of the form below map as shown.
\begin{align*}
	N^{G,H} N^{H,K} \underline{M} &\xrightarrow{\cong} N^{G,K} \underline{M} \\
	\big(V \xrightarrow{j} X, (W \xrightarrow{f} V, x)\big) &\mapsto \big(D_H (W, f, V) \xrightarrow{j \circ p} X, r_{\Res_K^H e_H} (x)\big)
\end{align*}
\end{cor}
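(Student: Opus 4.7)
The plan is to leverage the spectrum-level isomorphism $N_K^G \cong N_H^G N_K^H$ (which is natural and arises from the bijection $G/K \cong (G/H) \times_{G/H} (H/K)$ via the chosen coset representatives $\{g_i h_l\}$) and transport it through the identifications of Theorem~\ref{thm:intrnorm}. Concretely, I would start with a cofibrant model $\HH\underline{M}$ for $\underline{M} \in Mack(K)$, invoke the natural isomorphism $N_K^G \HH\underline{M} \cong N_H^G N_K^H \HH\underline{M}$ in $Ho(Sp_G)$, apply $\underline{\pi}_0$, and use Corollary~\ref{cor:norm-1der} twice to commute $\underline{\pi}_0$ past $N_H^G$ and $N_K^H$. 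This immediately gives an associative natural isomorphism $N^{G,H} N^{H,K} \underline{M} \cong N^{G,K} \underline{M}$; associativity for a chain of three subgroups follows from the corresponding associativity of norms on spectra.

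Next I would identify the isomorphism on generators. A generator of $N^{G,H} N^{H,K} \underline{M}$ has the form $(V \xrightarrow{j} X, u)$ with $u \in N^{H,K} \underline{M} (\Res_H^G V)$, itself represented by $(W \xrightarrow{f} \Res_H^G V, x)$ with $x \in \underline{M} (\Res_K^H W)$. Since transfers are preserved by the isomorphism and every such generator equals $t_j$ applied to a pair with $j = \mathrm{id}$, it suffices to compute the image when $j$ and the inner transfer map are identities and then track the distributor. The schematic calculation already recorded just above the statement — namely, iterating the product formula $v \mapsto \prod_i \prod_l x(h_l^{-1} g_i^{-1} \cdot v_{i,v})$ and rewriting it as a sum indexed by sections $(v,s) \in D_H(W,f,V)$ with $x(h_l^{-1} \cdot w_{i,v}) = r_{e_H}(x)((g_i h_l)^{-1} \cdot (v,s))$ — directly yields the claimed formula $(V \xrightarrow{j} X, (W \xrightarrow{f} V, x)) \mapsto (D_H(W,f,V) \xrightarrow{j \circ p} X, r_{\Res_K^H e_H}(x))$. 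The translation from the schematic spectrum-level picture to the intrinsic formula is the same device used in the unnumbered calculation preceding Corollary~\ref{cor:intrnormsymmon}, made rigorous by representing $u$ as a map out of $\Sigma^\infty V_+$ and using the definition of the norm in terms of coset representatives.

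Finally, symmetric monoidality is inherited from the spectrum level: both $N^{G,H}$ and $N^{H,K}$ are strong symmetric monoidal by Corollary~\ref{cor:intrnormsymmon}, their composite is therefore strong symmetric monoidal, and the isomorphism with $N^{G,K}$ is built from the symmetric monoidal isomorphism $N_K^G \cong N_H^G N_K^H$ of derived functors $Sp_K \to Sp_G$. Compatibility of the stated formula with the monoidal structure can be checked on the generators $(V \xrightarrow{j} X, (W \xrightarrow{f} V, x))$ using the explicit pairing formula in Corollary~\ref{cor:intrnormsymmon}, since products of distributors are distributors and $e_H$ is natural in the input data.

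The main obstacle is the middle step: rigorously extracting the explicit formula $(V,(W,f,x)) \mapsto (D_H(W,f,V), r_{\Res_K^H e_H}(x))$ from the topological isomorphism. The schematic argument just above the statement suggests the answer, but making it rigorous requires a careful bookkeeping of how the two descriptions of $u$ — as an element of $N^{H,K}\underline{M}(\Res_H^G V)$ and as an $H$-equivariant map out of a suspension spectrum — match up under the coset representatives $\{g_i h_l\}$, together with the verification that the resulting assignment respects relation~(iii) of Definition~\ref{def:intrnorm} for both the outer $N^{G,H}$ and the inner $N^{H,K}$. Once this identification is established, the remaining naturality, associativity, and monoidality assertions follow formally from the corresponding properties on spectra.
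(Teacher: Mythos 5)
Your proposal matches the paper's approach: establish the natural isomorphism $N^{G,K} \cong N^{G,H} N^{H,K}$ by applying $\underline{\pi}_0$ to the spectrum-level isomorphism $N_K^G \cong N_H^G N_K^H$ (using the coset representatives $\{g_i h_l\}$), then extract the explicit formula on generators via the same schematic product/distributor computation that appears immediately before the corollary. The symmetric monoidality and associativity claims are likewise derived from the topological picture exactly as you describe, so this is essentially the paper's own argument.
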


Now, since there is an isomorphism $(N_H^G X)^{\wedge T} \cong N_H^G (X^{\wedge \Res_H^G T})$ for $H$-spectra $X$ and $T \in \Fin_G$, indicated schematically below,
\begin{align}\label{eq:normpowiso}
	(N_H^G X)^{\wedge T} &\xrightarrow{\cong} N_H^G (X^{\wedge \Res_H^G T}) \\
	\wedge_t \wedge_i x_{t,i} &\mapsto \wedge_i \wedge_t x_{g_i \cdot t, i} \nonumber
\end{align}

we have a canonical isomorphism as below.
\begin{align*}
	F(T, N^{G,H} \underline{M}) \xrightarrow{\cong} N^{G,H} F(\Res_H^G T, \underline{M})
\end{align*}

We give a partial computation of this, as follows. Consider an element of $F(T, N^{G,H} \underline{M})$ represented by a pair $(V \xrightarrow{j} X, u)$, where $u$ is equal to $(W \xrightarrow{f} V \times T, x)$. Then we can represent $u$ schematically, as below.
\begin{align*}
	(v,t) \mapsto \sum_{w \in W, f(w) = (v,t)} \prod_i x(g_i^{-1} \cdot w)
\end{align*}

Then we can represent $(V \xrightarrow{=} V, u)$ as below.
\begin{align*}
	v &\mapsto \prod_{t \in T} \Big( \sum_{w \in W, f(w) = (v,t)} \prod_i x(g_i^{-1} \cdot w) \Big) \\
	   &= \sum_{(v,s) \in D(W, f, V)} \prod_{t \in T} \prod_i x(g_i^{-1} \cdot s(v,t))
\end{align*}

Careful examination of~\ref{eq:normpowiso} yields the following.
\begin{align*}
	\prod_{t \in T} \prod_i x(g_i^{-1} \cdot s(v,t)) = \prod_i \prod_{t \in T} x(g_i^{-1} \cdot s(v, g_i \cdot t))
\end{align*}

We also have $g_i^{-1} \cdot s(v, g_i \cdot t) = (g_i^{-1} \cdot s) (g_i^{-1} \cdot v, t)$, so that
\begin{align*}
	x(g_i^{-1} \cdot s(v, g_i \cdot t)) = r_e(x) (g_i^{-1} \cdot (v,s), t).
\end{align*}

Thus we obtain the following. Again, when $u$ is a difference of pairs one could obtain a formula, but we shall not do so here.
\begin{cor}\label{cor:normpow}
There is an associative and unital system of natural isomorphisms $F(T, N^{G,H} \underline{M}) \cong N^{G,H} F(\Res_H^G T, \underline{M})$. The elements of the form below map as shown.
\begin{align*}
	F(T, N^{G,H} \underline{M}) &\xrightarrow{\cong} N^{G,H} F(\Res_H^G T, \underline{M}) \\
	\big(V \xrightarrow{j} X, (W \xrightarrow{f} V \times T, x)\big) &\mapsto \big(D(W, f, V) \xrightarrow{j \circ p} X, \\
	\big(\Res_H^G D(W, f, V) &\xrightarrow{=} \Res_H^G D(W, f, V), r_{\Res_H^G e} (x)\big)\big)
\end{align*}
\end{cor}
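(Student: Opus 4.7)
The plan is to build the isomorphism by passing through topology and then verify the explicit formula on a distinguished class of generators. By Theorems~\ref{thm:intrpow} and~\ref{thm:intrnorm}, combined with Corollary~\ref{cor:norm-1der} and Corollary~\ref{cor:gsymmon-1der}, we can identify
\begin{align*}
  F(T, N^{G,H} \underline{M}) \cong \underline{\pi}_0 \bigl( (N_H^G \HH\underline{M})^{\wedge T} \bigr)
  \quad \text{and} \quad
  N^{G,H} F(\Res_H^G T, \underline{M}) \cong \underline{\pi}_0 \bigl( N_H^G (\HH\underline{M})^{\wedge \Res_H^G T} \bigr).
\end{align*}
Applying $\underline{\pi}_0$ to the spectrum-level isomorphism~\ref{eq:normpowiso} then produces the desired natural isomorphism, for which associativity and unitality (with respect to the pairings from Corollary~\ref{cor:intrpairiso} and the norm composition from Corollary~\ref{cor:intrnormcomposite}) follow from the corresponding coherence of~\ref{eq:normpowiso} in the symmetric monoidal category of orthogonal $G$-spectra.

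The substantive work is to check that this abstract isomorphism agrees with the stated formula on pairs of the form $\bigl(V \xrightarrow{j} X, (W \xrightarrow{f} V \times T, x)\bigr)$. First I would reduce to the case $j = \mathrm{Id}_V$ using the fact that both functors send transfers to transfers, then reduce the $V \xrightarrow{=} V$ factor of the inner pair to $\Res_H^G V \xrightarrow{=} \Res_H^G V$. What remains is to compute the image of an element of $F(T, N^{G,H}\underline{M})(V)$ whose defining data is $(W \xrightarrow{f} V \times T, x)$. The schematic calculation written just before the statement of the corollary does exactly this: unwinding the defining isomorphism of $(V \xrightarrow{=} V, u)$ through $\Theta^T$, $\Theta^{G,H}$, and~\ref{eq:normpowiso}, one gets a sum indexed over $(v, s) \in D(W, f, V)$ of products over $i$ and $t$. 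Translating the schematic description back into algebra, the sum is exactly the transfer along the map $p \colon D(W, f, V) \to V$ coming from the exponential diagram~\ref{eq:distrdef}, while the product becomes a norm of $r_e(x)$ packaged as an element of $N^{G,H} F(\Res_H^G T, \underline{M})$ via the defining pair $\bigl(\Res_H^G D(W,f,V) \xrightarrow{=} \Res_H^G D(W,f,V), r_{\Res_H^G e}(x)\bigr)$.

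To make the schematic step into an actual proof I would mirror the calculation performed in the paragraphs preceding Corollary~\ref{cor:intrnormcomposite} and Corollary~\ref{cor:normpow}: take $\HH\underline{M}$ positive cofibrant, represent $x$ by an $H$-map $\Sigma^\infty W_+ \to \Res_H^G \HH\underline{M}$, and trace the wedge-summand-by-wedge-summand description of~\ref{eq:normpowiso} through the identifications of $\Theta^T$ and $\Theta^{G,H}$ on representable Mackey functors (where Lemma~\ref{lem:intrrepiso} provides the base case). Functoriality in $\underline{M}$ together with the fact that both sides commute with the colimit presentations used in the proof of Lemma~\ref{lem:intrpowfingen} then extends the formula from representables to an arbitrary $\underline{M}$, establishing the natural isomorphism on all pairs of the indicated form.

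The main obstacle is the bookkeeping for the exponential diagrams: the $D(W, f, V)$ appearing on the right-hand side must be identified with the indexing set $\{(v, s)\}$ that arose in the schematic sum, and the composition $p \colon D(W, f, V) \to V$ along with the evaluation $e \colon D(W, f, V) \times T \to W$ must be matched against the corresponding data coming out of~\ref{eq:normpowiso}. Lemmas~\ref{lem:distrpullback}, \ref{lem:distrlaw}, and~\ref{lem:functorialnorm} will be needed to justify that the exponential diagram obtained in the schematic computation is the same as the one arising from the intrinsic definition, and that associativity of the promised isomorphism is compatible with the norm composition isomorphism of Corollary~\ref{cor:intrnormcomposite}.
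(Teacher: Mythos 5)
Your proposal is correct and follows essentially the same route as the paper: identify $F(T, N^{G,H}\underline{M})$ and $N^{G,H}F(\Res_H^G T,\underline{M})$ with $\underline{\pi}_0$ of $(N_H^G\HH\underline{M})^{\wedge T}$ and $N_H^G\bigl((\HH\underline{M})^{\wedge \Res_H^G T}\bigr)$ via the comparison isomorphisms $\Theta^T$, $\Theta^{G,H}$ and the connectivity results, transport the spectrum-level isomorphism~\ref{eq:normpowiso} through $\underline{\pi}_0$, and verify the stated formula on generators (after reducing to $j = \mathrm{Id}_V$ by compatibility with transfers) by the same wedge-summand schematic computation the paper carries out. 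The only divergence is your optional detour through representable Mackey functors and colimit presentations, which is harmless but unnecessary since the topological representation of $x$ handles arbitrary $\underline{M}$ directly.
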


In the next section we recap what we have learned about certain homotopy groups of spectra.

\section{Multiplicative Push Forward and Homotopy Groups}\label{sec:multpush}

In this section we use \emph{multiplicative push forwards} to give pleasing descriptions of the homotopy groups of spectra which we have calculated. For any finite $G$-set $X$, we denote by $\BB_G (X)$ the translation category of $X$, and let $Sp_{\BB_G (X)}$ denote the category of functors from $\BB_G (X)$ to $Sp$. If $X \cong \coprod_i G/H_i$ then we have an equivalence of categories as below.
\begin{align}\label{eq:transequiv}
	Sp_{\BB_G (X)} \cong \prod_i Sp_{H_i}
\end{align}

We can put a model structure on this category which corresponds to the product of the stable (or positive stable) model structures under this equivalence. Note that the result does not depend on the choice of orbit representatives. A map $Y \to Z$ in $Sp_{\BB_G (X)}$ is a weak equivalence (fibration) if and only if all the maps $Y(x) \to Z(x)$ in $Sp_{G_x}$ for $x \in X$ are weak equivalences (fibrations).\\
\indent Now, any map $f : U \to V$ induces a functor $f : \BB_G (U) \to \BB_G (V)$, so by precomposition we have a \emph{pullback} functor as below.
\begin{align*}
	f^* : Sp_{\BB_G (V)} \to Sp_{\BB_G (U)}
\end{align*}

Under appropriate equivalences~\ref{eq:transequiv}, this corresponds to a product of restriction functors, so it preserves fibrations and trivial fibrations. It also has a left adjoint, which we call \emph{additive push forward}, obtained by taking wedge sums over fibers as below.
\begin{align*}
	f_* : Sp_{\BB_G (U)} &\to Sp_{\BB_G (V)} \\
	(u \mapsto X_u) &\mapsto (v \mapsto \vee_{u \in f^{-1} (v)} X_u)
\end{align*}

Under appropriate equivalences~\ref{eq:transequiv}, in each factor this is a wedge of induction functors. Hence $(f_*, f^*)$ is a Quillen pair. (In fact, $f^*$ preserves (trivial) cofibrations as well.) We may similarly define the \emph{multiplicative push forward} as below.
\begin{align*}
	f_{\star} : Sp_{\BB_G (U)} &\to Sp_{\BB_G (V)} \\
	(u \mapsto X_u) &\mapsto (v \mapsto \wedge_{u \in f^{-1} (v)} X_u)
\end{align*}

Under appropriate equivalences~\ref{eq:transequiv}, in each factor this is a smash product of norm functors, so it preserves (positive) cofibrancy and weak equivalences between cofibrant objects. Hence, it is left derivable.\\
\indent Next note that $Sp_{\BB_G (\ast)} = Sp_G$. If $V$ is a $G$-set and $\pi : V \to \ast$ is the unique map to a point, then we have $\pi^* X (v) = X$ for all $v \in V$. We denote this $\BB_G (V)$-diagram by $const_V X$. Hence, $(\pi_*, const_V)$ is a Quillen pair. We note the following easy facts.
\begin{enumerate}[(i)]
\item If $\pi : V \to \ast$ then $\pi_* (const_V S^0) \cong \Sigma^{\infty} V_+$.
\item If $i : U \to V$ then $i_{\star} (const_U S^0) \cong const_V S^0$.
\item If $\pi^T = \pi_1 : V \times T \to V$ then $\pi^T_{\star} (const_{V \times T} X) \cong const_V X^{\wedge T}$.
\end{enumerate}

Part (i) gives an adjunction isomorphism as below.
\begin{align*}
	[\Sigma^{\infty} V_+, X] \cong [const_V S^0, const_V X]
\end{align*}

Hence we can define maps
\begin{align*}
	\pi^T_{\star} : \underline{\pi}_0 X (V \times T) \to \underline{\pi}_0 X^{\wedge T} (V)
\end{align*}

as below, where on the bottom right we have used (ii) and (iii).
\begin{align*}
\xymatrix{
 \underline{\pi}_0 X (V \times T) \ar@{-->}[r]^-{\pi^T_{\star}} \ar[d]_-{\cong} & \underline{\pi}_0 X^{\wedge T} (V) \ar[d]^-{\cong} \\
 [const_{V \times T} S^0, const_{V \times T} X] \ar[r]_-{\pi^T_{\star}} & [const_V S^0, const_V X^{\wedge T}] }
\end{align*}

With this notation and the calculations in Subsection~\ref{subsec:gsymmon}, we obtain the following.

\begin{thm}\label{thm:hmpowhtpy}
Let $\underline{M} \in Mack(G)$, and let $T$ and $X$ be finite $G$-sets. Let $Z (T, \underline{M}, X)$ be the free abelian group on the pairs $(V \xrightarrow{j} X, u)$, where $j$ is a map in $\Fin_G$ and $u \in \underline{M} (V \times T)$. Then the map
\begin{align*}
	Z (T, \underline{M}, X) &\to \underline{\pi}_0 (\HH \underline{M})^{\wedge T} (X) \\
	(V \xrightarrow{j} X, u) &\mapsto t_j (\pi^T_{\star} (u))
\end{align*}
is a surjection. The kernel is generated by the relations
\begin{enumerate}[(i)]
\item $(V \xrightarrow{j} X, u) = (V' \xrightarrow{j'} X, u')$ when there is a commutative diagram
\begin{align*}
\xymatrix{
 V \ar[r]^-{j} & X \\
 V' \ar[u]^-{f}_-{\cong} \ar[ur]_-{j'} & }
\end{align*}
such that $f$ is an isomorphism and $r_{f \times 1} (u) = u'$,
\item $(V_1 \coprod V_2 \xrightarrow{j_1 \coprod j_2} X, (u_1, u_2)) = (V_1 \xrightarrow{j_1} X, u_1) + (V_2 \xrightarrow{j_2} X, u_2)$, and
\item $(V \xrightarrow{j} X, t_f (w)) = (D \xrightarrow{j \circ p} X, r_e (w))$ for any map of finite $G$-sets $f : W \to V \times T$, where $D$, $p$ and $e$ are as in the exponential diagram below.
\begin{align*}
\xymatrix{
 D \times T \ar[d]_-{e} \ar[rr]^-{\pi^T} && D \ar[d]^-{p} \\
 W \ar[r]_-{f} & V \times T \ar[r]_-{\pi^T} & V }
\end{align*}
\end{enumerate}
\end{thm}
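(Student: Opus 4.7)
My plan is to deduce this theorem from the combination of Theorems~\ref{thm:identgsymmon} and~\ref{thm:intrpow}. Definition~\ref{def:intrinsicpower} already presents $F(T,\underline{M})(X)$ as the quotient of $Z(T,\underline{M},X)$ by exactly the three relations (i), (ii), (iii) listed in the statement above. Composing the two cited isomorphisms, the map
\[
\Psi \;:\; Z(T,\underline{M},X) \twoheadrightarrow F(T,\underline{M})(X) \xrightarrow[\cong]{\Theta^T_{\underline{M}}} \TT^T \underline{M}(X) \xrightarrow[\cong]{(\iota^T_{\underline{M}})^{-1}} \underline{\pi}_0 (\HH\underline{M})^{\wedge T}(X)
\]
is surjective with kernel generated by precisely those three relations. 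Consequently the theorem reduces to showing that $\Psi$ sends a generator $(V \xrightarrow{j} X, u)$ to $t_j(\pi^T_{\star}(u))$.

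Since $\Psi$ is visibly natural with respect to postcomposing $j$ (which corresponds to applying $t_j$ on both sides), I first reduce to the case $V = X$, $j = \mathrm{id}_V$. What then remains is to verify that, for every $u \in \underline{M}(V \times T)$,
\[
\pi^T_{\star}(u) \;=\; (\iota^T_{\underline{M}})^{-1}\bigl(V \times T \xrightarrow{\pi_1} V \xrightarrow{=} V,\; \slant^{V,T}_{\underline{M}}(u)\bigr)
\]
in $\underline{\pi}_0 (\HH\underline{M})^{\wedge T}(V)$. By naturality in $u$, viewed as a Mackey functor map $[\spacedash,\, V \times T] \to \underline{M}$, it suffices to check this identity in the universal case $\underline{M} = [\spacedash,\, V \times T]$ with $u$ the identity element. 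Here the right-hand side was computed explicitly in the proof of Theorem~\ref{thm:identgsymmon} (taking $X = V \times T$ there), and the remaining task is to recognize that universal element as $\pi^T_{\star}$ of the identity.

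The main obstacle I anticipate is reconciling the two a priori different avatars of the ``fiberwise $T$-fold smash product along $\pi^T$'': the multiplicative push forward $\pi^T_{\star}$, defined through the $\BB_G$-diagram formalism together with the adjunction $[\Sigma^\infty V_+, Y] \cong [const_V S^0, const_V Y]$ and fact (iii) preceding the theorem; and the combined slant-map / Tambara-norm construction that defines $\TT^T \underline{M}$ and is related to $(\HH\underline{M})^{\wedge T}$ via $\iota^T$. Both yield the schematic formula $v \mapsto \wedge_{t \in T} u(v, t)$ on the summand of $\Sigma^\infty V_+$ indexed by $v$, so the identification is ultimately a formal unwinding of adjunctions once one matches the canonical isomorphism $\pi^T_{\star}(const_{V \times T} X) \cong const_V (X^{\wedge T})$ with the coretraction $\iota^T$. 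I expect this unwinding, rather than any genuinely new homotopical input, to occupy the bulk of the write-up.
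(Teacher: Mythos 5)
Your proposal is correct and follows essentially the same route as the paper: Theorem~\ref{thm:hmpowhtpy} is obtained there by combining Theorems~\ref{thm:identgsymmon} and~\ref{thm:intrpow} with the schematic computation of $(\iota^T_{\underline{M}})^{-1}\circ\Theta^T_{\underline{M}}$ carried out at the end of Subsection~\ref{subsec:gsymmon}, which identifies the image of a generator $(V\xrightarrow{j}X,u)$ with $t_j$ of the fiberwise smash $v\mapsto\prod_{t\in T}u(v,t)$, i.e.\ with $t_j(\pi^T_{\star}(u))$. Your only deviation is organizing that last identification by naturality and the representable universal case rather than the direct schematic computation for general $u$, which is a routine variant of the same verification.
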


To describe the norm functor, we begin by noting that, for any $G$-set $V$ and subgroup $H \subseteq G$, we have a canonical inclusion
\begin{align*}
	\BB_H (\Res_H^G V) \subseteq \BB_G (G/H \times V)
\end{align*}

corresponding to the identity coset, and this is an equivalence. We obtain an equivalence of categories as below.
\begin{align*}
	Sp_{\BB_G (G/H \times V)} \xrightarrow{\cong} Sp_{\BB_H (\Res_H^G V)}
\end{align*}

After choosing a set of coset representatives $\{ g_i \}$ for $G/H$ (with $1$ representing the identity coset), we obtain an inverse equivalence by using the contraction below.
\begin{align*}
	\BB_G (G/H \times V) &\to \BB_H (\Res_H^G V) \\
	(g_i H, v) &\mapsto g_i^{-1} \cdot v
\end{align*}

Applying this equivalence to $const_{\Res_H^G V} X$ for some $H$-spectrum $X$, we obtain a $\BB_G (G/H \times V)$-diagram which we shall denote by $const_{G/H \times V}^{G,H} X$. Note the following.
\begin{enumerate}[(i)]
\item The diagram $const_{G/H \times V}^{G,H} S^0$ is equal to $const_{G/H \times V} S^0$, since $S^0$ has trivial action.
\item If $\pi^{G/H} = \pi_2 : G/H \times V \to V$ then we have an isomorphism
\begin{align*}
	\pi^{G/H}_{\star} (const_{G/H \times V}^{G,H} X) \cong const_V N_H^G X.
\end{align*}
\end{enumerate}

Thus, for $H$-spectra $X$ and $G$-sets $V$ we obtain a map
\begin{align*}
	\pi^{G,H}_{\star} : \underline{\pi}_0 X (\Res_H^G V) \to \underline{\pi}_0 N_H^G X (V)
\end{align*}

as below, where we use (i) on left and (ii) on the right.
\begin{align*}
\xymatrix{
 \underline{\pi}_0 X (\Res_H^G V) \ar[d]_-{\cong} \ar@{-->}[r]^-{\pi^{G,H}_{\star}} & \underline{\pi}_0 N_H^G X (V) \ar[d]^-{\cong} \\
 [const_{G/H \times V} S^0, const_{G/H \times V}^{G,H} X] \ar[r]_-{\pi^{G/H}_{\star}} & [const_V S^0, const_V N_H^G X] }
\end{align*}

With this notation and the calculations in Subsection~\ref{subsec:norm}, we obtain the following.

\begin{thm}\label{hmnormhtpy}
Let $H$ be a subgroup of $G$, $\underline{M} \in Mack(H)$ and $X$ a finite $G$-set. Let $Z_H^G (\underline{M}, X)$ be the free abelian group on the pairs $(V \xrightarrow{j} X, u)$, with $j$ a map in $\Fin_G$ and $u \in \underline{M} (\Res_H^G V)$. Then the map
\begin{align*}
	Z_H^G (\underline{M}, X) &\to \underline{\pi}_0 N_H^G (\HH \underline{M}) (X) \\
	(V \xrightarrow{j} X, u) &\mapsto t_j (\pi^{G,H}_{\star} (u))
\end{align*}
is a surjection. The kernel is generated by the relations
\begin{enumerate}[(i)]
\item $(V \xrightarrow{j} X, u) = (V' \xrightarrow{j'} X, u')$ when there is a commutative diagram
\begin{align*}
\xymatrix{
 V \ar[r]^-{j} & X \\
 V' \ar[u]^-{f}_-{\cong} \ar[ur]_-{j'} & }
\end{align*}
such that $f$ is an isomorphism and $r_{\Res_H^G f} (u) = u'$,
\item $(V_1 \coprod V_2 \xrightarrow{j_1 \coprod j_2} X, (u_1, u_2)) = (V_1 \xrightarrow{j_1} X, u_1) + (V_2 \xrightarrow{j_2} X, u_2)$, and
\item $(V \xrightarrow{j} X, t_f (w)) = (D \xrightarrow{j \circ p} X, r_{e_H} (w))$ for any $H$-set $W$ and $H$-map $f : W \to \Res_H^G V$, where $D$, $p$ and $e_H$ are as in the exponential diagram below.
\begin{align*}
\xymatrix{
 G/H \times D \ar[d]_-{G \times_H e_H} \ar[rr]^-{\pi^{G/H}} && D \ar[d]^-{p} \\
 G \times_H W \ar[r]_-{G \times_H f} & G/H \times V \ar[r]_-{\pi^{G/H}} & V }
\end{align*}
\end{enumerate}
\end{thm}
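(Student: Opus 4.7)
The plan is to reduce the statement directly to Theorems~\ref{thm:intrnorm} and~\ref{thm:normimage}. The three relations listed here are exactly those of Definition~\ref{def:intrnorm}, so the quotient of $Z_H^G(\underline{M}, X)$ by these relations is precisely $N^{G,H}\underline{M}(X)$. Theorem~\ref{thm:intrnorm} identifies this quotient with $\TT^{G,H}\underline{M}(X)$ via $\Theta^{G,H}_{\underline{M}}$, while Theorem~\ref{thm:normimage} identifies $\underline{\pi}_0 N_H^G(\HH\underline{M})(X) = N_H^G\underline{M}(X)$ with the same subgroup $\TT^{G,H}\underline{M}(X)$ of $\TT(\Ind_H^G\underline{M})(X)$ via the coretraction $\iota^{G,H}_{\underline{M}}$. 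Composing these yields an isomorphism $N^{G,H}\underline{M}(X) \cong \underline{\pi}_0 N_H^G(\HH\underline{M})(X)$, so the only content left to verify is that this isomorphism sends each generator $(V \xrightarrow{j} X, u)$ to $t_j(\pi^{G,H}_{\star}(u))$.

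Because $\Theta^{G,H}_{\underline{M}}$ and $\iota^{G,H}_{\underline{M}}$ are both maps of Mackey functors, they commute with transfers, and it therefore suffices to check the formula on generators of the form $(V \xrightarrow{=} V, u)$. There the task becomes showing that $\iota^{G,H}_{\underline{M}}(\pi^{G,H}_{\star}(u))$ equals the pair $(G/H \times V \xrightarrow{\pi_2} V \xrightarrow{=} V, G \times_H u)$ in $\TT(\Ind_H^G\underline{M})(V)$. I would next unravel $\pi^{G,H}_{\star}(u)$ explicitly: the chosen coset representatives $\{g_i\}$ induce an equivalence $\BB_H(\Res_H^G V) \simeq \BB_G(G/H \times V)$, under which $u$ corresponds to a map $const_{G/H \times V} S^0 \to const_{G/H \times V}^{G,H} \HH\underline{M}$; multiplicatively pushing this forward along $\pi^{G/H} = \pi_2$ yields a representative for $\pi^{G,H}_{\star}(u)$ in $\underline{\pi}_0 N_H^G \HH\underline{M}(V)$ whose underlying nonequivariant map on the wedge summand at $v \in V$ is $\wedge_i u(g_i^{-1} \cdot v)$.

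Finally I would track this representative through $\iota^{G,H}_{\underline{M}}$, which is built by applying $N_H^G$ to the coretraction $\HH\underline{M} \hookrightarrow \CC(\HH\underline{M})$ and then invoking the adjunction isomorphism $N_H^G \CC(\HH\underline{M}) \cong \CC(G_+ \wedge_H \HH\underline{M})$ introduced before Theorem~\ref{thm:normimage}. The schematic formula $v \mapsto \wedge_i u(g_i^{-1} \cdot v)$ is exactly the recipe used in the surjectivity part of that proof to represent the pair $(G/H \times V \xrightarrow{\pi_2} V \xrightarrow{=} V, G \times_H u)$ inside $\TT(\Ind_H^G\underline{M})(V)$. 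The hard part is this last bookkeeping step: one must confirm that the coset representatives defining the equivalence of translation categories, the multiplicative push forward $\pi^{G/H}_{\star}$, and the adjunction isomorphism $N_H^G \CC(\HH\underline{M}) \cong \CC(G_+ \wedge_H \HH\underline{M})$ are all used consistently, so that the schematic formulas match on the nose rather than only up to a noncanonical permutation of the factors in $N_H^G \HH\underline{M}$.
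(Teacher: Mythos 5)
Your proposal is correct and follows essentially the same route the paper intends. The paper states the theorem as a consequence of ``this notation and the calculations in Subsection~\ref{subsec:norm},'' which is exactly your chain: the relations are those of Definition~\ref{def:intrnorm}, so the quotient is $N^{G,H}\underline{M}(X)$; Theorem~\ref{thm:intrnorm} matches that to $\TT^{G,H}\underline{M}(X)$ via $\Theta^{G,H}$; Theorem~\ref{thm:normimage} matches $\underline{\pi}_0 N_H^G\HH\underline{M}$ to $\TT^{G,H}\underline{M}$ via $\iota^{G,H}$; and the schematic formula $v \mapsto \prod_i u(g_i^{-1}\cdot v)$ worked out just before Corollary~\ref{cor:intrnormsymmon} is what ties the topological $\pi^{G,H}_{\star}$ to the pair $(G/H\times V\to V\to V,\ G\times_H u)$. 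One small mislabel: that schematic representative is developed in the discussion of monoidal pairings in Subsection~\ref{subsec:norm}, not literally in the surjectivity part of the proof of Theorem~\ref{thm:normimage}, but the content you are citing is the right one, and your identification of the remaining bookkeeping (consistency of the coset representatives across the translation-category equivalence, $\pi^{G/H}_{\star}$, and the $N_H^G\CC\cong\CC(G_+\wedge_H -)$ isomorphism) is exactly the unwritten verification the paper leaves to the reader.
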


Finally, we can describe the symmetric powers of Mackey functors in similar terms. Let $n > 0$, and suppose that $i : U \to V$ is a $G$-map of degree $n$. Then for any $G$-spectrum $X$ we have a quotient map
\begin{align*}
	q^i : i_{\star} (const_U X) \to const_V (X^{\wedge n} / \Sigma_n).
\end{align*}

Then we can define a map
\begin{align*}
	i^{\Sigma}_{\star} : \underline{\pi}_0 X (U) \to \underline{\pi}_0 (X^{\wedge n} / \Sigma_n) (V)
\end{align*}

as below.
\begin{align*}
\xymatrix{
 \underline{\pi}_0 X (U) \ar[d]_-{\cong} \ar@{-->}[rr]^-{i^{\Sigma}_{\star}} && \underline{\pi}_0 (X^{\wedge n} / \Sigma_n) (V) \ar[d]^-{\cong} \\
 [const_U S^0, const_U X] \ar[rr]_-{[\spacedash, q^i] \circ i_{\star}} && [const_V S^0, const_V (X^{\wedge n} / \Sigma_n)] }
\end{align*}

We can now restate the description from Section~\ref{sec:sympow}.

\begin{thm}\label{thm:sympowmultpush}
Let $\underline{M} \in Mack(G)$, $n > 0$ and $X \in \Fin_G$. Let $Z^n (\underline{M}, X)$ be the free abelian group on the pairs $(U \xrightarrow{i} V \xrightarrow{j} X, u)$, where $i$ and $j$ are maps in $\Fin_G$, $i$ has degree $n$ and $u \in \underline{M} (U)$. Then the map
\begin{align*}
	Z^n (\underline{M}, X) &\to \underline{\pi}_0 ((\HH \underline{M})^{\wedge n} / \Sigma_n) (X) \\
	(U \xrightarrow{i} V \xrightarrow{j} X, u) &\mapsto t_j (i^{\Sigma}_{\star} (u))
\end{align*}
is a surjection. The kernel is generated by the relations
\begin{enumerate}[(i)]
\item $(U \xrightarrow{i} V \xrightarrow{j} X, u) = (U' \xrightarrow{i'} V' \xrightarrow{j'} X, u')$ whenever there is a commutative diagram
\begin{align*}
\xymatrix{
 U \ar[r]^-{i} & V \ar[r]^-{j} & X \\
 U' \ar[u]^-{f}_-{\cong} \ar[r]_-{i'} & V' \ar[u]^-{g}_-{\cong} \ar[ru]_-{j'} & }
\end{align*}
such that $f$ and $g$ are isomorphisms and $r_f (u) = u'$,
\item $(U_1 \textstyle \coprod U_2 \xrightarrow{i_1 \coprod i_2} V_1 \coprod V_2 \xrightarrow{j_1 \coprod j_2} X, (u_1, u_2)) =$ \\
$(U_1 \xrightarrow{i_1} V_1 \xrightarrow{j_1} X, u_1) + (U_2 \xrightarrow{i_2} V_2 \xrightarrow{j_2} X, u_2)$, and
\item $(U \xrightarrow{i} V \xrightarrow{j} X, t_k (w)) = (A \xrightarrow{g} B \xrightarrow{j \circ h} X, r_f (w))$ whenever the diagram
\begin{align*}
\xymatrix{
 W \ar[r]^-{k} & U \ar[r]^-{i} & V \ar[r]^-{j} & X \\
 A \ar[u]^-{f} \ar[rr]_-{g} && B \ar[u]_-{h} \ar[ur]_-{j \circ h} & }
\end{align*}
is commutative and the rectangle is exponential.
\end{enumerate}
\end{thm}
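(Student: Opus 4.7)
The plan is to reduce this theorem to Corollary~\ref{cor:identsympow}, which already identifies $\underline{\pi}_0((\HH\underline{M})^{\wedge n}/\Sigma_n)(X) = Sym_n(\underline{M})(X)$ with the quotient of the free abelian group on such pairs by precisely the three relations (i), (ii), (iii) stated here. Since the relations in the two descriptions are verbatim identical, the only real content to verify is that the map of the present theorem, $(U \xrightarrow{i} V \xrightarrow{j} X, u) \mapsto t_j(i^{\Sigma}_{\star}(u))$, coincides with the map implicit in Corollary~\ref{cor:identsympow}, which sends the same pair to $\psi^n_{\underline{M}}(t_j n_i \theta_{\underline{M}}(u))$.

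First I would observe that the additive transfer $t_j$ on $\underline{\pi}_0((\HH\underline{M})^{\wedge n}/\Sigma_n)$ is induced by the additive push-forward $j_*$ in the $\BB_G(\spacedash)$-diagram formalism of Section~\ref{sec:multpush}, so that both expressions are the additive transfer along $j$ of a common element at level $V$. This reduces the problem to the case $V = X$, $j = Id_V$, where I must show $i^{\Sigma}_{\star}(u) = \psi^n_{\underline{M}}(n_i \theta_{\underline{M}}(u))$ in $\underline{\pi}_0((\HH\underline{M})^{\wedge n}/\Sigma_n)(V)$.

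Next I would unpack the right side. By construction $\theta_{\underline{M}}$ is induced by the wedge-summand inclusion $\HH\underline{M} \hookrightarrow \CC(\HH\underline{M})$, and Lemma~\ref{lem:gradedmap} says $\psi$ is graded with respect to the wedge decomposition $\CC(\HH\underline{M}) \simeq \bigvee_m (\HH\underline{M})^{\wedge m}/\Sigma_m$. Thus $\theta_{\underline{M}}(u)$ is represented by $u: \Sigma^{\infty} U_+ \to \HH\underline{M}$ landing in the $m = 1$ summand. The Tambara-functor norm $n_i$ on $\underline{\pi}_0$ of a commutative ring $G$-spectrum, as constructed in~\cite{Brun} and~\cite{Stri}, is by definition the effect on $\underline{\pi}_0$ of the multiplicative push-forward along $i$ using the commutative multiplication on the spectrum; since $i$ has degree $n$, this lands in the $n$'th wedge summand of $\CC(\HH\underline{M})$.

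The main obstacle, then, is a diagram-chase verifying that for $\CC(\HH\underline{M})$ this multiplicative push-forward, composed with projection onto the $n$'th wedge summand, coincides with the composite $[\spacedash, q^i] \circ i_{\star}$ defining $i^{\Sigma}_{\star}$. This amounts to showing that the $n$'th summand of $\CC(\HH\underline{M})$, viewed as a $\BB_G(V)$-diagram via pullback, represents the multiplicative push-forward $i_{\star}(const_U \HH\underline{M})$ modulo the $\Sigma_n$-action on fibers --- which is exactly the universal property of the symmetric quotient. Tracing the Quillen adjunction $(i_{\star}, i^*)$ through the wedge-sum decomposition of $\CC$ yields the required equality. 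Once the two surjections $Z^n(\underline{M}, X) \twoheadrightarrow Sym_n(\underline{M})(X)$ are identified, the description of the kernel transfers verbatim from Corollary~\ref{cor:identsympow} and the theorem is proved.
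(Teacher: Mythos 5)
Your proposal is correct and fills in exactly what the paper leaves implicit: the paper states Theorem~\ref{thm:sympowmultpush} as a ``restatement'' of Corollary~\ref{cor:identsympow} with no separate proof, so the only content is the identification of the surjection $(U \to V \to X, u) \mapsto t_j(i^{\Sigma}_{\star}(u))$ with $(U\to V\to X,u)\mapsto \psi^n_{\underline{M}}(t_j n_i \theta_{\underline{M}}(u))$, which is precisely what you argue. The reduction to $j = \mathrm{Id}$, the use of Lemma~\ref{lem:gradedmap} and the wedge-summand inclusion for $\theta_{\underline{M}}$, and the observation that the Tambara norm of a commutative ring spectrum, restricted to the degree-$1$ summand of $\CC(\HH\underline{M})$ and projected to the degree-$n$ summand, is exactly $q^i \circ i_{\star}$, are all the right ingredients.

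One small terminological slip: you write ``Tracing the Quillen adjunction $(i_{\star}, i^*)$,'' but the multiplicative push-forward $i_{\star}$ is \emph{not} left adjoint to the pullback $i^*$ (that is the additive push-forward $i_*$). The step you need does not require any such adjunction: it is simply that at each $v \in V$ one has $(i_{\star}\, const_U \HH\underline{M})(v) \cong (\HH\underline{M})^{\wedge n}$, and the multiplication on $\CC(\HH\underline{M})$ restricted to $n$-fold products of degree-$1$ elements is by definition the $\Sigma_n$-quotient map, i.e.\ $q^i$. Replacing the phrase with this direct fiberwise observation tightens the argument without changing its substance.
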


\section{The Norm / Restriction Adjunctions for Tambara and semi-Tambara Functors}\label{sec:normresadj}

One result of~\cite{UTamb} is that the norm functor on Mackey functors gives the left adjoint of restriction on Tambara functors. In this section, we demonstrate this adjunction algebraically. We begin with restriction functors. Let $H$ be a subgroup of $G$. Since the induction functor
\begin{align*}
	G \times_H (\spacedash) : \Fin_H \to \Fin_G
\end{align*}

preserves all colimits, as well as pullbacks and exponential diagrams, we may define a restriction functor
\begin{align*}
	\Res_H^G : Tamb(G) \to Tamb(H).
\end{align*}

For a $G$-Tambara functor $\underline{R}$, we define
\begin{align*}
	(\Res_H^G \underline{R})_{\star} \defeq \underline{R}_{\star} \circ G \times_H (\spacedash),
\end{align*}

just as with restrictions and transfers (this corresponds to topological restriction). As noted in~\cite{UTamb}, this functor has a right adjoint $\Ind_H^G$, defined similarly; however, we are interested in the left adjoint.\\
\indent In order to give norm maps to norms of $H$-Tambara functors, we must work with semi-Tambara functors and additive completion. For this purpose we introduce a norm construction on semi-Mackey functors
\begin{align*}
	sN^{G,H} : sMack(H) \to sMack(G).
\end{align*}

Let $\underline{M} \in sMack(H)$ and $X \in \Fin_G$. We define $sN_0^{G,H} \underline{M} (X)$ to be the set of isomorphism classes of pairs $(V \xrightarrow{j} X, u)$, with $j$ a map in $\Fin_G$ and $u \in \underline{M} (\Res_H^G V)$, where $(V \xrightarrow{j} X, u)$ is isomorphic to $(V' \xrightarrow{j'} X', u')$ if and only if there is a commutative diagram
\begin{align*}
\xymatrix{
 V \ar[r]^-{j} & X \\
 V' \ar[u]^-{f}_-{\cong} \ar[ur]_-{j'} & }
\end{align*}

such that $f$ is an isomorphism and $r_{\Res_H^G f} (u) = u'$. We define transfers by composition and restrictions by pullback, as before. It is clear that we now have a semi-Mackey functor $sN_0^{G,H} \underline{M}$; addition is achieved by taking disjoint unions of $V$'s, as below.
\begin{align*}
	(V_1 \xrightarrow{j_1} X, u_1) + (V_2 \xrightarrow{j_2} X, u_2) = \textstyle \big(V_1 \coprod V_2 \xrightarrow{j_1 \coprod j_2} X, (u_1, u_2)\big)
\end{align*}

Next, we define $sN^{G,H} \underline{M} (X)$ to be the quotient of $sN_0^{G,H} \underline{M} (X)$ by the smallest equivalence relation $\sim$ such that, for any $G$-map $j : V \to X$, $H$-map $f : W \to \Res_H^G V$ and $w \in \underline{M} (W)$, we have
\begin{align*}
	(V \xrightarrow{j} X, t_f (w)) \sim \big(D_H (W, f, V) \xrightarrow{j \circ p}, r_{e_H} (w)\big),
\end{align*}

where $D_H (W, f, V)$, $p$ and $e_H$ are as in the exponential diagram below.
\begin{align*}
\xymatrix{
 G/H \times D_H (W, f, V) \ar[d]_-{G \times_H e_H} \ar[rr]^-{\pi_2} && D_H (W, f, V) \ar[d]^-{p} \\
 G \times_H W \ar[r]_-{G \times_H f} & G/H \times V \ar[r]_-{\pi_2} & V }
\end{align*}

Lemma~\ref{lem:distrpullback} implies that the restrictions descend to this quotient. That the transfers descend is trivial. It remains to verify that $sN^{G,H} \underline{M}$ converts coproducts in $\Fin_G$ into products. One sees that the equivalence relation on $sN_0^{G,H} \underline{M} (X_1 \textstyle \coprod X_2) \cong sN_0^{G,H} \underline{M} (X_1) \bigtimes sN_0^{G,H} \underline{M} (X_2)$ corresponds to the product of the equivalence relations for $X_1$ and $X_2$. Note that when $\underline{M}$ is a Mackey functor, $N^{G,H} \underline{M}$ is the additive completion of $sN^{G,H} \underline{M}$.\\
\indent Now suppose that $\underline{R} \in sTamb(H)$. We define norm maps for $sN_0^{G,H} \underline{R}$ as follows. Let $f : X \to Y$ be a $G$-map. To define $n_f$ of $(V \xrightarrow{j} X, u)$, we form the exponential diagram
\begin{align*}
\xymatrix{
 E \ar[d]_-{e} \ar[rr]^-{k} && D \ar[d]^-{p} \\
 V \ar[r]_-{j} & X \ar[r]_-{f} & Y }
\end{align*}

and take $n_f ((V \xrightarrow{j} X, u))$ to be $\big(D \xrightarrow{p} Y, n_{\Res_H^G k} r_{\Res_H^G e} (u)\big)$. It is clear that this is well-defined. Lemma~\ref{lem:functorialnorm} implies that this norm is functorial, and Lemma~\ref{lem:distrlaw} implies that the distributive law holds. The commutation of norms and restrictions follows from Lemma~\ref{lem:distrpullback}.\\
\indent We must now show that these norm maps descend to $sN^{G,H} \underline{R}$. For this we point out the following interpretation of exponential diagrams. Let $f : X \to Y$ be a $G$-map. Then $f$ induces a pullback functor as below.
\begin{align*}
	f^* : \Fin_G / Y &\to \Fin_G / X \\
	(V \to Y) &\mapsto V \times_Y X
\end{align*}

This functor has a right adjoint, namely $(A \xrightarrow{i} X) \mapsto \big(\prod_{i,f} A \xrightarrow{p} Y\big)$. In an exponential diagram as below,
\begin{align*}
\xymatrix{
 D \times_Y X \ar[d]_-{e} \ar[rr]^-{\pi_1} && D \ar[d] \\
 A \ar[r]_-{i} & X \ar[r]_-{f} & Y }
\end{align*}

the map $e$ is the counit of the adjunction. Hence we have an isomorphism as below.
\begin{align*}
	Hom_{\Fin_G / Y} (B, D) &\xrightarrow{\cong} Hom_{\Fin_G /X} (B \times_Y X, A) \\
	j &\mapsto e \circ (j \times_Y 1)
\end{align*}

Also, if $A$ and $B$ are $H$-sets over $C$, we have an isomorphism as below.
\begin{align*}
	Hom_{\Fin_H / C} (A, B) &\xrightarrow{\cong} Hom_{\Fin_G / G \times_H C} (G \times_H A, G \times_H B) \\
	j &\mapsto G \times_H j
\end{align*}

We are now ready to prove the following.

\begin{lem}\label{lem:normtambnormdescend}
The norm maps for $sN_0^{G,H} \underline{R}$ descend to $sN^{G,H} \underline{R}$.
\end{lem}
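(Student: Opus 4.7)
The plan is to imitate the proof of Lemma~\ref{lem:normsdescend} in the present equivariant setting. Given a generating relation $(V \xrightarrow{j} X, t_f(w)) \sim (D_H(W,f,V) \xrightarrow{j \circ p} X, r_{e_H}(w))$ for an $H$-map $f : W \to \Res_H^G V$, and a $G$-map $F : X \to Y$ along which we wish to take the norm, I must show that $n_F$ of the two sides agree in $sN^{G,H} \underline{R} (Y)$.

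First I would form the exponential diagram for $(j,F)$,
\begin{align*}
\xymatrix{E \ar[d]_-{e} \ar[rr]^-{k} && D \ar[d]^-{p} \\ V \ar[r]_-{j} & X \ar[r]_-{F} & Y}
\end{align*}
so that $n_F$ of the left-hand side equals $(D \xrightarrow{p} Y, n_{\Res_H^G k} r_{\Res_H^G e} t_f (w))$. To simplify the second coordinate, I would form the $H$-pullback of $f$ along $\Res_H^G e$ to rewrite $r_{\Res_H^G e} \circ t_f$ as $t_{f'} \circ r_{e'}$, and then invoke the distributive law in $\underline{R}$ to rewrite $n_{\Res_H^G k} \circ t_{f'}$ via its defining exponential diagram. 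This expresses the second coordinate as a transfer of a norm of a restriction of $w$.

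For the right-hand side, I would form the exponential diagram for $(j \circ p, F)$; Lemma~\ref{lem:functorialnorm} identifies this with the outer rectangle of the exponential for $(j,F)$ stacked with the exponential defining $D_H(W,f,V)$, the latter pulled back appropriately using Lemma~\ref{lem:distrpullback}. A direct computation then expresses $n_F$ of the right-hand side in the same transfer-norm-restriction normal form, indexed by the same $G$-set on the nose. The final identification reduces to checking that two composites of $H$-restrictions of $w$ agree, exactly analogous to the concluding step $r_{l_1} r_{j_3} r_{p_1} = r_{l_1} r_{j_1} r_{f_1}$ in the proof of Lemma~\ref{lem:normsdescend}.

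The main obstacle is purely notational: one must simultaneously track several pullbacks and exponential diagrams and translate between $G$-equivariant data (the indexing $G$-sets and the $G$-map $F$) and $H$-equivariant data (the element $w$ and the norm, transfer, and restriction maps of $\underline{R}$). The adjunction interpretation of exponential diagrams introduced just before this lemma, together with the adjunction $G \times_H (\spacedash) \dashv \Res_H^G$ which ensures that pullbacks of $G$-maps along maps of the form $\Res_H^G e$ are essentially $H$-pullbacks, should make all of the relevant exponential diagrams match canonically and reduce the verification to a diagram chase parallel to, but no harder than, the one in Lemma~\ref{lem:normsdescend}.
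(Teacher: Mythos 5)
Your outline reproduces the paper's treatment of the left-hand side (commute the restriction past the transfer via a pullback, apply the distributive law in $\underline{R}$, then invoke the defining relation of the quotient), but the right-hand side is where the proposal goes astray. First, a citation problem: the exponential diagram you must decompose is the one for the pair $(j\circ p, F)$, whose composite sits in the \emph{first} slot, so the relevant stacking result is Lemma~\ref{lem:distrlaw}, not Lemma~\ref{lem:functorialnorm} (the latter handles a composite in the second slot, and was needed in Lemma~\ref{lem:normsdescend} only because elements of $s\TT_0$ are transfers of norms, a complication that does not arise for $sN_0^{G,H}$). Moreover the decomposition is not ``the exponential for $(j,F)$ stacked with the pulled-back exponential defining $D_H(W,f,V)$'': Lemma~\ref{lem:distrpullback} only identifies the pullback of $p : D_H(W,f,V) \to V$ along the evaluation map of the $(j,F)$-exponential as a $D_H$ of pulled-back $H$-data; on top of that one must form a \emph{new} $G$-exponential of this pulled-back map along the top map of the $(j,F)$-exponential.

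Second, and more seriously, the assertion that both sides end up ``in the same transfer-norm-restriction normal form, indexed by the same $G$-set on the nose'' conceals the actual crux. After your manipulations the left-hand side is a pair on $D$ (the $(j,F)$-exponential) whose second coordinate carries a transfer along an $H$-map $m : M \to \Res_H^G D$ produced by an $H$-exponential diagram, and removing that transfer by the defining relation of $sN^{G,H}$ moves the index to $D_H(M,m,D)$; the right-hand side's norm is indexed by the $G$-exponential described above. One must prove these two $G$-sets coincide --- i.e.\ that a $G$-exponential of an object of the form $D_H(\cdot)$ is again of the form $D_H(\cdot)$, for exactly the $H$-exponential data arising from the distributive law in $\underline{R}$ --- and this is not a consequence of Lemmas~\ref{lem:distrpullback}, \ref{lem:distrlaw}, \ref{lem:functorialnorm}. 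The paper establishes it by the chain of adjunction isomorphisms~\ref{eq:overiso}, yielding the exponential diagram~\ref{eq:manysurprises}, and then still needs the compatibility~\ref{eq:manyfinalsquare} of evaluation maps (together with the norm--restriction commutation across~\ref{eq:manydubsquare}) before the concluding comparison of restrictions. So the remaining work is not ``purely notational'' and not ``no harder than'' Lemma~\ref{lem:normsdescend}: the identification of the two indexing $G$-sets and of their evaluation maps is the genuinely new content of this lemma, and your proposal leaves it unproved.
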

\begin{proof}
Suppose given $G$-sets $A$, $B$ and $C$, and an $H$-set $F$. Suppose we have maps $a : A \to B$ and $b : B \to C$ in $\Fin_G$, and a map $f : F \to \Res_H^G A$ in $\Fin_H$. Now form the exponential diagram below.
\begin{align}\label{eq:firstmanyexp}
\xymatrix{
 J \ar[d]_-{j_1} \ar[rr]^-{j_2} && K \ar[d]^-{k} \\
 A \ar[r]_-{a} & B \ar[r]_-{b} & C }
\end{align}
Next form the pullback diagram below.
\begin{align}\label{eq:mainpullback}
\xymatrix{
 I \ar[d]_-{i_1} \ar[r]^-{i_2} & \Res_H^G J \ar[d]^-{\Res_H^G j_1} \\
 F \ar[r]_-{f} & \Res_H^G A }
\end{align}
It follows by Lemma~\ref{lem:distrpullback} that the exponential diagram
\begin{align*}
\xymatrix{
 G/H \times D_H (I, i_2, J) \ar[d]_-{G \times_H e_H^{i_2}} \ar[rr]^-{\pi_2} && D_H (I, i_2, J) \ar[d]^-{p^{i_2}} \\
 G \times_H I \ar[r]_-{G \times_H i_2} & G/H \times J \ar[r]_-{\pi_2} & J }
\end{align*}
is the pullback of the exponential diagram below over $j_1$.
\begin{align*}
\xymatrix{
 G/H \times D_H (F, f, A) \ar[d]_-{G \times_H e_H^f} \ar[rr]^-{\pi_2} && D_H (F, f, A) \ar[d]^-{p^f} \\
 G \times_H F \ar[r]_-{G \times_H f} & G/H \times A \ar[r]_-{\pi_2} & A }
\end{align*}
Hence we obtain two commutative squares as below (the left square is a pullback).
\begin{align}\label{eq:manytwosquareleftpullback}
\xymatrix{
 D_H (I, i_2, J) \ar[d]_-{d} \ar[r]^-{p^{i_2}} & J \ar[d]^-{j_1} & \Res_H^G D_H (I, i_2, J) \ar[d]_-{e_H^{i_2}} \ar[r]^-{\Res_H^G d} & \Res_H^G D_H (F, f, A) \ar[d]^-{e_H^f} \\
 D_H (F, f, A) \ar[r]_-{p^f} & A & I \ar[r]_-{i_1} & F }
\end{align}
Now form the exponential diagram below.
\begin{align}\label{eq:manyhexp}
\xymatrix{
 L \ar[d]_-{l_1} \ar[rr]^-{l_2} && M \ar[d]^-{m} \\
 I \ar[r]_-{i_2} & \Res_H^G J \ar[r]_-{\Res_H^G j_2} & \Res_H^G K }
\end{align}
Next form the exponential diagram below.
\begin{align*}
\xymatrix{
 G/H \times D_H (M, m, K) \ar[d]_-{G \times_H e_H^m} \ar[rr]^-{\pi_2} && D_H (M, m, K) \ar[d]^-{p^m} \\
 G \times_H M \ar[r]_-{G \times_H m} & G/H \times K \ar[r]_-{\pi_2} & K }
\end{align*}
We now have a chain of isomorphisms for $X \in \Fin_G / K$, as below.
\begin{align}\label{eq:overiso} \nonumber
	Hom_{/ K} (X, \prod_{p^{i_2}, j_2} D_H (I, i_2, J)) &\cong Hom_{/ J} (X {\times_K} J, D_H (I, i_2, J)) \\ \nonumber
	                                                                                               &\cong Hom_{/ (G/H \times J)} (G/H {\times} X {\times_K} J, G \times_H I) \\ \nonumber
	                                                                                               &\cong Hom_{/ \Res_H^G J} (\Res_H^G X {\times_{\Res_H^G K}} \Res_H^G J, I) \\
	                                                                                               &\cong Hom_{/ \Res_H^G K} (\Res_H^G X, M) \\ \nonumber
	                                                                                               &\cong Hom_{/ (G/H \times K)} (G/H {\times} X, G {\times_H} M) \\ \nonumber
	                                                                                               &\cong Hom_{/ K} (X, D_H (M, m, K)) \nonumber
\end{align}
It follows that we have an exponential diagram as below.
\begin{align}\label{eq:manysurprises}
\xymatrix{
 D_H (M, m, K) \times_K J \ar[d]_-{e^{!}} \ar[rr]^-{\pi_1} && D_H (M, m, K) \ar[d]^-{p^m} \\
 D_H (I, i_2, J) \ar[r]_-{p^{i_2}} & J \ar[r]_-{j_2} & K }
\end{align}
Next form the diagram below, where both squares are pullbacks.
\begin{align}\label{eq:manydubsquare}
\xymatrix{
 \Res_H^G (D_H (M, m, K) \times_K J) \ar[d]_-{\Res_H^G \pi_1} \ar[rr]^-{e_H^m \times_{\Res_H^G K} 1} && L \ar[d]^-{l_2} \ar[r]^-{i_2 \circ l_1} & \Res_H^G J \ar[d]^-{\Res_H^G j_2} \\
 \Res_H^G D_H (M, m, K) \ar[rr]_-{e_H^m} && M \ar[r]_-{m} & \Res_H^G K }
\end{align}
Next we claim that the diagram below commutes.
\begin{align}\label{eq:manyfinalsquare}
\xymatrix{
 \Res_H^G (D_H (M, m, K) \times_K J) \ar[d]_-{e_H^m \times_{\Res_H^G K} 1} \ar[r]^-{\Res_H^G e^{!}} & \Res_H^G D_H (I, i_2, J) \ar[d]^-{e_H^{i_2}} \\
 L \ar[r]_-{l_1} & I }
\end{align}
To see this, we consider the isomorphisms~\ref{eq:overiso}. We obtain $e^!$ by moving from the last line to the first, starting with the identity map. Restricting to $H$ and postcomposing with $e_H^{i_2}$ corresponds to moving to the third line of~\ref{eq:overiso}. Moving from the last line to the third-to-last line gives the map $e_H^m$. Moving up one more line corresponds to taking the pullback with $\Res_H^G J$ over $\Res_H^G K$ and postcomposing with $l_1$.\\
\indent Now every generating relation on $sN_0^{G,H} \underline{R} (B)$ is of the form
\begin{align*}
	(A \xrightarrow{a} B, t_f (x)) \sim (D_H (F, f, A) \xrightarrow{a \circ p^f} B, r_{e_H^f} (x))
\end{align*}
for some $A$, $a$, $x$, etc. First, using~\ref{eq:firstmanyexp} we calculate
\begin{align*}
	n_b ((A \xrightarrow{a} B, t_f (x))) = (K \xrightarrow{k} C, n_{\Res_H^G j_2} r_{\Res_H^G j_1} t_f (x)).
\end{align*}
Now $r_{\Res_H^G j_1} t_f = t_{i_2} r_{i_1}$ by~\ref{eq:mainpullback}, and $n_{\Res_H^G j_2} t_{i_2} = t_m n_{l_2} r_{l_1}$ by~\ref{eq:manyhexp}. It follows that we have
\begin{align*}
	n_b ((A \xrightarrow{a} B, t_f (x))) \sim (D_H (M, m, K) \xrightarrow{k \circ p^m} C, r_{e_H^m} n_{l_2} r_{l_1} r_{i_1} (x)).
\end{align*}
From~\ref{eq:manydubsquare} it follows that $r_{e_H^m} n_{l_2} = n_{\Res_H^G \pi_1} r_{e_H^m \times_{\Res_H^G K} 1}$, and by~\ref{eq:manyfinalsquare} we have $r_{e_H^m \times_{\Res_H^G K} 1} r_{l_1} = r_{\Res_H^G e^{!}} r_{e_H^{i_2}}$. Thus we have
\begin{align*}
	n_b ((A \xrightarrow{a} B, t_f (x))) \sim (D_H (M, m, K) \xrightarrow{k \circ p^m} C, n_{\Res_H^G \pi_1} r_{\Res_H^G e^{!}} r_{e_H^{i_2}} r_{i_1} (x)).
\end{align*}
Now by applying Lemma~\ref{lem:distrlaw} to~\ref{eq:firstmanyexp}, \ref{eq:manytwosquareleftpullback} and~\ref{eq:manysurprises}, we see that $d \circ e^!$, $\pi_1$, $k \circ p^m$ is a distributor for $a \circ p^f$, $b$. Hence we have
\begin{gather*}
	n_b ((D_H (F, f, A) \xrightarrow{a \circ p^f} B, r_{e_H^f} (x))) = \\
	(D_H (M, m, K) \xrightarrow{k \circ p^m} C, n_{\Res_H^G \pi_1} r_{\Res_H^G e^!} r_{\Res_H^G d} r_{e_H^f} (x)).
\end{gather*}
Finally, we have $r_{\Res_H^G d} r_{e_H^f} = r_{e_H^{i_2}} r_{i_1}$ by~\ref{eq:manytwosquareleftpullback}, so that
\begin{align*}
	n_b ((A \xrightarrow{a} B, t_f (x))) \sim n_b ((D_H (F, f, A) \xrightarrow{a \circ p^f} B, r_{e_H^f} (x))).
\end{align*}
\end{proof}

We have constructed a functor
\begin{align*}
	sN^{G,H} : sTamb(H) \to sTamb(G).
\end{align*}

We now construct a unit map
\begin{align*}
	\eta_{\underline{R}} : \underline{R} \to \Res_H^G sN^{G,H} \underline{R}
\end{align*}

for arbitrary $\underline{R} \in sTamb(H)$, as follows. We take our cue from topology. If $X$ is a commutative ring $H$-spectrum, then the unit map for the norm / restriction adjunction is as below.
\begin{align*}
	X \cong X \wedge (\wedge_{(G-H)/H} S^0) \xrightarrow{Id_X \wedge (\wedge_{(G-H)/H} 1)} \Res_H^G N_H^G X
\end{align*}

This suggests the following. For any $X \in \Fin_H$, let
\begin{align*}
	\eta_X : X \to \Res_H^G (G \times_H X)
\end{align*}

be the unit map of the induction / restriction adjunction. We shall also denote the counit by $\epsilon_X$ for $X \in \Fin_G$, and note that it corresponds to $\pi^{G/H}$ under the canonical isomorphism $G \times_H (\Res_H^G X) \cong G/H \times X$. We define $\eta_{\underline{R}} (X)$ as below.
\begin{align*}
	\eta_{\underline{R}} (X) : \underline{R} (X) &\to \Res_H^G sN^{G,H} \underline{R} (X) \\
	u &\mapsto (G \times_H X \xrightarrow{=} G \times_H X, n_{\eta_X} (u))
\end{align*}

We now check that this is a map of semi-Tambara functors.

\begin{lem}\label{lem:unitnormresadj}
For any $\underline{R} \in sTamb(H)$, the maps $\eta_{\underline{R}} (X)$ determine a map $\eta_{\underline{R}}$ in $sTamb(H)$.
\end{lem}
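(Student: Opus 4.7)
The plan is to verify that $\eta_{\underline{R}}$ commutes with each of restrictions, transfers, and norms. Fix an $H$-map $f: X \to Y$ and $u \in \underline{R}(X)$ (or $\underline{R}(Y)$, as appropriate). Compatibility with restrictions follows from the fact that the naturality square of $\eta$ along $f$ and $\Res_H^G(G \times_H f)$ is a pullback of $H$-sets (standard for the induction/restriction adjunction on $\Fin_H$), combined with commutation of norms and restrictions in $\underline{R}$. Compatibility with norms follows by observing that the exponential diagram for $(\mathrm{id}_{G \times_H X}, G \times_H f)$ is trivial, so the norm formula in $sN^{G,H}\underline{R}$ collapses to $(G \times_H Y \xrightarrow{=} G \times_H Y, n_{\Res_H^G(G \times_H f)}(n_{\eta_X}(u)))$; functoriality of norms in $\underline{R}$ together with the naturality identity $\Res_H^G(G \times_H f) \circ \eta_X = \eta_Y \circ f$ then rewrites this as $\eta_{\underline{R}}(Y)(n_f(u))$.

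Transfers constitute the main obstacle. The identity to prove is
\[
(G \times_H X \xrightarrow{G \times_H f} G \times_H Y, n_{\eta_X}(u)) = (G \times_H Y \xrightarrow{=} G \times_H Y, n_{\eta_Y}(t_f(u)))
\]
in $sN^{G,H}\underline{R}(G \times_H Y)$. The approach is to apply the distributive law in $\underline{R}$ to the right-hand side: if $(f', g', h')$ denotes the distributor for $(f, \eta_Y)$, then $n_{\eta_Y}(t_f(u)) = t_{h'}(n_{g'}(r_{f'}(u)))$. An explicit calculation of this exponential diagram --- using that $\eta_Y^{-1}(z)$ is a singleton over the identity summand $\eta_Y(Y) \subseteq \Res_H^G(G \times_H Y)$ and empty on the complement --- identifies $A' \cong X$ with $f' = \mathrm{id}_X$ and $g'$ the inclusion of the identity summand into $B' \cong X \sqcup C_Y$ (where $C_Y$ is the complement of $\eta_Y(Y)$), with $h'|_X = \eta_Y \circ f$ and $h'|_{C_Y}$ the canonical inclusion. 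Relation (iii) of Definition~\ref{def:intrnorm} then rewrites the right-hand side as $(D_H(B', h', G \times_H Y) \xrightarrow{p} G \times_H Y, r_{e_H}(n_{g'}(u)))$.

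To match this with the left-hand side via relation (i), construct an isomorphism $\alpha: G \times_H X \to D_H(B', h', G \times_H Y)$ of $G$-sets over $G \times_H Y$ from the sections-over-fibers description of $D_H$: send $[g, x]$ to the pair consisting of $[g, f(x)] \in G \times_H Y$ together with the section that picks $x$ (viewed in $X \subseteq B'$) over the coset $gH$ and takes the forced value in $C_Y$ over every other coset. Verify directly that $p \circ \alpha = G \times_H f$, so that relation (i) applies. The hypothesis to check, $r_{\Res_H^G \alpha}(r_{e_H}(n_{g'}(u))) = n_{\eta_X}(u)$, follows by decomposing $\Res_H^G(G \times_H X) = \eta_X(X) \sqcup C_X$ and the corresponding splitting of $\Res_H^G D_H$ according to whether the base point of the section lies in $\eta_Y(Y)$ or in $C_Y$; both sides of the equality then decompose as $(u, 1)$ across the two summands, since norms along summand inclusions insert multiplicative units and restrictions respect the product structure of $\underline{R}$ over disjoint unions. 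This yields the transfer compatibility.
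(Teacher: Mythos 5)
Your proof is correct and takes essentially the same route as the paper's: restrictions via the pullback naturality square of $\eta$, norms via the trivial exponential diagram together with $\Res_H^G(G \times_H f) \circ \eta_X = \eta_Y \circ f$, and transfers by expanding $n_{\eta_Y} t_f$ through the explicit distributor (the summand inclusion of $X$ into $X \coprod C_Y$) and then absorbing the transfer using the generating relation of $sN^{G,H}$. The only divergence is in execution: the paper identifies the resulting $D_H$ with $G \times_H X$ by exhibiting a general exponential diagram and finishes with a pullback square plus the norm--restriction commutation axiom, whereas you build the isomorphism $\alpha$ directly from the sections-over-fibers description and verify $r_{\Res_H^G \alpha} r_{e_H} n_{g'}(u) = n_{\eta_X}(u) = (u,1)$ by summand-wise computation, which amounts to the same verification.
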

\begin{proof}
First we check that $\eta_{\underline{R}}$ commutes with restrictions; this follows from the fact that, for any $H$-map $f : Y \to X$, the diagram
\begin{align*}
\xymatrix{
 Y \ar[d]_-{\eta_Y} \ar[rr]^-{f} && X \ar[d]^-{\eta_X} \\
 \Res_H^G (G \times_H Y) \ar[rr]_-{\Res_H^G (G \times_H f)} && \Res_H^G (G \times_H X) }
\end{align*}
is a pullback. Next, suppose $f : W \to X$ is an $H$-map and $w \in \underline{R} (W)$. The following diagram is exponential.
\begin{align*}
\xymatrix{
 W \ar[d]_-{=} \ar[rr]^-{i_1} && W \coprod (G-H) \times_H X \ar[d]^-{f \coprod 1} \\
 W \ar[r]_-{f} & X \ar[r]_-{\eta_X} & \Res_H^G (G \times_H X) }
\end{align*}
Hence we obtain $n_{\eta_X} t_f (w) = t_{(f \coprod 1)} n_{i_1} (w)$. Next note that any diagram of the form below is exponential.
\begin{align*}
\xymatrix{
 A \coprod A \times_B C \ar[d]_-{1 \coprod \pi_2} \ar[rr]^-{1 \coprod \pi_1} && A \ar[d]^-{a} \\
 A \coprod C \ar[r]_-{a \coprod 1} & B \coprod C \ar[r]_-{1 \coprod c} & B }
\end{align*}
Applying the above with $A = G \times_H W$, $a = G \times_H f$, $B = G \times_H X$, $C = G \times_H ((G-H) \times_H X)$ and $c = \epsilon_{G \times_H X}|_C$, we obtain the exponential diagram below,
\begin{align*}
\xymatrix{
 G {\times_H} (\Res_H^G (G {\times_H} W)) \ar[d]^-{G \times_H (1 \coprod (G-H) \times_H f)} \ar[rr]^-{\epsilon_{G \times_H W}} && G {\times_H} W \ar[d]^-{G \times_H f} \\
 G {\times_H} (W \coprod (G{-}H) {\times_H} X) \ar[r]_-{G \times_H (f \coprod 1)} & G {\times_H} (\Res_H^G (G {\times_H} X)) \ar[r]_-{\epsilon_{G \times_H X}} & G {\times_H} X }
\end{align*}
where we have used the fact that the diagram
\begin{align*}
\xymatrix{
 G \times_H ((G-H) \times_H W) \ar[d]_-{G \times_H ((G-H) \times_H f)} \ar[r]^-{\epsilon_{G \times_H W}} & G \times_H W \ar[d]^-{G \times_H f} \\
 G \times_H ((G-H) \times_H X) \ar[r]^-{\epsilon_{G \times_H X}} & G \times_H X }
\end{align*}
is a pullback. This implies the following.
\begin{align*}
	\eta_{\underline{R}} (t_f (w)) &= (G \times_H X \xrightarrow{=} G \times_H X, t_{(f \coprod 1)} n_{i_1} (w)) \\
	                                                  &= (G \times_H W \xrightarrow{G \times_H f} G \times_H X, r_{(1 \coprod (G-H) \times_H f)} n_{i_1} (w))
\end{align*}
Then since the diagram
\begin{align*}
\xymatrix{
 W \ar[d]_-{\eta_W} \ar[rr]^-{=} && W \ar[d]^-{i_1} \\
 \Res_H^G (G \times_H W) \ar[rr]_-{1 \coprod (G-H) \times_H f} && W \coprod (G-H) \times_H X }
\end{align*}
is a pullback, we obtain the following.
\begin{align*}
	\eta_{\underline{R}} (t_f (w)) &= (G \times_H W \xrightarrow{G \times_H f} G \times_H X, n_{\eta_W} (w)) \\
	                                                  &= t_f ((G \times_H W \xrightarrow{=} G \times_H W, n_{\eta_W} (w))) \\
	                                                  &= t_f (\eta_{\underline{R}} (w))
\end{align*}
It remains to show that $\eta_{\underline{R}}$ commutes with norm maps. Again suppose we have an $H$-map $f : W \to X$. For $w \in \underline{R} (W)$ we have
\begin{align*}
	\eta_{\underline{R}} (n_f (w)) = (G \times_H X \xrightarrow{=} G \times_H X, n_{\eta_X} n_f (w)).
\end{align*}
We have $n_{\eta_X} n_f = n_{\Res_H^G (G \times_H f)} n_{\eta_W}$ by the naturality of $\eta$. Now the diagram below is exponential,
\begin{align*}
\xymatrix{
 G \times_H W \ar[d]_-{=} \ar[rr]^-{G \times_H f} && G \times_H X \ar[d]^-{=} \\
 G \times_H W \ar[r]_-{=} & G \times_H W \ar[r]_-{G \times_H f} & G \times_H X }
\end{align*}
so we obtain the following.
\begin{align*}
	n_f (\eta_{\underline{R}} (w)) &= n_f ((G \times_H W \xrightarrow{=} G \times_H W, n_{\eta_W} (w))) \\
	                                                  &= (G \times_H X \xrightarrow{=} G \times_H X, n_{\Res_H^G (G \times_H f)} n_{\eta_W} (w)) \\
	                                                  &= \eta_{\underline{R}} (n_f (w))
\end{align*}
\end{proof}

Next we define a counit map
\begin{align*}
	\epsilon_{\underline{R}} : sN^{G,H} \Res_H^G \underline{R} \to \underline{R}
\end{align*}

for $\underline{R} \in sTamb(G)$. Using that $\Res_H^G \underline{R} (\Res_H^G V) = \underline{R} (G \times_H (\Res_H^G V))$, we define $\epsilon_{\underline{R}}$ as below for $X \in \Fin_G$.
\begin{align*}
	\epsilon_{\underline{R}} (X) : sN^{G,H} \Res_H^G \underline{R} (X) &\to \underline{R} (X) \\
	(V \xrightarrow{j} X, u) &\mapsto t_j n_{\epsilon_V} (u)
\end{align*}

It is clear that this gives a well-defined function on $sN_0^{G,H} \Res_H^G \underline{R} (X)$. It then descends to the quotient $sN^{G,H} \Res_H^G \underline{R} (X)$ since diagrams of the form below are exponential and the distributive law holds in $\underline{R}$.
\begin{align*}
\xymatrix{
 G \times_H (\Res_H^G D_H (W, f, V)) \ar[d]_-{G \times_H e_H} \ar[rr]^-{\epsilon_{D_H (W, f, V)}} && D_H (W, f, V) \ar[d]^-{p} \\
 G \times_H W \ar[r]_-{G \times_H f} & G \times_H (\Res_H^G V) \ar[r]_-{\epsilon_V} & V }
\end{align*}

We now check that this is a map of semi-Tambara functors.

\begin{lem}\label{lem:counitnormresadj}
For any $\underline{R} \in sTamb(G)$, the maps $\epsilon_{\underline{R}} (X)$ determine a map $\epsilon_{\underline{R}}$ in $sTamb(G)$.
\end{lem}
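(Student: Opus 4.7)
The plan is to verify that $\epsilon_{\underline{R}}$ commutes with transfers, restrictions, and norm maps, checking each compatibility on representing pairs $(V \xrightarrow{j} X, u)$ in $sN_0^{G,H} \Res_H^G \underline{R}(X)$. The transfer compatibility is essentially immediate from the definitions: for a $G$-map $k : X \to Y$, both $\epsilon_{\underline{R}}(t_k((V \xrightarrow{j} X, u)))$ and $t_k(\epsilon_{\underline{R}}((V \xrightarrow{j} X, u)))$ unwind to $t_{k \circ j} n_{\epsilon_V}(u)$ using the functoriality of transfer in $\underline{R}$.

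For the restriction compatibility, consider a $G$-map $f : Y \to X$ and form the pullback square with $q : P \to V$ and $k : P \to Y$. We must compare $t_k n_{\epsilon_P} r_{\Res_H^G q}(u)$ with $r_f t_j n_{\epsilon_V}(u)$. I would use the pullback-based commutation $r_f t_j = t_k r_q$ in $\underline{R}$, reducing the problem to the identity $r_q n_{\epsilon_V} = n_{\epsilon_P} r_{\Res_H^G q}$. This identity follows from the commutation of norms with restrictions applied to the square
\begin{align*}
\xymatrix{
 G \times_H \Res_H^G P \ar[d]_-{G \times_H \Res_H^G q} \ar[r]^-{\epsilon_P} & P \ar[d]^-{q} \\
 G \times_H \Res_H^G V \ar[r]_-{\epsilon_V} & V
}
\end{align*}
which is readily seen to be a pullback, since under the identification $G \times_H \Res_H^G (-) \cong G/H \times (-)$ the counit $\epsilon$ becomes the projection $\pi_2$.

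For norm compatibility, fix a $G$-map $f : X \to Y$ and use the exponential diagram with $e : E \to V$, $k : E \to D$, $p : D \to Y$ defining the distributor for $(j, f)$. On one side we obtain $\epsilon_{\underline{R}}(n_f((V \xrightarrow{j} X, u))) = t_p n_{\epsilon_D} n_{\Res_H^G k} r_{\Res_H^G e}(u)$, while on the other side $n_f \epsilon_{\underline{R}}((V \xrightarrow{j} X, u)) = n_f t_j n_{\epsilon_V}(u)$. The distributive law in $\underline{R}$ applied to the same exponential diagram rewrites the latter as $t_p n_k r_e n_{\epsilon_V}(u)$, so the task reduces to showing $n_{\epsilon_D} n_{\Res_H^G k} r_{\Res_H^G e} = n_k r_e n_{\epsilon_V}$. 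Here I would invoke naturality of $\epsilon$ (giving $\epsilon_D \circ (G \times_H \Res_H^G k) = k \circ \epsilon_E$, hence $n_{\epsilon_D} n_{\Res_H^G k} = n_k n_{\epsilon_E}$ by multiplicativity of norms) together with the pullback-commutation $n_{\epsilon_E} r_{\Res_H^G e} = r_e n_{\epsilon_V}$, which holds for the same reason as in the restriction case applied to the square with $e$ in place of $q$.

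The main obstacle is the norm compatibility, since it is the only step that mixes all three kinds of structure maps of $\underline{R}$ and requires two separate applications of naturality of $\epsilon$ — once to slide norms past $\epsilon_D$ via multiplicativity, and once to commute a norm past a restriction along the pullback square for $\epsilon$. Careful bookkeeping of whether a given map lives in $\Fin_H$ or $\Fin_G$, together with the convention that structure maps on $\Res_H^G \underline{R}$ for an $H$-map $f$ are computed using $G \times_H f$ in $\underline{R}$, will be essential to make these identifications unambiguous.
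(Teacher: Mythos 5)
Your proposal is correct and follows essentially the same route as the paper's proof: check compatibility with transfers (immediate), restrictions (via the pullback square for $\epsilon$ and functoriality of norms over pullbacks), and norms (via naturality of $\epsilon$ combined with the distributive law and pullback-commutation of restrictions and norms). The only minor notational caveat, which you yourself flag at the end, is that writing $r_{\Res_H^G q}$ should more precisely be $r_{G \times_H \Res_H^G q}$ since $u$ lives in $\underline{R}(G \times_H \Res_H^G V)$, exactly as the paper writes it.
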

\begin{proof}
First we check that $\epsilon_{\underline{R}}$ commutes with restrictions. Suppose given an element $(V \xrightarrow{j} X, u) \in sN^{G,H} \Res_H^G \underline{R} (X)$. Let $f : Y \to X$ be a $G$-map and form the diagram below, where both squares are pullbacks.
\begin{align*}
\xymatrix{
 G \times_H (\Res_H^G V) \ar[r]^-{\epsilon_V} & V \ar[r]^-{j} & X \\
 G \times_H (\Res_H^G P) \ar[u]^-{G \times_H (\Res_H^G p)} \ar[r]_-{\epsilon_P} & P \ar[r]_-{q} \ar[u]^-{p} & Y \ar[u]_-{f} }
\end{align*}
We now have the following.
\begin{align*}
	\epsilon_{\underline{R}} (r_f ((V \xrightarrow{j} X, u))) &= \epsilon_{\underline{R}} ((P \xrightarrow{q} Y, r_{G \times_H (\Res_H^G p)} (u))) \\
	                                                                                          &= t_q n_{\epsilon_P} r_{G \times_H (\Res_H^G p)} (u) \\
	                                                                                          &= t_q r_p n_{\epsilon_V} (u) \\
	                                                                                          &= r_f t_j n_{\epsilon_V} (u) \\
	                                                                                          &= r_f (\epsilon_{\underline{R}} ((V \xrightarrow{j} X, u)))
\end{align*}
That $\epsilon_{\underline{R}}$ commutes with transfers is trivial. Finally, we must show that it commutes with norms. Hence let $f : X \to Y$ be a $G$-map, and form the exponential diagram below.
\begin{align*}
\xymatrix{
 B \ar[d]_-{e} \ar[rr]^-{b} && C \ar[d]^-{p} \\
 V \ar[r]_-{j} & X \ar[r]_-{f} & Y }
\end{align*}
Then we have the following.
\begin{align*}
	\epsilon_{\underline{R}} (n_f ((V \xrightarrow{j} X, u))) &= \epsilon_{\underline{R}} ((C \xrightarrow{p} Y, n_{G \times_H (\Res_H^G b)} r_{G \times_H (\Res_H^G e)} (u))) \\
	                                                                                           &= t_p n_{\epsilon_C} n_{G \times_H (\Res_H^G b)} r_{G \times_H (\Res_H^G e)} (u) \\
\end{align*}
Now $n_{\epsilon_C} n_{G \times_H (\Res_H^G b)} = n_b n_{\epsilon_B}$ by the naturality of $\epsilon$, and the diagram below is a pullback,
\begin{align*}
\xymatrix{
 G \times_H (\Res_H^G B) \ar[d]_-{\epsilon_B} \ar[rr]^-{G \times_H (\Res_H^G e)} && G \times_H (\Res_H^G V) \ar[d]^-{\epsilon_V} \\
 B \ar[rr]_-{e} && V }
\end{align*}
so we also have $n_{\epsilon_B} r_{G \times_H (\Res_H^G e)} = r_e n_{\epsilon_V}$. Thus we obtain
\begin{align*}
	\epsilon_{\underline{R}} (n_f ((V \xrightarrow{j} X, u))) &= t_p n_b r_e n_{\epsilon_V} (u) \\
	                                                                                          &= n_f t_j n_{\epsilon_V} (u) \\
	                                                                                          &= n_f (\epsilon_{\underline{R}} ((V \xrightarrow{j} X, u))).
\end{align*}
\end{proof}

We can now demonstrate our adjunction for semi-Tambara functors.

\begin{thm}\label{thm:stambnormresadj}
The above natural maps $\eta_{\underline{R}} : \underline{R} \to \Res_H^G sN^{G,H} \underline{R}$ and $\epsilon_{\underline{R}} : sN^{G,H} \Res_H^G \underline{R}\to \underline{R}$ are the unit and counit, respectively, for an adjunction $(sN^{G,H}, \Res_H^G)$ on semi-Tambara functors.
\end{thm}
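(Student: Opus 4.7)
The plan is to reduce the theorem to verification of the two triangle identities, since Lemmas~\ref{lem:unitnormresadj} and~\ref{lem:counitnormresadj} have already established that $\eta_{\underline{R}}$ and $\epsilon_{\underline{R}}$ are morphisms in the appropriate categories, and naturality in $\underline{R}$ is immediate from the formulas. Explicitly, I would check:
\begin{enumerate}[(a)]
\item $\Res_H^G \epsilon_{\underline{R}} \circ \eta_{\Res_H^G \underline{R}} = Id_{\Res_H^G \underline{R}}$ for $\underline{R} \in sTamb(G)$, and
\item $\epsilon_{sN^{G,H} \underline{R}} \circ sN^{G,H}(\eta_{\underline{R}}) = Id_{sN^{G,H} \underline{R}}$ for $\underline{R} \in sTamb(H)$.
\end{enumerate}

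For (a), take $u \in \Res_H^G \underline{R}(X) = \underline{R}(G \times_H X)$ for $X \in \Fin_H$. Unwinding the formula for $\eta$, and recalling that the norm along $\eta_X$ in $\Res_H^G \underline{R}$ is by definition $n_{G \times_H \eta_X}$ in $\underline{R}$, gives
\begin{align*}
	\eta_{\Res_H^G \underline{R}}(u) = (G \times_H X \xrightarrow{=} G \times_H X, n_{G \times_H \eta_X}(u)).
\end{align*}
Applying $\Res_H^G \epsilon_{\underline{R}}$, which at level $X$ is $\epsilon_{\underline{R}}(G \times_H X)$, produces $n_{\epsilon_{G \times_H X}} n_{G \times_H \eta_X}(u)$. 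By functoriality of norm this collapses to $n_{\epsilon_{G \times_H X} \circ (G \times_H \eta_X)}(u)$, and the inner composite is $Id_{G \times_H X}$ by the triangle identity for the induction/restriction adjunction on finite sets.

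For (b), I would evaluate both sides on a generator $(V \xrightarrow{j} X, u)$ with $u \in \underline{R}(\Res_H^G V)$. First, $sN^{G,H}(\eta_{\underline{R}})$ sends this to $(V \xrightarrow{j} X, \eta_{\underline{R}}(u))$ with $\eta_{\underline{R}}(u) = (G \times_H \Res_H^G V \xrightarrow{=} G \times_H \Res_H^G V, n_{\eta_{\Res_H^G V}}(u))$. Applying $\epsilon_{sN^{G,H} \underline{R}}$ then yields $t_j n_{\epsilon_V}$ of this inner pair, where $n_{\epsilon_V}$ is now the norm in $sN^{G,H} \underline{R}$. The key observation is that for any $G$-map $g : A \to Z$, the exponential diagram for $(Id_A, g)$ is trivial: $\prod_{Id_A, g} A \cong Z$ via $p = Id_Z$, $k = g$, and $e = Id_A$, because each fiber of $g$ admits a unique section of $Id_A$, namely its inclusion into $A$. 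Applying this with $g = \epsilon_V$, the definition of the norm in $sN^{G,H} \underline{R}$ reduces the inner pair to $(V \xrightarrow{Id} V, n_{\Res_H^G \epsilon_V} n_{\eta_{\Res_H^G V}}(u))$; functoriality of norm and the other triangle identity for the induction/restriction adjunction then collapse this to $(V \xrightarrow{Id} V, u)$, and transferring along $j$ recovers the original generator.

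The main obstacle is really bookkeeping: keeping track of which norm operation is being invoked at each step (in $\underline{R}$, $\Res_H^G \underline{R}$, or $sN^{G,H} \underline{R}$), translating between them, and recognizing the trivial exponential diagram for $(Id, \epsilon_V)$. Once these are isolated, both triangle identities follow cleanly from functoriality of norm (Lemma~\ref{lem:functorialnorm}) together with the two triangle identities for the $(G \times_H -, \Res_H^G)$ adjunction on finite $G$-sets.
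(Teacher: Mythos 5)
Your proposal is correct and follows essentially the same route as the paper: the paper likewise reduces the theorem to the two triangle identities, checks them on generators, and collapses the composite norms $n_{\epsilon_{G \times_H X}} n_{G \times_H \eta_X}$ and $n_{\Res_H^G \epsilon_V} n_{\eta_{\Res_H^G V}}$ via the triangle identities for the $(G \times_H (\spacedash), \Res_H^G)$ adjunction on finite sets, exactly as you do (your appeal to "functoriality of norm" here is just the axiom that $\underline{R}_{\star}$ is a functor, not Lemma~\ref{lem:functorialnorm}, but that is a harmless relabeling).
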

\begin{proof}
We will demonstrate the following.
\begin{enumerate}[(i)]
\item For any $\underline{R} \in sTamb(H)$ we have $\epsilon_{sN^{G,H} \underline{R}} \circ sN^{G,H} (\eta_{\underline{R}}) = 1$.
\item For any $\underline{R} \in sTamb(G)$ we have $\Res_H^G (\epsilon_{\underline{R}}) \circ \eta_{\Res_H^G \underline{R}} = 1$.
\end{enumerate}
We begin with (i); let $\underline{R} \in sTamb(H)$. Consider the composite below.
\begin{align*}
	sN^{G,H} \underline{R} \xrightarrow{sN^{G,H} (\eta_{\underline{R}})} sN^{G,H} \Res_H^G sN^{G,H} \underline{R} \xrightarrow{\epsilon_{sN^{G,H} \underline{R}}} sN^{G,H} \underline{R}
\end{align*}
First, we have the following.
\begin{gather*}
	sN^{G,H} (\eta_{\underline{R}}) ((V \xrightarrow{j} X, u)) = \\
	\big(V \xrightarrow{j} X, (G \times_H (\Res_H^G V) \xrightarrow{=} G \times_H (\Res_H^G V), n_{\eta_{\Res_H^G V}} (u))\big)
\end{gather*}
Applying $\epsilon_{sN^{G,H} \underline{R}}$, we obtain the following.
\begin{align*}
	t_j n_{\epsilon_V} \big((G \times_H (\Res_H^G V) \xrightarrow{=} G \times_H (\Res_H^G V), n_{\eta_{\Res_H^G V}} (u))\big)
\end{align*}
An easy calculation gives the following.
\begin{gather*}
	n_{\epsilon_V} \big((G \times_H (\Res_H^G V) \xrightarrow{=} G \times_H (\Res_H^G V), n_{\eta_{\Res_H^G V}} (u))\big) = \\
	(V \xrightarrow{=} V, n_{\Res_H^G (\epsilon_V)} n_{\eta_{\Res_H^G V}} (u))
\end{gather*}
Now $\Res_H^G (\epsilon_V) \circ \eta_{\Res_H^G V} = 1$ by the analogue of (ii) for the adjunction $(G \times_H (\spacedash), \Res_H^G)$, so the above element is $(V \xrightarrow{=} V, u)$. Applying $t_j$, we obtain $(V \xrightarrow{j} X, u)$, so (i) holds.\\
\indent Now we check (ii); let $\underline{R} \in sTamb(G)$. Consider the composite below.
\begin{align*}
	\Res_H^G \underline{R} \xrightarrow{\eta_{\Res_H^G \underline{R}}} \Res_H^G sN^{G,H} \Res_H^G \underline{R} \xrightarrow{\Res_H^G (\epsilon_{\underline{R}})} \Res_H^G \underline{R}
\end{align*}
We begin with an element $u \in \Res_H^G \underline{R} (X) = \underline{R} (G \times_H X)$. Applying $\eta_{\Res_H^G \underline{R}}$, we obtain the following.
\begin{align*}
	(G \times_H X \xrightarrow{=} G \times_H X, n_{G \times_H (\eta_X)} (u))
\end{align*}
Applying $\Res_H^G (\epsilon_{\underline{R}})$, we obtain
\begin{align*}
	n_{\epsilon_{G \times_H X}} n_{G \times_H (\eta_X)} (u) = u
\end{align*}
by the analogue of (i) for the adjunction $(G \times_H (\spacedash), \Res_H^G)$.
\end{proof}

\indent \emph{Remark:} Examining the above proof, we see that our norm / restriction adjunction for semi-Tambara functors is deeply related to the induction / restriction adjunction for sets with group action.\\
\indent Next, suppose that $\underline{R_1} \in Tamb(H)$. Then for $\underline{R}_2 \in Tamb(G)$ we have the following.
\begin{align}\label{eq:complnormresadj}
	Hom_{Tamb(H)} (\underline{R}_1, \Res_H^G \underline{R}_2) &= Hom_{sTamb(H)} (\underline{R}_1, \Res_H^G \underline{R}_2) \\
	                                                                                                      &\cong Hom_{sTamb(G)} (sN^{G,H} \underline{R}_1, \underline{R}_2) \nonumber
\end{align}

Hence we define the norm construction on Tambara functors
\begin{align*}
	N^{G,H} : Tamb(H) \to Tamb(G)
\end{align*}

to be the composite of $sN^{G,H}$ with the additive completion functor. For $\underline{R} \in Tamb(H)$ we define
\begin{align*}
	\eta_{\underline{R}} : \underline{R} \to \Res_H^G N^{G,H} \underline{R}
\end{align*}

to be the additive completion of $\eta_{\underline{R}} : \underline{R} \to \Res_H^G sN^{G,H} \underline{R}$, and
\begin{align*}
	\epsilon_{\underline{R}} : N^{G,H} \Res_H^G \underline{R} \to \underline{R}
\end{align*}

the additive completion of $\epsilon_{\underline{R}} : sN^{G,H} \Res_H^G \underline{R} \to \underline{R}$ for $\underline{R} \in Tamb(G)$. Then the following is immediate from~\ref{eq:complnormresadj} and Theorem~\ref{thm:stambnormresadj}.

\begin{thm}\label{thm:tambnormresadj}
The above natural maps $\eta_{\underline{R}}$ and $\epsilon_{\underline{R}}$ are the unit and counit, respectively, for an adjunction $(N^{G,H}, \Res_H^G)$ on Tambara functors.
\end{thm}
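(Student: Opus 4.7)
The plan is to deduce this adjunction formally from Theorem~\ref{thm:stambnormresadj} and the universal property of additive completion, as the author indicates.

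For any $\underline{R}_1 \in Tamb(H)$ and $\underline{R}_2 \in Tamb(G)$, I would chain together three natural bijections. First, since $Tamb(H) \subset sTamb(H)$ is a full subcategory and $\Res_H^G$ preserves additive inverses, we have $Hom_{Tamb(H)}(\underline{R}_1, \Res_H^G \underline{R}_2) = Hom_{sTamb(H)}(\underline{R}_1, \Res_H^G \underline{R}_2)$. Second, Theorem~\ref{thm:stambnormresadj} yields $Hom_{sTamb(H)}(\underline{R}_1, \Res_H^G \underline{R}_2) \cong Hom_{sTamb(G)}(sN^{G,H}\underline{R}_1, \underline{R}_2)$. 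Third, since $\underline{R}_2$ is already additive, the universal property of additive completion gives $Hom_{sTamb(G)}(sN^{G,H}\underline{R}_1, \underline{R}_2) \cong Hom_{Tamb(G)}(N^{G,H}\underline{R}_1, \underline{R}_2)$ by the very definition of $N^{G,H}$. The composite provides the desired natural Hom-bijection witnessing the adjunction $(N^{G,H}, \Res_H^G)$.

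To confirm that this bijection is represented by the specific maps $\eta_{\underline{R}_1}$ and $\epsilon_{\underline{R}_2}$ defined just before the statement, I would trace $Id_{N^{G,H}\underline{R}_1}$ backwards through the chain: the additive completion adjunction pulls it back to the canonical coretraction $sN^{G,H}\underline{R}_1 \to N^{G,H}\underline{R}_1$, and the semi-Tambara adjunction of Theorem~\ref{thm:stambnormresadj} pulls this further back to the composite of the semi-Tambara unit with the coretraction. By construction, this composite is exactly the $\eta_{\underline{R}_1}$ defined in the paragraph preceding the theorem. A dual calculation starting from $Id_{\Res_H^G \underline{R}_2}$ identifies the counit as $\epsilon_{\underline{R}_2}$. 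The triangle identities in $Tamb$ then follow by applying additive completion term-by-term to the corresponding identities for $(sN^{G,H}, \Res_H^G)$ established in Theorem~\ref{thm:stambnormresadj}.

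There is no substantive obstacle here — the argument is purely formal, reducing the entire claim to Theorem~\ref{thm:stambnormresadj} where the genuine calculations (in particular Lemma~\ref{lem:normtambnormdescend} and the verifications in Lemmas~\ref{lem:unitnormresadj} and~\ref{lem:counitnormresadj}) were already carried out. This is why the author can fairly describe the result as immediate.
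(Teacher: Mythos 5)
Your argument is correct and matches the paper's intent exactly: the paper records the Hom-isomorphism~\ref{eq:complnormresadj} just before the theorem and then declares the result ``immediate from~\ref{eq:complnormresadj} and Theorem~\ref{thm:stambnormresadj},'' which is precisely the chain you spelled out. One terminological slip worth fixing: the completion map $sN^{G,H}\underline{R}_1 \to N^{G,H}\underline{R}_1$ is the unit of the Grothendieck-group adjunction, not a coretraction (there is generally no semi-Tambara map going back); this does not affect the logic since you only use its universal property, not any retraction.
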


Next, recall that the functor $N^{G,H}$ is symmetric monoidal, so it preserves commutative Green functors. Corollary~\ref{cor:intrnormsymmon} describes the structure maps. This gives us two ways of obtaining a commutative Green functor structure on the norm of a Tambara functor. We now check that these are the same.

\begin{prop}\label{prop:twonormsamecomm}
The norm construction on Tambara functors coincides with the norm construction on underlying commutative Green functors.
\end{prop}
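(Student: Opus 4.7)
The plan is to verify the claim by comparing both multiplications on generators of $N^{G,H}\underline{R}$ coming from Definition~\ref{def:intrnorm}. Call $\mu_{\otimes}$ the commutative Green multiplication arising from the symmetric monoidal structure of $N^{G,H}$ on Mackey functors (via Corollary~\ref{cor:intrnormsymmon} composed with $N^{G,H}$ applied to the Green multiplication $\underline{R} \otimes \underline{R} \to \underline{R}$), and $\mu_T$ the internal multiplication coming from the Tambara structure on $N^{G,H}\underline{R}$, defined using the norm maps of Section~\ref{sec:normresadj} as the norm along the fold $\nabla \colon X \coprod X \to X$. Both are maps of Mackey functors $N^{G,H}\underline{R} \otimes N^{G,H}\underline{R} \to N^{G,H}\underline{R}$, so it suffices to check agreement on external products of generating pairs and then use compatibility with restriction along the diagonal $\Delta \colon X \to X \times X$ to deduce agreement of the internal products.

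First I would evaluate $\mu_{\otimes}$ on $(V_1 \xrightarrow{j_1} X_1, u_1) \otimes (V_2 \xrightarrow{j_2} X_2, u_2)$; unwinding the formula of Corollary~\ref{cor:intrnormsymmon} and then applying the Green multiplication in the fiber produces the pair $(V_1 \times V_2 \xrightarrow{j_1 \times j_2} X_1 \times X_2, r_{\Res_H^G \pi_1}(u_1) \cdot r_{\Res_H^G \pi_2}(u_2))$, where the product is taken in $\underline{R}(\Res_H^G(V_1 \times V_2))$. For $\mu_T$, I would first combine $(u_1, u_2)$ via the Mackey decomposition $N^{G,H}\underline{R}(X_1) \times N^{G,H}\underline{R}(X_2) \cong N^{G,H}\underline{R}(X_1 \coprod X_2)$ into the single pair $(V_1 \coprod V_2 \xrightarrow{j_1 \coprod j_2} X_1 \coprod X_2, (u_1, u_2))$, and apply $n_\nabla$ at $X_1 = X_2 = X$. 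Unwinding the exponential diagram for $V_1 \coprod V_2 \to X \coprod X \xrightarrow{\nabla} X$ identifies the top-right vertex $D$ with $V_1 \times_X V_2$, the evaluation map $E \to V_1 \coprod V_2$ with $q_1 \coprod q_2 \colon D \coprod D \to V_1 \coprod V_2$ (where $q_i$ are the projections), and the map $k \colon E \to D$ with the fold $\nabla_D$. Plugging this into the norm formula of Section~\ref{sec:normresadj} yields the pair $(V_1 \times_X V_2 \to X, n_{\Res_H^G \nabla_D}(r_{\Res_H^G q_1}(u_1), r_{\Res_H^G q_2}(u_2)))$; since the norm along a fold map is by definition the internal Tambara product in $\underline{R}(\Res_H^G(V_1 \times_X V_2))$, the second coordinate equals $r_{\Res_H^G q_1}(u_1) \cdot r_{\Res_H^G q_2}(u_2)$, matching the value of $\mu_{\otimes}$ after restriction along $\Delta$.

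The units agree for free: $\underline{A}$ is simultaneously the initial Tambara functor and the unit for the Green multiplication, so both the Tambara unit and the Green unit of $N^{G,H}\underline{R}$ are determined by the unique Tambara map $\underline{A} \to N^{G,H}\underline{R}$, which is in particular a Green map. The main obstacle is the careful identification of the data $(D, e, k)$ in the exponential diagram for $j_1 \coprod j_2$ and $\nabla$ — specifically, checking equivariantly that the evaluation map decomposes as the disjoint union of the two projections and that $k$ reduces to a genuine fold after restriction to $H$. This is a direct application of the definition of exponential diagrams together with Lemma~\ref{lem:distrpullback}, but the equivariant bookkeeping is where any subtlety will lie.
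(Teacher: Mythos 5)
Your proposal is correct and takes essentially the same route as the paper's proof: compute the Tambara product as the norm along the fold map, identify the exponential diagram for $(j_1 \coprod j_2, \nabla)$ with top-right vertex $V_1 \times_X V_2$, evaluation $q_1 \coprod q_2$ and fold $\nabla_D$, and match this against the diagonal restriction of the external product supplied by Corollary~\ref{cor:intrnormsymmon}, using the pullback square $V_1 \times_X V_2 \to V_1 \times V_2$ over $\Delta$. The only differences (working at $X_1 \times X_2$ before restricting along $\Delta$, and the brief remark on units, which in any case follows once the multiplications agree) are cosmetic.
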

\begin{proof}
Let $\underline{R} \in Tamb(H)$, and let $X \in \Fin_G$. Suppose given two elements $(V_1 \xrightarrow{j_1} X, u_1)$ and $(V \xrightarrow{j_2} X, u_2)$ of $N^{G,H} \underline{R} (X)$. We first multiply them using the Tambara functor structure. This produces the element below.
\begin{align*}
	\textstyle n_{1 \coprod 1} \big((V_1 \coprod V_2 \xrightarrow{j_1 \coprod j_2} X \coprod X, (u_1, u_2))\big)
\end{align*}
To compute this, we form the exponential diagram below.
\begin{align*}
\xymatrix{
 (V_1 \times_X V_2) \coprod (V_1 \times_X V_2) \ar[d]_-{\pi_1 \coprod \pi_2} \ar[rr]^-{1 \coprod 1} && V_1 \times_X V_2 \ar[d]^-{j_1 \pi_1 = j_2 \pi_2} \\
 V_1 \coprod V_2 \ar[r]_-{j_1 \coprod j_2} & X \coprod X \ar[r]_-{1 \coprod 1} & X }
\end{align*}
We obtain the element below.
\begin{align*}
	(V_1 \times_X V_2 \xrightarrow{j_1 \pi_1} X, r_{\Res_H^G \pi_1} (u_1) \cdot r_{\Res_H^G \pi_2} (u_2))
\end{align*}
Next we multiply the two elements using the structure map from Corollary~\ref{cor:intrnormsymmon}. We obtain the element below.
\begin{align*}
	r_{\Delta} \big((V_1 \times V_2 \xrightarrow{j_1 \times j_2} X \times X, u_1 \otimes u_2)\big)
\end{align*}
Now the diagram below is a pullback,
\begin{align*}
\xymatrix{
 V_1 \times V_2 \ar[r]^-{j_1 \times j_2} & X \times X \\
 V_1 \times_X V_2 \ar[u]^-{k = \pi_1 \times \pi_2} \ar[r]_-{j_1 \pi_1} & X \ar[u]_-{\Delta} }
\end{align*}
so we obtain the following.
\begin{align*}
	(V_1 \times_X V_2 \xrightarrow{j_1 \pi_1} X, r_{\Res_H^G k} (u_1 \otimes u_2))
\end{align*}
Now we have $u_1 \otimes u_2 = (u_1 \otimes 1) \cdot (1 \otimes u_2) = r_{\Res_H^G \pi_1} (u_1) \cdot r_{\Res_H^G \pi_2} (u_2)$, so that $r_{\Res_H^G k} (u_1 \otimes u_2) = r_{\Res_H^G \pi_1} (u_1) \cdot r_{\Res_H^G \pi_2} (u_2)$.
\end{proof}

Next we wish to identify the adjunction of Theorem~\ref{thm:tambnormresadj} with the one obtained from topology in~\cite{UTamb}. We begin by describing the latter adjunction. Let $\underline{R} \in Tamb(H)$, and let $\HH \underline{R}$ be a cofibrant model for the corresponding Eilenberg MacLane commutative ring spectrum. Corollary~5.10 of~\cite{UTamb} gives the left adjoint of restriction from $Tamb(G)$ to $Tamb(H)$ as
\begin{align*}
	\underline{R} \mapsto \underline{\pi}_0 N_H^G \HH \underline{R},
\end{align*}

with unit map $\eta_{\underline{R}}^{top}$ induced by the map below.
\begin{align}\label{eq:induceetatop}
	\HH \underline{R} \cong \HH \underline{R} \wedge (S^0)^{\wedge (G-H)/H} &\xrightarrow{Id \wedge 1^{\wedge (G-H)/H}} \HH \underline{R} \wedge (\HH \underline{R})^{\wedge (G-H)/H} \\
	&\cong \Res_H^G N_H^G \HH \underline{R} \nonumber
\end{align}

To compare the two adjunctions, we begin by letting $(\HH \underline{R})_c$ be a cofibrant replacement, and considering the norm of the approximation map
\begin{align*}
	N_H^G (\HH \underline{R})_c \to N_H^G \HH \underline{R}.
\end{align*}

Applying $\underline{\pi}_0$, this induces a natural map
\begin{align}\label{eq:compareleftadj}
	\gamma_{\underline{R}} : N^{G,H} \underline{R} \to \underline{\pi}_0 (N_H^G \HH \underline{R})
\end{align}

of Mackey functors. Proposition~\ref{prop:twonormsamecomm} implies that it is a map of commutative Green functors. We must first prove the following.

\begin{lem}\label{lem:compmaptamb}
For any $\underline{R} \in Tamb(H)$, the map $\gamma_{\underline{R}}$ is a map of $G$-Tambara functors.
\end{lem}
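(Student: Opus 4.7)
The plan is to verify that $\gamma_{\underline{R}}$ commutes with norm maps $n_f$ for $G$-maps $f : X \to Y$, since by Proposition~\ref{prop:twonormsamecomm} it is already known to be a map of commutative Green functors. I would begin by giving an explicit description of $\gamma_{\underline{R}}$ on generators: applying Theorem~\ref{hmnormhtpy} to the cofibrant replacement $(\HH\underline{R})_c$ identifies $N^{G,H}\underline{R}$ with $\underline{\pi}_0 N_H^G (\HH\underline{R})_c$ via the formula $(V \xrightarrow{j} X, u) \mapsto t_j \pi^{G,H}_{\star}(u)$, and $\gamma_{\underline{R}}$ is then simply the image under the approximation map $N_H^G (\HH\underline{R})_c \to N_H^G \HH\underline{R}$, so that $\gamma_{\underline{R}}\big((V \xrightarrow{j} X, u)\big) = t_j \pi^{G,H}_{\star}(u)$ with $\pi^{G,H}_{\star}$ defined as in Section~\ref{sec:multpush}.

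I would then establish two naturality properties of $\pi^{G,H}_{\star}$: for any $G$-map $f : V \to W$ and $u \in \underline{R}(\Res_H^G V)$,
\[
r_f \circ \pi^{G,H}_{\star} = \pi^{G,H}_{\star} \circ r_{\Res_H^G f}
\quad\text{and}\quad
n_f \circ \pi^{G,H}_{\star} = \pi^{G,H}_{\star} \circ n_{\Res_H^G f}.
\]
The first is a diagrammatic verification from the definition of $\pi^{G,H}_{\star}$ via push-forward along $\pi^{G/H}$, using that base change commutes with the relevant constructions. The second expresses that the multiplicative push-forward intertwines $G$-norms in $\underline{\pi}_0 N_H^G \HH\underline{R}$ with $H$-norms in $\underline{R}$ pulled back along restriction, and follows from the symmetric monoidality of $N_H^G$ on commutative ring $G$-spectra together with Corollary~\ref{cor:intrnormsymmon} applied to the identification of $N^{G,H}\underline{R}$ with $\underline{\pi}_0 N_H^G (\HH\underline{R})_c$.

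With these identities in hand, the Tambara axiom on a generator is direct: for a $G$-map $f : X \to Y$ with distributor $(e,k,p)$ of $(j,f)$, the algebraic norm formula from the construction of $sN^{G,H}$ yields $n_f\big((V \xrightarrow{j} X, u)\big) = \big(D \xrightarrow{p} Y,\, n_{\Res_H^G k}\, r_{\Res_H^G e}(u)\big)$. Applying $\gamma_{\underline{R}}$ and using both naturality identities gives $t_p\, n_k\, r_e\, \pi^{G,H}_{\star}(u)$, which also equals $n_f\big(t_j \pi^{G,H}_{\star}(u)\big) = n_f \gamma_{\underline{R}}\big((V \xrightarrow{j} X, u)\big)$ by the distributive law inside the topologically defined Tambara structure on $\underline{\pi}_0 N_H^G \HH\underline{R}$. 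To extend from generators, I would note that by relation (ii) of Definition~\ref{def:intrnorm} every element of $N^{G,H}\underline{R}(X)$ is a difference of two generators; the distributive law then reduces $n_f$ of such a difference to a Green-functor combination of norms of individual generators, all of which are respected by $\gamma_{\underline{R}}$ by the calculation just performed and the Green-functor property.

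The main obstacle will be the second naturality identity $n_f\, \pi^{G,H}_{\star}(u) = \pi^{G,H}_{\star}(n_{\Res_H^G f}(u))$: while conceptually it is precisely the compatibility between the algebraic exponential-diagram formula for norms and the topological norm coming from the commutative ring structure on $N_H^G \HH\underline{R}$, its verification either requires an explicit unwinding of the multiplicative push-forward construction from Section~\ref{sec:multpush} or can be obtained by a uniqueness argument: both $(N^{G,H}, \Res_H^G)$ and its topological analogue from Corollary~\ref{cor:utambnormresadj} are left adjoints to $\Res_H^G$ on Tambara functors, and compatibility of their units forces the two resulting Tambara structures on the common Mackey functor $N_H^G \underline{R}$ to agree.
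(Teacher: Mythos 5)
Your core computation is exactly the paper's: the explicit formula $\gamma_{\underline{R}}\big((V \xrightarrow{j} X, u)\big) = t_j \pi^{G,H}_{\star}(u)$, the two compatibilities $r_f \pi^{G,H}_{\star} = \pi^{G,H}_{\star} r_{\Res_H^G f}$ and $n_f \pi^{G,H}_{\star} = \pi^{G,H}_{\star} n_{\Res_H^G f}$ (the paper's ``simple topological calculation''), and the comparison of the algebraic norm formula on a pair with the distributive law in $\underline{\pi}_0 N_H^G \HH\underline{R}$. The gap is in your passage from single pairs to arbitrary elements. The paper never faces this: it checks the identity for $s\gamma_{\underline{R}}$ on the semi-Tambara functor $sN^{G,H}\underline{R}$, where every element \emph{is} a single pair, and then the Tambara statement follows from additive completion (the Grothendieck-completion adjunction $sTamb(G) \rightleftarrows Tamb(G)$). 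Your claim that ``the distributive law reduces $n_f$ of a difference to a Green-functor combination of norms of individual generators'' is not literally true: expanding $n_f(a-b)$ via the exponential formula for the fold map produces norms of restrictions of $-b$, i.e.\ of \emph{negatives} of generators, and hence terms of the form $n_g(-1)\cdot n_g(\,\cdot\,)$; these are not norms of generators, so commutation with them does not follow from the generator case plus the Green-functor property alone. This can be repaired (e.g.\ by noting $n_g(-1)$ is a universal expression in transfers and restrictions of $1$, hence preserved by any Green map), but the cleanest repair is precisely the paper's semi-Tambara/completion device, which you should adopt.

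A second, smaller point: your proposed fallback for the identity $n_f \pi^{G,H}_{\star} = \pi^{G,H}_{\star} n_{\Res_H^G f}$ via uniqueness of left adjoints is circular. Identifying the two left adjoints ``on the common Mackey functor $N_H^G\underline{R}$'' requires a compatible isomorphism whose underlying Mackey map is $\gamma_{\underline{R}}$; that is the content of Theorem~\ref{thm:twoadj}, which presupposes this lemma. Knowing only that $\gamma_{\underline{R}}$ is a Green map satisfying the unit triangle does not force it to be the canonical comparison, because the unit generates $N^{G,H}\underline{R}$ only under norms and transfers, so uniqueness among mere Mackey maps is not automatic. Your primary route --- the direct unwinding of the multiplicative push-forward --- is the right one and is what the paper does; drop the adjunction fallback.
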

\begin{proof}
We need only show that the precomposite of $\gamma_{\underline{R}}$ with the completion map $sN^{G,H} \underline{R} \to N^{G,H} \underline{R}$, which we denote by $s\gamma_{\underline{R}}$, is a map of semi-Tambara functors. Let $X \in \Fin_G$, and let $(V \xrightarrow{j} X, u)$ be an element of $sN^{G,H} \underline{R} (X)$. We compute the image of this element using the notation of Section~\ref{sec:multpush}. We obtain the following.
\begin{align*}
	s\gamma_{\underline{R}} \big((V \xrightarrow{j} X, u)\big) = t_j \pi^{G,H}_{\star} (u)
\end{align*}
Now let $f : X \to Y$ be a $G$-map, and form the exponential diagram below.
\begin{align*}
\xymatrix{
 D \times_Y X \ar[d]_-{e} \ar[rr]^-{\pi_1} && D \ar[d]^-{p} \\
 V \ar[r]_-{j} & X \ar[r]_-{f} & Y }
\end{align*}
We obtain the following.
\begin{align*}
	n_f \big(s\gamma_{\underline{R}} \big((V \xrightarrow{j} X, u)\big)\big) = t_p n_{\pi_1} r_e \pi^{G,H}_{\star} (u)
\end{align*}
A simple topological calculation yields
\begin{align*}
	n_{\pi_1} r_e \pi^{G,H}_{\star} (u) = n_{\pi_1} \pi^{G,H}_{\star} (r_{\Res_H^G e} (u)) = \pi^{G,H}_{\star} (n_{\Res_H^G \pi_1} r_{\Res_H^G e} (u)),
\end{align*}
so that we have the following.
\begin{align*}
	n_f \big(s\gamma_{\underline{R}} \big((V \xrightarrow{j} X, u)\big)\big) &= t_p \pi^{G,H}_{\star} (n_{\Res_H^G \pi_1} r_{\Res_H^G e} (u)) \\
	                                                                                                                     &= s\gamma_{\underline{R}} \big((D \xrightarrow{p} Y, n_{\Res_H^G \pi_1} r_{\Res_H^G e} (u))\big) \\
	                                                                                                                     &= s\gamma_{\underline{R}} \big(n_f \big((V \xrightarrow{j} X, u)\big)\big)
\end{align*}
\end{proof}

We can now relate our two adjunctions.

\begin{thm}\label{thm:twoadj}
The natural map~\ref{eq:compareleftadj} is the unique natural isomorphism relating the two adjunctions $(N^{G,H}, \Res_H^G)$ and $(\underline{\pi}_0 N_H^G \HH, \Res_H^G)$ on Tambara functors.
\end{thm}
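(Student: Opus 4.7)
The plan is to exploit the uniqueness of left adjoints. By Theorem~\ref{thm:tambnormresadj} and Corollary~5.10 of~\cite{UTamb}, both $N^{G,H}$ and $\underline{\pi}_0 N_H^G \HH$ are left adjoints to $\Res_H^G : Tamb(G) \to Tamb(H)$, so there is a unique natural isomorphism $\alpha : N^{G,H} \xrightarrow{\cong} \underline{\pi}_0 N_H^G \HH$ characterized by the compatibility $\Res_H^G \alpha \circ \eta = \eta^{top}$ between the units. Lemma~\ref{lem:compmaptamb} shows that $\gamma_{\underline{R}}$ is already a natural transformation of Tambara functors, so it will suffice to verify this same compatibility for $\gamma$; it will then be forced to be an isomorphism and to coincide with $\alpha$.

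Concretely, I would fix $\underline{R} \in Tamb(H)$, $X \in \Fin_H$ and $u \in \underline{R}(X)$, and compute both $\Res_H^G \gamma_{\underline{R}}(X)(\eta_{\underline{R}}(X)(u))$ and $\eta^{top}_{\underline{R}}(X)(u)$. Unwinding the definition of $\eta_{\underline{R}}$ and applying the formula derived in the proof of Lemma~\ref{lem:compmaptamb}, the left-hand side reduces to $\pi^{G,H}_{\star}(n_{\eta_X}(u))$. The right-hand side is, by~\eqref{eq:induceetatop}, the class in $\underline{\pi}_0 N_H^G \HH\underline{R}(G \times_H X)$ of a representative of $u$ smashed with the unit map on the non-identity coset factors.

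The crux is the topological identification of these two classes. Since $\eta_X$ is the inclusion of the identity-coset summand in $\Res_H^G(G \times_H X) \cong X \sqcup (G-H) \times_H X$, the distributive law forces $n_{\eta_X}(u)$ to be represented by the map whose component over $X$ is a representative of $u$ and whose component over the complementary summand is the unit map. Applying $\pi^{G,H}_{\star}$ then yields, via the identification $\pi^{G/H}_{\star}(const^{G,H}_{G/H \times V}(\spacedash)) \cong const_V(N_H^G(\spacedash))$ from Section~\ref{sec:multpush}, precisely the smash product $u \wedge 1^{\wedge (G-H)/H}$ appearing in~\eqref{eq:induceetatop}.

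The main obstacle will be bookkeeping coset representatives so that the two expressions agree as $G$-equivariant maps rather than merely up to unspecified ambiguity; once this is done on elements, naturality and the uniqueness of left adjoints then deliver the full theorem.
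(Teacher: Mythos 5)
Your proposal is correct and follows essentially the same route as the paper: reduce to verifying the unit compatibility $\Res_H^G\gamma_{\underline{R}}\circ\eta_{\underline{R}}=\eta^{top}_{\underline{R}}$ (which, by the uniqueness of the comparison between two left adjoints of $\Res_H^G$, forces $\gamma$ to be the canonical isomorphism), then compute $\pi^{G,H}_{\star}(n_{\eta_X}(u))$ by representing $u$ as a map $F_1S^1\wedge X_+\to\HH\underline{R}$, noting that $n_{\eta_X}(u)$ is represented by $(u,1)$ on the two summands of $\Res_H^G(G\times_H X)$, and doing the coset-representative bookkeeping to identify the result with $u\wedge 1^{\wedge(G-H)/H}$ from~\ref{eq:induceetatop}. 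The only cosmetic quibble is that the $(u,1)$ description of $n_{\eta_X}(u)$ comes from base change along the summand inclusions together with the norm along $\emptyset\to(G-H)\times_H X$ giving the unit, rather than from the distributive law.
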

\begin{proof}
The unique isomorphism relating the two adjunctions is the unique natural transformation $\sigma$ such that the following diagram commutes for all $\underline{R} \in Tamb(H)$.
\begin{align*}
\xymatrix{
 & \underline{R} \ar[dl]_-{\eta_{\underline{R}}} \ar[dr]^-{\eta^{top}_{\underline{R}}} & \\
 \Res_H^G N^{G,H} \underline{R} \ar[rr]_-{\Res_H^G \sigma_{\underline{R}}} && \Res_H^G \underline{\pi}_0 N_H^G \HH \underline{R} }
\end{align*}
Thus, we must show that this diagram commutes for $\sigma = \gamma$. Let $X$ be an $H$-set, and let $u \in \underline{R} (X)$. Then we have
\begin{align*}
	\Res_H^G (\gamma_{\underline{R}}) \eta_{\underline{R}} (u) = \pi^{G,H}_{\star} (n_{\eta_X} (u)) \in \underline{\pi}_0 N_H^G \HH \underline{R} (G \times_H X).
\end{align*}
To compute this, we may first assume that $\HH \underline{R}$ is a cofibrant and fibrant commutative ring spectrum. Then its underlying spectrum is positive fibrant, so $u$ may be represented by a map
\begin{align*}
	u : F_1 S^1 \wedge X_+ \to \HH \underline{R}.
\end{align*}
Then $n_{\eta_X} (u)$ may be represented by the map
\begin{align*}
	(u, 1) : (const_X F_1 S^1, const_{(G-H) \times_H X} S^0) \to const_{\Res_H^G (G \times_H X)} \HH \underline{R}.
\end{align*}
We now take the multiplicative push forward $\pi^{G,H}_{\star}$ and restrict to the copy of $X$ corresponding the identity coset. Letting $\{ g_i \}$ be our set of coset representatives, we note that for any $x \in X \subseteq \Res_H^G (G \times_H X)$, we have $g_i^{-1} \cdot (1,x) \in X$ only when $g_i = 1$. It follows from the explicit description of multiplicative push forwards in Section~\ref{sec:multpush} that we can describe the result precisely as the map $F_1 S^1 \wedge X_+ \to \Res_H^G N_H^G \HH \underline{R}$ which is equal, on the summand corresponding to $x \in X$, to the composite below.
\begin{align*}
	F_1 S^1 \cong F_1 S^1 \wedge (S^0)^{\wedge (G-H)/H} &\xrightarrow{u \wedge 1^{\wedge (G-H)/H}} \HH \underline{R} \wedge (\HH \underline{R})^{\wedge (G-H)/H} \\
	&\cong \Res_H^G N_H^G \HH \underline{R}
\end{align*}
Examining~\ref{eq:induceetatop}, we see that this is precisely $\eta^{top}_{\underline{R}} (u)$.
\end{proof}

\indent \emph{Remark:} Theorem~\ref{thm:twoadj} implies formally that the following diagram commutes for any $\underline{R} \in Tamb(G)$, where the map on the right is induced by the counit of the adjunction $(N_H^G, \Res_H^G)$ on commutative ring spectra.
\begin{align*}
\xymatrix{
 N^{G,H} \Res_H^G \underline{R} \ar[dr]_-{\epsilon_{\underline{R}}} \ar[rr]^-{\gamma_{\Res_H^G \underline{R}}}_-{\cong} && \underline{\pi}_0 N_H^G \Res_H^G \HH \underline{R} \ar[dl]^-{\epsilon_{\underline{R}}^{top}} \\ 
 & \underline{R} & }
\end{align*}
\indent We can now generalize a special case of Proposition~B.63 of~\cite{HHR}.

\begin{cor}\label{cor:normrighthtpytype}
Let $X \in comm_H$ be cofibrant and $(-1)$-connected, and let $Y \in Sp_H$ be cofibrant and $(-1)$-connected. Then for any map $f : Y \to X$ which induces an isomorphism on $\underline{\pi}_0$, the map
\begin{align*}
	N_H^G f : N_H^G Y \to N_H^G X
\end{align*}
induces an isomorphism on $\underline{\pi}_0$.
\end{cor}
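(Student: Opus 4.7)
The plan is to deduce the result from Lemma~\ref{lem:norm-1conn} together with a special case in which the target is a cofibrant Eilenberg MacLane commutative ring $H$-spectrum, which is almost immediate from Theorem~\ref{thm:twoadj}.

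First I would establish the following Eilenberg MacLane special case: if $Y$ is cofibrant in $Sp_H$ and $(-1)$-connected, $\HH\underline{R} \in comm_H$ is cofibrant Eilenberg MacLane with $\underline{R} \in Tamb(H)$, and $g : Y \to \HH\underline{R}$ is a $\pi_0$-iso, then $N_H^G g$ is a $\pi_0$-iso. Indeed, let $s : (\HH\underline{R})_c \to \HH\underline{R}$ be a cofibrant replacement in $Sp_H$. From the construction of $\gamma_{\underline{R}}$ leading up to equation~\ref{eq:compareleftadj}, the natural map $\gamma_{\underline{R}} : N^{G,H}\underline{R} \to \underline{\pi}_0 N_H^G \HH\underline{R}$ is obtained from $\underline{\pi}_0 N_H^G s$ after identifying $N^{G,H}\underline{R} \cong N_H^G \underline{R} \cong \underline{\pi}_0 N_H^G (\HH\underline{R})_c$ via Proposition~\ref{prop:twonormsamecomm} and Corollary~\ref{cor:norm-1der}. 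Since Theorem~\ref{thm:twoadj} asserts that $\gamma_{\underline{R}}$ is an isomorphism, $N_H^G s$ is a $\pi_0$-iso. Now, using cofibrancy of $Y$, lift $g$ through the trivial fibration $s$ to $g' : Y \to (\HH\underline{R})_c$; then $g'$ is a $\pi_0$-iso between cofibrant $(-1)$-connected spectra in $Sp_H$, so Lemma~\ref{lem:norm-1conn} yields that $N_H^G g'$ is a $\pi_0$-iso, and hence $N_H^G g = N_H^G s \circ N_H^G g'$ is a $\pi_0$-iso.

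For the general case, take a cofibrant replacement $p : X_c \to X$ in $Sp_H$ with $X_c$ cofibrant and $(-1)$-connected, and lift $f$ along $p$ to $\tilde{f} : Y \to X_c$, so that $\tilde{f}$ is a $\pi_0$-iso between cofibrant $(-1)$-connected spectra in $Sp_H$. By Lemma~\ref{lem:norm-1conn}, $N_H^G \tilde{f}$ is a $\pi_0$-iso. Set $\underline{R} := \underline{\pi}_0 X \in Tamb(H)$, choose a cofibrant Eilenberg MacLane $\HH\underline{R} \in comm_H$, and by Theorem~\ref{thm:mapstoemcomm} pick a representative $q : X \to \HH\underline{R}$ in $comm_H$ of the canonical homotopy class (inducing the identity on $\pi_0$). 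Applying the Eilenberg MacLane special case to both $q\tilde{f}$ and $qf$ (both are $\pi_0$-isos $Y \to \HH\underline{R}$) shows that $N_H^G(q\tilde{f})$ and $N_H^G(qf)$ are $\pi_0$-isos. Two applications of 2-out-of-3 for $\pi_0$-isos finish the proof: from $N_H^G(q\tilde{f}) = N_H^G q \circ N_H^G \tilde{f}$, $N_H^G q$ is a $\pi_0$-iso; then from $N_H^G(qf) = N_H^G q \circ N_H^G f$, $N_H^G f$ is a $\pi_0$-iso.

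The main obstacle is the Eilenberg MacLane special case, specifically verifying that $\gamma_{\underline{R}}$ factors through $\underline{\pi}_0 N_H^G s$ as described; this requires a careful unpacking of the construction of $\gamma_{\underline{R}}$ given just before equation~\ref{eq:compareleftadj}, after which Theorem~\ref{thm:twoadj} delivers exactly the needed input.
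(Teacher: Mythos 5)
Your Eilenberg MacLane special case is sound and in fact mirrors part of the paper's own argument: unpacking the construction of $\gamma_{\underline{R}}$ shows it is $\underline{\pi}_0 N_H^G(s)$ up to the identification $N^{G,H}\underline{R} \cong \underline{\pi}_0 N_H^G (\HH\underline{R})_c$, so Theorem~\ref{thm:twoadj} does give that $N_H^G(s)$ is a $\underline{\pi}_0$-isomorphism, and combining with Lemma~\ref{lem:norm-1conn} handles any $\underline{\pi}_0$-isomorphism from a cofibrant $(-1)$-connected object of $Sp_H$ into a cofibrant Eilenberg MacLane object of $comm_H$.

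The reduction of the general case, however, has a genuine gap. The composite $q\tilde{f}$ does not exist: $\tilde{f}$ lands in $X_c$, while $q$ is defined on $X$, and $q$ cannot be transported to $X_c$ since $X_c$ is not a commutative ring spectrum. If you repair this by using $qp\tilde{f}=qf$ or $qp : X_c \to \HH\underline{R}$, your special case only tells you that the composites $N_H^G(q)\circ N_H^G(f)$ and $N_H^G(q)\circ N_H^G(p)$ are $\underline{\pi}_0$-isomorphisms; you can never isolate $N_H^G(q)$ (equivalently $N_H^G(f)$) by 2-out-of-3, because knowing $N_H^G(p)$ is a $\underline{\pi}_0$-isomorphism for $p : X_c \to X$ is itself an instance of the corollary you are proving (a map from a cofibrant spectrum to the underlying spectrum of a cofibrant object of $comm_H$), so the argument is circular. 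This is exactly the crux: the underlying spectrum of a cofibrant commutative ring spectrum is not known to be cofibrant in $Sp_H$, so neither Lemma~\ref{lem:norm-1conn} nor your special case applies to maps out of $X$. The paper supplies the missing input by a separate argument for the map $X \to \HH\underline{R}$ in $comm_H$: both $\underline{\pi}_0 N_H^G X$ and $\underline{\pi}_0 N_H^G \HH\underline{R}$ corepresent $Hom_{Tamb(H)}(\underline{\pi}_0 X, \Res_H^G(\spacedash))$ on $Tamb(G)$, using Theorems~\ref{thm:mapstoemcomm} and~\ref{thm:homemcomm} together with the norm/restriction adjunction on commutative ring spectra, and then Yoneda's Lemma shows $\underline{\pi}_0 N_H^G$ of that map is an isomorphism. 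Some such corepresentability (or equivalent) step must be added to your proposal; without it the proof does not close.
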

\begin{proof}
Let $\underline{R} = \underline{\pi}_0 X$. Then by Proposition~3.7 of~\cite{UTamb}, we can construct a cofibration $p : X \to \HH \underline{R}$ in $comm_H$ which induces the identity on $\underline{\pi}_0$. Now let $k : Z \to \HH \underline{R}$ be a trivial fibration in $Sp_H$ with $Z$ cofibrant. Then we can complete the diagram below by the lifting axiom.
\begin{align*}
\xymatrix{
 Y \ar[d]_-{f} \ar[r]^-{q} & Z \ar[d]^-{k} \\
 X \ar[r]_-{p} & \HH \underline{R} }
\end{align*}
The map $q$ induces an isomorphism on $\underline{\pi}_0$ since the other three maps do. Then by Lemma~\ref{lem:norm-1conn}, $\underline{\pi}_0 N_H^G (q)$ is an isomorphism. Also, $\underline{\pi}_0 N_H^G (k)$ is an isomorphism by Theorem~\ref{thm:twoadj}. Thus, it suffices to show that $\underline{\pi}_0 N_H^G (p)$ is an isomorphism as well. Let $\underline{B} \in Tamb(G)$. We have the following chain of isomorphisms by Theorems~\ref{thm:mapstoemcomm} and~\ref{thm:homemcomm}, and the fact that the norm functor gives the left adjoint of restriction from $comm_G$ to $comm_H$.
\begin{align*}
	Hom_{Tamb(G)} (\underline{\pi}_0 N_H^G X, \underline{B}) &\cong Hom_{Ho(comm_G)} (N_H^G X, \HH \underline{B}) \\
	                                                                                                 &\cong Hom_{Ho(comm_H)} (X, \Res_H^G \HH \underline{B}) \\
	                                                                                                 &\cong Hom_{Tamb(H)} (\underline{\pi}_0 X, \Res_H^G \underline{B})
\end{align*}
We have similar isomorphisms for $\HH \underline{R}$. The result then follows from Yoneda's Lemma.
\end{proof}

\begin{cor}\label{cor:gsymmonrighthtpytype}
Let $X \in comm_G$ be cofibrant and $(-1)$-connected, and let $Y \in Sp_G$ be cofibrant and $(-1)$-connected. Then for any map $f : Y \to X$ which induces an isomorphism on $\underline{\pi}_0$ and any $T \in \Fin_G$, the map
\begin{align*}
	f^{\wedge T} : Y^{\wedge T} \to X^{\wedge T}
\end{align*}
induces an isomorphism on $\underline{\pi}_0$.
\end{cor}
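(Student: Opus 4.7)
The plan is to reduce to Corollary~\ref{cor:normrighthtpytype} via the orbit decomposition of the $G$-symmetric monoidal structure. Choose an orbit decomposition $T \cong \coprod_i G/H_i$. Section~\ref{sec:gsymmon} gives natural isomorphisms
\begin{align*}
  X^{\wedge T} \cong \bigwedge_i N_{H_i}^G \Res_{H_i}^G X \qquad\text{and}\qquad Y^{\wedge T} \cong \bigwedge_i N_{H_i}^G \Res_{H_i}^G Y,
\end{align*}
under which $f^{\wedge T}$ corresponds to the smash product of the maps $N_{H_i}^G \Res_{H_i}^G f$.

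Since restriction preserves cofibrancy, $(-1)$-connectivity, and the commutative ring structure, for each $i$ the restricted map $\Res_{H_i}^G f$ satisfies the hypotheses of Corollary~\ref{cor:normrighthtpytype}. That corollary then gives that each $N_{H_i}^G \Res_{H_i}^G f$ induces an isomorphism on $\underline{\pi}_0$. Moreover, each $N_{H_i}^G \Res_{H_i}^G X$ and $N_{H_i}^G \Res_{H_i}^G Y$ is cofibrant and $(-1)$-connected, because norm functors preserve both properties.

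To conclude, I would factor the smash product $\bigwedge_i N_{H_i}^G \Res_{H_i}^G f$ as a composite of maps, each modifying only one tensor factor, and verify that each such stage induces an isomorphism on $\underline{\pi}_0$. For this step I would use the standard fact that if $g : C \to D$ is a $\underline{\pi}_0$-isomorphism between cofibrant, $(-1)$-connected $G$-spectra and $E$ is a cofibrant, $(-1)$-connected $G$-spectrum, then $g \wedge 1_E$ is a $\underline{\pi}_0$-isomorphism: the cofiber of $g$ is $0$-connected by the long exact sequence in $\underline{\pi}_*$, and smashing it with the $(-1)$-connected $E$ keeps it $0$-connected, so the cofiber sequence after smashing with $E$ shows that $g \wedge 1_E$ is a $\underline{\pi}_0$-isomorphism.

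The main obstacle, modest as it is, is this final connectivity step, which requires the equivariant Künneth-type connectivity estimate for smash products. This is well known and is implicit in the proof of Corollary~\ref{cor:gsymmon-1conn} and of Lemma~\ref{lem:norm-1conn}. The point of the present corollary is to upgrade Corollary~\ref{cor:gsymmon-1conn} from the case where $X$ is cofibrant as a $G$-spectrum to the more flexible case where $X$ is merely cofibrant as a commutative ring $G$-spectrum, and this upgrade is effected precisely by threading the extra ring-theoretic structure through the norms via Corollary~\ref{cor:normrighthtpytype}.
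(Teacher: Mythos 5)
Your overall strategy matches the paper's: decompose $T$ into orbits, rewrite $Z^{\wedge T}$ as $\bigwedge_i N_{H_i}^G \Res_{H_i}^G Z$, apply Corollary~\ref{cor:normrighthtpytype} orbit by orbit, and then argue that the smash product preserves the $\underline{\pi}_0$-isomorphisms. The factorization of $\bigwedge_i N_{H_i}^G \Res_{H_i}^G f$ into single-factor changes is a fine way to organize the last step.

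There is, however, a real gap in the cofibrancy bookkeeping. You assert that each $N_{H_i}^G \Res_{H_i}^G X$ ``is cofibrant and $(-1)$-connected, because norm functors preserve both properties,'' and then invoke a ``standard fact'' that requires cofibrancy of $C$, $D$ and $E$ in $Sp_G$. But $X$ is only assumed cofibrant in $comm_G$, not in $Sp_G$, and cofibrant commutative ring $G$-spectra are not in general cofibrant as $G$-spectra. What $N_{H_i}^G$ and $\Res_{H_i}^G$ preserve is cofibrancy within $comm$, so $N_{H_i}^G \Res_{H_i}^G X$ is cofibrant in $comm_G$ and there is no reason it should be cofibrant in $Sp_G$. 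Consequently, when your factorization argument reaches a stage where the ``fixed'' factor $E$, or the target $D$, is one of the $X$-side spectra, the hypothesis of your cited fact is not literally met. The paper sidesteps this by observing that cofibrant objects of $Sp_G$ and cofibrant objects of $comm_G$ are both \emph{flat}, which is the property actually needed: it guarantees that the point-set smash products compute derived smash products, and the connectivity estimate for smash products then applies. Your argument becomes correct once you replace ``cofibrant'' by ``flat'' in the standard fact and justify flatness of the $X$-side factors from cofibrancy in $comm_G$.
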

\begin{proof}
Let $T \cong \coprod_i G/H_i$. Then we have a natural isomorphism for $Z \in Sp_G$ as below.
\begin{align}\label{eq:gsymmonrighthtpytype}
	Z^{\wedge T} \cong \bigwedge_i N_{H_i}^G \Res_{H_i}^G Z
\end{align}
For each $i$ the map $\Res_{H_i}^G f$ induces an isomorphism on $\underline{\pi}_0$, so Corollary~\ref{cor:normrighthtpytype} implies that $N_{H_i}^G \Res_{H_i}^G f$ induces an isomorphism on $\underline{\pi}_0$. Now the $N_{H_i}^G \Res_{H_i}^G Y$ are cofibrant in $Sp_G$ and the $N_{H_i}^G \Res_{H_i}^G X$ are cofibrant in $comm_G$, so all of these spectra are flat. It follows that the smash products~\ref{eq:gsymmonrighthtpytype} for $Z = Y$ and $Z = X$ are derived smash products. Since all of these spectra are $(-1)$-connected as well, these smash products induce tensor products on $\underline{\pi}_0$.
\end{proof}

\section{Tambara Functors and Multiplicative Push Forwards of Mackey Functors}\label{sec:multpushmackey}

In this section we give an alternative characterization of Tambara functors in terms of \emph{multiplicative push forwards} of Mackey functors. We then relate this topologically to $\underline{\pi}_0 \CC \HH$ and give an alternative formula for $\TT$ in these terms. We begin by recalling the notation and concepts of Section~\ref{sec:multpush}. Letting $T \in \Fin_G$, recall that $\BB_G (T)$ is the translation category of $T$, and that $Sp_{\BB_G (T)}$ denotes the category of functors from $\BB_G (T)$ to orthogonal spectra. We call these $\BB_G (T)$-spectra. We would like a corresponding notion of $\BB_G (T)$-Mackey functors. These should correspond to $\BB_G (T)$-spectra which are pointwise Eilenberg MacLane. Suppose that $X \in Sp_{\BB_G (T)}$ and that $g$ is a morphism in $\BB_G (T)$ from $t_1$ to $t_2$ (that is, $g \cdot t_1 = t_2$). Letting $H_t$ be the stabilizer of $t$ for all $t \in T$, we have $H_{t_2} = g H_{t_1} g^{-1}$. Hence, we have a conjugation isomorphism as below.
\begin{align*}
	I_g : H_{t_1} &\xrightarrow{\cong} H_{t_2} \\
	                   h &\mapsto g h g^{-1}
\end{align*}

This isomorphism clearly induces a corresponding isomorphism of categories $(I_g^{-1})^* : Sp_{H_{t_1}} \xrightarrow{\cong} Sp_{H_{t_2}}$. We can then describe our $\BB_G (T)$-spectrum $X$ equivalently as a collection $\{ X_t \in Sp_{H_t} \}_{t \in T}$ together with isomorphisms
\begin{align}\label{eq:bgtspectwopts}
	(I_g^{-1})^* X_t \xrightarrow[\cong]{g} X_{gt}
\end{align}

for each $t \in T$ and $g \in G$ which are suitably functorial, subject to the condition that the above map is the action by $g$ when $g \in H_t$. Now pullback of group action along $I_g$ induces an isomorphism of Burnside categories $Burn(H_{gt}) \xrightarrow{\cong} Burn(H_t)$ which we shall denote by $A \mapsto A^g$, and precomposition with this functor then induces an isomorphism $Mack(H_t) \xrightarrow{\cong} Mack(H_{gt})$ which we shall denote by
\begin{align*}
	\underline{M} \mapsto {}^g\underline{M}.
\end{align*}

Now for any $n \in \ZZ$ we clearly have a canonical isomorphism
\begin{align*}
	\underline{\pi}_n (I_g^{-1})^* X_t \cong {}^g(\underline{\pi}_n X_t),
\end{align*}

and hence we have isomorphisms as below.
\begin{align*}
	{}^g(\underline{\pi}_n X_t) \xrightarrow[\cong]{g} \underline{\pi}_n X_{gt}
\end{align*}

These satisfy the condition that, when $g \in H_t$, they coincide with the restriction maps
\begin{align*}
	{}^g(\underline{\pi}_n X_t) (V) = \underline{\pi}_n X_t (V^g) \xrightarrow{\cong} \underline{\pi}_n X_t (V)
\end{align*}

induced by the isomorphisms below for each $V \in \Fin_{H_t}$.
\begin{align*}
	V &\xrightarrow{\cong} V^g \\
	v &\mapsto g \cdot v
\end{align*}

Thus, a $\BB_G (T)$-spectrum $X$ is pointwise Eilenberg MacLane if and only if it is Eilenberg MacLane at any set of orbit representatives for $T$, and similarly for pointwise $(-1)$-connected objects. We now describe the analogue of the Burnside category. We define $Burn(\BB_G (T))$ to be the category whose objects are finite $G$-sets over $T$, where the hom sets are given by Grothendieck groups of correspondences over $T$. We now define $\BB_G (T)$-Mackey functors.

\begin{definition}\label{def:bgtmackey}
Let $T \in \Fin_G$. A $\BB_G (T)$-Mackey functor $\underline{M}$ is a contravariant, additive functor from $Burn(\BB_G (T))$ to the category of abelian groups. We denote the category of $\BB_G (T)$-Mackey functors by $Mack(\BB_G (T))$.
\end{definition}

Of course, if $T \cong \coprod_i G/H_i$ then $Burn(\BB_G (T)) \cong \prod_i Burn(H_i)$, so we have an equivalence of categories as below.
\begin{align}\label{eq:mackbgtsimple}
	Mack(\BB_G (T)) \cong \prod_i Mack(H_i)
\end{align}

Next note that $G$-sets over $T$ are equivalent to $\BB_G (T)$-sets: given a map $Y \to T$ we can take $Y_t$ to be the preimage of $t \in T$, and given $\{ Y_t \}_{t \in T}$ we can form $\coprod_{t \in T} Y_t \to T$. Hence we can associate suspension $\BB_G (T)$-spectra to the objects of $Burn(\BB_G (T))$. We have an equivalence $Sp_{\BB_G (T)} \cong \prod_i Sp_{H_i}$ corresponding to~\ref{eq:mackbgtsimple} which we can use to define cotransfer maps between these suspension $\BB_G (T)$-spectra. It is clear from~\ref{eq:bgtspectwopts} that these maps do not depend on the choice of orbit representatives. Hence we obtain the following.

\begin{prop}\label{prop:bgtmackeyspectra}
The homotopy category of suspension $\BB_G (T)$-spectra of finite $G$-sets over $T$ is canonically equivalent to $Burn(\BB_G (T))$. For each $n \in \ZZ$ there is a functor as below.
\begin{align*}
	\underline{\pi}_n : Ho(Sp_{\BB_G (T)}) &\to Mack(\BB_G (T)) \\
	                                                             X &\mapsto [ S^n \wedge \Sigma^{\infty} (\spacedash)_+, X]
\end{align*}
The functor $\underline{\pi}_0$ induces an equivalence from the homotopy category of pointwise Eilenberg MacLane $\BB_G (T)$-spectra to $Mack(\BB_G (T))$. For any pointwise $(-1)$-connected $\BB_G (T)$-spectrum $X$ and $\underline{M} \in Mack(\BB_G (T))$, we have a natural isomorphism as below.
\begin{align*}
	(\underline{\pi}_0)_* : Hom_{Ho(Sp_{\BB_G (T)})} (X, \HH \underline{M}) \xrightarrow{\cong} Hom_{Mack(\BB_G (T))} (\underline{\pi}_0 X, \underline{M})
\end{align*}
\end{prop}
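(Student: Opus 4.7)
My plan is to reduce each of the four claims to the classical equivariant statement for a single subgroup, via decomposition into orbits. Write $T \cong \coprod_i G/H_i$. The translation category decomposes as $\BB_G (T) \cong \coprod_i \BB_G (G/H_i)$, and for each $i$ the choice of the basepoint $e H_i$ produces an equivalence $\BB_G (G/H_i) \simeq \BB(H_i)$ with the one-object category whose automorphism group is $H_i$. This induces equivalences
\[
Sp_{\BB_G (T)} \simeq \prod_i Sp_{H_i}, \qquad Mack(\BB_G (T)) \simeq \prod_i Mack(H_i), \qquad Burn(\BB_G (T)) \simeq \prod_i Burn(H_i),
\]
all of which are compatible with suspension spectra, smash with $S^n$, and mapping in the relevant homotopy categories.

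First I would verify that a finite $G$-set $Y$ over $T$ decomposes canonically as $Y \cong \coprod_i (G \times_{H_i} Y_i)$, where $Y_i$ is the preimage of $e H_i$ regarded as an $H_i$-set, and that under the equivalence above, the suspension $\BB_G (T)$-spectrum of $Y$ corresponds to the tuple $(\Sigma^{\infty} (Y_i)_+)_i$. The first claim of the proposition then follows componentwise from the classical fact that the homotopy category of suspension spectra of finite $H_i$-sets is equivalent to $Burn(H_i)$. The functor $\underline{\pi}_n$ is well-defined and additive on $Burn(\BB_G (T))$ by the first claim, and under the decomposition it corresponds factorwise to the usual Mackey functor-valued homotopy groups $\underline{\pi}_n^{H_i}$. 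The Eilenberg MacLane equivalence is then a product of the standard equivalences $Ho(HMack(H_i)) \xrightarrow{\underline{\pi}_0} Mack(H_i)$ (cf.\ Theorem~\ref{thm:homemcomm} in the nonmultiplicative setting), and the final Hom-set bijection is a product of the standard $H_i$-equivariant bijections for maps from a $(-1)$-connected spectrum into an Eilenberg MacLane spectrum.

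The main obstacle will be showing that the decomposition is genuinely canonical despite the auxiliary choice of orbit representatives and basepoints $e H_i$. This amounts to checking that for any $g \in G$ with $g \cdot t_1 = t_2$, the conjugation isomorphism $I_g : H_{t_1} \xrightarrow{\cong} H_{t_2}$ intertwines the classical constructions of $Burn$, $Mack$, and $Ho(Sp_{(-)})$ in the manner dictated by the coherence data~\ref{eq:bgtspectwopts} and its Mackey-functor shadow described just before Definition~\ref{def:bgtmackey}. Once this coherence is verified—essentially by naturality of each classical construction under conjugation—the product decomposition is canonical up to canonical isomorphism, and the four assertions of the proposition descend factorwise from the familiar $H_i$-equivariant statements.
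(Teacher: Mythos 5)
Your proposal is correct and takes essentially the same approach as the paper: the paper also decomposes $T$ into orbits, uses the resulting product decompositions $Sp_{\BB_G(T)} \cong \prod_i Sp_{H_i}$, $Mack(\BB_G(T)) \cong \prod_i Mack(H_i)$, and $Burn(\BB_G(T)) \cong \prod_i Burn(H_i)$ to reduce each claim to the classical $H_i$-equivariant statements, and appeals to the coherence isomorphisms~\ref{eq:bgtspectwopts} to verify independence of the choice of orbit representatives.
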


Now let $\underline{M} \in Mack(G)$. Letting $const_T \underline{M} \defeq \underline{\pi}_0 (const_T \HH \underline{M})$, and recalling the adjunction between $const$ and additive push forward noted in Section~\ref{sec:multpush}, we obtain a canonical isomorphism as below for each $T \in \Fin_G$.
\begin{align*}
	\underline{M} (T) \cong Hom_{Mack(\BB_G (T))} (const_T \underline{A}, const_T \underline{M})
\end{align*}

We can now define additive and multiplicative push forwards of these objects. Let $i : U \to V$ be a $G$-map, and let $\underline{M} \in Mack(\BB_G (U))$. We define the \emph{additive push forward} associated to $i$ in terms of the topological push forward, as below.
\begin{align*}
	i_* : Mack(\BB_G (U)) &\to Mack(\BB_G (V)) \\
	            \underline{M} &\mapsto \underline{\pi}_0 i_* \HH\underline{M}
\end{align*}

We also define the \emph{pullback along $i$} by replacing $i_* \HH\underline{M}$ in the above definition with $i^* \HH\underline{M}$, and note that these are adjoint functors. We define the \emph{multiplicative push forward} similarly, as below.
\begin{align*}
	i_{\star} : Mack(\BB_G (U)) &\to Mack(\BB_G (V)) \\
	                      \underline{M} &\mapsto \underline{\pi}_0 i_{\star} \HH\underline{M}
\end{align*}

Note that in the above definitions we may replace $\HH\underline{M}$ with any $(-1)$-connected object with the correct $\underline{\pi}_0$. It follows that for any maps $i : U \to V$ and $j : V \to W$ in $\Fin_G$ we have canonical isomorphisms $j_* i_* \cong (j \circ i)_*$, $j_{\star} i_{\star} \cong (j \circ i)_{\star}$ and $i^* j^* \cong (j \circ i)^*$.\\
\indent We can describe these constructions algebraically. Let $i : U \to V$ be a map in $\Fin_G$. Then we obtain a restriction functor
\begin{align*}
	\Res(i) : Burn(\BB_G (V)) &\to Burn(\BB_G (U)) \\
	                                    W &\mapsto W \times_V U
\end{align*}

and an induction functor as below.
\begin{align*}
	\Ind(i) : Burn(\BB_G (U)) &\to Burn(\BB_G (V)) \\
	    (W \xrightarrow{k} U) &\mapsto (W \xrightarrow{i \circ k} V)
\end{align*}

Using the topological adjunction between $i_*$ and $i^*$, we calculate
\begin{align*}
	i^* \underline{M} \cong \underline{M} \circ Ind(i)
\end{align*}

for $\underline{M} \in Mack(\BB_G (V))$. Since $i_*$ is left adjoint to $i^*$, and $\Ind(i)$ is left adjoint to $\Res(i)$ (on opposite Burnside categories), it is formal that
\begin{align*}
	i_* \underline{M} \cong \underline{M} \circ \Res(i)
\end{align*}

for $\underline{M} \in Mack(\BB_G (U))$. (In fact, $i_*$ and $i^*$ are both left and right adjoint to one another, since $\Ind(i)$ and $\Res(i)$ are.) Note that these induction and restriction functors, for $i : G/K \to G/H$ the canonical projection for pairs of subgroups $K \subseteq H$, coincide with the usual induction and restriction functors.\\
\indent Next we give an algebraic description of the multiplicative push forward $i_{\star}$.

\begin{definition}\label{def:intrmultpushmackey}
Let  $\underline{M} \in Mack(\BB_G (U))$ and let $i : U \to V$ be a map in $\Fin_G$. Let $X$ be a finite $G$-set over $V$. We define $F(i, \underline{M}) (X)$ to be the quotient of the free abelian group on the pairs $(Y \xrightarrow{j} X, y)$, where $j$ is a map in $\Fin_G / V$ and $y \in \underline{M} (Y \times_V U)$, by the relations
\begin{enumerate}[(i)]
\item $(Y \xrightarrow{j} X, y) = (Y' \xrightarrow{j'} X, y')$ when there is a commutative diagram
\begin{align*}
\xymatrix{
 Y \ar[r]^-{j} & X \\
 Y' \ar[u]^-{f}_-{\cong} \ar[ur]_-{j'} & }
\end{align*}
such that $f$ is an isomorphism and $r_{f \times_V 1} (u) = u'$,
\item $(Y_1 \coprod Y_2 \xrightarrow{j_1 \coprod j_2} X, (y_1, y_2)) = (Y_1 \xrightarrow{j_1} X, y_1) + (Y_2 \xrightarrow{j_2} X, y_2)$, and
\item $(Y \xrightarrow{j} X, t_f (w)) = (D_i (W, f, V) \xrightarrow{j \circ p} X, r_e (w))$ for any $G$-map $f : W \to Y \times_V U$ over $U$, where the diagram below is exponential.
\begin{align*}
\xymatrix{
 D_i (W, f, V) \times_V U \ar[d]_-{e} \ar[rr]^-{\pi_1} && D_i (W, f, V) \ar[d]^-{p} \\
 W \ar[r]_-{f} & Y \times_V U \ar[r]_-{\pi_1} & Y }
\end{align*}
\end{enumerate}
\end{definition}

We define transfers for $F(i, \underline{M})$ by composition on the generators, and restrictions by pullback. Arguing as before with $F(T, \underline{M})$ and $N^{G,H}$, we see that these are well-defined and determine a $\BB_G (V)$-Mackey functor $F(i, \underline{M})$. Note that when $i : T \to \ast$ we have a precise identification
\begin{align*}
	F(i, i^* \underline{M}) \cong F(T, \underline{M})
\end{align*}

for any $\underline{M} \in Mack(G) = Mack(\BB_G (\ast))$, and for any subgroup $H \subseteq G$ and $i : G/H \to \ast$ we have a precise identification
\begin{align*}
	F\big(i, (Z \xrightarrow{k} G/H) \mapsto \underline{M} (k^{-1} (H))\big) \cong N^{G,H} \underline{M}
\end{align*}

for any $\underline{M} \in Mack(H)$.\\
\indent Next we require a comparison map from $F(i, \underline{M})$ to $i_{\star} \underline{M}$. We shall require the following proposition. The proof is left to the interested reader.

\begin{prop}\label{prop:basicmultpushfacts}
The following conclusions hold.
\begin{enumerate}[(i)]
\item For any $G$-map $i : U \to V$ there is a canonical isomorphism $i_{\star} (const_U S^0) \cong const_V S^0$.
\item For any pullback diagram of $G$-sets as below there are canonical natural isomorphisms $j^* i_{\star} X \cong q_{\star} p^* X$ and $j^* i_* X \cong q_* p^* X$ for $X \in Sp_{\BB_G (U)}$.
\begin{align*}
\xymatrix{
 P \ar[d]_-{q} \ar[r]^-{p} & U \ar[d]^-{i} \\
 W \ar[r]_-{j} & V }
\end{align*}
\item For any $G$-maps $j : W \to U$ and $i : U \to V$ there is a canonical natural isomorphism $i_{\star} j_* X \cong p_* f_{\star} e^* X$ for $X \in Sp_{\BB_G (W)}$, where the diagram below is exponential.
\begin{align}\label{eq:genericexp}
\xymatrix{
 E \ar[d]_-{e} \ar[rr]^-{f} && D \ar[d]^-{p} \\
 W \ar[r]_-{j} & U \ar[r]_-{i} & V }
\end{align}
\item If~\ref{eq:genericexp} is exponential then, under the isomorphisms (i) and (iii), $i_{\star}$ of the cotransfer $const_U S^0 \to j_* const_W S^0$ is equal to the cotransfer $const_V S^0 \to p_* const_D S^0$.
\end{enumerate}
\end{prop}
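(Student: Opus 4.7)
My plan is to verify each clause by unwrapping the definition of multiplicative push forward pointwise, after reducing to the case where the base $G$-set has a single orbit (using the equivalence $Sp_{\BB_G(T)} \cong \prod_i Sp_{H_i}$ and the fact that $i_\star$ was defined in each factor as a smash product of norm functors). All four statements are then essentially formal once one expresses smash products over fibers carefully.

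For \textbf{(i)}, I would simply compute the value of $i_\star (const_U S^0)$ at a point $v \in V$ as $\bigwedge_{u \in i^{-1}(v)} S^0 \cong S^0$, verify that this is $G$-equivariantly natural, and observe that the coherence isomorphisms required to make this $\BB_G(V)$-equivariant are canonical. For \textbf{(ii)}, I would check pointwise: writing $P = \{(u,w) : i(u) = j(w)\}$ with $p, q$ the projections, the map $p$ induces a bijection $q^{-1}(w) \xrightarrow{\cong} i^{-1}(j(w))$ for each $w \in W$, and so both sides of $j^* i_\star X \cong q_\star p^* X$ equal $\bigwedge_{u \in i^{-1}(j(w))} X(u)$; the additive case $j^* i_* X \cong q_* p^* X$ is identical with wedges in place of smashes. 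This is the standard Beck-Chevalley base change formula.

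For \textbf{(iii)}, which is the distributive law, I would evaluate both sides at $v \in V$. On the left we get
\begin{align*}
(i_\star j_* X)(v) = \bigwedge_{u \in i^{-1}(v)} \bigvee_{w \in j^{-1}(u)} X(w),
\end{align*}
and on the right, using that a point of $D$ over $v$ is a section $s : i^{-1}(v) \to W$ of $j$ and the fiber of $f : E \to D$ over $(v,s)$ is canonically $i^{-1}(v)$ with $e$ sending $u \mapsto s(u)$, we get
\begin{align*}
(p_* f_\star e^* X)(v) = \bigvee_{(v,s) \in p^{-1}(v)} \bigwedge_{u \in i^{-1}(v)} X(s(u)).
\end{align*}
These agree by the distributivity of $\wedge$ over $\vee$ in orthogonal spectra. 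The main work is checking that the resulting isomorphism is natural in $X$ and compatible with the $G$-action on $D$ and $E$, both of which are automatic from the universal property defining the exponential diagram.

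For \textbf{(iv)}, I would trace through the identifications. The cotransfer $const_U S^0 \to j_* const_W S^0$ is pointwise the map $S^0 \to \bigvee_{w \in j^{-1}(u)} S^0$ that is the identity on each summand stably (dual to the projection $j : W \to U$). Applying $i_\star$ and invoking (i) on the source and (iii) followed by (i) on the target, the resulting map $const_V S^0 \to p_* const_D S^0$ is pointwise $S^0 \to \bigvee_{(v,s) \in p^{-1}(v)} S^0$, which is exactly the cotransfer for $p$. The hard part here, and the main obstacle in the whole proposition, is keeping track of the $\BB_G(V)$-equivariance data under the coherence isomorphisms of (iii)—one needs to verify that smashing the cotransfer pointwise commutes with the distributivity rearrangement, which is a diagram chase that becomes manageable after passing to the orbit decomposition of $V$ and restricting to individual $Sp_{H_i}$ factors where $i_\star$ is a smash product of norm functors.
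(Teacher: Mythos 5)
The paper offers no argument to compare against: it states this proposition and explicitly leaves the proof to the interested reader. Your orbitwise reduction followed by pointwise computation is exactly the intended verification, and parts (i)--(iii) are correct as written, with (iii) coming down to distributivity of smash over wedge together with the identification of $p^{-1}(v)$ with sections of $j$ over $i^{-1}(v)$ and of $f^{-1}(v,s)$ with $i^{-1}(v)$ via $e(u,(v,s)) = s(u)$. The only place I would tighten the write-up is (iv): the cotransfer is a map in the homotopy category, not a point-set map, so one cannot literally apply $i_{\star}$ fiberwise to it; the clean packaging of your ``dual to the projection'' remark is that $i_{\star}$ is (derived) symmetric monoidal, hence carries the duality data exhibiting the cotransfer for the finite set $j^{-1}(u)$ to that for the product $\prod_{u \in i^{-1}(v)} j^{-1}(u) \cong p^{-1}(v)$, which together with (i) and (iii) gives exactly the cotransfer for $p$. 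With that observation the equivariance bookkeeping you mention is absorbed into the canonicity of the identifications, and the argument is complete.
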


Now let $i : U \to V$ be a $G$-map, $X \in Sp_{\BB_G (U)}$ and $j : W \to V$ another $G$-map. Form the pullback diagram below.
\begin{align*}
\xymatrix{
 W \times_V U \ar[d]_-{q} \ar[r]^-{p} & U \ar[d]^-{i} \\
 W \ar[r]_-{j} & V }
\end{align*}

Utilizing Proposition~\ref{prop:basicmultpushfacts}, we define a homotopy operation
\begin{align*}
	\pi^i_{\star} : \underline{\pi}_0 X (W \times_V U) \to \underline{\pi}_0 i_{\star} X (W)
\end{align*}

by the commutative diagram below.
\begin{align*}
\xymatrix{
 [\Sigma^{\infty} (W \times_V U)_+, X] \ar[d]_-{\cong} \ar[r]^-{\pi^i_{\star}} & [\Sigma^{\infty} W_+, i_{\star} X] \ar[d]^-{\cong} \\
 [const_{W \times_V U} S^0, p^* X] \ar[r]_-{q_{\star}} & [const_W S^0, q_{\star} p^* X \cong j^* i_{\star} X] }
\end{align*}

Applying this to $X = \HH \underline{M}$ for $\underline{M} \in Mack(\BB_G (U))$, we now define our comparison map as below for $i : U \to V$ and arbitrary $X \in \Fin_G / V$.
\begin{align*}
	\Theta^i_{\underline{M}} (X) : F(i, \underline{M}) (X) &\to i_{\star} \underline{M} (X) \\
	                                                 (Y \xrightarrow{j} X, y) &\mapsto t_j \pi^i_{\star} (y)
\end{align*}

The fact that this map respects relation (iii) of Definition~\ref{def:intrmultpushmackey} follows directly from part (iv) of Proposition~\ref{prop:basicmultpushfacts}. It is clear that it commutes with transfers. Part (ii) of Proposition~\ref{prop:basicmultpushfacts} implies that $\Theta^i_{\underline{M}}$ commutes with restrictions. Hence we have a natural transformation $\Theta^i$. We can now identify multiplicative push forwards of Mackey functors algebraically.

\begin{thm}\label{thm:identmultpushmackey}
For any $i : U \to V$ and any $\underline{M} \in Mack(\BB_G (U))$, the map $\Theta^i_{\underline{M}}$ is an isomorphism.
\end{thm}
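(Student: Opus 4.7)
The plan is to prove the isomorphism by right-exactness reduction to representable Mackey functors, then computing both sides explicitly on representables using the adjoint pair $(i^*, \prod_i)$ on $\Fin_G$-over-categories whose counit is the evaluation map in the exponential diagram. Surjectivity of $\Theta^i_{\underline{M}}$ is immediate from the construction, so only injectivity requires argument.

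First I would establish that both $F(i, -)$ and $i_{\star}$ are right exact functors $Mack(\BB_G(U)) \to Mack(\BB_G(V))$, i.e.\ preserve direct limits and reflexive coequalizers. Right exactness of $F(i, -)$ is immediate from the generators-and-relations description (compare Lemma~\ref{lem:intrdirlim} and Corollary~\ref{cor:sympowsomecolim}). For $i_{\star}$, one decomposes $U$ and $V$ into orbits and reduces via Proposition~\ref{prop:basicmultpushfacts}(ii) to (tensor products of) classical norm functors $N_H^G$; right exactness then follows from Lemma~\ref{lem:normsomecolim}. Since every Mackey functor is a reflexive coequalizer of a pair of maps between direct sums of representables $[-, Z]$ with $Z \in \Fin_G / U$, and $\Theta^i$ is natural, it suffices to prove $\Theta^i_{[-, Z]}$ is an isomorphism for every such $Z$.

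For representable $\underline{M} = [-, Z]$ (with structure map $g : Z \to U$), I claim both sides are naturally isomorphic to $[-, \prod_i Z]$ in $Mack(\BB_G(V))$, where $\prod_i Z$ denotes the right adjoint of $i^* : \Fin_G / V \to \Fin_G / U$ applied to $g$ --- equivalently, the finite $G$-set $D$ appearing in the exponential diagram for $g$ and $i$, with maps $p : D \to V$ and $e : D \times_V U \to Z$. On the target side, $[-, Z]$ corresponds to $g_* const_Z S^0$; applying Proposition~\ref{prop:basicmultpushfacts}(iii) gives $i_{\star} g_* const_Z S^0 \cong p_* f_{\star} e^* const_Z S^0$, and part (i) together with $e^* const_Z S^0 = const_E S^0$ reduces this to $p_* const_D S^0$, which represents $[-, \prod_i Z]$. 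On the source side, every generator $(Y \xrightarrow{j} X, y)$ with $y$ represented by a single span $Y \times_V U \xleftarrow{f} Q \xrightarrow{h} Z$ over $U$ can be rewritten, via relation (iii) of Definition~\ref{def:intrmultpushmackey} applied to the map $f$, as $(D_i(Q, f, V) \xrightarrow{j \circ p} X, r_{h \circ e}(1_Z))$, whose second component corresponds under the $(i^*, \prod_i)$ adjunction to a $G$-map $D_i(Q, f, V) \to \prod_i Z$ over $V$. The induced assignment is bijective onto $[-, \prod_i Z](X)$, and intertwines $\Theta^i_{[-, Z]}$ with the target identification by direct inspection.

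The main obstacle will be proving that the source-side assignment $F(i, [-, Z])(X) \to [-, \prod_i Z](X)$ is well-defined and injective --- i.e.\ that two generators mapping to the same span are forced to be equal by relations (i)--(iii). Well-definedness amounts to showing that the relations are compatible with the adjoint-transpose operation, which is the content of Lemma~\ref{lem:distrlaw} applied in this setting. Injectivity requires showing that the normal-form procedure above (expressing each element as a single pair with a pure restriction) is canonical up to relation (iii). This argument is a direct analogue of portions of the proofs of Theorems~\ref{thm:intrpow} and~\ref{thm:intrnorm}, but the need to work with $\BB_G(U)$-Mackey functors rather than $G$-Mackey functors requires careful bookkeeping with pullbacks and the evaluation map $e$ in the exponential diagram.
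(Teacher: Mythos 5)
Your approach is genuinely different from the paper's, and it has a real gap at the step you flag as the main obstacle. The paper does \emph{not} prove this theorem by reducing to representables and identifying both sides with $[\spacedash, \prod_i Z]$ directly. Instead it decomposes $V$ into orbits, reduces to $V = G/H$ and then to $V = \ast$ by identifying $\BB_G(G/H)$-Mackey functors with $\BB_H(\cdot)$-Mackey functors, inducts on the order of $G$, reduces to $U$ an orbit via a monoidal pairing (a section of $\Theta^i$ checked to be surjective on geometric fixed points), and finally identifies $\Theta^i_{\underline{M}}$ for $i : G/H \to \ast$ with the norm comparison map $\Theta^{G,H}_{\underline{M}'}$ of Theorem~\ref{thm:intrnorm}, which is already known to be an isomorphism. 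So the proof is a reduction to previously established machinery rather than a fresh computation.

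The gap in your plan is the claim that $F(i, [\spacedash, Z]) \cong [\spacedash, \textstyle\prod_i Z]$ ``by direct inspection.'' Establishing that the normal form you describe (trading every $t_f$ for a restriction via relation (iii)) is \emph{canonical} --- that two generators mapping to the same span over $V$ are forced to be equal by relations (i)--(iii), and that this persists for $\ZZ$-linear combinations --- is exactly the content that the paper found hard enough to require induction on $|G|$ plus a geometric fixed points rank count in Lemma~\ref{lem:intrrepiso}. You cite that the argument is ``a direct analogue of portions of the proofs of Theorems~\ref{thm:intrpow} and~\ref{thm:intrnorm},'' but those proofs do not run a normal-form argument at all: Lemma~\ref{lem:intrrepiso} shows the comparison map has a section, proves that section is levelwise surjective for proper subgroups by induction, and then compares ranks of free abelian groups on geometric fixed points to conclude; Theorem~\ref{thm:intrnorm} passes through the retraction $\underline{M} \hookrightarrow \Res_H^G \Ind_H^G \underline{M}$ to import Theorem~\ref{thm:intrpow}. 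Neither is a direct bijection on generators modulo relations.

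There is also a smaller gap in the reduction step. You assert right exactness of $F(i, \spacedash)$ ``from the generators-and-relations description,'' citing Lemma~\ref{lem:intrdirlim} and Corollary~\ref{cor:sympowsomecolim}. But Lemma~\ref{lem:intrdirlim} addresses only direct limits, and Corollary~\ref{cor:sympowsomecolim} is proved via the topological retraction of $Sym_n$ into $\TT$, not from a presentation. Preservation of reflexive coequalizers by $F(i, \spacedash)$ is plausible (it is a sifted-colimit-preserving, ``polynomial''-style construction), but it is not established in the paper prior to this theorem --- in the paper it is a consequence of the theorem, since $i_\star$ preserves them. If you want to use it as an input you would need to prove it directly, which requires checking that the relation set in $F(i, \underline{M})(X)$ is the image of the relation set for a presenting object --- not difficult, but not nothing, and certainly not ``immediate.'' In short: your outline would require re-proving the essential content of Lemma~\ref{lem:intrrepiso} in greater generality, rather than bypassing it.
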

\begin{proof}
Firstly, suppose that $V = V_1 \coprod V_2$, so that $U = U_1 \coprod U_2$ and $i = i_1 \coprod i_2$. We may also write $\underline{M} = (\underline{M}_1, \underline{M}_2)$. One easily sees that we have a commutative diagram as below.
\begin{align*}
\xymatrix{
 F(i, \underline{M}) \ar[d]_-{\cong} \ar[rr]^-{\Theta^i_{\underline{M}}} && i_{\star} \underline{M} \ar[d]^-{\cong} \\
 \big(F(i_1, \underline{M}_1), F(i_2, \underline{M}_2)\big) \ar[rr]_-{(\Theta^{i_1}_{\underline{M}_1}, \Theta^{i_2}_{\underline{M}_2})} && ({i_1}_{\star} \underline{M}_1, {i_2}_{\star} \underline{M}_2) }
\end{align*}
Hence we are reduced to the case $V = G/H$, where $H$ is some subgroup of $G$. Now $G$-sets $X$ over $G/H$ correspond to $H$-sets under the correspondence
\begin{align*}
	(j : X \to G/H) \mapsto j^{-1} (H).
\end{align*}
It follows that $U \cong G \times_H U'$ and $i \cong G \times_H i'$ for some $H$-map $i'$. Also, $\underline{M}$ corresponds to an $\underline{M}' \in Mack(\BB_H (U'))$. One easily sees that we have a commutative diagram as below for any $G$-set $X$ over $G/H$.
\begin{align*}
\xymatrix{
 F(i, \underline{M}) (X \cong G \times_H X') \ar[d]_-{\cong} \ar[r]^-{\Theta^i_{\underline{M}}} & i_{\star} \underline{M} (X \cong G \times_H X') \ar[d]^-{\cong} \\
 F(i', \underline{M}') (X') \ar[r]_-{\Theta^{i'}_{\underline{M}'}} & i'_{\star} \underline{M}' (X') }
\end{align*}
Hence we are reduced to the case $V = \ast$.\\
\indent First suppose $U = \emptyset$. Then $i_{\star} \underline{M} \equiv \underline{A}$ and the result follows by inspection. Hence suppose $U$ is nonempty. We may suppose inductively that the theorem holds for all proper subgroups of $G$. If $H$ is a subgroup of $G$ and $X$ is an $H$-set then we may identify the map $\Theta^i_{\underline{M}} (G \times_H X)$ with $\Theta^{\Res_H^G i}_{\Res_H^G \underline{M}} (X)$ using similar arguments to those in the preceding paragraph. Hence, we may assume that $\Theta^i_{\underline{M}}$ is an isomorphism on all levels $G/H$ for $H$ a proper subgroup of $G$. We now reduce to the case where $U$ is an orbit. Suppose $U = U_1 \coprod U_2$, $\underline{M} = (\underline{M}_1, \underline{M}_2)$ and $i = i_1 \coprod i_2$. Then we have from topology an isomorphism
\begin{align*}
	{i_1}_{\star} \underline{M}_1 \otimes {i_2}_{\star} \underline{M}_2 \xrightarrow{\cong} i_{\star} \underline{M}.
\end{align*}
We now define a pairing as below.
\begin{align*}
	F(i_1, \underline{M}_1) \otimes F(i_2, \underline{M}_2) &\to F(i, \underline{M}) \\
	(Y_1 \xrightarrow{j_1} X_1, y_1) \otimes (Y_2 \xrightarrow{j_2} X_2, y_2) &\mapsto \\
	\big(Y_1 \times Y_2 \xrightarrow{j_1 \times j_2} &X_1 \times X_2, (r_{\pi_{Y_1 \times U_1}} (y_1), r_{\pi_{Y_1 \times U_2}} (y_2))\big) 
\end{align*}
Examining the proof of Proposition~\ref{prop:intrpairing}, we see that it applies equally well here, so that the above pairing is well-defined. Using the explicit description of $\Theta^i$, we obtain a commutative diagram as below.
\begin{align*}
\xymatrix{
 F(i_1, \underline{M}_1) \otimes F(i_1, \underline{M}_2) \ar[d]_-{\Theta^{i_1}_{\underline{M}_1} \otimes \Theta^{i_2}_{\underline{M}_2}} \ar[r] & F(i, \underline{M}) \ar[d]^-{\Theta^i_{\underline{M}}} \\
 {i_1}_{\star} \underline{M}_1 \otimes {i_2}_{\star} \underline{M}_2 \ar[r]_-{\cong} & i_{\star} \underline{M} }
\end{align*}
Assuming that the statement of the theorem holds for $i_1$ and $i_2$, the left vertical map above is an isomorphism, so $\Theta^i_{\underline{M}}$ has a section. By our inductive hypothesis, this section must be an isomorphism on all proper subgroups of $G$. Hence, to show that the top horizontal map above is surjective, it suffices to check that it is surjective on geometric fixed points. The geometric fixed points of $F(i, \underline{M})$ are generated by the elements of the form $(\ast \xrightarrow{=} \ast, (y_1, y_2))$. Such an element is the image under the top horizontal map of $(\ast \xrightarrow{=} \ast, y_1) \otimes (\ast \xrightarrow{=} \ast, y_2)$. Thus, the top horizontal map is an isomorphism, so $\Theta^i_{\underline{M}}$ is as well.\\
\indent Finally, we must prove the statement for $i$ of the form $G/H \to \ast$. In this case, $\underline{M}$ can be identified with an $H$-Mackey functor $\underline{M}'$, and $\Theta^i_{\underline{M}}$ can be identified with the map $\Theta^{G,H}_{\underline{M}'}$ of Subsection~\ref{subsec:norm}.
\end{proof}

We will not need this algebraic description in what follows; we will instead rely on the topological definitions. We will need the following consequences of Proposition~\ref{prop:basicmultpushfacts}.

\begin{cor}\label{cor:basicmultpushmackey}
The following conclusions hold.
\begin{enumerate}[(i)]
\item For any $G$-map $i : U \to V$ there is a canonical isomorphism $i_{\star} (const_U \underline{A}) \cong const_V \underline{A}$.
\item For any pullback diagram of $G$-sets as below there are canonical natural isomorphisms $j^* i_{\star} \underline{M} \cong q_{\star} p^* \underline{M}$ and $j^* i_* \underline{M} \cong q_* p^* \underline{M}$ for $\underline{M} \in Mack(\BB_G (U))$.
\begin{align*}
\xymatrix{
 P \ar[d]_-{q} \ar[r]^-{p} & U \ar[d]^-{i} \\
 W \ar[r]_-{j} & V }
\end{align*}
\item For any $G$-maps $j : W \to U$ and $i : U \to V$ there is a canonical natural isomorphism $i_{\star} j_* \underline{M} \cong p_* f_{\star} e^* \underline{M}$ for $\underline{M} \in Mack(\BB_G (W))$, where the diagram below is exponential.
\begin{align*}
\xymatrix{
 E \ar[d]_-{e} \ar[rr]^-{f} && D \ar[d]^-{p} \\
 W \ar[r]_-{j} & U \ar[r]_-{i} & V }
\end{align*}
\end{enumerate}
\end{cor}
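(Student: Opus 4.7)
The plan is to derive each part of the corollary directly from the corresponding spectrum-level isomorphism in Proposition~\ref{prop:basicmultpushfacts} by applying the functor $\underline{\pi}_0$. I will use the slogan, already in force in the definitions of $i_{\ast}$, $i_{\star}$, and $i^{\ast}$ on Mackey functors, that any pointwise $(-1)$-connected $\BB_G(X)$-spectrum with the correct $\underline{\pi}_0$ may be substituted for $\HH\underline{M}$ when reading off Mackey functor push forwards.

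The preliminary input is to record that each of the operations appearing in the three statements — pullbacks along functors between translation categories, additive push forwards (which correspond, under~\ref{eq:transequiv}, to wedges of induction functors), and multiplicative push forwards (which correspond to smash products of norm functors) — preserves pointwise $(-1)$-connectedness. For pullbacks this is immediate, for additive push forwards it is immediate since wedge sums preserve $(-1)$-connectedness, and for multiplicative push forwards it follows from Corollary~I.5.8 of~\cite{Ull} applied factorwise under~\ref{eq:transequiv}. Each of these functors also preserves pointwise cofibrancy and weak equivalences between pointwise cofibrant objects. Consequently, any iterated composite applied to a cofibrant model of $\HH\underline{M}$ is cofibrant and pointwise $(-1)$-connected, and, by the slogan above, its $\underline{\pi}_0$ is the corresponding Mackey functor composite applied to $\underline{M}$.

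Given this, part (i) follows by substituting $const_U S$ — a pointwise $(-1)$-connected model of $const_U \underline{A}$ — into Proposition~\ref{prop:basicmultpushfacts}(i) and applying $\underline{\pi}_0$. For parts (ii) and (iii), I choose a pointwise cofibrant model $X$ of $\HH\underline{M}$, feed it into Proposition~\ref{prop:basicmultpushfacts}(ii) and (iii) respectively, and apply $\underline{\pi}_0$ to both sides; canonicity and naturality of the resulting isomorphisms are inherited from the spectrum-level statements. The main — though relatively minor — obstacle is the bookkeeping just outlined: one must verify that each of the composites $j^{\ast} i_{\star}$, $q_{\star} p^{\ast}$, $j^{\ast} i_{\ast}$, $q_{\ast} p^{\ast}$, $i_{\star} j_{\ast}$, and $p_{\ast} f_{\star} e^{\ast}$, evaluated on such a cofibrant representative, remains pointwise $(-1)$-connected so that $\underline{\pi}_0$ faithfully encodes the corresponding algebraic composite. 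This is precisely the content of the preservation properties isolated in the previous paragraph, so no genuinely new computations are required beyond assembling Proposition~\ref{prop:basicmultpushfacts} with these preservation facts.
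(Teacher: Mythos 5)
Your proposal is correct and follows the same route as the paper, which states this corollary without proof as an immediate consequence of Proposition~\ref{prop:basicmultpushfacts} together with the observation, made right after the definitions of $i_*$, $i_{\star}$ and $i^*$ on Mackey functors, that $\HH\underline{M}$ may be replaced by any pointwise $(-1)$-connected object with the correct $\underline{\pi}_0$. Your write-up simply makes explicit the bookkeeping (preservation of cofibrancy, $(-1)$-connectedness, and $\underline{\pi}_0$-isomorphisms by $f^*$, $f_*$, $f_{\star}$) that the paper leaves implicit.
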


We now give an alternative characterization of Tambara functors. In the following, we note that for any $G$-map $i : U \to V$ we have the identification $i^* (const_V \underline{M}) \cong const_U \underline{M}$, and we define the \emph{fold map} of $i$ to be the counit $\nabla_i : i_* (const_U \underline{M}) \to const_V \underline{M}$ of the $(i_*, i^*)$ adjunction.

\begin{definition}\label{def:multmackeyfunct}
A \emph{multiplicative Mackey functor} over $G$ is a $G$-Mackey functor $\underline{M}$ together with maps $\mu_i : i_{\star} (const_U \underline{M}) \to const_V \underline{M}$ for all $i : U \to V$ in $\Fin_G$ such that the following conditions hold.
\begin{enumerate}[(i)]
\item If $i : \ast \xrightarrow{=} \ast$ then $\mu_i = Id$.
\item If $i : U \to V$ and $j : V \to W$ then the diagram below commutes.
\begin{align*}
\xymatrix{
 j_{\star} (i_{\star} (const_U \underline{M})) \ar[d]_-{\cong} \ar[r]^-{j_{\star} (\mu_i)} & j_{\star} (const_V \underline{M}) \ar[d]^-{\mu_j} \\
 (j \circ i)_{\star} (const_U \underline{M}) \ar[r]_-{\mu_{j \circ i}} & const_W \underline{M} }
\end{align*}
\item If the square below is a pullback in $\Fin_G$ then the triangle commutes.
\begin{align*}
\xymatrix{
 P \ar[d]_-{q} \ar[r]^-{p} & U \ar[d]^-{i} & q_{\star} (const_P \underline{M}) \ar[d]_-{\mu_q} \ar[r]^-{\cong} & j^* i_{\star} (const_U \underline{M}) \ar[dl]^-{j^* (\mu_i)} \\
 W \ar[r]_-{j} & V & const_W \underline{M} & }
\end{align*}
\item If the rectangle below is exponential then the triangle commutes.
\begin{align*}
\xymatrix{
 E \ar[d]_-{e} \ar[rr]^-{f} && D \ar[d]^-{p} & p_* f_{\star} (const_E \underline{M}) \ar[r]^-{\cong} \ar[dr]_-{\nabla_p \circ p_* (\mu_f)} & i_{\star} j_* (const_W \underline{M}) \ar[d]^-{\mu_i \circ i_{\star} (\nabla_j)} \\
 W \ar[r]_-{j} & U \ar[r]_-{i} & V & & const_V \underline{M} }
\end{align*}
\end{enumerate}
We denote the category of multiplicative Mackey functors over $G$ by $MultMack(G)$.
\end{definition}

\begin{thm}\label{thm:multmackeqtamb}
For any finite group $G$, $Tamb(G)$ is isomorphic to $MultMack(G)$.
\end{thm}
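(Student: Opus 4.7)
The approach is to construct mutually inverse functors between $Tamb(G)$ and $MultMack(G)$: from a Tambara functor $\underline{R}$ I will build the structure maps $\mu_i$ out of norms and transfers, and conversely from a multiplicative Mackey functor I will extract the Tambara norms by evaluating $\mu_f$ at the terminal object of $\Fin_G / Y$. The identification $i_\star \underline{M} \cong F(i, \underline{M})$ of Theorem~\ref{thm:identmultpushmackey} is the key tool that makes the $\mu_i$'s concrete enough to manipulate on generators.

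For the forward direction, I will define $\mu_i : i_\star(const_U \underline{R}) \to const_V \underline{R}$ on a generator $(Y \xrightarrow{j} X, y)$ at level $X \to V$ (with $y \in \underline{R}(Y \times_V U)$) by
$$\mu_i(Y \xrightarrow{j} X, y) \defeq t_j \, n_{q_Y}(y),$$
where $q_Y : Y \times_V U \to Y$ is the pullback of $i$ along the composite $Y \to X \to V$. Well-definedness with respect to the three relations in Definition~\ref{def:intrmultpushmackey} reduces to isomorphism-invariance, additivity of transfers over coproducts, and the Tambara distributive law applied to the exponential diagram defining $D_i(W,f,V)$. Naturality on $Burn(\BB_G(V))$ reduces to commutation of norms, transfers, and restrictions in $\underline{R}$ along pullback squares.

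I will then verify the four axioms of Definition~\ref{def:multmackeyfunct}. Axiom (i) is immediate from $n_{Id} = t_{Id} = Id$; axiom (ii) follows from functoriality of norms and transfers together with the composition isomorphism $j_\star i_\star \cong (j \circ i)_\star$; axiom (iii) uses the pullback base change for norms in $\underline{R}$ combined with Corollary~\ref{cor:basicmultpushmackey}(ii); and axiom (iv) is the Tambara distributive law read against the natural isomorphism $i_\star j_\ast \cong p_\ast f_\star e^\ast$ of Corollary~\ref{cor:basicmultpushmackey}(iii). The main obstacle will be axiom (iv): I will trace a generator through both composites in the required diagram, specializing the comparison isomorphism to generators of the form $(Y \xrightarrow{=} Y, y)$, and recognize the resulting equality as the distributive law in the form stated in Definition~\ref{def:tambara}(iii).

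For the reverse direction, given $(\underline{M}, \{\mu_i\}) \in MultMack(G)$ and $f : X \to Y$, I will define $n_f(u) \defeq \mu_f(Y)(Y \xrightarrow{=} Y, u)$ for $u \in \underline{M}(X)$. The Tambara axioms for $(\underline{M}, \{n_f\})$ then follow by running the previous paragraph in reverse: functoriality from axiom (ii), pullback commutation from axiom (iii), distributive law from axiom (iv), and preservation of multiplicative units together with distributivity of $n_f$ over the additive structure by applying axioms (i) and (iv) to identity, empty, and fold maps. To check mutual inverseness, for a Tambara functor the formula immediately yields $\mu_f(Y \xrightarrow{=} Y, u) = t_{Id} n_f(u) = n_f(u)$. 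Conversely, starting from $(\underline{M}, \{\mu_i\})$, naturality of $\mu_i$ under transfers on $Burn(\BB_G(V))$ combined with axiom (iii) applied to the pullback of $i$ along an arbitrary map $Y \to V$ together force $\mu_i(Y \xrightarrow{j} X, y) = t_j n_{q_Y}(y)$, matching the forward construction. Functoriality in morphisms of both categories is then routine naturality checking.
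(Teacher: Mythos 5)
Your proposal is correct in outline, but it takes a genuinely different route from the paper. The paper handles the forward direction topologically: for $\underline{R} \in Tamb(G)$ it takes $\HH\underline{R}$ to be a cofibrant commutative ring spectrum, uses Corollaries~\ref{cor:normrighthtpytype} and~\ref{cor:gsymmonrighthtpytype} to see that multiplicative push forwards of constant diagrams have the right $\underline{\pi}_0$, and defines $\mu_i$ as $\underline{\pi}_0$ of the spectrum-level multiplication, so the axioms of Definition~\ref{def:multmackeyfunct} hold because they already hold before applying $\underline{\pi}_0$; and it proves that the extracted Tambara structure determines the $\mu_i$ by induction on the order of $G$ together with a geometric fixed point argument. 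You instead work entirely in the algebraic model $F(i,\underline{M})$ of Theorem~\ref{thm:identmultpushmackey} --- which the paper explicitly declines to use in this proof --- defining $\mu_i$ on generators by $t_j n_{q_Y}(y)$ (this formula agrees with the paper's closing remark computing $\mu_i'$), and you recover uniqueness directly from transfer-generation of $i_{\star}(const_U\underline{M})$ plus axiom (iii), which is slicker than the induction/geometric-fixed-point argument. Your well-definedness check is sound: relation (iii) of Definition~\ref{def:intrmultpushmackey} maps under your formula exactly to the distributive law $n_{q_Y} t_f = t_p n_{q_D} r_e$. The cost of your route is that verifying axioms (ii) and (iv) (and, in the uniqueness step, axiom (iii)) against the \emph{topologically defined} canonical isomorphisms $j_{\star} i_{\star} \cong (j \circ i)_{\star}$, $j^* i_{\star} \cong q_{\star} p^*$ and $i_{\star} j_* \cong p_* f_{\star} e^*$ requires knowing how these isomorphisms act on the generators $t_j \pi^i_{\star}(y)$; for axiom (iii) this is essentially built into the definition of $\pi^i_{\star}$, but for (ii) and (iv) it is a genuine computation of the kind the paper only carries out in special cases (compare Corollaries~\ref{cor:intrpowpow} and~\ref{cor:intrnormcomposite}), and your proposal asserts rather than performs it. Those compatibilities do hold (axiom (iv) again reduces to the distributive law), so I regard this as unfinished bookkeeping rather than a gap in the idea, but it is where the bulk of the remaining work lies.
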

\begin{proof}
First let $\underline{R} \in Tamb(G)$. We shall construct a family of maps $\mu_i$ as follows. Let $\HH \underline{R}$ be a cofibrant commutative ring spectrum. Corollaries~\ref{cor:normrighthtpytype} and~\ref{cor:gsymmonrighthtpytype} imply that repeated multiplicative push forwards of constant diagrams at $\HH \underline{R}$ have the correct $\underline{\pi}_0$. Hence, for each $G$-map $i : U \to V$ we may define $\mu_i$ to be $\underline{\pi}_0$ of the evident multiplication map $i_{\star} (const_U \HH \underline{R}) \to const_V \HH \underline{R}$. It is trivial to check that $(\underline{R}, \{ \mu_i \})$ is a multiplicative Mackey functor.\\
\indent Next, suppose that $(\underline{R}, \{ \mu_i \})$ is a multiplicative Mackey functor, and let $i : U \to V$. We define the norm map for $i$ by the commutative diagram below.
\begin{align*}
\xymatrix{
 \underline{R} (U) \ar[d]_-{\cong} \ar[rr]^-{n_i} && \underline{R} (V) \ar[d]^-{\cong} \\
 Hom (const_U \underline{A}, const_U \underline{R}) \ar[rr]_-{Hom(\spacedash, \mu_i) \circ i_{\star}} && Hom (const_V \underline{A}, const_V \underline{R}) }
\end{align*}
These norm maps respect composition and commute appropriately with restrictions by parts (ii) and (iii), respectively, of Definition~\ref{def:multmackeyfunct}. They convert identity maps into identity maps by parts (i) and (iii) of that definition, since any identity map is a pullback of the identity map of $G/G$. We verify the distributive law as follows. Let the diagram below be exponential.
\begin{align*}
\xymatrix{
 E \ar[d]_-{e} \ar[rr]^-{f} && D \ar[d]^-{p} \\
 W \ar[r]_-{j} & U \ar[r]_-{i} & V }
\end{align*}
For any $G$-map $k : B \to C$ we let $\Delta^k : const_C \underline{A} \to k_* (const_B \underline{A})$ denote the cotransfer associated to the fold map of $k$ for $S^0$. Then for any $w \in \underline{R} (W)$, the element $t_j (w)$ is represented by the composite below.
\begin{align*}
	const_U \underline{A} \xrightarrow{\Delta^j} j_* (const_W \underline{A}) \xrightarrow{j_* (w)} j_* (const_W \underline{R}) \xrightarrow{\nabla_j} const_U \underline{R}
\end{align*}
We now examine the diagram below.
\begin{align*}
\xymatrix{
 const_V \underline{A} \cong i_{\star} (const_U \underline{A}) \ar[r]^-{i_{\star} (\Delta^j)} & i_{\star} j_* (const_W \underline{A}) \ar[dl]_-{\cong} \ar[r]^-{i_{\star} j_* (w)} & i_{\star} j_* (const_W \underline{R}) \ar[dl]_-{\cong} \ar[d]^-{i_{\star} (\nabla_j)} \\
 p_* f_{\star} (const_E \underline{A}) \ar[d]_-{\cong} \ar[r]_-{p^* f_{\star} e^* (w)} & p_* f_{\star} (const_E \underline{R}) \ar[d]^-{p_* (\mu_f)} & i_{\star} (const_U \underline{R}) \ar[d]^-{\mu_i} \\
 p_* (const_D \underline{A}) \ar[r]_-{p_* (n_f r_e (w))} & p_* (const_D \underline{R}) \ar[r]_-{\nabla_p} & const_V \underline{R} }
\end{align*}
This diagram commutes: the lower square commutes by the definition of $n_f$, while the trapezoid commutes by part (iv) of Definition~\ref{def:multmackeyfunct}. The composite along the top and right sides is $n_i t_j (w)$. The composite along the left side is $\Delta^p$ by part (iv) of Proposition~\ref{prop:basicmultpushfacts}.\\
\indent Thus we have constructed functors $Tamb(G) \to MultMack(G)$ and $MultMack(G) \to Tamb(G)$. Examining the definition of the norm maps for a commutative ring spectrum, one easily sees that the composite functor from $Tamb(G)$ to itself is the identity. It remains to show that the Tambara functor associated to a multiplicative Mackey functor $(\underline{R}, \{ \mu_i \})$ determines the maps $\mu_i$. We proceed by induction on the order of $G$; hence, we may assume that the composite functor from $MultMack(H)$ to itself is the identity for all proper subgroups $H$ of $G$. Let $i : U \to V$ be a $G$-map. Suppose that $V = V_1 \coprod V_2$, so that $U = U_1 \coprod U_2$ and $i = i_1 \coprod i_2$. The diagram below is a pullback,
\begin{align*}
\xymatrix{
 U_1 \ar[d]_-{i_1} \ar[r]^-{\subseteq} & U \ar[d]^-{i} \\
 V_1 \ar[r]_-{\subseteq} & V }
\end{align*}
so part (iii) of Definition~\ref{def:multmackeyfunct} implies that $\mu_i \cong (\mu_{i_1}, \mu_{i_2})$. Part (iii) of that definition also implies that $\mu_i$ is determined by $\mu_{i'}$ for any map $i'$ which is isomorphic to $i$. Hence, we are reduced to the case where $V = G/H$ for some subgroup $H$ of $G$. In this case we have $U \cong G \times_H U'$ and $i \cong G \times_H i'$. Now, we may define a restriction functor as below,
\begin{align*}
	\Res_H^G : MultMack(G) &\to MultMack(H) \\
	(\underline{R}, \{ \mu_i \}) &\mapsto (\Res_H^G \underline{R}, \{ (\Res_H^G \mu)_i \})
\end{align*}
where for any $H$-map $i : U \to V$, we define $(\Res_H^G \mu)_i$ to be the map corresponding to $\mu_{G \times_H i}$ under the evident isomorphism
\begin{gather*}
	Hom ((G \times_H i)_{\star} (const_{G \times_H U} \underline{R}), const_{G \times_H V} \underline{R}) \cong \\
	Hom(i_{\star} (const_U \Res_H^G \underline{R}), const_V \Res_H^G \underline{R}).
\end{gather*}
We then have a commutative diagram as below.
\begin{align*}
\xymatrix{
 MultMack(G) \ar[d] \ar[r]^-{\Res_H^G} & MultMack(H) \ar[d] \\
 Tamb(G) \ar[r]_-{\Res_H^G} & Tamb(H) }
\end{align*}
It then follows by our inductive hypothesis that $\mu_i$ is determined by the associated Tambara functor when $V = G/H$ and $H$ is a proper subgroup of $G$. Hence we are reduced to the case $V = \ast$. Pulling $i$ back along the map $G/H \to \ast$ and applying the above argument, we see that all restrictions of $\mu_i$ to proper subgroups are determined. Hence, it suffices to show that
\begin{align*}
	\mu_i (G/G) : \underline{R}^{\otimes U} (G/G) \to \underline{R} (G/G)
\end{align*}
is determined on a set of elements which generate the geometric fixed points of $\underline{R}^{\otimes U}$. By Subsection~\ref{subsec:gsymmon}, we may take the elements of the form $(\ast \xrightarrow{=} \ast, u \in \underline{R} (U))$. Applying $\mu_i$ to this element, we obtain $n_i (u)$. Hence, $\mu_i$ is determined by the associated Tambara functor for all $G$-maps $i$.
\end{proof}

The notion of a multiplicative Mackey functor is formally somewhat similar to the notion of an algebra over a pair of operads, with the actions represented here by the $\mu_i$ and $\nabla_i$. Hence, one might expect the Tambara functors to be the algebras over some monad. We confirm this below.

\begin{prop}\label{prop:tambmonad}
For any finite group $G$ the category $Tamb(G)$ is isomorphic to the category of algebras over the free Tambara functor monad $\TT$.
\end{prop}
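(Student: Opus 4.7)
The plan is to apply Beck's monadicity theorem to the forgetful functor $U : Tamb(G) \to Mack(G)$, whose left adjoint $\TT$ is provided by Corollary~\ref{cor:algfreetamb}. This adjunction induces a monad (also denoted $\TT$) on $Mack(G)$, together with a canonical comparison functor $K : Tamb(G) \to \TT\text{-Alg}$ sending a Tambara functor $\underline{R}$ to its underlying Mackey functor equipped with the structure map $\TT(U\underline{R}) \to U\underline{R}$ adjoint to the identity. The goal is to show that $K$ is an isomorphism of categories.

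First I would check that $U$ is conservative: a morphism of Tambara functors is invertible if and only if its underlying morphism of Mackey functors is, since every Tambara structure map is a unary operation on the underlying sets $\underline{R}(X)$. Next I would verify that $Tamb(G)$ has, and $U$ creates, coequalizers of $U$-split (in particular, reflexive) pairs; this is the fact already invoked in the proof of Corollary~\ref{cor:sympowsomecolim}, and holds because the shared section of a reflexive pair combined with the distributive law lets one descend the norm, transfer and restriction operations uniquely to the coequalizer in $Mack(G)$. These two properties together with the adjunction $\TT \dashv U$ imply via Beck's theorem that $K$ is an equivalence; since both sides have the same underlying Mackey functors on the nose, it is actually an isomorphism of categories.

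Alternatively, if one wishes to avoid Beck as a black box, one can construct an explicit inverse $L$ to $K$. Given an algebra $(\underline{M}, \alpha)$, equip $\underline{M}$ with norm maps by
\begin{align*}
    n_f \defeq \alpha \circ n_f^{\TT} \circ \theta_{\underline{M}} : \underline{M}(X) \to \underline{M}(Y)
\end{align*}
for each $f : X \to Y$ in $\Fin_G$, where $n_f^{\TT}$ is the norm in $\TT(\underline{M})$. The unit axiom $\alpha \circ \theta_{\underline{M}} = \mathrm{Id}$ gives $n_{\mathrm{Id}} = \mathrm{Id}$ and compatibility with $\theta$, while the associativity axiom $\alpha \circ \mu_{\underline{M}} = \alpha \circ \TT(\alpha)$ (with $\mu$ the monad multiplication) upgrades to functoriality and the distributive law for these induced norms.

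The main obstacle lies in this last step: verifying that the Tambara axioms of Definition~\ref{def:tambara} genuinely follow from the monad axioms. Concretely, one must use the explicit generators-and-relations description of $\TT(\underline{M})$ from Corollary~\ref{cor:identsympow} to rewrite the identities required of $n_f$ (functoriality, commutation with restrictions across pullbacks, and the distributive law across exponential diagrams) as consequences of the corresponding identities in $\TT(\underline{M})$ applied to elements of the form $n_f^{\TT}(\theta_{\underline{M}}(u))$, collapsed via $\alpha$. Once this verification is complete, the identities $LK = \mathrm{Id}$ and $KL = \mathrm{Id}$ follow routinely from the universal property of $\TT$.
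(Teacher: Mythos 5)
Your primary argument is correct in outline but takes a genuinely different route from the paper's. The paper proves the proposition by hand: given a $\TT$-algebra $(\underline{R},m)$ it defines norms by exactly your formula $n_f \defeq m \circ n_f \circ \theta_{\underline{R}}$, observes that pullback compatibility and the distributive law follow immediately from the facts that $m$ and $\theta_{\underline{R}}$ are Mackey maps with $m\circ\theta_{\underline{R}}=1$, and isolates functoriality of norms as the one nontrivial axiom; this is settled by applying the algebra associativity square (with the monad multiplication $p$) to the element $n_h\theta_{\TT\underline{R}}(n_f\theta_{\underline{R}}(x))$, using that $p_{\underline{R}}$ sends $\big(Y\xrightarrow{h}Z\xrightarrow{=}Z,(X\xrightarrow{f}Y\xrightarrow{=}Y,x)\big)$ to $(X\xrightarrow{hf}Z\xrightarrow{=}Z,x)$; finally the action map is forced to be $(U\xrightarrow{i}V\xrightarrow{j}X,u)\mapsto t_jn_i(u)$ because every element of $\TT\underline{R}(X)$ is a difference of $t_jn_i\theta_{\underline{R}}(u)$'s, giving a strict inverse. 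Your Beck-monadicity route buys you this bookkeeping formally, at the cost of two inputs you should be explicit about. First, the closure of $Tamb(G)$ under reflexive coequalizers computed in $Mack(G)$: the paper itself only asserts this (in the proof of Corollary~\ref{cor:sympowsomecolim}), so citing it is on par with the paper's own rigor, but it is the real content hiding behind Beck here, and it is the norms (not transfers or restrictions) whose descent needs the reflexivity plus the distributive law. Second, the upgrade from equivalence to isomorphism: your parenthetical treating $U$-split pairs as a special case of reflexive pairs is backwards, and ``same underlying Mackey functors on the nose'' is not by itself an argument; what one actually says is that the comparison functor is injective on objects because the structure map recovers the norms (via $n_f = U\epsilon\circ n_f\circ\theta$), and surjective on objects because $U$ \emph{creates} (strictly) the relevant coequalizer of $\mu_{\underline{M}},\TT(\alpha):\TT\TT\underline{M}\rightrightarrows\TT\underline{M}$, whose underlying coequalizer is split with value $\underline{M}$. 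Your alternative explicit route is essentially the paper's proof, but the step you defer as ``the main obstacle'' --- deducing functoriality of norms from the associativity axiom --- is exactly the step the paper works out (and it needs only the monad structure, not the full generators-and-relations machinery of Corollary~\ref{cor:identsympow}; note also that the distributive law does not require associativity at all), while the identities $LK=\mathrm{Id}$ and $KL=\mathrm{Id}$ are indeed short, as you say.
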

\begin{proof}
It is formal that we have a functor
\begin{align*}
	Tamb(G) \to Mack(G) [\TT].
\end{align*}
Now suppose we have an algebra $\underline{R}$ over $\TT$. Let $m : \TT \underline{R} \to \underline{R}$ denote the action map. Then for any map $f : X \to Y$ in $\Fin_G$ we define $n_f \defeq m (Y) \circ n_f \circ \theta_{\underline{R}} (X)$. (Recall that $\theta_{\underline{R}}$ is the unit map for the monad $\TT$.) Since $\theta_{\underline{R}}$ and $m$ commute with restrictions and transfers, and $m \circ \theta_{\underline{R}} = 1$, all the axioms for a Tambara functor are clear, except that the norm maps respect composition. Let $p : \TT \circ \TT \to \TT$ be the multiplication map of $\TT$. Then the diagram below commutes by the naturality of $\theta$ and the definition of $\TT$-algebras.
\begin{align*}
\xymatrix{
 \TT \underline{R} \ar[d]_-{m} \ar[r]^-{\theta_{\TT \underline{R}}} & \TT \TT \underline{R} \ar[d]_-{\TT (m)} \ar[r]^-{p_{\underline{R}}} & \TT \underline{R} \ar[d]^-{m} \\
 \underline{R} \ar[r]_-{\theta_{\underline{R}}} & \TT \underline{R} \ar[r]_-{m} & \underline{R} }
\end{align*}
Now let $f : X \to Y$ and $h : Y \to Z$ be $G$-maps, and let $x \in \underline{R} (X)$. We apply the above diagram to the element $n_h \theta_{\TT \underline{R}} (n_f \theta_{\underline{R}} (x)) \in \TT \TT \underline{R} (Z)$. The middle vertical map is a map of Tambara functors, so by using its commutativity with norms and the commutativity of the left square above we obtain the element $n_h \theta_{\underline{R}} m n_f \theta_{\underline{R}} (x)$. Applying $m$, we obtain $n_h (n_f (x))$. Now note that $n_h \theta_{\TT \underline{R}} (n_f \theta_{\underline{R}} (x))$ can be represented by the pair below.
\begin{align*}
	\big( Y \xrightarrow{h} Z \xrightarrow{=} Z, (X \xrightarrow{f} Y \xrightarrow{=} Y, x) \big)
\end{align*}
Applying $p_{\underline{R}}$, which is the counit of the adjunction between Mackey and Tambara functors for $\TT \underline{R}$, we obtain
\begin{align*}
	n_h \big( (X \xrightarrow{f} Y \xrightarrow{=} Y, x) \big) &= (X \xrightarrow{h \circ f} Z \xrightarrow{=} Z, x) \\
	                                                                                            &= n_{hf} \theta_{\underline{R}} (x).
\end{align*}
Hence, applying $m$ we obtain $n_{hf} (x)$. We have constructed a functor
\begin{align*}
	Mack(G)[\TT] \to Tamb(G).
\end{align*}
It is clear that the composite functor from $Tamb(G)$ to itself is the identity. Hence it suffices to show that the action map $m : \TT \underline{R} \to \underline{R}$ of an algebra is determined by the associated Tambara functor. We know from Section~\ref{sec:sympow} that any element of $\TT \underline{R} (X)$ is a difference of elements of the form $(U \xrightarrow{i} V \xrightarrow{j} X, u)$, where $u \in \underline{R}$. This element is $t_j n_i \theta_{\underline{R}} (u)$, so we must have
\begin{align*}
	m \big( (U \xrightarrow{i} V \xrightarrow{j} X, u) \big) = t_j m n_i \theta_{\underline{R}} (u) = t_j n_i (u).
\end{align*}
\end{proof}

Now we know by Corollary~\ref{cor:freetambara} that $\TT \cong \underline{\pi}_0 \CC \HH$. There is a fascinating relationship between the notion of a multiplicative Mackey functor and the structure of $\underline{\pi}_0 \CC \HH$, which we now explain. Let $\underline{M}$ be a Mackey functor, and let $\HH \underline{M}$ be positive cofibrant. Then there is a weak equivalence as below.
\begin{align*}
	\CC \HH \underline{M} \xleftarrow{\sim} \bigvee_{n \geq 0} E_G {\Sigma_n}_+ \wedge_{\Sigma_n} (\HH \underline{M})^{\wedge n}
\end{align*}

Here, $E_G \Sigma_n$ is the universal space for the family $\FF_G [n]$ of subgroups of $G \times \Sigma_n$ which have trivial intersection with $1 \times \Sigma_n$. Recall that these subgroups are the sets of the form
\begin{align*}
	H^{\phi} \defeq \{ (h, \phi(h)) : h \in H \}
\end{align*}

for subgroups $H$ of $G$ and homomorphisms $\phi : H \to \Sigma_n$. We denote the category of $(G \times \Sigma_n)$-orbits of the form $(G \times \Sigma_n) / H^{\phi}$ by $\OO(G;n)$. Now if we only wish to calculate $\underline{\pi}_0$, we may replace $E_G \Sigma_n$ with its $1$-skeleton. We may take the following,
\begin{align*}
	E_G \Sigma_n^{[1]} = \hocoeq \big( \coprod (G \times \Sigma_n) / L^{\lambda} \rightrightarrows \coprod_{H,\phi} (G \times \Sigma_n) / H^{\phi} \big)
\end{align*}

where the first coproduct is over pairs of distinct maps in $\OO(G;n)$ (with the same domain). Smashing over $\Sigma_n$ with $(\HH \underline{M})^{\wedge n}$ and taking $\underline{\pi}_0$, we obtain a coequalizer, which can be re-written as a colimit as below.
\begin{align*}
	\underline{\pi}_0 (\HH \underline{M})^{\wedge n} / \Sigma_n \cong \colim_{(G \times \Sigma_n) / H^{\phi} \in \OO(G;n)} \underline{\pi}_0 \big( ((G \times \Sigma_n) / H^{\phi})_+ \wedge_{\Sigma_n} (\HH \underline{M})^{\wedge n} \big)
\end{align*}

To understand this colimit, we shall require a few lemmas.

\begin{lem}\label{lem:freesigmaorbit}
Let $H^{\phi}$ be a subgroup of $G \times \Sigma_n$ in $\FF_G[n]$, and let $X$ be a $(G \times \Sigma_n)$-spectrum. Then there is a natural isomorphism
\begin{align*}
	(G \times \Sigma_n / H^{\phi})_+ \wedge_{\Sigma_n} X \hspace{.5cm} \cong \hspace{.5cm} G_+ \wedge_H X^{\phi},
\end{align*}
where $X^{\phi}$ is $X$ with $H$-action multiplied by the pullback of the $\Sigma_n$-action along $\phi$.
\end{lem}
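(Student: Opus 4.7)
The plan is to exhibit the isomorphism by writing down explicit mutually inverse natural $G$-equivariant maps. The underlying content is essentially set-theoretic: because $H^{\phi} \cap (1 \times \Sigma_n) = 1$, the coset space $(G \times \Sigma_n)/H^{\phi}$ is a free $\Sigma_n$-set whose quotient by $\Sigma_n$ is $G/H$, and one can describe it as $G \times_H \Sigma_n$ with an $H$-action on $\Sigma_n$ coming from $\phi$; the claimed isomorphism is then the standard identification of a twisted induction after smashing with $X$ over $\Sigma_n$.

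Concretely, I would first construct a $G$-equivariant map
\[
\Psi : ((G \times \Sigma_n)/H^{\phi})_+ \wedge_{\Sigma_n} X \to G_+ \wedge_H X^{\phi}, \qquad [(g,\sigma)] \wedge y \mapsto g \wedge (g^{-1}, \sigma^{-1}) \cdot y,
\]
where $(g^{-1}, \sigma^{-1}) \cdot y$ is the $(G \times \Sigma_n)$-action on $X$. Well-definedness under the $H^{\phi}$-coset relation $(g,\sigma) \sim (gh, \sigma\phi(h))$ requires checking that $gh \wedge (h^{-1}g^{-1}, \phi(h)^{-1}\sigma^{-1}) y = g \wedge (g^{-1}, \sigma^{-1}) y$ in $G_+ \wedge_H X^{\phi}$; this falls out of the twisted $H$-action $h \cdot_{\phi} = (h, \phi(h))$ on $X^{\phi}$. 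Well-definedness under the diagonal $\Sigma_n$-relation $[(g,\tau\sigma)] \wedge y = [(g,\sigma)] \wedge \tau y$ is immediate since $((g, \tau\sigma)^{-1})(\tau y) = (g,\sigma)^{-1} y$.

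Next I would define the candidate inverse
\[
\Phi : G_+ \wedge_H X^{\phi} \to ((G \times \Sigma_n)/H^{\phi})_+ \wedge_{\Sigma_n} X, \qquad g \wedge x \mapsto [(g,1)] \wedge (g, 1) \cdot x,
\]
and check it descends past the $H$-action by computing that $[(gh, 1)] \wedge (gh, 1) x$ and $[(g, 1)] \wedge (gh, \phi(h)) x$ coincide in the target; this reduces via the coset identity $[(gh, 1)] = [(g, \phi(h)^{-1})]$ and the $\Sigma_n$-smash identity $[(g, \phi(h)^{-1})] \wedge z = [(g, 1)] \wedge \phi(h) \cdot z$ to a trivial verification. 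The compositions $\Psi \circ \Phi$ and $\Phi \circ \Psi$ are then the identity by direct substitution (on $\Phi \circ \Psi$ this uses the $\Sigma_n$-relation once to reabsorb the $\sigma^{-1}$), and naturality in $X$ is manifest because both formulas are built out of the $(G \times \Sigma_n)$-action.

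The only obstacle is bookkeeping --- keeping straight the conventions for left versus right actions of $G$ and $\Sigma_n$, the diagonal nature of the $\Sigma_n$-quotient in $\wedge_{\Sigma_n}$, and the precise form of the twisted $H$-action on $X^{\phi}$. Once these are fixed at the start, the verifications are mechanical. Since $\Phi$ and $\Psi$ are defined levelwise by formulas using only the $(G \times \Sigma_n)$-action on $X$, they immediately promote from pointed sets/spaces to orthogonal spectra.
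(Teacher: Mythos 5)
Your proof is correct and is essentially the paper's argument made explicit: the paper chains the standard untwisting isomorphism $(G \times \Sigma_n / H^{\phi})_+ \wedge X \cong (G \times \Sigma_n)_+ \wedge_{H^{\phi}} X$ with the identification $\Sigma_n \backslash (G \times \Sigma_n) \cong G$ to land in $G_+ \wedge_{H^{\phi}} X \cong G_+ \wedge_H X^{\phi}$, and your maps $\Psi$ and $\Phi$ are precisely that composite isomorphism and its inverse written out at the point level. One small slip in the write-up: the diagonal relation should read $[(g,\sigma)] \wedge y = [(g,\tau\sigma)] \wedge \tau y$ rather than with $y$ and $\tau y$ interchanged, but the identity you actually verify, $(g,\tau\sigma)^{-1}(\tau y) = (g,\sigma)^{-1} y$, is exactly the check for this correct relation, so the argument stands.
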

\begin{proof}
The above spectrum is
\begin{align*}
	((G \times \Sigma_n / H^{\phi})_+ \wedge X)/\Sigma_n &\cong ((G \times \Sigma_n)_+ \wedge_{H^{\phi}} X)/\Sigma_n \\
	&\cong (\Sigma_n \SLASH G \times \Sigma_n)_+ \wedge_{H^{\phi}} X \cong G_+ \wedge_{H^{\phi}} X,
\end{align*}
where $H^{\phi}$ acts on $G$ via its projection onto $H$. The last spectrum above can be described equivalently as $G_+ \wedge_H X^{\phi}$.
\end{proof}

We leave the next two lemmas to the reader.

\begin{lem}\label{lem:orbitmaps}
Let $H$ and $L$ be subgroups of $G$, and let $\phi : H \to \Sigma_n$ and $\lambda : L \to \Sigma_n$ be homomorphisms. There is a map in $\Fin_{G \times \Sigma_n}$
\begin{align*}
	G \times \Sigma_n / L^{\lambda} \to G \times \Sigma_n / H^{\phi}
\end{align*}
sending the identity coset to $(g,\sigma)H^{\phi}$ if and only if $L \subseteq gHg^{-1}$ and
\begin{align*}
	\lambda(l) = \sigma \phi(g^{-1} l g) \sigma^{-1}
\end{align*}
for all $l \in L$.
\end{lem}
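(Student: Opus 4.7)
The approach is the standard orbit-to-orbit classification in equivariant set theory: any $(G \times \Sigma_n)$-map out of a transitive $(G \times \Sigma_n)$-set is determined by the image of the identity coset, and such an assignment is well-defined precisely when the stabilizer of the source point is contained in the stabilizer of the target point. The whole lemma thus reduces to computing these stabilizers and unpacking the resulting containment.

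First I would fix a proposed image $(g,\sigma)H^{\phi}$ and record that the stabilizer of this point in $G \times \Sigma_n$ is the conjugate $(g,\sigma)\,H^{\phi}\,(g,\sigma)^{-1}$. A direct computation shows
\begin{align*}
(g,\sigma)(h,\phi(h))(g,\sigma)^{-1} = (ghg^{-1},\ \sigma\phi(h)\sigma^{-1}),
\end{align*}
so this stabilizer is exactly the set $\{(ghg^{-1},\sigma\phi(h)\sigma^{-1}) : h \in H\}$.

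Next I would observe that the assignment $eL^{\lambda} \mapsto (g,\sigma)H^{\phi}$ extends to a well-defined equivariant map if and only if $L^{\lambda}$ is contained in this conjugated stabilizer. Examining the first coordinate of the containment forces every $l \in L$ to be of the form $ghg^{-1}$ for some $h \in H$, i.e.\ $L \subseteq gHg^{-1}$, in which case the relevant $h$ must be $g^{-1}lg$. The second coordinate then forces $\lambda(l) = \sigma\phi(g^{-1}lg)\sigma^{-1}$ for all $l \in L$, which is precisely the stated formula.

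For the converse, I would check that these two conditions make the containment $L^{\lambda} \subseteq (g,\sigma)H^{\phi}(g,\sigma)^{-1}$ hold, and hence by the usual universal property of $G \times \Sigma_n / L^{\lambda}$ there is a unique equivariant map sending the identity coset to $(g,\sigma)H^{\phi}$. There is no genuine obstacle here; the only point requiring care is bookkeeping between $H^{\phi}$, its conjugate by $(g,\sigma)$, and the reindexing $h = g^{-1}lg$, but this is purely formal.
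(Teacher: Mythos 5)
Your proof is correct and uses exactly the standard orbit-classification criterion that the paper expects; the paper explicitly leaves this lemma to the reader, and yours is the argument a reader would be expected to supply. The stabilizer computation $(g,\sigma)(h,\phi(h))(g,\sigma)^{-1} = (ghg^{-1}, \sigma\phi(h)\sigma^{-1})$ and the unpacking of the containment $L^{\lambda} \subseteq (g,\sigma)H^{\phi}(g,\sigma)^{-1}$ into the two stated conditions are exactly right.
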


\begin{lem}\label{lem:orbitmapsinduce}
Let $H$ and $L$ be subgroups of $G$, and let $\phi : H \to \Sigma_n$ and $\lambda : L \to \Sigma_n$ be homomorphisms. Suppose there is a map in $\Fin_{G \times \Sigma_n}$
\begin{align*}
	f : G \times \Sigma_n / L^{\lambda} \to G \times \Sigma_n / H^{\phi}
\end{align*}
sending the identity coset to $(g,\sigma)H^{\phi}$. Let the following diagram commute, where the vertical isomorphisms are given by Lemma~\ref{lem:freesigmaorbit} and $X \in Sp_G$.
\begin{align*}
\xymatrix{
 (G \times \Sigma_n / L^{\lambda})_+ \wedge_{\Sigma_n} X^{\wedge n} \ar[d]_-{\cong} \ar[rr]^-{f_+ \wedge_{\Sigma_n} Id} & & (G \times \Sigma_n / H^{\phi})_+ \wedge_{\Sigma_n} X^{\wedge n} \ar[d]^-{\cong} \\
 G_+ \wedge_{L} (X^{\times n})^{\lambda} \ar[rr]_-{f'} & & G_+ \wedge_{H} (X^{\wedge n})^{\phi} }
\end{align*}
Then $f'$ may be represented schematically as below.
\begin{align*}
	f' : G_+ \wedge_{L} (X^{\wedge n})^{\lambda} &\to G_+ \wedge_{H} (X^{\wedge n})^{\phi} \\
	[k, \wedge_j x_j] &\mapsto [kg, \wedge_j (g^{-1} \cdot x_{\sigma(j)})]
\end{align*}
\end{lem}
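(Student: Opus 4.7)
My plan is to prove this by a direct chase of an element through the isomorphisms of Lemma~\ref{lem:freesigmaorbit}, with $f_+ \wedge_{\Sigma_n} Id$ applied in the middle and $(G \times \Sigma_n)$-equivariance of $f$ used to track the identity coset. First I would unpack the iso of Lemma~\ref{lem:freesigmaorbit} into an explicit formula, since the statement of that lemma treats it abstractly. Following its proof --- the identification $(G \times \Sigma_n / H^{\phi})_+ \wedge_{\Sigma_n} X \cong ((G \times \Sigma_n)_+ \wedge_{H^{\phi}} X)/\Sigma_n$ (which uses the $(G \times \Sigma_n)$-equivariant structure of $X$), followed by collapsing the residual $\Sigma_n$-action on $G \times \Sigma_n$ by left translation to obtain $G$ --- one arrives at the explicit formula $[(g,\tau) H^{\phi}, x] \longleftrightarrow [g, (g,\tau)^{-1} \cdot x]$, with inverse $[k, y] \longleftrightarrow [(k,1) H^{\phi}, k \cdot y]$, and analogously for $L^{\lambda}$.

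With these formulas in hand, the chase becomes mechanical. Starting from $[k, \wedge_j x_j] \in G_+ \wedge_L (X^{\wedge n})^{\lambda}$, the inverse iso yields $[(k,1) L^{\lambda}, \wedge_j (k \cdot x_j)]$; by $(G \times \Sigma_n)$-equivariance of $f$ one has $f((k,1) L^{\lambda}) = (k,1)(g,\sigma) H^{\phi} = (kg,\sigma) H^{\phi}$, so $f_+ \wedge_{\Sigma_n} Id$ produces $[(kg,\sigma) H^{\phi}, \wedge_j (k \cdot x_j)]$; and the forward iso for $H^{\phi}$ sends this to $[kg,\, (kg,\sigma)^{-1} \cdot \wedge_j (k \cdot x_j)]$. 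Since the $G$-action on the factors of $X^{\wedge n}$ commutes with the $\Sigma_n$-permutation of slots, the $k^{-1}$ cancels the $k$ and the expression collapses to $[kg, \wedge_j (g^{-1} \cdot x_{\sigma(j)})]$, as asserted.

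\emph{Main obstacle.} Nothing deep; the difficulty is purely bookkeeping of conventions --- left vs.\ right actions, the order in $(g,\tau)^{-1}$, and the distinction between the $G$-action on factors of $X^{\wedge n}$ and the $\Sigma_n$-action permuting slots (so that e.g.\ $\sigma^{-1} \cdot \wedge_j y_j$ reads off as $\wedge_j y_{\sigma(j)}$). Once these are pinned down consistently with the choices made in the proof of Lemma~\ref{lem:freesigmaorbit}, the advertised formula drops out of the chase without further work.
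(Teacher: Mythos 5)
Your element chase is correct: unwinding the proof of Lemma~\ref{lem:freesigmaorbit} via the untwisting isomorphism does give $[(g,\tau)H^{\phi},x]\mapsto[g,(g,\tau)^{-1}\cdot x]$, and since the diagonal $G$-action on $X^{\wedge n}$ commutes with the $\Sigma_n$-permutation of factors, the composite sends $[k,\wedge_j x_j]$ to $[kg,\wedge_j(g^{-1}\cdot x_{\sigma(j)})]$ exactly as claimed (with the same permutation convention the schematic formula presumes). The paper in fact leaves this lemma to the reader, and your direct computation is precisely the intended argument, so there is nothing to add.
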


Now when $H^{\phi} \in \FF_G [n]$, let us denote the set $\{ 1, ..., n \}$ with $H$-action through $\phi$ by $\{ 1, ..., n\}_{\phi}$. Using Lemma~\ref{lem:freesigmaorbit}, we now have the following,
\begin{align}\label{eq:topcolim}
	\underline{\pi}_0 (\HH \underline{M})^{\wedge n} / \Sigma_n \cong \colim_{(G \times \Sigma_n) / H^{\phi} \in \OO(G;n)} \Ind_H^G (\Res_H^G \underline{M})^{\otimes \{ 1, ..., n \}_{\phi}}
\end{align}

where the maps in the colimit are determined by Lemma~\ref{lem:orbitmapsinduce}. We now construct a similar colimit from the definition of a multiplicative Mackey functor. We focus on (iii) of Definition~\ref{def:multmackeyfunct} (the assumption on pullback squares). We define $P(G)$ to be the category whose objects are morphisms in $\Fin_G$, and where the morphisms are morphisms of arrows which produce pullback squares. If $(\underline{M}, \{ \mu_i \})$ is a multiplicative Mackey functor then the maps
\begin{align*}
	\mu_i : i_{\star} (const_U \underline{M}) \to const_V \underline{M}
\end{align*}

are adjoint, via the maps $\pi^V : V \to \ast$, to maps
\begin{align*}
	\mu_i' : \pi^V_* i_{\star} (const_U \underline{M}) \to \underline{M}.
\end{align*}

If the diagram below is a pullback,
\begin{align*}
\xymatrix{
 B \ar[d]_-{b} \ar[r]^-{f} & D \ar[d]^-{d} \\
 C \ar[r]_-{c} & E }
\end{align*}

then the commutative triangles
\begin{align*}
\xymatrix{
 b_{\star} (const_B \underline{M}) \ar[d]_-{\mu_b} \ar[r]^-{\cong} & c^* d_{\star} (const_D \underline{M}) \ar[dl]^-{c^* (\mu_d)} \\
 const_C \underline{M} }
\end{align*}

are adjoint to commutative triangles,
\begin{align*}
\xymatrix{
 c_* b_{\star} (const_B \underline{M}) \ar[d] \ar[r] & d_{\star} (const_D \underline{M}) \ar[dl]^-{\mu_d} \\
 const_E \underline{M} }
\end{align*}

and hence to commutative triangles as below.
\begin{align*}
\xymatrix{
 \pi^C_* b_{\star} (const_B \underline{M}) \ar[d]_-{\mu_b'} \ar[r] & \pi^E_* d_{\star} (const_D \underline{M}) \ar[dl]^-{\mu_d'} \\
 \underline{M} }
\end{align*}

Note that the top maps above are defined for all $\underline{M}$, and give a functor from $P(G)$ to $Mack(G)$. The diagram above then gives a cocone from this functor to $\underline{M}$. Hence, a reasonable candidate for the "free multiplicative Mackey functor monad" is the colimit below.
\begin{align*}
	\colim_{i : U \to V \in P(G)} \pi^V_* i_{\star} (const_U \underline{M})
\end{align*}

We now identify this colimit with~\ref{eq:topcolim}.

\begin{thm}\label{thm:pi0ceqmultmack}
For any $\underline{M} \in Mack(G)$ there is an isomorphism
\begin{align*}
	F : \colim_{i : U \to V \in P(G)} \pi^V_* i_{\star} (const_U \underline{M}) \xrightarrow{\cong} \pi_0 \CC \HH \underline{M}.
\end{align*}
\end{thm}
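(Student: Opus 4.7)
My approach is to match the colimit on the left with the known topological colimit description of $\underline{\pi}_0 \CC\HH\underline{M}$ from~\ref{eq:topcolim}, via a construction-then-surjection-then-injection argument.

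First I would construct $F$. For each $(i:U\to V) \in P(G)$, iterate the multiplication on a cofibrant model of $\CC\HH\underline{M}$ to build a spectral map $\pi^V_* i_{\star}(const_U \HH\underline{M}) \to \CC\HH\underline{M}$, in exactly the way the structure maps $\mu_i$ are extracted from a commutative ring spectrum in the proof of Theorem~\ref{thm:multmackeqtamb}. Applying $\underline{\pi}_0$ gives a candidate. Verifying that this collection defines a cocone under $P(G)$ reduces to the associativity, unitality, pullback-compatibility, and distributivity axioms of Definition~\ref{def:multmackeyfunct} for the Tambara functor $\underline{\pi}_0 \CC\HH\underline{M}$, all of which hold by Theorem~\ref{thm:multmackeqtamb}. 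This yields the map $F$ out of the colimit.

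Next I would prove surjectivity using~\ref{eq:topcolim}. For each orbit type $(G\times\Sigma_n)/H^{\phi} \in \OO(G;n)$, take the orbit-valued $P(G)$-object $(i_\phi : G\times_H \{1,\ldots,n\}_\phi \to G/H)$. Factoring $\pi^{G/H} \circ i_\phi$ and applying part (iii) of Corollary~\ref{cor:basicmultpushmackey} together with the equivalence $Mack(\BB_G(G/H)) \cong Mack(H)$ computes
\[
\pi^{G/H}_*(i_\phi)_{\star}(const_U \underline{M}) \cong \Ind_H^G (\Res_H^G \underline{M})^{\otimes \{1,\ldots,n\}_\phi},
\]
which is precisely the summand appearing in~\ref{eq:topcolim}. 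So $F$ already hits every generator of $\underline{\pi}_0 \CC\HH\underline{M}$.

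For injectivity I would first reduce $\colim_{P(G)}$ to the colimit over the full subcategory $P^{\mathrm{orb}}(G) \subseteq P(G)$ on arrows whose codomain is a single orbit. For a general $(i:U\to V) \in P(G)$, the orbit decomposition $V = \coprod_k G/H_k$ yields $i = \coprod_k i_k$, and part (ii) of Corollary~\ref{cor:basicmultpushmackey} together with~\ref{eq:mackbgtsimple} gives a natural splitting
\[
\pi^V_* i_{\star}(const_U \underline{M}) \cong \bigoplus_k \pi^{G/H_k}_*(i_k)_{\star}(const_{U_k} \underline{M}),
\]
in which each summand is the image, under $F$ applied to the $P(G)$-morphism obtained by pulling $i$ back along the orbit inclusion $G/H_k \hookrightarrow V$, of the value on an orbit-valued object. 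Hence every element and every relation in the $P(G)$-colimit is represented in the $P^{\mathrm{orb}}(G)$-colimit.

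Finally I would identify $P^{\mathrm{orb}}(G) \cong \coprod_n \OO(G;n)$: a degree-$n$ object $(i:U\to G/H)$ is determined by the $H$-set $i^{-1}(H)$ of size $n$, classified up to isomorphism by a homomorphism $\phi:H\to\Sigma_n$ modulo conjugation, which is exactly the data of $(G\times\Sigma_n)/H^{\phi} \in \OO(G;n)$. Morphisms in $P^{\mathrm{orb}}(G)$ between such objects are pullback squares covering $G$-maps $G/L \to G/H$; by Lemma~\ref{lem:orbitmaps} these correspond to morphisms in $\OO(G;n)$, and by Lemma~\ref{lem:orbitmapsinduce} they induce the same maps on values of $F$ as appear in the topological colimit. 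The main obstacle I anticipate is this last matching: one must carefully thread the conjugation-by-$g$ and permutation-by-$\sigma$ of Lemma~\ref{lem:orbitmapsinduce} through the base-change isomorphism of Corollary~\ref{cor:basicmultpushmackey}(ii), confirming that the two induced maps on $\Ind_H^G(\Res_H^G \underline{M})^{\otimes \{1,\ldots,n\}_\phi}$ truly coincide, after which injectivity — and hence the theorem — follows.
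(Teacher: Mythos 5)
Your proposal follows essentially the same route as the paper's proof: the same construction of $F$ from the multiplication of $\CC\HH\underline{M}$, the same reduction of the $P(G)$-colimit to orbit targets graded by degree of $i$, and the same identification of the resulting index categories with the $\OO(G;n)$ via Lemmas~\ref{lem:orbitmaps} and~\ref{lem:orbitmapsinduce}, matched against the colimit~\ref{eq:topcolim}. The ``obstacle'' you flag at the end---verifying that pullback squares over $G/L \to G/H$ correspond precisely to cosets $(g,\sigma)H^{\phi}$ and that the induced maps agree after $\underline{\pi}_0$---is exactly the computation the paper's proof devotes its second half to, so your plan is sound and your surjectivity/injectivity framing is only a cosmetic repackaging of the paper's direct identification of the two colimit diagrams.
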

\begin{proof}
We begin by defining $F$. One can mimic the notion of a multiplicative Mackey functor, replacing Mackey functors with spectra. Then commutative ring spectra are naturally "multiplicative" in this sense. Taking $\HH \underline{M}$ to be positive cofibrant, we take $F$ to be the map induced by applying $\underline{\pi}_0$ to the composites
\begin{align*}
	\pi_* i_{\star} (const_U \HH \underline{M}) \to \pi_* i_{\star} (const_U \CC \HH \underline{M}) \xrightarrow{\mu_i'} \CC \HH \underline{M},
\end{align*}
where the left maps are induced by the inclusions $\HH \underline{M} \xrightarrow{\subseteq} \CC \HH \underline{M}$.\\
\indent To show that $F$ is an isomorphism, we begin by examining the structure of $P(G)$. First of all, if $V = V_1 \coprod V_2$ then $U = U_1 \coprod U_2$ and $i = i_1 \coprod i_2$. Then we have
\begin{align*}
	\pi^V_* i_{\star} (const_U \underline{M}) \cong \pi^{V_1}_* {i_1}_{\star} (const_{U_1} \underline{M}) \oplus \pi^{V_2}_* {i_2}_{\star} (const_{U_2} \underline{M}),
\end{align*}
and since the diagrams
\begin{align*}
\xymatrix{
 U_j \ar[d]_-{i_j} \ar[r]^-{\subseteq} & U \ar[d]^-{i} \\
 V_j \ar[r]_-{\subseteq} & V }
\end{align*}
are pullbacks for $j = 1, 2$, if one has a cocone then the map for $i$ must be the direct sum of the maps for $i_1$ and $i_2$. Thus one sees that we obtain the same colimit if we restrict ourselves to the full subcategory of $P(G)$ such that the targets $V$ are orbits. In this case, the maps $i$ have degrees. Since pullbacks preserve fibers, this full subcategory is a disjoint union over components, by degree. We may further restrict to a full subcategory containing all isomorphism classes. Hence, for each $n \geq 0$ we let $P(G;n)$ denote the full subcategory of $P(G)$ consisting of the arrows of the form
\begin{align*}
	G \times_H \{ 1, ..., n \}_{\phi} \xrightarrow{G \times_H \pi} G/H,
\end{align*} 
where $\phi : H \to \Sigma_n$ is a homomorphism. Thus, our colimit is a direct sum over $n \geq 0$ of colimits of $\Ind_H^G (\Res_H^G \underline{M})^{\otimes \{ 1, ..., n \}_{\phi}}$ over the $P(G;n)$. We will identify these precisely with the colimits~\ref{eq:topcolim}.\\
\indent First of all, it is clear that the objects of $P(G;n)$ correspond to the objects of $\OO(G;n)$. We need only show that the morphisms in the two diagrams correspond as well; it is then easy to see using the explicit isomorphisms in Lemma~\ref{lem:freesigmaorbit} that the two cocones are identical. First we dispose of the special cases $n = 0, 1$. For $n = 0$, the sources $U$ are empty, and we see that $\emptyset \to \ast$ is the terminal object. Hence, our colimit over $P(G,0)$ is $\underline{\pi}_0 S^0 \cong \underline{A}$, and this is clearly mapped isomorphically to $\underline{\pi}_0 (\HH \underline{M})^{\wedge 0}$. Similarly, for $n = 1$ the maps $i$ are isomorphisms and we have a terminal object $\ast \xrightarrow{=} \ast$. Hence, our colimit over $P(G;1)$ is $\underline{M}$, which is clearly mapped isomorphically to $\underline{\pi}_0 (\HH \underline{M})^{\wedge 1}$.\\
\indent Finally, fix $n \geq 2$. Let $L$ and $H$ be two subgroups of $G$, and let $\lambda : L \to \Sigma_n$ and $\phi : H \to \Sigma_n$ be homomorphisms. Now consider a diagram as below.
\begin{align*}
\xymatrix{
 G \times_L \{ 1, ..., n \}_{\lambda} \ar[d] \ar[r]^-{p} & G \times_H \{ 1, ..., n \}_{\phi} \ar[d] \\
 G/L \ar[r]_-{q} & G/H }
\end{align*}
This diagram is a pullback if it commutes and induces isomorphisms on corresponding fibers. Since $G/L$ is a transitive $G$-set, the condition on the fibers may be checked on the single fiber over $L$. The map $q$ corresponds to an $L$-fixed coset of $G/H$. Suppose that $q (L) = gH$, so that $L \subseteq gHg^{-1}$. Then the fiber over $gH$ may be identified as $\{ [g, j] : 1 \leq j \leq n \}$. Now let $\sigma \in \Sigma_n$. Then we attempt to define a map $p$ by
\begin{align*}
	p ([1, j]) \defeq [g, \sigma^{-1} (j)].
\end{align*}
This defines a pullback diagram if and only if the above definition is $L$-equivariant. Hence let $l \in L$. We have $l \cdot [1, j] = [1, \lambda(l) (j)]$, so that $p (l \cdot [1, j]) = [g, \sigma^{-1} \lambda(l) (j)]$. We also have
\begin{align*}
	l \cdot p([1, j]) = [lg, \sigma^{-1} (j)] = [g \cdot g^{-1} l g, \sigma^{-1} (j)] = [g, \phi(g^{-1} l g) \sigma^{-1} (j)].
\end{align*}
Hence, we have a pullback diagram if and only if
\begin{align*}
	\sigma^{-1} \lambda(l) = \phi(g^{-1} l g) \sigma^{-1},
\end{align*}
for all $l \in L$, which is equivalent by Lemma~\ref{lem:orbitmaps} to the existence of a $(G \times \Sigma_n)$-map from $(G \times \Sigma_n)/L^{\lambda}$ to $(G \times \Sigma_n)/H^{\phi}$ sending the identity coset to $(g, \sigma) H^{\phi}$. Now for $h \in H$ we have
\begin{align*}
	[g, \sigma^{-1} (j)] = [gh, \phi(h)^{-1} \sigma^{-1} (j)] = [gh, (\sigma \phi(h))^{-1} (j)],
\end{align*}
so the map specified by $(g, \sigma)$ is the same as the map specified by $(gh, \sigma \phi(h))$; that is, two element of $G \times \Sigma_n$ specify the same pullback diagram exactly when they represent the same coset of $H^{\phi}$, and hence the same $(G {\times} \Sigma_n)$-map from $(G {\times} \Sigma_n)/L^{\lambda}$ to $(G {\times} \Sigma_n)/H^{\phi}$. Using this correspondence, one easily checks that $P(G;n)$ is isomorphic to $\OO(G;n)$. Let $i : G {\times_L} \{ 1, ..., n \}_{\lambda} \to G/L$ and $k : G {\times_H} \{ 1, ..., n \}_{\phi} \to G/H$ be the projection maps. Then we have, schematically,
\begin{align*}
	(gH, \wedge_j x_j) = g \cdot (H, \wedge_j (g^{-1} \cdot x_j))
\end{align*}
in $k_{\star} (const_{G \times_H \{ 1, ..., n \}_{\phi}} X)$ for any $X \in Sp_G$, so one easily sees from Lemma~\ref{lem:orbitmapsinduce} that the maps
\begin{gather*}
	G_+ \wedge_L (\Res_L^G \HH \underline{M})^{\wedge \{ 1, ..., n \}_{\lambda}} \cong \pi^{G/L}_* i_{\star} (const_{G \times_L \{ 1, ..., n \}_{\lambda}} \HH \underline{M}) \to \\
	\pi^{G/H}_* k_{\star} (const_{G \times_H \{ 1, ..., n \}_{\phi}} \HH \underline{M}) \cong G_+ \wedge_H (\Res_H^G \HH \underline{M})^{\wedge \{ 1, ..., n \}_{\phi}}
\end{gather*}
that we used to define $F$ coincide, after applying $\underline{\pi}_0$, with those occuring in the colimit~\ref{eq:topcolim}.
\end{proof}

\begin{cor}\label{cor:tfuncmultmack}
Let $\underline{M} \in Mack(G)$. For each $G$-map $i : U \to V$, let $\mu_i$ be the corresponding structure map for $\TT \underline{M}$ regarded as a multiplicative Mackey functor, and let $\mu_i'$ be the adjoint of $\mu_i$. Then the composite maps
\begin{align*}
	\pi^V_* i_{\star} (const_U \underline{M}) \xrightarrow{\pi^V_* i_{\star} (const_U \theta_{\underline{M}})} \pi^V_* i_{\star} (const_U \TT \underline{M}) \xrightarrow{\mu_i'} \TT \underline{M}
\end{align*}
determine an isomorphism
\begin{align*}
	\colim_{i : U \to V \in P(G)} \pi^V_* i_{\star} (const_U \underline{M}) \xrightarrow{\cong} \TT \underline{M}.
\end{align*}
\end{cor}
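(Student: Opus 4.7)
The plan is to deduce this directly from Theorem~\ref{thm:pi0ceqmultmack} together with the identification $\psi_{\underline{M}} : \TT(\underline{M}) \xrightarrow{\cong} \underline{\pi}_0 \CC\HH\underline{M}$ of Corollary~\ref{cor:freetambara}. Specifically, I will check that the composite in the statement of the present corollary agrees, after postcomposition with $\psi_{\underline{M}}$, with the isomorphism $F$ of Theorem~\ref{thm:pi0ceqmultmack}. The conclusion then follows immediately: $\psi_{\underline{M}}^{-1} \circ F$ is the composite of the corollary and is an isomorphism, and the cocone property is inherited from that for $F$.

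Comparing the two cocones requires two observations. First, $\CC\HH\underline{M}$ is itself a $(-1)$-connected cofibrant commutative ring $G$-spectrum whose $\underline{\pi}_0$, via $\psi_{\underline{M}}$, is the Tambara functor $\TT\underline{M}$, so it qualifies as a valid choice of ``$\HH\underline{R}$'' in the first paragraph of the proof of Theorem~\ref{thm:multmackeqtamb} applied to $\underline{R} = \TT\underline{M}$. Corollaries~\ref{cor:normrighthtpytype} and~\ref{cor:gsymmonrighthtpytype} guarantee that the resulting multiplicative Mackey structure on $\TT\underline{M}$ is independent of the cofibrant $(-1)$-connected commutative ring spectrum model chosen, so the algebraic $\mu_i'$ on $\TT\underline{M}$ corresponds under $\psi_{\underline{M}}$ to $\underline{\pi}_0$ of the topological $\mu_i'$ on $\CC\HH\underline{M}$. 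Second, the spectrum-level inclusion $\HH\underline{M} \hookrightarrow \CC\HH\underline{M}$ realises the Mackey functor map $\psi_{\underline{M}} \circ \theta_{\underline{M}}$ on $\underline{\pi}_0$ by the very definition of $\psi_{\underline{M}}$, and the functors $\pi^V_* i_{\star}$ commute with $\underline{\pi}_0$ on pointwise $(-1)$-connected cofibrant $\BB_G(U)$-spectra --- as can be seen by decomposing $V$ into orbits and invoking Corollaries~\ref{cor:norm-1der} and~\ref{cor:gsymmon-1der}. Chaining these identifications, $F$ transported through $\psi_{\underline{M}}^{-1}$ becomes precisely the composite in the statement.

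The main technical point is the commutation $\underline{\pi}_0 \circ \pi^V_* i_{\star} \cong \pi^V_* i_{\star} \circ \underline{\pi}_0$ on $(-1)$-connected cofibrant inputs. After orbit decomposition of $V$, this reduces to the separate statements for norm functors and for tensor products of norms of restrictions appearing in the $G$-symmetric monoidal structure on $Mack(G)$, both of which are established in Sections~\ref{sec:norm} and~\ref{sec:gsymmon}. The rest is routine naturality, as the construction of $F$ in Theorem~\ref{thm:pi0ceqmultmack} is formally identical to the composite in the corollary once one has matched the topological and algebraic unit and multiplication data.
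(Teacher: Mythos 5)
Your proposal is correct and follows essentially the route the paper intends: the corollary is deduced from Theorem~\ref{thm:pi0ceqmultmack} by transporting the isomorphism $F$ along $\psi_{\underline{M}}$ of Corollary~\ref{cor:freetambara}, using that $\CC\HH\underline{M}$ is a valid cofibrant $(-1)$-connected commutative ring spectrum model for computing the multiplicative Mackey structure on $\TT\underline{M}$ (via Corollaries~\ref{cor:normrighthtpytype} and~\ref{cor:gsymmonrighthtpytype}) and that $\underline{\pi}_0$ of the wedge-summand inclusion $\HH\underline{M}\to\CC\HH\underline{M}$ is $\psi_{\underline{M}}\circ\theta_{\underline{M}}$. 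You merely make explicit the identifications the paper leaves implicit, so no further changes are needed.
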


\indent \emph{Remark:} The way that these comparison maps are defined is precisely how one would define a map of monads whose algebra categories were isomorphic. Unfortunately, it appears to be difficult to construct a monad algebraically from the definition of multiplicative Mackey functors, though it should be possible in principle.\\
\indent \emph{Remark:} If $\underline{R} \in Tamb(G)$ and $i : U \to V$ is a map in $\Fin_G$, then we can compute the adjoint structure maps $\mu_i'$ for $\underline{R}$ in terms of the description given by Definition~\ref{def:intrmultpushmackey}. For any $X \in \Fin_G$, $\mu_i' (X)$ is as below.
\begin{align*}
	\mu_i' (X) : \pi^V_* i_{\star} \underline{R} (X) &\to \underline{R} (X) \\
	(Y \xrightarrow{j \times k} X \times V, y \in \underline{R} (Y \times_V U)) &\mapsto t_j n_{\pi_Y} (y)
\end{align*}

It follows that the maps in Corollary~\ref{cor:tfuncmultmack} are as below.
\begin{align*}
	(Y \xrightarrow{j \times k} X \times V, y) \mapsto (Y \times_V U \xrightarrow{\pi_Y} Y \xrightarrow{j} X, y)
\end{align*}

\vspace{1cm}

\indent \emph{Acknowledgement:} I would like to thank Mike Hill for some key insights on this topic.

\vspace{1cm}

\bibliographystyle{alphanum}
\bibliography{sympownormbiblio}

\end{document}